\newtheorem{theorem}{Theorem}[section]
\newtheorem{proposition}[theorem]{Proposition}
\newtheorem{corollary}[theorem]{Corollary}
\newtheorem{lemma}[theorem]{Lemma}
\newtheorem{example}[theorem]{Example}
\newtheorem{remark}[theorem]{Remark}
\newtheorem{problem}[theorem]{Problem}
\numberwithin{equation}{section}
\def\bR{\mathbb{R}}
\def\bN{\mathbb{N}}
\def\bM{\mathbb{M}}
\def\bC{\mathbb{C}}
\def\cA{\mathcal{A}}
\def\cH{\mathcal{H}}
\def\cM{\mathcal{M}}
\def\cN{\mathcal{N}}
\def\Tr{\mathrm{Tr}}
\def\<{\langle}
\def\>{\rangle}
\def\eps{\varepsilon}
\def\diag{\mathrm{diag}}
\def\chao{\mathrm{chao}}
\def\Re{\mathrm{Re}\,}
\def\near{\mathrm{near}}
\begin{document}
\allowdisplaybreaks

\bigskip
\centerline{\LARGE Various inequalities between quasi-arithmetic mean}
\medskip
\centerline{\LARGE and quasi-geometric type means for matrices}

\bigskip
\bigskip
\centerline{\large
Fumio Hiai\footnote{{\it E-mail:} hiai.fumio@gmail.com
\ \ {\it ORCID:} 0000-0002-0026-8942}}

\medskip
\begin{center}
$^1$\,Graduate School of Information Sciences, Tohoku University, \\
Aoba-ku, Sendai 980-8579, Japan
\end{center}

\medskip

\begin{abstract}
In this paper, for $0<\alpha<1$, $p>0$ and positive semidefinite matrices $A,B\ge0$, we consider the
quasi-extension $\mathcal{A}_{\alpha,p}(A,B):=((1-\alpha)A^p+\alpha B^p)^{1/p}$ of the
$\alpha$-weighted arithmetic matrix mean, and the quasi-extensions
$\mathcal{M}_{\alpha,p}(A,B):=\mathcal{M}_\alpha(A^p,B^p)^{1/p}$ of several different $\alpha$-weighted
geometric-type matrix means $\mathcal{M}_\alpha(A,B)$ such as the $\alpha$-weighted geometric mean
in Kubo and Ando's sense and two types of $\alpha$-weighted version of Fiedler and Pt\'ak's spectral
geometric mean, as well as the R\'enyi mean and the $\alpha$-weighted Log-Euclidean mean. For these
we examine the inequalities $\mathcal{A}_{\alpha,p}(A,B)\triangleleft\mathcal{A}_{\alpha,q}(A,B)$ and
$\mathcal{M}_{\alpha,p}(A,B)\triangleleft\mathcal{A}_{\alpha,q}(A,B)$ of arithmetic-geometric type, where
$\triangleleft$ is one of several different matrix orderings varying from the strongest Loewner order to the
weakest order determined by trace inequality. For each choice of the above inequalities, our goal is to
hopefully obtain the necessary and sufficient condition on $p,q,\alpha$ under which the inequality holds
for all $A,B\ge0$.

\bigskip\noindent
{\it 2020 Mathematics Subject Classification:}
15A45, 47A64

\medskip\noindent
{\it Keywords and phrases:}
quasi matrix mean,
weighted arithmetic mean,
weighted harmonic mean,
weighted geometric mean,
spectral geometric mean,
R\'enyi mean,
Log-Euclidean mean,
Loewner order,
chaotic order,
near order,
entrywise eigenvalue order,
weak majorization,
Lie--Trotter--Kato product formula
\end{abstract}

{\small
\tableofcontents}

\section{Introduction}\label{Sec-1}

For positive semidefinite matrices (also operators) the most developed two-variable operator/matrix means
are Kubo and Ando's operator means \cite{KA}, defined corresponding to operator monotone functions
$f\ge0$ on $[0,\infty)$ with $f(1)=1$ as
$
A\sigma_fB:=A^{1/2}f(A^{-1/2}BA^{-1/2})A^{1/2}
$
for positive definite matrices (also operators) $A,B>0$ and extended to general positive semidefinite
$A,B\ge0$ as $A\sigma_fB:=\lim_{\eps\searrow0}(A+\eps I)\sigma_f(B+\eps I)$. Among them the most
typical ones are the $\alpha$-weighted arithmetic, harmonic and geometric means $\triangledown_\alpha$,
$!_\alpha$ and $\#_\alpha$ for $0\le\alpha\le1$, whose definitions for $A,B>0$ are
\begin{align*}
A\triangledown_\alpha B&:=(1-\alpha)A+\alpha B, \\
A!_\alpha B&:=((1-\alpha)A^{-1}+\alpha B^{-1})^{-1}, \\
A\#_\alpha B&:=A^{1/2}(A^{-1/2}BA^{-1/2})^\alpha A^{1/2}.
\end{align*}
The so-called arithmetic-geometric-harmonic mean inequality in the Loewner order is
\begin{align}\label{F-1.1}
A!_\alpha B\le A\#_\alpha B\le A\triangledown_\alpha B.
\end{align}

Geometric-type matrix means not in Kubo and Ando's sense have recently been in active consideration in
matrix analysis, partly motivated by recent development of quantum divergences in quantum information.
The most familiar one is the $\alpha$-weighted Log-Euclidean mean
\[
LE_\alpha(A,B):=\exp((1-\alpha)\log A+\alpha\log B)
\]
for $A,B>0$. The spectral geometric mean
\[
F(A,B):=(A^{-1}\#B)^{1/2}A(A^{-1}\#B)^{1/2}
\]
for $A,B>0$ was formerly introduced in \cite{FP} and extended in \cite{LL} to the $\alpha$-weighted version
as
\[
F_\alpha(A,B):=(A^{-1}\#B)^\alpha A(A^{-1}\#B)^\alpha,
\]
which has recently been reconsidered in \cite{GK,GT,GH} from a new perspective. Another new
$\alpha$-weighted version of the spectral geometric mean was also recently introduced in \cite{DTV} as
\[
\widetilde F_\alpha(A,B):=(A^{-1}\#_\alpha B)^{1/2}A^{2(1-\alpha)}(A^{-1}\#_\alpha B)^{1/2}
\]
Yet another geometric-type mean discussed in \cite{Hi2,DF} is
\[
R_{\alpha,z}(A,B):=\bigl(A^{1-\alpha\over2z}B^{\alpha\over z}A^{1-\alpha\over2z}\bigr)^z
\]
with two parameters $\alpha\in(0,1)$ and $z>0$, which is called the R\'enyi mean because of its close
relation with the $\alpha$-$z$-R\'enyi divergence \cite{AD}.

For any matrix mean $\cM(A,B)$ we have its quasi-extension $\cM(A^p,B^p)^{1/p}$ with a parameter $p>0$.
For $0<\alpha<1$ and $p>0$ the quasi-extension
$(A^p\triangledown_\alpha B^p)^{1/p}=((1-\alpha)A^p+\alpha B^p)^{1/p}$ of the $\alpha$-weighted arithmetic
mean has been discussed by many authors under the name ``operator power mean.'' We notice that
$LE_\alpha(A,B)$ is invariant under taking quasi-extension. In the present paper we denote the
quasi-extensions of $A\triangledown_\alpha B$, $A\#_\alpha B$, $A!_\alpha B$, $F(A,B)$ and
$\widetilde F(A,B)$ respectively by $\cA_{\alpha,p}(A,B)$, $G_{\alpha,p}(A,B)$, $\cH_{\alpha,p}(A,B)$,
$SG_{\alpha}(A,B)$ and $\widetilde SG_{\alpha,p}(A,B)$. We also write $R_{\alpha,p}(A,B)$ for
$R_{\alpha,z}(A,B)$ with parameter $p=1/z$ instead of $z$. See Section \ref{Sec-2.1} for more details on
these quasi-extended matrix means. The quasi matrix means $G_{\alpha,p}$, $SG_{\alpha,p}$,
$\widetilde SG_{\alpha,p}$, $R_{\alpha,p}$ as well as $LE_\alpha$ are referred to as ``quasi-geometric type
matrix means'' because all of them are reduced to $A^{1-\alpha}B^\alpha$ when $AB=BA$.

Our main aim of this paper is to extend the arithmetic-geometric mean inequality in \eqref{F-1.1} to those
for the quasi arithmetic mean and the quasi-geometric type means mentioned above. A special feature of
our study is that we consider not only the Loewner order ($X\le Y$ for $X,Y\ge0$) but also several of other
weaker orderings such as the chaotic order (denoted by $X\le_\chao Y$), the near order recently introduced
in \cite{DF1} (denoted by $X\le_\near Y$), the entrywise eigenvalue order (denoted by $X\le_\lambda Y$),
the weak majorization ($X\prec_wY$) and the trace inequality $\Tr\,X\le\Tr\,Y$ (denoted by $X\le_\Tr Y$).
See Section \ref{Sec-2.2} for the explicit definitions of these orderings. It may also be stressed that we deal
with quasi matrix means for general positive semidefinite matrices though restricted to positive definite
matrices in most references. Our goal is to hopefully obtain the necessary and sufficient condition on
$p,q,\alpha$ under which the inequality $\cM_{\alpha,p}(A,B)\triangleleft\cA_{\alpha,q}(A,B)$ holds for all
$A$ and $B$, when $\cM_{\alpha,p}$ is one of the quasi-geometric type matrix means and $\triangleleft$ is
one of the above matrix orderings.

The structure of the paper is as follows. In Section \ref{Sec-2.1}, for $0\le\alpha\le1$ and $p>0$ we review
the definitions of the above stated quasi matrix means $\cA_{\alpha,p}(A,B)$, $G_{\alpha,p}(A,B)$, etc.\ for
general positive semidefinite matrices $A,B\ge0$ (with the support condition $s(A)\ge s(B)$ for
$SG_{\alpha,p}$ and $\widetilde SG_{\alpha,p}$). Some general basic facts on these quasi matrix means
are summarized. In Section \ref{Sec-2.2} we review the above mentioned matrix orderings and summarize
some basic properties of them such as the strength relationship between them. In particular, for any quasi
matrix mean $\cM_{\alpha,p}$ and any matrix ordering $\triangleleft$ among stated above, we show
(Theorem \ref{T-2.3}) that if $\cM_{\alpha,p}(A,B)\triangleleft\cA_{\alpha,q}(A,B)$ holds for all $A,B>0$,
then the same holds for all $A,B\ge0$ (with $s(A)\ge s(B)$ for $\cM_{\alpha,p}=SG_{\alpha,p}$,
$\widetilde SG_{\alpha,p}$). Section~\ref{Sec-3} provides some technical computations for specified
$2\times2$ positive definite matrices as lemmas, which are repeatedly used in Section~\ref{Sec-4}.
The main Section~\ref{Sec-4} are divided into six subsections. When $\cM_{\alpha,p}$ is respectively
$\cA_{\alpha,p}$, $LE_\alpha$, $R_{\alpha,p}$, $G_{\alpha,p}$, $SG_{\alpha,p}$ and
$\widetilde SG_{\alpha,p}$, we examine the inequality $\cM_{\alpha,p}\triangleleft\cA_{\alpha,q}$ for any
choice $\triangleleft$ of the above matrix orderings. At the end of each subsection a table surveying the
results of the subsection is attached for the reader's convenience. Finally in Section~\ref{Sec-5} several
remarks are in order to supplement characteristics and motivation of our study, additional facts, open
questions, etc. The paper contains an appendix on the Lie--Trotter--Kato product formula for operator means,
which is used in Section~\ref{Sec-2}. This formula for positive semidefinite matrices is expected to be of
quite use, while it has been nowhere published in its complete form.

\section{Preliminaries}\label{Sec-2}

For each $n\in\bN$ we write $\bM_n$ for the $n\times n$ complex matrices. Let $\bM_n^+$ and
$\bM_n^{++}$ be the positive semidefinite $n\times n$ matrices and the positive definite $n\times n$
matrices, respectively. We simply write $A\ge0$ for $A\in\bM_n^+$ and $A>0$ for $A\in\bM_n^{++}$. The
$n\times n$ identity matrix is denoted by $I_n$ or simply $I$. Let $\Tr$ be the usual trace on $\bM_n$ and
$\|X\|_\infty$ be the operator norm of $X\in\bM_n$. We write ``for all $A,B\ge0$'' to mean
``for all $A,B\in\bM_n^+$ with any $n\in\bN$'', and ``for all $A,B>0$'' to mean ``for all $A,B\in\bM_n^{++}$
with any $n\in\bN$''. For $A\ge0$ there are two options of the convention of $A^0$; the one is $A^0:=I$
and the other is $A^0:=s(A)$, the support projection of $A$. In our discussions below we adopt the latter
convention. We write $A^{-1}$ for the generalized inverse of $A$, i.e., the inverse of $A$ under the
restriction to the support of $A$. Moreover, for $p<0$ we define $A^p:=(A^{-1})^{-p}$.

In this preliminary section we will explain several examples of quasi matrix means and different notions of
matrix orders, which provide the basis for our discussions in the main Section \ref{Sec-4}.

\subsection{Quasi matrix means}\label{Sec-2.1}

We first enumerate the definitions of quasi extensions of several binary matrix means for matrices, whose
order properties will be discussed in this paper. Let $0\le\alpha\le1$ and $p>0$. Let $A,B\in\bM_n^+$.

(i)\enspace
The \emph{$\alpha$-weighted arithmetic mean} is $A\triangledown_\alpha B:=(1-\alpha)A+\alpha B$. Define
the \emph{quasi $\alpha$-weighted arithmetic mean} by
\[
\cA_{\alpha,p}(A,B):=(A^p\triangledown_\alpha B^p)^{1/p}=((1-\alpha)A^p+\alpha B^p)^{1/p},
\]
which is also called the ($\alpha$-weighted) \emph{matrix power mean}.

(ii)\enspace
For $A,B>0$ the \emph{$\alpha$-weighted harmonic mean} is
$A!_\alpha B:=((1-\alpha)A^{-1}+\alpha B^{-1})^{-1}$, extended to general $A,B\ge0$ as
$A!_\alpha B:=\lim_{\eps\searrow0}(A+\eps I)!_\alpha(B+\eps I)$. Define the
\emph{quasi $\alpha$-weighted harmonic mean} by
\[
\cH_{\alpha,p}(A,B):=(A^p!_\alpha B^p)^{1/p}.
\]

(iii)\enspace
For $A,B>0$ the \emph{$\alpha$-geometric mean} is
$A\#_\alpha B:=A^{1/2}(A^{-1/2}BA^{-1/2})^\alpha A^{1/2}$, extended to $A,B\ge0$ as
$A\#_\alpha B:=\lim_{\eps\searrow0}(A+\eps I)\#_\alpha(B+\eps I)$. The \emph{geometric mean}
$\#$ ($=\#_{1/2}$) was first introduced by Pusz and Woronowicz \cite{PW}, and $\#_\alpha$, together with
the above $\triangledown_\alpha$ and $!_\alpha$, is a typical example of Kubo and Ando's operator means
\cite{KA}. Define the \emph{quasi $\alpha$-weighted geometric mean} by
\[
G_{\alpha,p}(A,B):=(A^p\#_\alpha B^p)^{1/p}.
\]

(iv)\enspace
For $A,B>0$ the \emph{spectral geometric mean} due to Fiedler and Pt\'ak \cite{FP} is
\[
F(A,B):=(A^{-1}\#B)^{1/2}A(A^{-1}\#B)^{1/2},
\]
which was extended to the $\alpha$-weighted version in \cite{LL} as
\[
F_\alpha(A,B):=(A^{-1}\#B)^\alpha A(A^{-1}\#B)^\alpha.
\]
For recent study of the weighted spectral
geometric mean, see, e.g., \cite{KL,GK,GT,GH} where $F_\alpha(A,B)$ is denoted by $A\natural_\alpha B$.
We define the \emph{quasi $\alpha$-weighted spectral geometric mean} by
\[
SG_{\alpha,p}(A,B):=F_\alpha(A^p,B^p)^{1/p}.
\]
Although the definitions of $F_\alpha$ and $SG_{\alpha,p}$ are available for any $A,B\ge0$ with $A^{-p}$
in the generalized sense, they are meaningful as far as $s(A)\ge s(B)$. Indeed, in this case we have
$SG_{\alpha,p}(A,B)=\lim_{\eps\searrow0}SG_{\alpha,p}(A+\eps I,B+\eps I)$ as verified in
Proposition \ref{P-2.2} below. When $s(A)\le s(B)$, the situation is similar by interchanging $A,B$ since
$SG_{\alpha,p}(A,B)=SG_{1-\alpha,p}(B,A)$ for $A,B>0$ (see Remark \ref{R-2.1}(2) below). In this paper
we will consider $SG_{\alpha,p}(A,B)$ for $A,B\ge0$ with $s(A)\ge s(B)$.

(v)\enspace
For $A,B>0$ another new weighted version of the spectral geometric mean was recently introduced in
\cite{DTV} as
\[
\widetilde F_\alpha(A,B):=(A^{-1}\#_\alpha B)^{1/2}A^{2(1-\alpha)}(A^{-1}\#_\alpha B)^{1/2}.
\]
We define the quasi version of this by
\[
\widetilde SG_{\alpha,p}(A,B):=\widetilde F_\alpha(A^p,B^p)^{1/p}.
\]
Note that $\widetilde F_{1/2}=F_{1/2}$ ($=F$) and so $\widetilde SG_{1/2,p}=SG_{1/2,p}$ for all $p>0$.
The definitions of $\widetilde F_\alpha$ and $\widetilde SG_{\alpha,p}$ are meaningful for any $A,B\ge0$
with $s(A)\ge s(B)$ similarly to $SG_{\alpha,p}$ in (iv); see Proposition \ref{P-2.2} below. We will use
$\widetilde SG_{\alpha,p}$ as well as $SG_{\alpha,p}$ in this situation.

(vi)\enspace
We consider one more quasi matrix mean defined for all $A,B\ge0$ by
\[
R_{\alpha,p}(A,B):=\bigl(A^{{1-\alpha\over2}p}B^{\alpha p}A^{{1-\alpha\over2}p}\bigr)^{1/p},
\]
which is called the \emph{R\'enyi mean} in \cite{DF}. Note that the trace function $\Tr\,R_{\alpha,1/z}(B,A)$
(for general $\alpha,z>0$) appears as the main component in the definition of the
\emph{$\alpha$-$z$-R\'enyi divergence} (see \cite{AD,Hi2}) in quantum information, that is the reason for
the terminology.

(vii)\enspace
For $A,B>0$ the \emph{Log-Euclidean mean} is
\[
LE_\alpha(A,B):=\exp((1-\alpha)\log A+\alpha\log B).
\]
Restricting to $0<\alpha<1$ we extend this to general $A,B\ge0$ as
\[
LE_\alpha(A,B):=P_0\exp\{(1-\alpha)P_0(\log A)P_0+\alpha P_0(\log B)P_0\},
\]
where $P_0:=s(A)\wedge s(B)$. This extended definition is justified by Proposition \ref{P-2.2} and
Theorem \ref{T-A.1} in Appendix~\ref{Sec-A}. Note that $LE_\alpha(A^p,B^p)^{1/p}=LE_\alpha(A,B)$ holds
for all $p>0$, so we have no quasi extension for $LE_\alpha$.

In the next remark we collect a few general simple facts on the quasi matrix means defined in (i)--(vii)
above.

\begin{remark}\label{R-2.1}\rm
(1)\enspace
For $\cM_{\alpha,p}\in\{\cA_{\alpha,p},\cH_{\alpha,p},G_{\alpha,p}\}$, it is obvious by definition that
$\cM_{0,p}(A,B)=A$ and $\cM_{1,p}(A,B)=B$ for all $A,B\ge0$. From our definitions in (iv) and (v) it is easy
to verify that for any $A,B\ge0$ with $s(A)\ge s(B)$ and any $p>0$,
\begin{align*}
&SG_{0,p}(A,B)=(s(B)A^ps(B))^{1/p},\qquad SG_{1,p}(A,B)=B, \\
&\widetilde SG_{0,p}(A,B)=A,\qquad\widetilde SG_{1,p}(A,B)=(B^{p/2}s(A)B^{p/2})^{1/p}.
\end{align*}
Also, note that $R_{0,1}(A,B)=(A^{p/2}s(B)A^{p/2})^{1/p}$ and $R_{1,p}(A,B)=(s(A)B^ps(A))^{1/p}$ for all
$A,B\ge0$ and $p>0$. Hence the cases $\alpha=0,1$ will be trivial (or quite simple) for our purpose, so
we will concentrate our considerations to $0<\alpha<1$ below.

(2)\enspace
Let $0<\alpha<1$ and $p>0$. It is clear that any $\cM_{\alpha,p}$ from $\cA_{\alpha,p}$, $\cH_{\alpha,p}$,
$G_{\alpha,p}$, $LE_\alpha$ is symmetric in the sense that $\cM_{\alpha,p}(A,B)=\cM_{1-\alpha,p}(B,A)$
for all $A,B\ge0$. This is the case also for $SG_{\alpha,p}$ when restricted to $A,B>0$; see \cite{LL}.
However, this is not the case for $\widetilde SG_{\alpha,p}$.

(3)\enspace
It is obvious that $\cA_{\alpha,p}$ and $\cH_{\alpha,p}$ are transformed each other by taking inverse,
that is, $\cA_{\alpha,p}(A^{-1},B^{-1})=\cH_{\alpha,p}(A,B)^{-1}$ for all $A,B>0$ and $p>0$. It is also
immediate to see that any $\cM_{\alpha,p}$ of the quasi matrix means given in (iii)--(vii) is invariant under
inverse, i.e., for any $A,B>0$,
\[
\cM_{\alpha,p}(A^{-1},B^{-1})=\cM_{\alpha,p}(A,B)^{-1}.
\]

(4)\enspace
We note that among quasi matrix means in (i)--(vii), only $\cA_{\alpha,1}$, $\cH_{\alpha,1}$ and
$G_{\alpha,1}$ for $0\le\alpha\le1$ are Kubo and Ando's operator means. Indeed, although the function
$\cA_{\alpha,p}(1,x)=(1-\alpha+\alpha x^p)^{1/p}$ is operator monotone on $(0,\infty)$ for $0\le\alpha\le1$
and $0<p\le1$, the corresponding operator mean is
\[
(A,B)\mapsto A^{1/2}\bigl\{(1-\alpha)I+\alpha(A^{-1/2}BA^{-1/2})^p\bigr\}^{1/p}A^{1/2},
\]
which is obviously different from $\cA_{\alpha,p}(A,B)$ except for $\alpha=0,1$ or $p=1$. The situation for
$\cH_{\alpha,p}$ is similar. For each $\cM$ from $G_{\alpha,p}$, $SG_{\alpha,p}$,
$\widetilde SG_{\alpha,p}$, $R_{\alpha,p},LE_\alpha$, the function $\cM(1,x)=x^\alpha$ is operator
monotone on $(0,\infty)$ for $0\le\alpha\le1$ but the corresponding operator mean is $\#_\alpha$, i.e.,
$G_{\alpha,1}$.
\end{remark}

\begin{proposition}\label{P-2.2}
Let $0<\alpha<1$, $p>0$, and $\cM$ be any of the quasi matrix means given in (i)--(vii). Let $A,B\ge0$,
with an additional assumption $s(A)\ge s(B)$ when $\cM=SG_{\alpha,p}$ or $\widetilde SG_{\alpha,p}$.
Then we have
\[
\cM(A,B)=\lim_{\eps\searrow0}\cM(A+\eps I,B+\eps I).
\]
\end{proposition}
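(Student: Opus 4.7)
The plan is to verify the limit case by case for each of the seven means, which naturally split into three tiers of difficulty.

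\emph{The easy cases $\cA_{\alpha,p}$ and $R_{\alpha,p}$.} Each defining formula is a composition of the norm-continuous maps $X\mapsto X^r$ ($r>0$), addition, and matrix multiplication on $\bM_n^+$. Since $(A+\eps I)^p\to A^p$ and $(B+\eps I)^p\to B^p$ in norm as $\eps\searrow 0$, the conclusion is immediate.

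\emph{The Kubo--Ando cases $\cH_{\alpha,p}$ and $G_{\alpha,p}$.} I would first observe that $(A+\eps I)^p\searrow A^p$ \emph{monotonically}, since the scalar identity $\frac{d}{d\eps}(A+\eps I)^p=p(A+\eps I)^{p-1}\ge 0$ is valid in the commuting functional calculus of $A+\eps I$ and $I$. Writing $\sigma$ for $!_\alpha$ or $\#_\alpha$, joint monotonicity of Kubo--Ando means gives that $(A+\eps I)^p\,\sigma\,(B+\eps I)^p$ is monotone decreasing in $\eps$ and bounded below by $A^p\,\sigma\,B^p$. For the matching upper bound, given $\delta>0$, norm convergence yields $(A+\eps I)^p\le A^p+\delta I$ for all small $\eps$, whence $(A+\eps I)^p\,\sigma\,(B+\eps I)^p\le(A^p+\delta I)\,\sigma\,(B^p+\delta I)$ by monotonicity; sending $\delta\searrow 0$ and invoking the definition $A^p\,\sigma\,B^p=\lim_{\delta\searrow 0}(A^p+\delta I)\,\sigma\,(B^p+\delta I)$ closes the sandwich. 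The norm-continuous $x^{1/p}$ finishes the case.

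\emph{The spectral cases $SG_{\alpha,p}$ and $\widetilde SG_{\alpha,p}$ under $s(A)\ge s(B)$.} The condition $s(B)\subseteq s(A)$ together with self-adjointness forces $A$ and $B$ to be simultaneously block-diagonal with respect to $s(A)\oplus\ker A$: write $A=A_1\oplus 0$ and $B=B_1\oplus 0$ with $A_1>0$ on the first block. The perturbations split blockwise as $(A_1+\eps I)\oplus\eps I$ and $(B_1+\eps I)\oplus\eps I$, so $(A_1+\eps I)^{-p}$ stays uniformly bounded near $\eps=0$. On the $s(A)$-block, the internal Kubo--Ando factor appearing inside $F_\alpha$ (resp.\ $\widetilde F_\alpha$) converges by the previous tier and the outer multiplications pass to the limit by continuity. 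On the $\ker A$-block every quantity is a scalar multiple of $I$, and the factor $A_\eps^p=\eps^p I$ annihilates it in the limit, matching the value produced by the generalized-inverse convention in the definitions.

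\emph{The Log--Euclidean case $LE_\alpha$ --- the main obstacle.} The naive approach fails because $\log(A+\eps I)\to-\infty$ on $\ker A$, so continuity of the logarithm is not available. My plan is to invoke the Lie--Trotter--Kato product formula of Appendix~\ref{Sec-A} (Theorem~\ref{T-A.1}), which for all $A,B\ge 0$ represents $LE_\alpha(A,B)$ as $\lim_{n\to\infty}(A^{(1-\alpha)/n}B^{\alpha/n})^n$ in a way automatically compatible with the $P_0$-truncated extended definition. For each fixed $n$, $(A_\eps^{(1-\alpha)/n}B_\eps^{\alpha/n})^n\to(A^{(1-\alpha)/n}B^{\alpha/n})^n$ as $\eps\searrow 0$ by the easy-case argument. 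The hard part is justifying the interchange of the two limits $\eps\searrow 0$ and $n\to\infty$, which I would close by extracting a uniform-in-$\eps$ statement (or a monotone/compactness diagonal argument) from the appendix's LTK proof.
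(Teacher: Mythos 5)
Your treatment of $\cA_{\alpha,p}$, $R_{\alpha,p}$, $\cH_{\alpha,p}$, $G_{\alpha,p}$ and the two spectral means follows essentially the same route as the paper: norm continuity for the first two, downward continuity of Kubo--Ando means for the next two (the paper simply cites this; your sandwich argument is a correct proof of it), and the block decomposition $A=A_1\oplus0$, $B=B_1\oplus0$ relative to $\ran s(A)$ for $SG_{\alpha,p}$ and $\widetilde SG_{\alpha,p}$, exactly as in the paper. One small imprecision there: the inner factor is $(A_1+\eps I)^{-p}\#(B_1+\eps I)^p$, whose two arguments move in \emph{opposite} monotone directions as $\eps\searrow0$, so your ``previous tier'' (the monotone sandwich) does not literally apply. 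What saves the step is that $A_1>0$, so $(A_1+\eps I)^{-p}\to A_1^{-p}>0$ in norm and one uses the explicit formula $X\#Y=X^{1/2}(X^{-1/2}YX^{-1/2})^{1/2}X^{1/2}$ with norm-continuous ingredients --- which is precisely the computation the paper writes out.

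The genuine gap is the $LE_\alpha$ case, and you have in effect flagged it yourself. Your plan represents $LE_\alpha(A,B)$ as an iterated limit via the Lie--Trotter--Kato formula and then interchanges $\lim_{\eps\searrow0}$ with $\lim_{n\to\infty}$, but no uniformity in $\eps$ is established, and pointwise convergence in each variable separately does not permit the interchange; ``extracting a uniform-in-$\eps$ statement from the appendix's proof'' is exactly the missing content, not a routine afterthought. (A secondary issue: the non-symmetrized product $(A^{(1-\alpha)/n}B^{\alpha/n})^n$ is not Hermitian; the appendix works with the symmetric form as in \eqref{F-A.2}.) The paper closes this case by citation: the identity $\lim_{\eps\searrow0}\exp\{(1-\alpha)\log(A+\eps I)+\alpha\log(B+\eps I)\}=P_0\exp\{(1-\alpha)P_0(\log A)P_0+\alpha P_0(\log B)P_0\}$ is the result of \cite[Sec.~4]{HP} recorded in Remark~\ref{R-A.2}, and it \emph{is} the assertion of Proposition~\ref{P-2.2} for $LE_\alpha$ given the extended definition in (vii). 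If you want a self-contained argument, the efficient route is not the product formula but the block analysis underlying Lemma~\ref{L-A.3}: writing $H_\eps:=(1-\alpha)\log(A+\eps I)+\alpha\log(B+\eps I)$ in $2\times2$ block form relative to $P_0\oplus P_0^\perp$, the $P_0$-corner converges to $(1-\alpha)P_0(\log A)P_0+\alpha P_0(\log B)P_0$, the off-diagonal corners stay bounded (the $\log\eps$ contributions are killed by $P_0(I-s(A))=P_0(I-s(B))=0$), and the $P_0^\perp$-corner tends to $-\infty$ at rate $\delta\log\eps$ because $(1-\alpha)(I-s(A))+\alpha(I-s(B))$ is bounded below by $\delta>0$ on $\ran P_0^\perp$; Lemma~\ref{L-A.3} then gives $e^{H_\eps}\to e^{Z_0}\oplus 0$. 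As written, your proof of the $LE_\alpha$ case is incomplete.
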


\begin{proof}
When $\cM=\cA_{\alpha,p}$ or $\cH_{\alpha,p}$ or $G_{\alpha,p}$, it is easy to verify the result from the
downward continuity of Kubo and Ando's operator means since $(A+\eps I)^p\searrow A^p$ and
$(B+\eps I)^p\searrow B^p$ as $\eps\searrow0$. For $\cM=R_{\alpha,p}$ the result is obvious, and for
$\cM=LE_\alpha$ it was verified in \cite[Sec.~4]{HP}; see also Remark \ref{R-A.2} in Appendix~\ref{Sec-A}.
For the remaining, we first consider the case $A>0$. Since
\begin{align*}
(A+\eps I)^{-p}\#(B+\eps I)^p
&=(A+\eps I)^{-p/2}\{(A+\eps I)^{p/2}(B+\eps I)^p(A+\eps I)^{p/2}\}^{1/2}(A+\eps I)^{-p/2} \\
&\to A^{-p/2}(A^{p/2}B^pA^{p/2})^{1/2}A^{-p/2}=A^{-p}\#B^p\quad\mbox{as $\eps\searrow0$},
\end{align*} 
we see that $SG_{\alpha,p}(A+\eps I,B+\eps I)\to SG_{\alpha,p}(A,B)$ as $\eps\searrow0$. Similarly we
have $\widetilde SG_{\alpha,p}(A+\eps I,B+\eps I)\to\widetilde SG_{\alpha,p}(A,B)$ too. For general
$A,B\ge0$ with $s(A)\ge s(B)$ take the decomposition $\bC^n=\cH_1\oplus\cH_2$ where $\cH_1$ is the
range of $s(A)$ and $\cH_2:=\cH_1^\perp$. For $\cM=SG_{\alpha,p}$ or $\widetilde SG_{\alpha,p}$
we can write
\begin{align*}
\cM(A,B)&=\cM(A_1,B_1)\oplus\cM(0I_2,0I_2), \\
\cM(A+\eps I,B+\eps I)&=\cM(A_1+\eps I_1,B_1+\eps I_1)\oplus\cM(\eps I_2,\eps I_2)
=\cM(A_1+\eps I_1,B_1+\eps I_1)\oplus\eps I_2,
\end{align*}
where $I_k:=I_{\cH_k}$, $k=1,2$. From the above shown case it follows that
$\cM(A_1+\eps I_1,B_1+\eps I_1)\to\cM(A_1,B_1)$ as $\eps\searrow0$. Hence the assertion follows.
\end{proof}

The next theorem says that the quasi matrix means in (i)--(vi) satisfy the Lie--Trotter--Kato product formula.
So we may consider the Log-Euclidean mean $LE_\alpha$ as a sort of attractor for those quasi matrix
means. Note that when $A,B>0$, the proof is much simpler without use of Theorem \ref{T-A.1} and
Remark \ref{R-A.4}.

\begin{theorem}\label{T-2.3}
Let $0<\alpha<1$, $p>0$, and $\cM_{\alpha,p}$ be any of the quasi matrix means given in (i)--(vi). Let
$A,B\ge0$, with an additional assumption $s(A)\ge s(B)$ when $\cM=SG_{\alpha,p}$ or
$\widetilde SG_{\alpha,p}$. Then we have
\[
LE_\alpha(A,B)=\lim_{p\searrow0}\cM_{\alpha,p}(A,B).
\]
\end{theorem}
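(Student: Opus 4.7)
The plan is to split the proof into the positive definite case and the positive semidefinite case, the former by a direct Taylor-expansion argument and the latter by a reduction using Proposition \ref{P-2.2} together with the appendix's Lie--Trotter--Kato formula for positive semidefinite matrices.

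For $A,B>0$, set $X:=(1-\alpha)\log A+\alpha\log B$ so that $LE_\alpha(A,B)=e^X$. The strategy is to verify, for each of the six means, the expansion $\cM_{\alpha,p}(A,B)^p=I+pX+O(p^2)$ as $p\searrow0$; then the general principle that $C_p=I+pX+O(p^2)$ implies $C_p^{1/p}\to e^X$ (via $(1/p)\log C_p\to X$) gives the result. Using $A^{\pm p}=I\pm p\log A+O(p^2)$ and $B^p=I+p\log B+O(p^2)$, this is immediate for $\cA_{\alpha,p}$ (direct substitution), for $\cH_{\alpha,p}$ (same but to the inverse), and for $R_{\alpha,p}$ (multiply the three factors $A^{(1-\alpha)p/2}$, $B^{\alpha p}$, $A^{(1-\alpha)p/2}$). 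For $G_{\alpha,p}$ I would expand $A^{-p/2}B^p A^{-p/2}=I+p(\log B-\log A)+O(p^2)$, raise to power $\alpha$, and sandwich by $A^{p/2}$. The two spectral geometric cases are essentially the same computation: first derive $A^{-p}\#B^p=I+(p/2)(\log B-\log A)+O(p^2)$ and $A^{-p}\#_\alpha B^p=I+p((\alpha-1)\log A+\alpha\log B)+O(p^2)$, and then substitute into the defining sandwiches $(\cdot)^\alpha A^p(\cdot)^\alpha$ and $(\cdot)^{1/2}A^{2(1-\alpha)p}(\cdot)^{1/2}$; in both cases the $\log A$ coefficients combine algebraically to give exactly $(1-\alpha)\log A+\alpha\log B$.

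For the general positive semidefinite case, I would decompose $\bC^n=\cH_1\oplus\cH_2$ via $P_0=s(A)\wedge s(B)$ (or $s(A)$ alone for $SG_{\alpha,p},\widetilde SG_{\alpha,p}$, using the assumption $s(A)\ge s(B)$ to secure that the $B$-support lies inside the $A$-support). Compressing to $\cH_1$, both $A_1,B_1$ are positive definite and the case above applies to yield $\cM_{\alpha,p}(A_1,B_1)\to LE_\alpha(A_1,B_1)=P_0\exp\{(1-\alpha)P_0(\log A)P_0+\alpha P_0(\log B)P_0\}\cdot\mathrm{restricted}$; on $\cH_2$ each $\cM_{\alpha,p}$ reduces, by the identities in Remark \ref{R-2.1}(1) together with $s(A+\eps I)=s(B+\eps I)=I$ handling, to the zero operator in the limit, matching the convention $LE_\alpha(A,B)|_{\cH_2}=0$. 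Here I would invoke Theorem~\ref{T-A.1} (the appendix's LTK for positive semidefinite matrices) and Remark~\ref{R-A.4} to justify that the block-decomposed limit is indeed the LTK limit on all of $\bC^n$, not just on $\cH_1$.

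The main obstacle is precisely this last step: the naive approximation $\cM_{\alpha,p}(A,B)=\lim_{\eps\searrow0}\cM_{\alpha,p}(A+\eps I,B+\eps I)$ from Proposition~\ref{P-2.2} cannot simply be interchanged with the $p\searrow0$ limit, because on $\cH_2$ one has $\cM_{\alpha,p}(\eps I_2,\eps I_2)=\eps I_2$, whose behavior as $\eps\searrow0$ and $p\searrow0$ are linked only through the spectral content of $A,B$ on $\cH_1$. Getting the limit cleanly therefore requires the refined statement in the appendix rather than a soft continuity argument; this is the subtle point for $SG_{\alpha,p}$ and $\widetilde SG_{\alpha,p}$ where the support asymmetry ($s(A)\ge s(B)$ but possibly strict) must be tracked through the sandwich structure.
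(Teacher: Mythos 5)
Your computation for $A,B>0$ is correct and is essentially the ``much simpler'' argument the paper alludes to just before its proof: for each of the six means one checks $\cM_{\alpha,p}(A,B)^p=I+pX+O(p^2)$ with $X=(1-\alpha)\log A+\alpha\log B$ and concludes via $\frac{1}{p}\log(\cdot)\to X$. The gap is in the passage to general $A,B\ge0$, which is the actual content of Theorem \ref{T-2.3}. Your reduction assumes that $\bC^n$ splits as $\cH_1\oplus\cH_2$ with $\cH_1=\ran P_0$, $P_0=s(A)\wedge s(B)$, in such a way that $A$ and $B$ are simultaneously block-diagonal and positive definite on $\cH_1$. That is false in general: $A$ and $B$ need not commute with $P_0$, so they are not reduced by $\ran P_0$, and the means do not decompose as direct sums over this splitting. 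A concrete obstruction: take $A,B$ two distinct rank-one projections in $\bM_2$; then $P_0=0$ and $LE_\alpha(A,B)=0$, yet $\cA_{\alpha,p}(A,B)=((1-\alpha)A+\alpha B)^{1/p}$ is supported on all of $\bC^2$ for every $p>0$ and converges to $0$ only because its top eigenvalue is strictly less than $1$ and is raised to the power $1/p\to\infty$. No invariant-subspace decomposition sees this; it is exactly the eigenvalue-perturbation content of Lemma \ref{L-A.3}. Even in the $SG_{\alpha,p}$ case, where $s(A)\ge s(B)$ does let you split off $\ker A$, the compression of $B$ to the range of $s(A)$ is still only positive semidefinite, so you have not reduced to the invertible case.

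Your closing appeal to Theorem \ref{T-A.1} and Remark \ref{R-A.4} points at the right tools but does not assemble them into a proof. The paper's route is: Theorem \ref{T-A.1} covers only the Kubo--Ando cases $\cA_{\alpha,p}$, $\cH_{\alpha,p}$, $G_{\alpha,p}$ (it is stated for operator means $\sigma$); formula \eqref{F-A.2} handles $R_{\alpha,p}$; and for $SG_{\alpha,p}$, $\widetilde SG_{\alpha,p}$ one needs the compressed first-order expansion \eqref{F-A.25}, namely $A^{-p}\#B^p=P_0+pL+o(p)$ with $L$ built from $P_0(\log A)P_0$ and $P_0(\log B)P_0$, which is then substituted into the sandwich and restricted to $\ran P_0$. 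That expansion is where the non-commutativity of the supports is absorbed; it is not a consequence of your positive definite computation plus a block decomposition, and without it the semidefinite case of the theorem remains unproved.
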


\begin{proof}
The convergences for $\cA_{\alpha,p}$, $\cH_{\alpha,p}$ and $G_{\alpha,p}$ are special cases of
Theorem \ref{T-A.1}, and that for $R_{\alpha,p}$ follows from \eqref{F-A.2} in Appendix~\ref{Sec-A}. For
the remaining, assume that $s(A)\ge s(B)$. Let $P_0:=s(A)\wedge s(B)=s(B)$ and $\cH_0$ be the range
of $P_0$. Note that $SG_{\alpha,p}(A,B)=\{(A^{-p}\#B^p)^\alpha A^p(A^{-p}\#B^p)\}^{1/p}$ is supported on
$\cH_0$. Set $L:={1\over2}\{-P_0(\log A)P_0+P_0(\log B)P_0\}$, where $\log A$ is defined in the
generalized sense as $\log A:=s(A)(\log A)$ and similarly for $\log B$. Then by \eqref{F-A.25} in
Remark \ref{R-A.4} one can write $A^{-p}\#B^p=P_0+pL+o(p)$ as $p\searrow0$, which implies that
$(A^{-p}\#B^p)^\alpha=P_0+p\alpha L+o(p)$. Therefore, one has
\begin{align*}
SG_{\alpha,p}(A,B)\big|_{\cH_0}
&=\bigl\{(P_0+p\alpha L+o(p))(I+p\log A+o(p))(P_0+p\alpha L+o(p))\bigr\}\big|_{\cH_0} \\
&=\bigl\{P_0+p(2\alpha L+P_0(\log A)P_0+o(p)\bigr\}\big|_{\cH_0} \\
&=\bigl\{P_0+p\bigl((1-\alpha)P_0(\log A)P_0+\alpha P_0(\log B)P_0\bigr)+o(p)\bigr\}\big|_{\cH_0}.
\end{align*}
Applying this to the Taylor expansion of $\log(1+x)$ gives
\[
{1\over p}\log SG_{\alpha,p}(A^p,B^p)\big|_{\cH_0}
=\bigl\{(1-\alpha)P_0(\log A)P_0+\alpha P_0(\log B)P_0+o(1)\bigr\}\big|_{\cH_0},
\]
which yields the assertion for $SG_{\alpha,p}$. The proof for $\widetilde SG_{\alpha,p}$ is similar, which is
omitted and left to the reader.
\end{proof}

\subsection{Matrix order relations}\label{Sec-2.2}

Here we recall different types of order relations between matrices in $\bM_n^+$. In Section \ref{Sec-4} we
will examine several of these orderings between quasi matrix means introduced in Section \ref{Sec-2.1}.
Let $X,Y\in\bM_n^+$.

(a)\enspace
We write $X\le Y$ as usual to denote the \emph{Loewner order}, i.e., the positive semidefiniteness order in
the sense that $Y-X$ is positive semidefinite.

(b)\enspace
We write $X\le_\chao Y$ and call it the \emph{chaotic order} if $s(X)\le s(Y)$ and
$s(X)(\log X)s(X)\le s(X)(\log Y)s(X)$. When $X,Y>0$, this simply reduces to $\log X\le\log Y$.

(c)\enspace
We write $X\le_\near Y$ and call it the \emph{near order} if $s(X)\le s(Y)$ and $X\#Y^{-1}\le I$,
where $Y^{-1}$ is the generalized inverse of $Y$. This ordering was introduced in \cite{DF1} and further
discussed in \cite{GH} for $X,Y>0$. Note that $\le_\near$ is not transitive (though `near' transitive); see
\cite[Theorem 2]{DF1}.

(d)\enspace
Let $\lambda(X)=(\lambda_1(X),\dots,\lambda_n(X))$ denote the eigenvalues of $X$ in decreasing
order with multiplicities. We write $X\le_\lambda Y$ to denote the \emph{entrywise eigenvalue order}, i.e.,
$\lambda_i(X)\le\lambda_i(Y)$ for each $i=1,\dots,n$.

(e)\enspace
The \emph{weak (sub)majorization} $X\prec_wY$ means that
\[
\sum_{i=1}^k\lambda_i(X)\le\sum_{i=1}^k\lambda_i(Y),\qquad1\le k\le n.
\]
This is equivalent to that $\|X\|\le\|Y\|$ for any unitarily invariant norm $\|\cdot\|$; see, e.g.,
\cite[Proposition 4.4.13]{Hi}. More details on majorization theory are found in \cite{An2}, \cite[Chap.~II]{Bh}
and \cite{MOA}.

(f)\enspace
The \emph{weak log-majorization} $X\prec_{w\log}Y$ means that
\[
\prod_{i=1}^k\lambda_i(X)\le\prod_{i=1}^k\lambda_i(Y),\qquad1\le k\le n.
\]
The \emph{log-majorization} $X\prec_{\log}Y$ means that $X\prec_{w\log}Y$ and
$\prod_{i=1}^n\lambda_i(X)=\prod_{i=1}^n\lambda_i(Y)$, i.e., $\det X=\det Y$.

(g)\enspace
We write $X\le_\Tr Y$ if the trace inequality $\Tr\,X\le\Tr\,Y$ holds.

In the next proposition we summarize the strength relationship between the matrix orderings defined in
(a)--(g) above, together with a few basic properties.
 
\begin{proposition}\label{P-2.4}
Let $X,Y\ge0$.
\begin{itemize}
\item[(1)] $X\le Y$ holds if and only $\log(tI+X)\le\log(tI+Y)$ for all $t>0$.
\item[(2)] $X\le_\chao Y$ holds if and only if $s(X)\le s(Y)$ and $X^p\#Y^{-p}\le I$ for all $p>0$ (equivalently,
for all $p\in(0,\delta)$ for some $\delta>0$), with $Y^{-p}$ in the generalized sense.
\item[(3)] We have
\begin{align*}
X\le Y&\implies X\le_\chao Y\implies X\le_\near Y\implies X\le_\lambda Y \\
&\implies X\prec_{w\log}Y\implies X\prec_wY\implies X\le_\Tr Y.
\end{align*}
\end{itemize}
\end{proposition}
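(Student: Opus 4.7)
My plan is to handle (1), (2), and (3) in that order, with the bulk of the analytic work in (2).

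For (1), the forward implication will follow from the operator monotonicity of $\log$ on $(0,\infty)$: $X\le Y$ gives $tI+X\le tI+Y$, hence $\log(tI+X)\le\log(tI+Y)$. For the converse, I would use the expansion $\log(tI+Y)-\log(tI+X)=\log(I+Y/t)-\log(I+X/t)=(Y-X)/t+O(1/t^2)$ valid as $t\to\infty$; multiplying the hypothesis by $t$ and sending $t\to\infty$ yields $Y-X\ge 0$.

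For (2), the implication ``for all $p>0$''$\Rightarrow$``for all $p\in(0,\delta)$'' is trivial, and the converse is the Ando--Hiai inequality ($A\#B\le I\Rightarrow A^r\#B^r\le I$ for $r\ge1$) applied to $A=X^p$, $B=Y^{-p}$, which propagates the inequality from a small $p$ to all larger ones. The direction ``$X^p\#Y^{-p}\le I$ on $(0,\delta)$''$\Rightarrow X\le_\chao Y$ I will prove via the Lie--Trotter product formula (Theorem~\ref{T-2.3}): since $M\le I$ is equivalent to $M^s\le I$ for any $s>0$ when $M\ge0$, the hypothesis gives $(X^p\#Y^{-p})^{1/p}\le I$, whose limit as $p\searrow0$ is $\exp(\tfrac12(\log X-\log Y))$; this limit is therefore $\le I$, which is equivalent to $\log X\le\log Y$. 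The reverse direction $X\le_\chao Y\Rightarrow X^p\#Y^{-p}\le I$ is the hardest part of the proposition; I would approach it via the first-order expansion $X^p\#Y^{-p}=I+\tfrac{p}{2}(\log X-\log Y)+O(p^2)$ after perturbing $Y$ to ensure strict chaotic domination $\log X<\log Y_\delta$, so that the negative leading term dominates the $O(p^2)$ correction on a small interval, then invoke Ando--Hiai to extend to all larger $p$, and finally let the perturbation vanish using downward continuity. The support condition $s(X)\le s(Y)$ is handled by restricting to the common support.

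For (3) I will verify each link in turn. $X\le Y$ implies $s(X)\le s(Y)$ and $\log X\le\log Y$ (by operator monotonicity of $\log$), hence $X\le_\chao Y$. The step $X\le_\chao Y\Rightarrow X\le_\near Y$ is (2) at $p=1$. For $X\le_\near Y\Rightarrow X\le_\lambda Y$, set $W:=X\#Y^{-1}\le I$; the defining identity $WX^{-1}W=Y^{-1}$ rearranges to $Y=W^{-1}XW^{-1}$, which shares its eigenvalues with $X^{1/2}W^{-2}X^{1/2}$ (cyclic rearrangement of $\lambda_i(AB)=\lambda_i(BA)$); since $W\le I$ forces $W^{-2}\ge I$, we obtain $X^{1/2}W^{-2}X^{1/2}\ge X$, hence $\lambda_i(Y)\ge\lambda_i(X)$. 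The remaining chain $X\le_\lambda Y\Rightarrow X\prec_{w\log}Y\Rightarrow X\prec_wY\Rightarrow X\le_\Tr Y$ is routine majorization bookkeeping: the first is immediate by multiplying the entrywise inequalities, the second is the standard fact that weak log-majorization implies weak majorization (via convexity and monotonicity of $\exp$), and the third is the $k=n$ case of $\prec_w$.

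The genuine obstacle is the forward implication $X\le_\chao Y\Rightarrow X^p\#Y^{-p}\le I$ in (2), which is a Furuta/Ando-type characterization of the chaotic order and requires combining the Lie--Trotter expansion with Ando--Hiai propagation and a perturbation argument; everything else in the proposition is either a direct limit/monotonicity argument or standard majorization.
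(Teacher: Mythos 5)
Your overall architecture is sound and close to the paper's: part (1) is handled identically (operator monotonicity of $\log$ one way, the first-order expansion of $\log(I+\eps X)$ the other way), and for part (2) both you and the paper combine the Lie--Trotter--Kato limit $\lim_{q\searrow0}(X^q\#Y^{-q})^{2/q}=P\exp\{P(\log X)P-P(\log Y)P\}$ (Theorem~\ref{T-A.1}) with an Ando--Hiai-type propagation in $p$. The difference is in how the propagation is organized: the paper invokes the log-majorization monotonicity $(X^p\#Y^{-p})^{1/p}\prec_{\log}(X^q\#Y^{-q})^{1/q}$ for $0<q\le p$ from \cite{AH}, so that the hard implication $X\le_\chao Y\Rightarrow X^p\#Y^{-p}\le I$ falls out in one line by letting $q\searrow0$ and comparing top eigenvalues, whereas you propose a perturbation ($Y\mapsto e^\delta Y$), a first-order expansion valid on a small $p$-interval, Ando--Hiai to reach all larger $p$, and a limit $\delta\searrow0$. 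Your route works (and by homogeneity $X^p\#(e^{\delta}Y)^{-p}=e^{-\delta p/2}(X^p\#Y^{-p})$, so the final limit is painless), but it is longer and you must justify the expansion $X^p\#Y^{-p}=P_0+{p\over2}\{P_0(\log X)P_0-P_0(\log Y)P_0\}+o(p)$ in the singular case, which is exactly \eqref{F-A.25}. In (3), your Riccati-based proof of $\le_\near\Rightarrow\le_\lambda$ (writing $Y=W^{-1}XW^{-1}$ with $W:=X\#Y^{-1}\le I$ and comparing eigenvalues of $X^{1/2}W^{-2}X^{1/2}$ with those of $X$) is a nice self-contained replacement for the paper's citation of \cite[Theorem 2.4]{GH}.

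One step needs repair. For the first implication $X\le Y\Rightarrow X\le_\chao Y$ you argue that ``$\log X\le\log Y$ by operator monotonicity of $\log$,'' but this is only literally valid for $X,Y>0$: when $X$ is singular the chaotic order involves the compression $s(X)(\log X)s(X)$ of the generalized logarithm, and one cannot apply the operator monotonicity of $\log$ to $X$ and $Y$ directly. The paper avoids this by routing through (2): $X\le Y$ gives $X^p\le Y^p$ for $p\in(0,1)$ by L\"owner--Heinz, hence $X^p\#Y^{-p}\le Y^p\#Y^{-p}=s(Y)\le I$, and (2) then yields $X\le_\chao Y$. Alternatively you could approximate via $X+\eps I\le Y+\eps I$ and pass to the limit, but that requires the stability of $\le_\chao$ under limits (Lemma~\ref{L-2.5}), whose proof itself rests on (2); either way, some argument beyond bare operator monotonicity is needed in the positive semidefinite case.
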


\begin{proof}
(1)\enspace
The `only if' is obvious since $\log x$ ($x>0$) is operator monotone. Conversely, assume that
$\log(tI+X)\le\log(tI+Y)$ for all $t>0$. Since
\[
\eps X+o(\eps)=\log(I+\eps X)\le\log(I+\eps Y)=\eps Y+o(\eps)\quad\mbox{as $\eps\searrow0$},
\]
we have $X\le Y$.

(2)\enspace
This equivalence was shown in \cite[Theorem 1]{An1} when $X,Y>0$. Assuming that $P:=s(X)\le s(Y)$,
we may show that $P(\log X)P\le P(\log Y)P$ if and only if $X^p\#Y^{-p}\le I$ for all $p>0$ (equivalently, for
all $p\in(0,\delta)$). Recall \cite[Theorem 2.1]{AH} that
\begin{align}\label{F-2.1}
(X^p\#Y^{-p})^{1/p}\prec_{\log}(X^q\#Y^{-q})^{1/q},\qquad0<q\le p.
\end{align}
Moreover, by Theorem \ref{T-A.1},
\begin{align}\label{F-2.2}
\lim_{q\searrow0}(X^q\#Y^{-q})^{2/q}=P\exp\{P(\log X)P-P(\log Y)P\}.
\end{align}
If $X^p\#Y^{-p}\le I$ for all $p\in(0,\delta)$, then \eqref{F-2.2} gives $P\exp\{P(\log X)P-P(\log Y)P\}\le I$ so
that $P(\log X)P-P(\log Y)P\le0$. Conversely, if $P(\log X)P\le P(\log Y)P$, then it follows from \eqref{F-2.1}
and \eqref{F-2.2} that
\[
(X^p\#Y^{-p})^{2/p}\prec_{\log}P\exp\{P(\log X)P-P(\log Y)P\}\le P\le I
\]
so that $X^p\#Y^{-p}\le I$ for all $p>0$.

(3)\enspace
Assume that $X\le Y$. Then $s(X)\le s(Y)$ and $X^p\#Y^{-p}\le Y^p\#Y^{-p}=s(Y)\le I$ for all $p\in(0,1)$.
Hence the first implication follows from (2). The second implication is seen from (2) as well. The third was
proved in \cite[Theorem 2.4]{GH}, whose proof is valid in the present setting. The remaining implications are
obvious or well known; see, e.g., \cite[Proposition 4.1.6]{Hi} for the penultimate implication.
\end{proof}

\begin{lemma}\label{L-2.5}
Let $X,Y,X_n,Y_n,\ge0$ for $n\in\bN$ be given such that $X_n\to X$ and $Y_n\to Y$. For each
$\triangleleft\in\{\le,\le_\lambda,\prec_w,\prec_{w\log},\le_\Tr\}$, if $X_n\triangleleft Y_n$ for all $n\in\bN$,
then $X\triangleleft Y$. This holds for $\triangleleft=\,\le_\chao$, $\le_\near$ as well under
the additional assumption $s(X)\le s(Y)$.
\end{lemma}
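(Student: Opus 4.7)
The plan is to split the seven orderings into two groups. For the first group $\le$, $\le_\lambda$, $\prec_w$, $\prec_{w\log}$, $\le_\Tr$, each ordering is a closed inequality in continuous functions of the matrix entries, so closure under limits is routine. For the second group $\le_\chao$, $\le_\near$, the situation is more delicate; I would reformulate via Proposition~\ref{P-2.4}(2) and exploit the support hypothesis $s(X)\le s(Y)$ in order to handle the generalized inverse.

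For the first group: $\bM_n^+$ is closed in $\bM_n$, so $Y_n-X_n\ge 0$ passes to $Y-X\ge 0$, handling $\le$. The ordered eigenvalues $\lambda_i(\cdot)$ depend continuously on a Hermitian matrix (Weyl's perturbation theorem), so the componentwise, partial-sum and partial-product inequalities in $\lambda_i(X_n)\le\lambda_i(Y_n)$ are preserved in the limit, handling $\le_\lambda$, $\prec_w$, $\prec_{w\log}$. Finally $\Tr$ is continuous, giving $\le_\Tr$.

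For $\le_\chao$ (and $\le_\near$) under $s(X)\le s(Y)$, Proposition~\ref{P-2.4}(2) restates $X_n\le_\chao Y_n$ as $s(X_n)\le s(Y_n)$ together with $X_n^p\# Y_n^{-p}\le I$ for every $p>0$, the near case corresponding to the single choice $p=1$. Fix $p>0$ and set $P:=s(Y)$, so that $X^p$ and $Y^p$ both live on $\ran P$ and the generalized inverse $Y^{-p}$ coincides with $(Y^p|_{\ran P})^{-1}$ on $\ran P$. Applying the transformer inequality $P(A\# B)P\ge (PAP)\#(PBP)$ for Kubo--Ando means to $X_n^p\# Y_n^{-p}\le I$ yields $(PX_n^p P)\#(PY_n^{-p}P)\le P$ on $\ran P$, while $PX_n^p P\to X^p|_{\ran P}$ is immediate from $s(X)\le P$.

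The main obstacle will be that $Y_n^{-p}$ can blow up in the direction of $I-P$, so $PY_n^{-p}P$ need not converge to $Y^{-p}|_{\ran P}$. I would resolve this by a Schur-complement analysis of $Y_n^p$ with respect to the splitting $\ran P\oplus\ran(I-P)$: since $Y_n^p\to Y^p$ is supported on $P$ with strictly positive $P$-block, the formula $(A-BC^{-1}B^*)^{-1}$ for the $(1,1)$-block of the inverse gives $PY_n^{-p}P\ge (PY_n^p P)^{-1}$, and any subsequential Loewner limit $M$ of $PY_n^{-p}P$ therefore satisfies $M\ge Y^{-p}|_{\ran P}$. Combining this with monotonicity of $\#$ in its second argument and joint continuity of $\#$ on strictly positive operators on $\ran P$, passage to a convergent subsequence produces $(X^p|_{\ran P})\#(Y^{-p}|_{\ran P})\le (X^p|_{\ran P})\# M\le P|_{\ran P}$, i.e., $X^p\# Y^{-p}\le I$. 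Applying this for every $p>0$ (respectively, for $p=1$) yields $X\le_\chao Y$ (respectively, $X\le_\near Y$) via Proposition~\ref{P-2.4}(2).
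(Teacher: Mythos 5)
Your treatment of the first group of orderings is fine and coincides with the paper's (continuity of the ordered eigenvalues and of the trace). For $\le_\chao$ and $\le_\near$ you correctly reduce matters to Proposition \ref{P-2.4}(2), but the route you then take contains a step that can fail. You propose to pass to a subsequential Loewner limit $M$ of $PY_n^{-p}P$ with $P=s(Y)$; however, nothing in the hypotheses keeps $PY_n^{-p}P$ bounded, so such a limit need not exist. For instance, take $X_n=X=0$ (so that $X_n\le_\chao Y_n$ and $s(X)\le s(Y)$ hold trivially) and
\[
Y_n=\begin{bmatrix}1&a_n\\a_n&b_n\end{bmatrix},\qquad b_n\searrow0,\quad a_n=\sqrt{b_n}\,(1-\eps_n),\quad\eps_n\searrow0;
\]
then $Y_n\to Y=\diag(1,0)$ while $PY_n^{-1}P=b_n/(b_n-a_n^2)=1/(2\eps_n-\eps_n^2)\to\infty$, and your argument produces no $M$ to work with. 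A second, related delicacy: the Schur-complement bound $PY_n^{-p}P\ge(PY_n^pP)^{-1}$ is a Choi-type inequality valid for invertible $Y_n$, but the lemma allows singular $Y_n$, and for generalized inverses the inequality can reverse (e.g.\ $A=\diag(1,0)$ and $P$ the projection onto the span of $(1,1)/\sqrt2$ give $PA^{-1}P=\tfrac12P<2P=(PAP)^{-1}$). Both problems are repairable --- one can discard $M$ altogether and work with the deterministic lower bound $(PY_n^pP)^{-1}$, which does converge to $Y^{-p}|_{\ran P}$, together with monotonicity of $\#$ --- but as written the proof is incomplete.

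The paper avoids all of this with a shorter device: from $X_n^p\#Y_n^{-p}\le I$ it conjugates by $Y_n^{p/2}$ to obtain $(Y_n^{p/2}X_n^pY_n^{p/2})^{1/2}=Y_n^{p/2}(X_n^p\#Y_n^{-p})Y_n^{p/2}\le Y_n^p$, an inequality involving only continuous, inverse-free expressions in $X_n,Y_n$; letting $n\to\infty$ gives $(Y^{p/2}X^pY^{p/2})^{1/2}\le Y^p$ for all $p>0$, and the support assumption $s(X)\le s(Y)$ converts this back into $X^p\#Y^{-p}\le s(Y)\le I$. Adopting that conjugation trick makes the whole issue of unbounded generalized inverses disappear.
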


\begin{proof}
The first statement is immediately seen since $\lambda(X_n)\to\lambda(X)$ and
$\lambda(Y_n)\to\lambda(Y)$. For the latter, assume that $s(X)\le s(Y)$ and $X_n\le_\chao Y_n$ for all $n$.
By Proposition \ref{P-2.4}(2) one has $s(X_n)\le s(Y_n)$ and $X_n^p\#Y_n^{-p}\le I$, $p>0$. For any $p>0$,
since
\[
(Y_n^{p/2}X_n^pY_n^{p/2})^{1/2}=Y_n^{p/2}(X_n^p\#Y_n^{-p})Y_n^{p/2}\le Y_n^p,
\]
letting $n\to\infty$ gives $(Y^{p/2}X^pY^{p/2})^{1/2}\le Y^p$ for all $p>0$. Thanks to $s(X)\le s(Y)$, this in turn
implies that $X^p\#Y^{-p}\le s(Y)\le I$. Hence $X\le_\chao Y$ follows by Proposition \ref{P-2.4}(2) again. The
proof for $\le_\near$ is similar.
\end{proof}

The next theorem is of quite importance from the viewpoint of the scope of our study.

\begin{theorem}\label{T-2.6}
Let $0<\alpha<1$ and $p,q>0$. Let $\cM_{\alpha,p}$ be any of the quasi matrix means in (i)--(vii) of
Section \ref{Sec-2.1}, and $\triangleleft$ be any of $\le$, $\le_\chao$, $\le_\near$, $\le_\lambda$, $\prec_w$,
$\prec_{w\log}$, $\le_\Tr$. If $\cM_{\alpha,p}(A,B)\triangleleft\cA_{\alpha,q}(A,B)$ (resp.,
$\cH_{\alpha,q}(A,B)\triangleleft\cM_{\alpha,p}(A,B)$) holds for all $A,B>0$, then the same holds for all
$A,B\ge0$, where $s(A)\ge s(B)$ is assumed for $\cM_{\alpha,p}=SG_{\alpha,p}$, $\widetilde SG_{\alpha,p}$.
\end{theorem}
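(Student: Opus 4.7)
The strategy is a perturbation–and–limit argument. For any $A,B\ge0$ (with $s(A)\ge s(B)$ in the spectral cases), put $A_\eps:=A+\eps I$ and $B_\eps:=B+\eps I$; these lie in $\bM_n^{++}$, so the hypothesis gives $\cM_{\alpha,p}(A_\eps,B_\eps)\triangleleft\cA_{\alpha,q}(A_\eps,B_\eps)$ for every $\eps>0$. Proposition \ref{P-2.2} yields $\cM_{\alpha,p}(A_\eps,B_\eps)\to\cM_{\alpha,p}(A,B)$ and $\cA_{\alpha,q}(A_\eps,B_\eps)\to\cA_{\alpha,q}(A,B)$ as $\eps\searrow0$. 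Thus it suffices to check that each ordering $\triangleleft$ under consideration is preserved under taking limits of the two sides; this is precisely the content of Lemma \ref{L-2.5}. The harmonic-side assertion $\cH_{\alpha,q}(A,B)\triangleleft\cM_{\alpha,p}(A,B)$ is handled in the same way, again invoking Proposition \ref{P-2.2} for $\cH_{\alpha,q}$.

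For $\triangleleft\in\{\le,\le_\lambda,\prec_w,\prec_{w\log},\le_\Tr\}$ the closure under norm limits is unconditional, so nothing further is required. The only genuine obstacle is the additional support hypothesis that Lemma \ref{L-2.5} imposes when $\triangleleft$ is $\le_\chao$ or $\le_\near$: one must verify $s(X)\le s(Y)$ where $X$ is the limiting left-hand side and $Y$ the limiting right-hand side. I expect this step to be where the case analysis sits, and it is what I would treat most carefully.

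The required support comparison reduces to a case-by-case inspection of the mean $\cM_{\alpha,p}$. For the arithmetic direction $\cM_{\alpha,p}\triangleleft\cA_{\alpha,q}$ we have $s(\cA_{\alpha,q}(A,B))=s(A)\vee s(B)$ (since $0<\alpha<1$ makes both summands in $(1-\alpha)A^q+\alpha B^q$ nonnegative with independent ranges). On the other hand, for $\cM_{\alpha,p}\in\{\cA_{\alpha,p},\cH_{\alpha,p},G_{\alpha,p},R_{\alpha,p},LE_\alpha\}$ the support of $\cM_{\alpha,p}(A,B)$ is either $s(A)\vee s(B)$ or $s(A)\wedge s(B)$, in either event dominated by $s(A)\vee s(B)$; and for $SG_{\alpha,p}$, $\widetilde SG_{\alpha,p}$ the assumption $s(A)\ge s(B)$ forces the support to be $s(B)$. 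Hence $s(\cM_{\alpha,p}(A,B))\le s(\cA_{\alpha,q}(A,B))$ in every case. For the harmonic direction $\cH_{\alpha,q}\triangleleft\cM_{\alpha,p}$ the roles swap: $s(\cH_{\alpha,q}(A,B))=s(A)\wedge s(B)$, and one checks similarly that every $\cM_{\alpha,p}(A,B)$ on the list has support containing $s(A)\wedge s(B)$, so again Lemma \ref{L-2.5} applies.

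Once the support inequalities are in hand, passing $\eps\searrow0$ in the perturbed inequality via Lemma \ref{L-2.5} produces $\cM_{\alpha,p}(A,B)\triangleleft\cA_{\alpha,q}(A,B)$ (or the harmonic analogue) for all admissible $A,B\ge0$, completing the proof. The whole argument is ``soft'' in that it invokes only the continuity statement of Proposition \ref{P-2.2}, the closure statement of Lemma \ref{L-2.5}, and elementary bookkeeping of supports; the only work is the support check, which I would present as a short enumeration keyed to the list (i)--(vii).
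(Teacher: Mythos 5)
Your proposal is correct and follows essentially the same route as the paper: perturb to $A+\eps I$, $B+\eps I$, invoke Proposition \ref{P-2.2} for convergence and Lemma \ref{L-2.5} for closure of each ordering under limits, and verify the support inequalities $s(\cM_{\alpha,p}(A,B))\le s(A)\vee s(B)=s(\cA_{\alpha,q}(A,B))$ (resp.\ $s(\cM_{\alpha,p}(A,B))\ge s(A)\wedge s(B)=s(\cH_{\alpha,q}(A,B))$) needed for $\le_\chao$ and $\le_\near$. No substantive difference from the paper's argument.
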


\begin{proof}
Assume that $\cM_{\alpha,p}(A,B)\triangleleft\cA_{\alpha,q}(A,B)$ for all $A,B>0$. Let $A,B\ge0$ be arbitrary.
The assumption implies that $\cM_{\alpha,p}(A+\eps I,B+\eps I)\triangleleft\cA_{\alpha,q}(A+\eps I,B+\eps I)$.
Now $s(A)\ge s(B)$ is assumed when $\cM_{\alpha,p}=SG_{\alpha,p}$ or $\widetilde SG_{\alpha,p}$. Then
by Proposition \ref{P-2.2} we have $\cM_{\alpha,p}(A+\eps I,B+\eps I)\to\cM_{\alpha,p}(A,B)$ as well as
$\cA_{\alpha,q}(A+\eps I,B+\eps I)\to\cA_{\alpha,q}(A,B)$ as $\eps\searrow0$. Moreover, it is clear that
$s(\cM_{\alpha,p}(A,B))\le s(A)\vee s(B)=s(\cA_{\alpha,q}(A,B))$. Hence we have
$\cM_{\alpha,p}(A,B)\triangleleft\cA_{\alpha,q}(A,B)$ by Lemma \ref{L-2.5}. The proof is similar for
$\cH_{\alpha,q}(A,B)\triangleleft\cM_{\alpha,p}(A,B)$ in view of
$s(\cM_{\alpha,p}(A,B))\ge s(A)\wedge s(B)=s(\cH_{\alpha,q}(A,B))$.
\end{proof}

Concerning the quasi matrix means and matrix orderings mentioned above, some general facts relevant to
our study are in order.

\begin{remark}\label{R-2.7}\rm
(1)\enspace
Let $0<\alpha,\beta<1$ with $\alpha\ne\beta$, and $p,q>0$ be arbitrary. Let
\begin{align*}
\cM_\alpha&\in\{\cA_{\alpha,p},\cH_{\alpha,p},G_{\alpha,p},SG_{\alpha,p},
\widetilde SG_{\alpha,p},R_{\alpha,p},LE_\alpha\}, \\
\cN_\beta&\in\{\cA_{\beta,q},\cH_{\beta,q},G_{\beta,q},SG_{\beta,q},
\widetilde SG_{\beta,q},R_{\beta,q},LE_\beta\}.
\end{align*}
Let $f_\alpha(x):=\cM_\alpha(1,x)$ and $g_\beta(x):=\cM_\beta(1,x)$ for scalars $A=x>0$ and $B =1$.
Since $f_\alpha(1)=g_\beta(1)=1$, $f'_\alpha(1)=\alpha$ and $g'_\beta(1)=\beta$, one has
$f_\alpha(1)-g_\beta(1)=0$ and $f'_\alpha(1)-g'_\beta(1)\ne0$. Therefore, the sign of
$f_\alpha(x)-g_\beta(x)$ is different between the left and the right of $x=1$. This implies that
$\cM_\alpha(A,B)$ and $\cN_\beta(A,B)$ are not definitively comparable with respect to any ordering. So
we may only discuss inequalities between two of the above quasi matrix means under the same $\alpha$.

(2)\enspace
For any $\alpha\in(0,1)$ it is clear that
$G_{\alpha,p}(a,b)=SG_{\alpha,p}(a,b)=\widetilde SG(a,b)=R_{\alpha,p}(a,b)=LE_\alpha(a,b)=
a^{1-\alpha}b^\alpha$ for scalars $a,b>0$, independently of $p>0$. For this reason we refer to
$G_{\alpha,p},SG_{\alpha,p},\widetilde SG,R_{\alpha,p},LE_\alpha$ as \emph{quasi-geometric type}
matrix means. Also it is immediate to see that
\[
\cH_{\alpha,q}(a,b)=((1-\alpha)a^{-q}+\alpha b^{-q})^{-1/q}\le a^{1-\alpha}b^\alpha
\le\cA_{\alpha,p}(a,b)=((1-\alpha)a^q+\alpha b^q)^{1/q}
\]
for scalars $a,b>0$, and moreover $\cH_{\alpha,q}(a,b)\nearrow a^{1-\alpha}b^\alpha$ and
$\cA_{\alpha,p}(a,b)\searrow a^{1-\alpha}b^\alpha$ as $p\searrow0$. Therefore, for each quasi-geometric
type mean $\cM_{\alpha,p}$ and for each ordering $\triangleleft$ mentioned above, we may consider
possible inequalities between $\cM_{\alpha,p},\cH_{\alpha,q}$ and between
$\cM_{\alpha,p},\cA_{\alpha,q}$ in the respective directions $\cH_{\alpha,q}\triangleleft\cM_{\alpha,p}$ and
$\cM_{\alpha,p}\triangleleft\cA_{\alpha,q}$ only.

(3)\enspace
Let $0<\alpha<1$, $p>0$ and $\cM_{\alpha,p}$ be any quasi-geometric type mean as in Theorem \ref{T-2.6}.
For any $X,Y>0$ the following are straightforward by definitions:
\begin{align*}
X\le Y&\iff Y^{-1}\le X^{-1},\\
X\le_\chao Y&\iff Y^{-1}\le_\chao X^{-1}, \\
X\le_\near Y&\iff Y^{-1}\le_\near X^{-1},\\
X\le_\lambda Y&\iff Y^{-1}\le_\lambda X^{-1}.
\end{align*}
From these, Remark \ref{R-2.1}(3) and Theorem \ref{T-2.6} together we see that for any
$\triangleleft\in\{\le,\le_\chao,\le_\near,\le_\lambda\}$, $\cH_{\alpha,q}(A,B)\triangleleft\cM_{\alpha,p}(A,B)$
holds for all $A,B>0$ if and only if $\cM_{\alpha,p}(A,B)\triangleleft\cA_{\alpha,q}(A,B)$ holds for all
$A,B>0$. So the characterizations of inequalities between $\cH_{\alpha,q},\cM_{\alpha,p}$ are to large
extent reduced to those between $\cM_{\alpha,p},\cA_{\alpha,q}$.
\end{remark}

\section{Some technical computations for $2\times2$ matrices}\label{Sec-3}

In this section we perform some technical computations for specified $2\times2$ matrices, which will be of
quite use in the next main section. Below we will often treat $2\times2$ matrices with parameter $\theta\in\bR$,
given in the approximate form up to $o(\theta^2)$ as
\begin{align}\label{F-3.1}
X_\theta:=\begin{bmatrix}a+\theta^2x_{11}&\theta x_{12}\\\theta x_{12}&b+\theta^2x_{22}\end{bmatrix}
+o(\theta^2)\quad\mbox{as $\theta\to0$},
\end{align}
where $a,b,x_{11},x_{22},x_{12}\in\bR$, and little-o notation $o(\theta^2)$ means that
${o(\theta^2)\over\theta^2}\to0$ as $\theta\to0$. In the first lemma we explain the way of computing the
functional calculus of $X_\theta$ based on Taylor's theorem and Daleckii and Krein's derivative formulas
(see, e.g., \cite[Sec.~2.3]{Hi}).

\begin{lemma}\label{L-3.1}
Let $X_\theta$ be given in \eqref{F-3.1} with $a\ne b$ and  $f$ be a $C^2$-function on an interval containing
$a,b$. Then the functional calculus $f(X_\theta)$ is given in the approximate form as
\begin{align}\label{F-3.2}
f(X_\theta)=\begin{bmatrix}f(a)+\theta^2y_{11}&\theta y_{12}\\
\theta y_{12}&f(b)+\theta^2y_{22}\end{bmatrix}+o(\theta^2)\quad\mbox{as $\theta\to0$},
\end{align}
where
\begin{align}\label{F-3.3}
\begin{cases}
y_{11}:=f^{[1]}(a,a)x_{11}+f^{[2]}(a,a,b)x_{12}^2=f'(a)x_{11}+{f'(a)(a-b)-f(a)+f(b)\over(a-b)^2}\,x_{12}^2, \\
y_{22}:=f^{[1]}(b,b)x_{22}+f^{[2]}(a,b,b)x_{12}^2=f'(b)x_{22}+{f(a)-f(b)-f'(b)(a-b)\over(a-b)^2}\,x_{12}^2, \\
y_{12}:= f^{[1]}(a,b)x_{12}={f(a)-f(b)\over a-b}\,x_{12}.\end{cases}
\end{align}
In the above, $f^{[1]}(a_1,a_2)$ is the first divided difference of $f$ and $f^{[2]}(a_1,a_2,a_3)$ is the second one
(see \cite[Sec.~2.2]{Hi}).
\end{lemma}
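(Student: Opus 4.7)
My plan is to view $X_\theta$ as a perturbation of the diagonal matrix $D:=\diag(a,b)$ and apply a second-order Fréchet/Taylor expansion for $C^2$ matrix functions. Setting
\[
E_1:=\begin{bmatrix}0&x_{12}\\x_{12}&0\end{bmatrix},\qquad E_2:=\diag(x_{11},x_{22}),
\]
I can write $X_\theta=D+V_\theta$ with $V_\theta=\theta E_1+\theta^2E_2+R(\theta)$, $\|R(\theta)\|=o(\theta^2)$, so $\|V_\theta\|=O(\theta)$. The $C^2$ matrix Taylor expansion at $D$ then gives
\[
f(X_\theta)=f(D)+Df(D)[V_\theta]+\tfrac12 D^2f(D)[V_\theta,V_\theta]+o(\|V_\theta\|^2).
\]
Substituting $V_\theta$ and tracking orders, the remainder $R(\theta)$ contributes only $o(\theta^2)$ (by linearity of $Df(D)$ and boundedness of $D^2f(D)$ as a bilinear form), the cross term $D^2f(D)[\theta E_1,\theta^2E_2]$ is of order $\theta^3=o(\theta^2)$, and the Taylor remainder is also $o(\theta^2)$. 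Hence
\[
f(X_\theta)=f(D)+\theta\,Df(D)[E_1]+\theta^2\bigl(Df(D)[E_2]+\tfrac12 D^2f(D)[E_1,E_1]\bigr)+o(\theta^2).
\]

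Next I would evaluate each Fréchet derivative using the Daleckii--Krein formulas. Since $D$ is diagonal in the standard basis with eigenvalues $a_1=a$, $a_2=b$, the $(i,j)$-entry of $Df(D)[V]$ is $f^{[1]}(a_i,a_j)V_{ij}$, and the $(i,j)$-entry of $D^2f(D)[V,V]$ is $2\sum_k f^{[2]}(a_i,a_k,a_j)V_{ik}V_{kj}$. Applied to the diagonal $E_2$ this gives $Df(D)[E_2]=\diag(f'(a)x_{11},f'(b)x_{22})$ (using $f^{[1]}(a,a)=f'(a)$). Applied to $E_1$ one finds $Df(D)[E_1]$ has vanishing diagonal and off-diagonal entry $f^{[1]}(a,b)x_{12}$. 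Finally, because $(E_1)_{11}=(E_1)_{22}=0$, the $(1,2)$-entry of $D^2f(D)[E_1,E_1]$ is a sum of products $(E_1)_{1k}(E_1)_{k2}$ that vanish for both $k=1,2$, while the diagonal entries pick up only the single nonzero contribution $k=2$ (resp.\ $k=1$) and yield $2f^{[2]}(a,a,b)x_{12}^2$ and $2f^{[2]}(a,b,b)x_{12}^2$ after invoking symmetry of the divided difference.

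Gathering these entry by entry produces exactly the expressions for $y_{11},y_{22},y_{12}$ in \eqref{F-3.3}, and the alternative forms in terms of $f(a),f(b),f'(a),f'(b)$ are immediate from
\[
f^{[1]}(a,b)=\frac{f(a)-f(b)}{a-b},\qquad
f^{[2]}(a_1,a_2,a_3)=\frac{f^{[1]}(a_1,a_2)-f^{[1]}(a_2,a_3)}{a_1-a_3}.
\]
The only subtle point is the order bookkeeping: the quadratic Fréchet contribution from $E_1$ enters at order $\theta^2$ despite $E_1$ being of order $\theta$, which is precisely why the second divided differences appear in $y_{11}$ and $y_{22}$ but not in $y_{12}$. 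The assumption $a\ne b$ is what lets us use $f^{[2]}$ at three distinct (or doubly confluent) arguments without passing to the $C^3$ case of three coinciding points.
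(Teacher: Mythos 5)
Your proof is correct and follows essentially the same route as the paper's: both decompose $X_\theta=\diag(a,b)+\theta H+\theta^2K+o(\theta^2)$, apply the second-order Taylor expansion of the functional calculus at the diagonal point, and evaluate the first and second Fr\'echet derivatives via the Daleckii--Krein formulas (Schur product with first divided differences, and the $f^{[2]}$-sum for the second derivative). Your order bookkeeping and the entrywise evaluation of $D^2f(D)[E_1,E_1]$ match the paper's computation exactly, so there is nothing to add.
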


\begin{proof}
First, note that since $X_\theta\to X_0=\begin{bmatrix}a&0\\0&b\end{bmatrix}$ as $\theta\to0$, $f(X_\theta)$
is well defined for any $\theta$ near $0$. We write $X_\theta=X_0+\theta H+\theta^2 K+o(\theta^2)$ as
$\theta\to0$ where $H:=\begin{bmatrix}0&x_{12}\\x_{12}&0\end{bmatrix}$ and
$K:=\begin{bmatrix}x_{11}&0\\0&x_{22}\end{bmatrix}$. By Taylor's theorem for the functional calculus $f(X)$
at $X_0$ (see \cite[Theorem 2.3.1]{Hi}) one has
\[
f(X_\theta)=f(X_0)+Df(X_0)(\theta H+\theta^2 K)+{1\over2}D^2f(\theta H,\theta H)+o(\theta^2)
\quad\mbox{as $\theta\to0$},
\]
where $Df(X_0)(\cdot)$ and $D^2f(X_0)(\cdot,\cdot)$ are the first and the second Fr\'echet derivatives of $f(X)$
at $X_0$. Thanks to Daleckii and Krein's derivative formulas (see \cite[p.~163]{Hi}) one can write, with the
Schur product $\circ$,
\begin{align*}
Df(X_0)(\theta H+\theta^2K)
&=\begin{bmatrix}f^{[1]}(a,a)&f^{[1]}(a,b)\\f^{[1]}(a,b)&f^{[1]}(b,b)\end{bmatrix}
\circ(\theta H+\theta^2 K) \\
&=\theta\begin{bmatrix}0&{f(a)-f(b)\over a-b}\,x_{12}\\{f(a)-f(b)\over a-b}\,x_{12}&0\end{bmatrix}
+\theta^2\begin{bmatrix}f'(a)x_{11}&0\\0&f'(b)x_{22}\end{bmatrix}
\end{align*}
and
\begin{align*}
{1\over2}D^2f(X_0)(\theta H,\theta H)
&=\theta^2\begin{bmatrix}f^{[2]}(a,a,b)x_{12}^2&0\\0&f^{[2]}(a,b,b)x_{12}^2\end{bmatrix} \\
&=\theta^2\begin{bmatrix}{f'(a)(a-b)-f(a)+f(b)\over(a-b)^2}\,x_{12}^2&0\\
0&{f(a)-f(b)-f'(b)(a-b)\over(a-b)^2}\,x_{12}^2\end{bmatrix}.
\end{align*}
Hence expression \eqref{F-3.2} holds with \eqref{F-3.3}.
\end{proof}

Under the weaker assumption that $f$ is a $C^1$-function, the reduced version of \eqref{F-3.2} up to
$o(\theta)$ (meaning ${o(\theta)\over\theta}\to0$ as $\theta\to0$) is given as
\begin{align}\label{F-3.4}
f(X_\theta)=\begin{bmatrix}f(a)&\theta y_{12}\\\theta y_{12}&f(b)\end{bmatrix}+o(\theta)
\quad\mbox{as $\theta\to0$}
\end{align}
with $y_{12}$ in \eqref{F-3.3}.

\begin{example}\label{E-3.2}\rm
The following special cases of Lemma \ref{L-3.1} will be used below.
\begin{itemize}
\item[(1)] Let $a=1\ne b>0$ in \eqref{F-3.1} and $f(x)=x^r$ on $(0,\infty)$ with $r>0$. Then
\[
X_\theta^r=\begin{bmatrix}1+\theta^2y_{11}&\theta y_{12}\\\theta y_{12}&b^r+\theta^2y_{22}
\end{bmatrix}+o(\theta^2)\quad\mbox{as $\theta\to0$},
\]
where
\[
\begin{cases}y_{11}:=rx_{11}+{r-1-rb+b^r\over(1-b)^2}\,x_{12}^2, \\
y_{22}:=rb^{r-1}x_{22}+{1-rb^{r-1}+(r-1)b^r\over(1-b)^2}\,x_{12}^2, \\
y_{12}:={1-b^r\over1-b}\,x_{12}.
\end{cases}
\]

\item[(2)] Let $a=0\ne b\in\bR$ in \eqref{F-3.1} and $f(x)=e^x$ on $\bR$. Then
\[
e^{X_\theta}=\begin{bmatrix}1+\theta^2y_{11}&\theta y_{12}\\\theta y_{12}&e^b+\theta^2y_{22}
\end{bmatrix}+o(\theta^2)\quad\mbox{as $\theta\to0$},
\]
where
\[
\begin{cases}y_{11}:=x_{11}-\bigl({1\over b}+{1-e^b\over b^2}\bigr)x_{12}^2, \\
y_{22}:=e^bx_{22}+\bigl({e^b\over b}+{1-e^b\over b^2}\bigr)x_{12}^2, \\
y_{12}:=-{1-e^b\over b}\,x_{12}.
\end{cases}
\]

\item[(3)] Let $a=1$, $b>0$ in \eqref{F-3.1} and $f(x)=\log x$ on $(0,\infty)$. Then
\[
\log X_\theta=\begin{bmatrix}\theta^2y_{11}&\theta y_{12}\\\theta y_{12}&\log b+\theta^2y_{22}
\end{bmatrix}+o(\theta^2)\quad\mbox{as $\theta\to0$},
\]
where
\[
\begin{cases}y_{11}:=x_{11}+{1-b+\log b\over(1-b)^2}\,x_{12}^2, \\
y_{22}:={bx_{22}\over b}-{1-b+b\log b\over b(1-b)^2}\,x_{12}^2, \\
y_{12}:={-\log b\over1-b}\,x_{12}.
\end{cases}
\]
\end{itemize}
\end{example}

\medskip
We will repeatedly utilize the $2\times2$ positive definite matrices given as follows:
\begin{align}\label{F-3.5}
A_0:=\begin{bmatrix}1&0\\0&x\end{bmatrix},\qquad
B_\theta:=\begin{bmatrix}\cos\theta&-\sin\theta\\\sin\theta&\cos\theta\end{bmatrix}
\begin{bmatrix}1&0\\0&y\end{bmatrix}
\begin{bmatrix}\cos\theta&\sin\theta\\-\sin\theta&\cos\theta\end{bmatrix}
\end{align}
for $x,y>0$ and $\theta\in\bR$. As immediately verified, the approximate form of $B_\theta$ up to $o(\theta^2)$
is
\begin{align}\label{F-3.6}
B_\theta=\begin{bmatrix}1-\theta^2(1-y)&\theta(1-y)\\\theta(1-y)&y+\theta^2(1-y)\end{bmatrix}
+o(\theta^2)\quad\mbox{as $\theta\to0$}.
\end{align}

\begin{lemma}\label{L-3.3}
Let $0<\alpha<1$, $p,q>0$, and $x,y>0$ be such that $(1-\alpha)x^p+\alpha y^p\ne1$ and
$(1-\alpha)x^q+\alpha y^q\ne1$. Let $A_0$ and $B_\theta$ be given in \eqref{F-3.5}. Then we have
\begin{align}\label{F-3.7}
\cA_{\alpha,p}(A_0,B_\theta)
=\begin{bmatrix}1+\theta^2u_{11}(p)&\theta u_{12}(p)\\
\theta u_{12}(p)&((1-\alpha)x^p+\alpha y^p)^{1/p}+\theta^2u_{22}(p)\end{bmatrix}
+o(\theta^2)\quad\mbox{as $\theta\to0$},
\end{align}
where
\begin{align}\label{F-3.8}
\begin{cases}
u_{11}(p):=-{\alpha(1-\alpha)(1-x^p)(1-y^p)\over p(1-(1-\alpha)x^p-\alpha y^p)}
-{\alpha^2(1-y^p)^2\bigl\{1-((1-\alpha)x^p+\alpha y^p)^{1/p}\bigr\}\over(1-(1-\alpha)x^p-\alpha y^p)^2}, \\
u_{22}(p):={\alpha(1-\alpha)(1-x^p)(1-y^p)((1-\alpha)x^p+\alpha y^p)^{{1\over p}-1}
\over p(1-(1-\alpha)x^p-\alpha y^p)}
+{\alpha^2(1-y^p)^2\bigl\{1-((1-\alpha)x^p+\alpha y^p)^{1/p}\bigr\}\over(1-(1-\alpha)x^p-\alpha y^p)^2}, \\
u_{12}(p):={\alpha(1-y^p)\bigl\{1-((1-\alpha)x^p+\alpha y^p)^{1/p}\bigr\}\over1-(1-\alpha)x^p-\alpha y^p}.
\end{cases}
\end{align}

Furthermore, we have
\begin{equation}\label{F-3.9}
\begin{aligned}
&\det\{\cA_{\alpha,q}(A_0,B_\theta)-\cA_{\alpha,p}(A_0,B_\theta)\} \\
&\quad=\theta^2\Biggl[\alpha(1-\alpha)\biggl\{{(1-x^p)(1-y^p)\over p(1-(1-\alpha)x^p-\alpha y^p)}
-{(1-x^q)(1-y^q)\over q(1-(1-\alpha)x^q-\alpha y^q)}\biggr\} \\
&\qquad\qquad\qquad
\times\bigl\{((1-\alpha)x^q+\alpha y^q)^{1/q}-((1-\alpha)x^p+\alpha y^p)^{1/p}\bigr\} \\
&\qquad\qquad
-\alpha^2\biggl\{{1-y^p\over 1-(1-\alpha)x^p-\alpha y^p}-{1-y^q\over1-(1-\alpha)x^q-\alpha y^q}\biggr\}^2 \\
&\qquad\qquad\qquad\times\bigl\{1-((1-\alpha)x^p+\alpha y^p)^{1/p}\bigr\}
\bigl\{1-((1-\alpha)x^q+\alpha y^q)^{1/q}\bigr\}\Biggr] \\
&\qquad+o(\theta^2)\quad\mbox{as $\theta\to0$}.
\end{aligned}
\end{equation}
\end{lemma}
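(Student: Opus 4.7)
The plan is to compute $\cA_{\alpha,p}(A_0,B_\theta) = ((1-\alpha) A_0^p + \alpha B_\theta^p)^{1/p}$ by applying Example~\ref{E-3.2}(1) twice.

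First, since $A_0$ is diagonal one has $A_0^p = \diag(1,x^p)$ exactly, while the expansion \eqref{F-3.6} of $B_\theta$ fits the pattern \eqref{F-3.1} with $a=1$, $b=y$, $x_{11}=-(1-y)$, and $x_{22}=x_{12}=1-y$. Applying Example~\ref{E-3.2}(1) with $r=p$ and simplifying the divided-difference expressions yields
\[
B_\theta^p = \begin{bmatrix} 1 - \theta^2(1-y^p) & \theta(1-y^p) \\ \theta(1-y^p) & y^p + \theta^2(1-y^p) \end{bmatrix} + o(\theta^2),
\]
so that $(1-\alpha) A_0^p + \alpha B_\theta^p$ again has the form \eqref{F-3.1}, now with $a=1$, $b = B_p := (1-\alpha)x^p + \alpha y^p \ne 1$, $x_{11} = -\alpha(1-y^p)$, and $x_{22}=x_{12}=\alpha(1-y^p)$. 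Now apply Example~\ref{E-3.2}(1) once more with $r=1/p$. Using the elementary identity
\[
(1-B_p) - \alpha(1-y^p) = (1-\alpha)(1-x^p),
\]
one combines the two summands defining each of $y_{11}$ and $y_{22}$ in \eqref{F-3.3} into closed form and reads off the expressions \eqref{F-3.7}--\eqref{F-3.8}; the off-diagonal $u_{12}(p)$ is immediate.

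For \eqref{F-3.9}, write $T_p := B_p^{1/p}$. Subtracting the $2\times 2$ expansion \eqref{F-3.7} at $p$ from that at $q$ and expanding the determinant of the resulting $2\times 2$ matrix, only the product of the $(1,1)$-entry of order $\theta^2$ with the leading $(2,2)$-entry $T_q - T_p$, together with the square of the off-diagonal of order $\theta$, contribute at order $\theta^2$; this gives
\[
\det\{\cA_{\alpha,q}(A_0,B_\theta)-\cA_{\alpha,p}(A_0,B_\theta)\} = \theta^2\bigl[(u_{11}(q)-u_{11}(p))(T_q-T_p) - (u_{12}(q)-u_{12}(p))^2\bigr] + o(\theta^2).
\]
The $\alpha(1-\alpha)$-part of $u_{11}$, by inspection of \eqref{F-3.8}, delivers the first big bracket of \eqref{F-3.9} directly. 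The main obstacle is to verify that the $\alpha^2$-part of $(u_{11}(q)-u_{11}(p))(T_q-T_p)$ combines with $-(u_{12}(q)-u_{12}(p))^2$ to produce the squared-difference expression of \eqref{F-3.9}. Setting $A_r := (1-y^r)/(1-B_r)$ and $a_r := 1 - T_r$ so that $T_q - T_p = a_p - a_q$, $u_{12}(r) = \alpha A_r a_r$, and the $\alpha^2$-part of $u_{11}(r)$ equals $-\alpha^2 A_r^2 a_r$, this reduces to the purely algebraic identity
\[
(A_q^2 a_q - A_p^2 a_p)(a_p - a_q) + (A_q a_q - A_p a_p)^2 = a_p a_q (A_p - A_q)^2,
\]
which follows by direct expansion of both sides and gives the required second bracket of \eqref{F-3.9}.
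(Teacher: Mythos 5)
Your proposal is correct and follows essentially the same route as the paper: put $(1-\alpha)A_0^p+\alpha B_\theta^p$ into the form \eqref{F-3.1}, apply Example \ref{E-3.2}(1) with $r=1/p$ to get \eqref{F-3.7}--\eqref{F-3.8}, and then reduce the $\alpha^2$-part of the determinant to the same squared-difference identity the paper uses (your identity in $A_r,a_r$ is exactly the paper's simplification of the term $\alpha^2\{\cdots\}$ in \eqref{F-3.10}). The only cosmetic difference is that you derive the expansion of $B_\theta^p$ by an extra application of Example \ref{E-3.2}(1) (which formally needs $y\ne1$, a trivially degenerate case), whereas the paper simply observes that $B_\theta^p$ is exactly \eqref{F-3.6} with $y^p$ in place of $y$.
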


\begin{proof}
Since $B_\theta^p$ is written as in \eqref{F-3.6} with $y^p$ in place of $y$, it follows that
$X_\theta:=(1-\alpha)A_0^p+\alpha B_\theta^p$ is of the form \eqref{F-3.1} with $a=1$,
$b=(1-\alpha)x^p+\alpha y^p$, $x_{11}=-\alpha(1-y^p)$, $x_{22}=\alpha(1-y^p)$ and $x_{12}=\alpha(1-y^p)$.
Since $(1-\alpha)x^p+\alpha y^p\ne1$ by assumption, we see by Example \ref{E-3.2}(1) that
$\cA_{\alpha,p}(A_0,B_\theta)=X_\theta^{1/p}$ is given in the form \eqref{F-3.7}, where
\begin{align*}
u_{11}(p)&=-{\alpha\over p}(1-y^p)+{{1\over p}-1-{1\over p}((1-\alpha)x^p+\alpha y^p)
+((1-\alpha)x^p+\alpha y^p)^{1/p}\over(1-(1-\alpha)x^p-\alpha y^p)^2}\,\alpha^2(1-y^p)^2 \\
&=-{\alpha\over p}(1-y^p)+{\alpha^2(1-y^p)^2\over p(1-(1-\alpha)x^p-\alpha y^p)}
-{\alpha^2(1-y^p)^2\bigl\{1-((1-\alpha)x^p+\alpha y^p)^{1/p}\bigr\}\over(1-(1-\alpha)x^p-\alpha y^p)^2} \\
&=-{\alpha(1-\alpha)(1-x^p)(1-y^p)\over p(1-(1-\alpha)x^p-\alpha y^p)}
-{\alpha^2(1-y^p)^2\bigl\{1-((1-\alpha)x^p+\alpha y^p)^{1/p}\bigr\}\over(1-(1-\alpha)x^p-\alpha y^p)^2}, \\
u_{22}(p)&={\alpha\over p}(1-y^p)((1-\alpha)x^p+\alpha y^p)^{{1\over p}-1} \\
&\quad
+{1-{1\over p}((1-\alpha)x^p+\alpha y^p)^{{1\over p}-1}+({1\over p}-1)((1-\alpha)x^p+\alpha y^p)^{1/p}
\over(1-(1-\alpha)x^p-\alpha y^p)^2}\,\alpha^2(1-y^p)^2 \\
&={\alpha\over p}(1-y^p)((1-\alpha)x^p+\alpha y^p)^{{1\over p}-1} \\
&\quad
-{\alpha^2(1-y^p)^2((1-\alpha)x^p+\alpha y^p)^{{1\over p}-1}\over p(1-(1-\alpha)x^p-\alpha y^p)}
+{\alpha^2(1-y^p)^2\bigl\{1-((1-\alpha)x^p+\alpha y^p)^{1/p}\bigr\}\over(1-(1-\alpha)x^p-\alpha y^p)^2} \\
&={\alpha(1-\alpha)(1-x^p)(1-y^p)((1-\alpha)x^p+\alpha y^p)^{{1\over p}-1}\over p(1-(1-\alpha)x^p-\alpha y^p)}
+{\alpha^2(1-y^p)^2\bigl\{1-((1-\alpha)x^p+\alpha y^p)^{1/p}\bigr\}\over(1-(1-\alpha)x^p-\alpha y^p)^2}, \\
u_{12}(p)&={\alpha(1-y^p)\bigl\{1-((1-\alpha)x^p+\alpha y^p)^{1/p}\bigr\}\over1-(1-\alpha)x^p-\alpha y^p}.
\end{align*}
Hence \eqref{F-3.7} holds with \eqref{F-3.8}.

Moreover, since
\begin{align*}
&\cA_{\alpha,q}(A_0,B_\theta)-\cA_{\alpha,p}(A_0,B_\theta) \\
&\quad=\small{\begin{bmatrix}\theta^2(u_{11}(q)-u_{11}(p))&\theta(u_{12}(q)-u_{12}(p))\\
\theta(u_{12}(q)-u_{12}(p))&((1-\alpha)x^q+\alpha y^q)^{1/q}-((1-\alpha)x^p+\alpha y^p)^{1/p}
+\theta^2(u_{22}(q)-u_{22}(p))\end{bmatrix}}+o(\theta^2),
\end{align*}
it follows that
\begin{align}
&\det\{\cA_{\alpha,q}(A_0,B_\theta)-\cA_{\alpha,p}(A_0,B_\theta)\} \nonumber\\
&\quad=\theta^2\Bigl[(u_{11}(q)-u_{11}(p))
\bigl\{((1-\alpha)x^q+\alpha y^q)^{1/q}-((1-\alpha)x^p+\alpha y^p)^{1/p}\bigr\} \nonumber\\
&\qquad\qquad-(u_{12}(q)-u_{12}(p))^2\Bigr]+o(\theta^2) \nonumber\\
&\quad=\theta^2\Biggl[\alpha(1-\alpha)\biggl\{{(1-x^p)(1-y^p)\over p(1-(1-\alpha)x^p-\alpha y^p)}
-{(1-x^q)(1-y^q)\over q(1-(1-\alpha)x^q-\alpha y^q)}\biggr\} \nonumber\\
&\qquad\qquad\qquad\quad
\times\bigl\{((1-\alpha)x^q+\alpha y^q)^{1/q}-((1-\alpha)x^p+\alpha y^p)^{1/p}\bigr\} \nonumber\\
&\qquad\qquad\small{+\alpha^2\biggl\{\biggl({(1-y^p)^2w_p\over\zeta_p^2}
-{(1-y^q)^2w_q\over\zeta_q^2}\biggr)(w_p-w_q)
-\biggl({(1-y^q)w_q\over\zeta_q}-{(1-y^p)w_p\over\zeta_p}\biggr)^2\biggr\}\Biggr]} \label{F-3.10}\\
&\qquad+o(\theta^2), \nonumber
\end{align}
where
\begin{align}\label{F-3.11}
\zeta_p:=1-(1-\alpha)x^p-\alpha y^p,\qquad w_p:=1-((1-\alpha)x^p+\alpha y^p)^{1/p}.
\end{align}
Since the term $\alpha^2\{\cdots\}$ in \eqref{F-3.10} is equal to
\[
-\alpha^2\biggl({1-y^p\over\zeta_p}-{1-y^q\over\zeta_q}\biggr)^2w_pw_q,
\]
we have \eqref{F-3.9} as well.
\end{proof}

\begin{lemma}\label{L-3.4}
Let $0<\alpha<1$ and $x,y>0$ be such that $x^{1-\alpha}y^\alpha\ne1$. Let $A_0$ and $B_\theta$ be given
in \eqref{F-3.5}. Then we have
\begin{align}\label{F-3.12}
LE_\alpha(A_0,B_\theta)
=\begin{bmatrix}1+\theta^2v_{11}&\theta v_{12}\\
\theta v_{12}&x^{1-\alpha}y^\alpha+\theta^2v_{22}\end{bmatrix}+o(\theta^2)
\quad\mbox{as $\theta\to0$},
\end{align}
where
\begin{align}\label{F-3.13}
\begin{cases}
v_{11}:={\alpha(1-\alpha)\log x\cdot\log y\over\log x^{1-\alpha}y^\alpha}
-{\alpha^2(1-x^{1-\alpha}y^\alpha)\log^2y\over\log^2x^{1-\alpha}y^\alpha}, \\
v_{22}:=-{\alpha(1-\alpha)x^{1-\alpha}y^\alpha\log x\cdot\log y\over\log x^{1-\alpha}y^\alpha}
+{\alpha^2(1-x^{1-\alpha}y^\alpha)\log^2y\over\log^2x^{1-\alpha}y^\alpha}, \\
v_{12}:={\alpha(1-x^{1-\alpha}y^\alpha)\log y\over\log x^{1-\alpha}y^\alpha }.
\end{cases}
\end{align}

Furthermore, when $p>0$ and $(1-\alpha)x^p+\alpha y^p\ne1$, we have
\begin{equation}\label{F-3.14}
\begin{aligned}
&\det\{\cA_{\alpha,p}(A_0,B_\theta)-LE_\alpha(A_0,B_\theta)\} \\
&\quad=\theta^2\Biggl[-\alpha(1-\alpha)\biggl\{{(1-x^p)(1-y^p)\over p(1-(1-\alpha)x^p-\alpha y^p)}
+{\log x\cdot\log y\over\log x^{1-\alpha}y^\alpha}\biggr\} \\
&\qquad\qquad\qquad\times\bigl\{((1-\alpha)x^p+\alpha y^p)^{1/p}-x^{1-\alpha}y^\alpha\bigr\} \\
&\qquad\qquad-\alpha^2\biggl\{{1-y^p\over1-(1-\alpha)x^p-\alpha y^p}
-{\log y\over\log x^{1-\alpha}y^\alpha}\biggr\}^2(1-x^{1-\alpha}y^\alpha) \\
&\qquad\qquad\qquad\times\bigl\{1-((1-\alpha)x^p+\alpha y^p)^{1/p}\bigr\}\Biggr] \\
&\qquad+o(\theta^2)\quad\mbox{as $\theta\to0$}.
\end{aligned}
\end{equation}
\end{lemma}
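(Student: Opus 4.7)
The plan is to mirror the proof of Lemma \ref{L-3.3}, first computing $LE_\alpha(A_0,B_\theta)$ up to $o(\theta^2)$ via Example \ref{E-3.2}, and then expanding the $2\times 2$ determinant of $\cA_{\alpha,p}(A_0,B_\theta)-LE_\alpha(A_0,B_\theta)$.

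For \eqref{F-3.12}, I would apply Example \ref{E-3.2}(3) to $B_\theta$ in the approximate form \eqref{F-3.6} (so $a=1$, $b=y$, $x_{11}=-(1-y)$, $x_{22}=x_{12}=1-y$); after the cancellations in Lemma \ref{L-3.1}, the result is the clean expression
\[
\log B_\theta=\begin{bmatrix}\theta^2\log y&-\theta\log y\\ -\theta\log y&\log y-\theta^2\log y\end{bmatrix}+o(\theta^2).
\]
Adding $(1-\alpha)\log A_0=\diag(0,(1-\alpha)\log x)$ puts $(1-\alpha)\log A_0+\alpha\log B_\theta$ in the form \eqref{F-3.1} with $a=0$, $b=L:=\log(x^{1-\alpha}y^\alpha)$, $x_{11}=\alpha\log y$, $x_{22}=-\alpha\log y$, $x_{12}=-\alpha\log y$. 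Then Example \ref{E-3.2}(2) produces $LE_\alpha(A_0,B_\theta)$ in the shape \eqref{F-3.12}, where the raw formulas for $v_{11},v_{22}$ contain bare terms such as $\alpha\log y-\alpha^2(\log^2y)/L$ that must be rewritten using $L-\alpha\log y=(1-\alpha)\log x$ to produce the factor $\alpha(1-\alpha)(\log x)(\log y)/L$ appearing in \eqref{F-3.13}.

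For \eqref{F-3.14}, set $\zeta_p,w_p$ as in \eqref{F-3.11}, $w_L:=1-x^{1-\alpha}y^\alpha$, $\psi_p:=(1-y^p)/\zeta_p$, $\psi_L:=(\log y)/L$, $\phi_p:=(1-x^p)(1-y^p)/(p\zeta_p)$, and $\phi_L:=-(\log x)(\log y)/L$; then \eqref{F-3.8} and \eqref{F-3.13} rewrite uniformly as
\[
u_{11}(p)=-\alpha(1-\alpha)\phi_p-\alpha^2\psi_p^2w_p,\qquad v_{11}=-\alpha(1-\alpha)\phi_L-\alpha^2\psi_L^2w_L,
\]
while $u_{12}(p)-v_{12}=\alpha(\psi_pw_p-\psi_Lw_L)$ and the $\theta$-independent part of the $(2,2)$ difference is $w_L-w_p$. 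Subtracting \eqref{F-3.12} from \eqref{F-3.7} and expanding the $2\times2$ determinant to order $\theta^2$, the leading coefficient becomes
\[
-\alpha(1-\alpha)(\phi_p-\phi_L)(w_L-w_p)-\alpha^2\bigl\{(\psi_p^2w_p-\psi_L^2w_L)(w_L-w_p)+(\psi_pw_p-\psi_Lw_L)^2\bigr\},
\]
and the brace collapses to $w_pw_L(\psi_p-\psi_L)^2$ by direct expansion, which is the very quadratic identity used in the proof of Lemma \ref{L-3.3}. This yields \eqref{F-3.14}.

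The main obstacle is routine bookkeeping: the simplification of $v_{11}$ and $v_{22}$ hinges on the single nontrivial rearrangement $L-\alpha\log y=(1-\alpha)\log x$ (and its companion $L-(1-\alpha)\log x=\alpha\log y$ for $v_{22}$), and the determinant expansion is tedious but follows the template of Lemma \ref{L-3.3} verbatim. A useful sanity check is that letting $p\searrow0$ in the formulas of Lemma \ref{L-3.3} should recover those of Lemma \ref{L-3.4}, consistent with the Lie--Trotter--Kato limit $\cA_{\alpha,p}(A,B)\to LE_\alpha(A,B)$ provided by Theorem \ref{T-2.3}.
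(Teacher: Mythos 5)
Your proposal is correct and follows essentially the same route as the paper's proof: you put $(1-\alpha)\log A_0+\alpha\log B_\theta$ in the form \eqref{F-3.1} with $a=0$, $b=\log x^{1-\alpha}y^\alpha$, $x_{11}=\alpha\log y$, $x_{22}=x_{12}=-\alpha\log y$, apply Example \ref{E-3.2}(2), simplify via $\log x^{1-\alpha}y^\alpha-\alpha\log y=(1-\alpha)\log x$, and then reduce the determinant's quadratic bracket to $-\alpha^2 w_pw_0(\psi_p-\psi_L)^2$ exactly as in Lemma \ref{L-3.3}. The only cosmetic differences are that you obtain $\log B_\theta$ via Example \ref{E-3.2}(3) rather than by direct computation and that you introduce the tidy $\phi,\psi,w$ notation, neither of which changes the argument.
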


\begin{proof}
We note that
\begin{align*}
X_\theta&:=(1-\alpha)\log A_0+\alpha\log B_\theta \\
&\ =(1-\alpha)\begin{bmatrix}0&0\\0&\log x\end{bmatrix}
+\alpha\begin{bmatrix}\theta^2\log y&-\theta\log y\\-\theta\log y&\log y-\theta^2\log y\end{bmatrix}
+o(\theta^2) \\
&\ =\begin{bmatrix}\theta^2\alpha\log y&-\theta\alpha\log y\\
-\theta\alpha\log y&\log x^{1-\alpha}y^\alpha-\theta^2\alpha\log y\end{bmatrix}+o(\theta^2)
\quad\mbox{as $\theta\to0$}.
\end{align*}
Since $\log x^\alpha y^{1-\alpha}\ne0$ by assumption, one can apply Example \ref{E-3.2}(2) with
$b=\log x^{1-\alpha}y^\alpha$, $x_{11}=\alpha\log y$ and $x_{22}=x_{12}=-\alpha\log y$. Hence it
follows that $LE_\alpha(A_0,B_\theta)=e^{X_\theta}$ is given in the form \eqref{F-3.12}, where
\begin{align*}
v_{11}&=\alpha\log y-\biggl({1\over\log x^{1-\alpha}y^\alpha}
+{1-x^{1-\alpha}y^\alpha\over\log^2x^{1-\alpha}y^\alpha}\biggr)(\alpha^2\log^2y) \\
&={\alpha(1-\alpha)\log x\cdot\log y\over\log x^{1-\alpha}y^\alpha}
-{\alpha^2(1-x^{1-\alpha}y^\alpha)\log^2y\over\log^2x^{1-\alpha}y^\alpha}, \\
v_{22}&=-\alpha x^{1-\alpha}y^\alpha\log y+\biggl({x^{1-\alpha}y^\alpha\over\log x^{1-\alpha}y^\alpha}
+{1-x^{1-\alpha}y^\alpha\over\log^2x^{1-\alpha}y^\alpha}\biggr)(\alpha^2\log^2y) \\
&=-{\alpha(1-\alpha)x^{1-\alpha}y^\alpha\log x\cdot\log y\over\log x^{1-\alpha}y^\alpha}
+{\alpha^2(1-x^{1-\alpha}y^\alpha )\log^2y\over\log^2x^{1-\alpha}y^\alpha}, \\
v_{12}&={\alpha(1-x^{1-\alpha}y^\alpha)\log y\over\log x^{1-\alpha}y^\alpha}.
\end{align*}
Hence \eqref{F-3.12} holds with \eqref{F-3.13}.

Moreover, since $(1-\alpha)x^p+\alpha y^p\ne1$ by assumption, it follows from \eqref{F-3.7} and
\eqref{F-3.12} that
\begin{align*}
&\det\{\cA_{\alpha,p}(A_0,B_\theta)-LE_\alpha(A_0,B_\theta)\} \\
&\quad=\det\begin{bmatrix}\theta^2(u_{11}(p)-v_{11})&\theta(u_{12}(p)-v_{12})\\
\theta(u_{12}(p)-v_{12})&((1-\alpha)x^p+\alpha y^p)^{1/p}-x^{1-\alpha}y^\alpha
+\theta^2(u_{22}(p)-v_{22})\end{bmatrix}+o(\theta^2) \\
&\quad=\theta^2\Bigl[(u_{11}(p)-v_{11})
\bigl\{((1-\alpha)x^p+\alpha y^p)^{1/p}-x^{1-\alpha}y^\alpha\bigr\}
-(u_{12}(p)-v_{12})^2\Bigr]+o(\theta^2) \\
&\quad=\theta^2\Biggl[-\alpha(1-\alpha)\biggl\{{(1-x^p)(1-y^p)\over p(1-(1-\alpha)x^p-\alpha y^p)}
+{\log x\cdot\log y\over\log x^{1-\alpha}y^\alpha}\biggr\}
\bigl\{((1-\alpha)x^p+\alpha y^p)^{1/p}-x^{1-\alpha}y^\alpha\bigr\} \\
&\qquad\qquad+\alpha^2\Biggl\{\biggl(-{(1-y^p)^2w_p\over(1-(1-\alpha)x^p-\alpha y^p)^2}
+{w_0\log^2y\over\log^2x^{1-\alpha}y^\alpha}\biggr)(-w_p+w_0) \\
&\qquad\qquad\qquad\quad-\biggl({(1-y^p)w_p\over1-(1-\alpha)x^p-\alpha y^p}
-{w_0\log y\over\log x^{1-\alpha}y^\alpha }\biggr)^2\Biggr\}\Biggr]+o(\theta^2),
\end{align*}
where $w_0:=1-x^{1-\alpha}y^\alpha$ and $w_p$ is given in \eqref{F-3.11}. Since $\alpha^2\{\cdots\}$ in
the last expression is equal to
\[
-\alpha^2\biggl\{{1-y^p\over1-(1-\alpha)x^p-\alpha y^p}
-{\log y\over\log x^{1-\alpha}y^\alpha}\biggr\}^2w_0w_p,
\]
we have \eqref{F-3.14}.
\end{proof}

\begin{lemma}\label{L-3.5}
Let $0<\alpha<1$, $p>0$, and $A_0,B_\theta\in\bM_2^{++}$ be given in \eqref{F-3.5} with $y=x>0$, $x\ne1$.
Then we have
\begin{align}\label{F-3.15}
R_{\alpha,p}(A_0,B_\theta)
=\begin{bmatrix}1+\theta^2z_{11}(p)&\theta z_{12}(p)\\
\theta z_{12}(p)&x+\theta^2z_{22}(p)\end{bmatrix}+o(\theta^2)\quad\mbox{as $\theta\to0$},
\end{align}
where
\begin{align}\label{F-3.16}
\begin{cases}
z_{11}(p):=-{1-x^{\alpha p}\over p}
+{(1-p+px-x^p)\bigl(x^{{1-\alpha\over2}p}-x^{{1+\alpha\over2}p}\bigr)^2\over p(1-x^p)^2}, \\
z_{22}(p):={x^{1-\alpha p}-x\over p}
+{(p+(1-p)x-x^{1-p})\bigl(x^{{1-\alpha\over2}p}-x^{{1+\alpha\over2}p}\bigr)^2\over p(1-x^p)^2}, \\
z_{12}(p):={(1-x)\bigl(x^{{1-\alpha\over2}p}-x^{{1+\alpha\over2}p}\bigr)\over1-x^p}.
\end{cases}
\end{align}
\end{lemma}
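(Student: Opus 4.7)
The plan is to compute $R_{\alpha,p}(A_0,B_\theta) = (A_0^{(1-\alpha)p/2} B_\theta^{\alpha p} A_0^{(1-\alpha)p/2})^{1/p}$ by two successive applications of Example \ref{E-3.2}(1), using the fact that conjugation by a diagonal matrix does not break the form \eqref{F-3.1}. Since we specialize to $y=x$, equation \eqref{F-3.6} already gives $B_\theta$ in the form \eqref{F-3.1} with $a=1$, $b=x$, $x_{11}=-(1-x)$, $x_{22}=1-x$, $x_{12}=1-x$.

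First, I would apply Example \ref{E-3.2}(1) with $r=\alpha p$ and $b=x$ to $B_\theta$; using $x_{11}=-(1-x)$, $x_{22}=x_{12}=1-x$, the elementary identities
\[
\alpha p\cdot(-(1-x)) + \alpha p - 1 - \alpha p x + x^{\alpha p} = x^{\alpha p}-1,
\qquad \alpha p x^{\alpha p-1}(1-x)+1-\alpha p x^{\alpha p-1}+(\alpha p-1)x^{\alpha p}=1-x^{\alpha p}
\]
collapse the $(1,1)$- and $(2,2)$-entries, giving
\[
B_\theta^{\alpha p}=\begin{bmatrix}1-\theta^2(1-x^{\alpha p}) & \theta(1-x^{\alpha p}) \\ \theta(1-x^{\alpha p}) & x^{\alpha p}+\theta^2(1-x^{\alpha p})\end{bmatrix}+o(\theta^2).
\]

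Second, since $A_0^{(1-\alpha)p/2}=\mathrm{diag}(1,x^{(1-\alpha)p/2})$ is diagonal, sandwiching only scales the $(1,1)$-entry by $1$, the $(2,2)$-entry by $x^{(1-\alpha)p}$, and the off-diagonal by $x^{(1-\alpha)p/2}$. Thus $C_\theta:=A_0^{(1-\alpha)p/2}B_\theta^{\alpha p}A_0^{(1-\alpha)p/2}$ is again of the form \eqref{F-3.1} with $a=1$, $b=x^p$, $x_{12}=x^{(1-\alpha)p/2}-x^{(1+\alpha)p/2}$, and corresponding diagonal second-order coefficients.

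Third, I would apply Example \ref{E-3.2}(1) a second time, now with $r=1/p$ and $b=x^p$, to obtain $R_{\alpha,p}(A_0,B_\theta)=C_\theta^{1/p}$ in the form \eqref{F-3.15}. The off-diagonal $z_{12}(p)=\frac{1-x}{1-x^p}\bigl(x^{(1-\alpha)p/2}-x^{(1+\alpha)p/2}\bigr)$ drops out immediately. For $z_{11}(p)$, the first-order contribution $\frac{1}{p}\cdot(x^{\alpha p}-1)=-\frac{1-x^{\alpha p}}{p}$ gives the leading term, and the divided-difference coefficient $\frac{1/p-1-x^p/p+x}{(1-x^p)^2}=\frac{1-p+px-x^p}{p(1-x^p)^2}$ multiplies $x_{12}^2$ to produce the second term of \eqref{F-3.16}. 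An analogous simplification, using $r b^{r-1}=x^{1-p}/p$ and the second identity above for $\tilde y_{22}=1-x^{\alpha p}$, yields $z_{22}(p)$.

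The only real obstacle is bookkeeping: ensuring that the cross-term coefficients in both applications of Example \ref{E-3.2}(1) are paired with the correctly-scaled $x_{12}$ after the diagonal conjugation, and that the algebraic simplifications of $\tilde y_{11}$ and $\tilde y_{22}$ are carried out without sign errors. Once these two cancellations are observed, the rest is mechanical substitution, and no further appeal beyond Example \ref{E-3.2}(1) is needed.
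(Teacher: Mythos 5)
Your proposal is correct and follows essentially the same route as the paper: form $A_0^{\frac{1-\alpha}{2}p}B_\theta^{\alpha p}A_0^{\frac{1-\alpha}{2}p}$, observe it is of the form \eqref{F-3.1} with $a=1$, $b=x^p$, $x_{12}=x^{\frac{1-\alpha}{2}p}-x^{\frac{1+\alpha}{2}p}$, and apply Example \ref{E-3.2}(1) with $r=1/p$; your substitutions and simplifications all check out against \eqref{F-3.16}. The only cosmetic difference is that the paper reads off $B_\theta^{\alpha p}$ exactly as \eqref{F-3.6} with $x^{\alpha p}$ in place of $y$ (since $B_\theta$ is a rotated diagonal matrix), whereas you rederive the same expression by an extra application of Example \ref{E-3.2}(1).
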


\begin{proof}
Since $B_\theta^{\alpha p}$ is written as in \eqref{F-3.6} with $x^{\alpha p}$ in place of $y$, it is easy to
compute
\[
A_0^{{1-\alpha\over2}p}B_\theta^{\alpha p}A_0^{{1-\alpha\over2}p}
=\begin{bmatrix}1-\theta^2(1-x^{\alpha p})&
\theta\bigl(x^{{1-\alpha\over2}p}-x^{{1+\alpha\over2}p}\bigr)\\
\theta\bigl(x^{{1-\alpha\over2}p}-x^{{1+\alpha\over2}p}\bigr)&
x^p+\theta^2(x^{(1-\alpha)p}-x^p)\end{bmatrix}
+o(\theta^2).
\]
Then one can apply Example \ref{E-3.2}(1) to show \eqref{F-3.15} with \eqref{F-3.16} (as in the first
paragraph of the proof of Lemma \ref{L-3.3}), whose details are left to the reader.
\end{proof}

In the last lemma of this section we explain the way of computing the eigenvalues of $X_\theta$ given in
\eqref{F-3.1}.

\begin{lemma}\label{L-3.6}
Let $X_\theta$ be given in \eqref{F-3.1} with $a\ne b$. Then the eigenvalues of $X_\theta$ is given in the
approximate form as
\begin{align}\label{F-3.17}
\lambda(X_\theta)=\begin{cases}
\Bigl(a+\theta^2\bigl(x_{11}+{x_{12}^2\over a-b}\bigr),b+\theta^2\bigl(x_{22}-{x_{12}^2\over a-b}\bigr)\Bigr)
+o(\theta^2)\quad\mbox{if $a>b$}, \\
\Bigl(b+\theta^2\bigl(x_{22}-{x_{12}^2\over a-b}\bigr),a+\theta^2\bigl(x_{11}+{x_{12}^2\over a-b}\bigr)\Bigr)
+o(\theta^2)\quad\mbox{if $a<b$}.\end{cases}
\end{align}
\end{lemma}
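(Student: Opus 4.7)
My plan is to use the closed-form eigenvalue formula for a $2\times2$ Hermitian matrix and then Taylor-expand the square root to order $\theta^2$. Writing $X_\theta=\bigl[\begin{smallmatrix}A&C\\C&B\end{smallmatrix}\bigr]+o(\theta^2)$ with $A=a+\theta^2x_{11}$, $B=b+\theta^2x_{22}$, $C=\theta x_{12}$, the eigenvalues are
\[
\mu_\pm(\theta)=\frac{A+B}{2}\pm\sqrt{\Bigl(\frac{A-B}{2}\Bigr)^{\!2}+C^2}\,.
\]
The sum of traces gives $\frac{A+B}{2}=\frac{a+b}{2}+\frac{\theta^2}{2}(x_{11}+x_{22})+o(\theta^2)$, which is unproblematic. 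The main content is the expansion of the radical.

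Since $a\neq b$, I would write
\[
\Bigl(\frac{A-B}{2}\Bigr)^{\!2}+C^2
=\frac{(a-b)^2}{4}\Bigl[1+\frac{2\theta^2(x_{11}-x_{22})}{a-b}+\frac{4\theta^2x_{12}^2}{(a-b)^2}\Bigr]+o(\theta^2),
\]
and then apply $\sqrt{1+\eta}=1+\eta/2+o(\eta)$ with $\eta=O(\theta^2)$. This yields
\[
\sqrt{\Bigl(\frac{A-B}{2}\Bigr)^{\!2}+C^2}
=\frac{|a-b|}{2}+\frac{\theta^2}{2}\,\mathrm{sgn}(a-b)\Bigl[(x_{11}-x_{22})+\frac{2x_{12}^2}{a-b}\Bigr]+o(\theta^2),
\]
where the sign of $a-b$ has to be tracked carefully because the square root returns the nonnegative value while $(a-b)/2$ inside the bracket is signed.

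Combining with $(A+B)/2$ and sorting $\mu_+,\mu_-$ into decreasing order yields exactly the two cases of \eqref{F-3.17}: if $a>b$, the larger eigenvalue collapses to $a+\theta^2(x_{11}+x_{12}^2/(a-b))+o(\theta^2)$ and the smaller one to $b+\theta^2(x_{22}-x_{12}^2/(a-b))+o(\theta^2)$; if $a<b$ the two labels are swapped because now $b$ dominates. I do not expect a real obstacle here—the only delicate points are bookkeeping for the sign of $a-b$ when ordering the eigenvalues, and verifying that the cross-term $2\theta^2(a-b)(x_{11}-x_{22})$ inside $(A-B)^2$ cancels correctly against $(A+B)/2$ to leave only $x_{11}$ on one side and $x_{22}$ on the other.
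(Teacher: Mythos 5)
Your proposal is correct and is essentially the same computation as the paper's: the paper writes down the characteristic polynomial and applies the quadratic formula, whose discriminant simplifies to $(a-b)^2+2\theta^2(a-b)(x_{11}-x_{22})+4\theta^2x_{12}^2$, i.e.\ exactly your $4\bigl(\bigl(\tfrac{A-B}{2}\bigr)^2+C^2\bigr)$, and then Taylor-expands the square root with the same $|a-b|$ sign bookkeeping. No substantive difference.
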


\begin{proof}
Since
\[
\det(\lambda I_2-X_\theta)
=\lambda^2-\{a+b+\theta^2(x_{11}+x_{22})\}\lambda
+\{ab+\theta^2(bx_{11}+ax_{22}-x_{12}^2)\}+o(\theta^2),
\]
the eigenvalues $\lambda_\pm(\theta)$ of $X_\theta$ are computed up to $o(\theta^2)$ as
\begin{align*}
2\lambda_\pm(\theta)&=a+b+\theta^2(x_{11}+x_{22}) \\
&\qquad\pm\sqrt{\{a+b+\theta^2(x_{11}+x_{22})\}^2-4\{ab+\theta^2(bx_{11}+ax_{22}-x_{12}^2)\}}+o(\theta^2) \\
&=a+b+\theta^2(x_{11}+x_{22})
\pm\sqrt{(a-b)^2+2\theta^2(a-b)(x_{11}-x_{22})+4\theta^2x_{12}^2}+o(\theta^2) \\
&=a+b+\theta^2(x_{11}+x_{22})
\pm|a-b|\biggl\{1+\theta^2{x_{11}-x_{22}\over a-b}+2\theta^2{x_{12}^2\over(a-b)^2}\biggr\}+o(\theta^2).
\end{align*}
Hence it is immediate to verify that $\lambda(X_\theta)=(\lambda_+(\theta),\lambda_-(\theta))$ has expression
\eqref{F-3.17}.
\end{proof}

\section{Various quasi-arithmetic-geometric inequalities}\label{Sec-4}

Throughout the section let $0<\alpha<1$ and $p,q>0$. For each quasi matrix mean $\cM_{\alpha,p}$ from
$\cA_{\alpha,p}$, $LE_\alpha$, $R_{\alpha,p}$, $G_{\alpha,p}$, $SG_{\alpha,p}$, $\widetilde SG_{\alpha,p}$
and for each matrix ordering $\triangleleft$ from $\le$, $\le_\chao$, $\le_\near$, $\le_\lambda$, $\prec_w$,
$\le_\Tr$, we will examine the inequality $\cM_{\alpha,p}(A,B)\triangleleft\cA_{\alpha,q}(A,B)$. Our goal is to
find the necessary and sufficient condition of $p,q,\alpha$ for the inequality to hold for all $A,B>0$, though
we have not succeeded it for all cases. In this paper we do not deal with the inequality
$\cH_{\alpha,q}\triangledown\cM_{\alpha,p}$ (see Remark \ref{R-2.7}(3) and remark (4) of Section \ref{Sec-5}).
Also we do not include the ordering $\prec_{w\log}$ in our considerations, though it would be meaningful to
examine the differences between $\le_\lambda$, $\prec_{w\log}$, $\prec_w$ (see Proposition \ref{P-2.4}(3))
in quasi-arithmetic-geometric mean inequalities.

The section is divided into six subsections.

\subsection{$\cA_{\alpha,p}$ vs.\ $\cA_{\alpha,q}$}\label{Sec-4.1}

The next theorem characterizes the inequality $\cA_{\alpha,p}\le\cA_{\alpha,q}$, which was formerly shown
in \cite{AuHi} with restriction to $\alpha=1/2$.

\begin{theorem}\label{T-4.1}
Let $0<\alpha<1$ and $p,q>0$. Then the following conditions are equivalent:
\begin{itemize}
\item[(i)] $\cA_{\alpha,p}(A,B)\le\cA_{\alpha,q}(A,B)$ for all $A,B\ge0$;
\item[(ii)] $\cA_{\alpha,p}(A,B)\le\cA_{\alpha,q}(A,B)$ for all $A,B\in\bM_2^{++}$;
\item[(iii)] $p=q$ or $1\le p<q$ or $1/2\le p<1\le q$.
\end{itemize}
\end{theorem}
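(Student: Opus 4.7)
The plan is to prove the equivalence via the chain (i)$\Rightarrow$(ii)$\Rightarrow$(iii)$\Rightarrow$(i), with the first implication trivial. By Theorem~\ref{T-2.6} it suffices to work with $A,B>0$ throughout.

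For (iii)$\Rightarrow$(i), the case $p=q$ is trivial, so suppose $p<q$. I would substitute $X:=A^p$, $Y:=B^p$, $r:=q/p$, which recasts the desired inequality as
\[
\cA_{\alpha,p}(A,B)=((1-\alpha)X+\alpha Y)^{1/p}\le((1-\alpha)X^r+\alpha Y^r)^{1/(pr)}=\cA_{\alpha,q}(A,B).
\]
When $1\le p\le q\le2p$ so that $r\in[1,2]$, the operator convexity of $t\mapsto t^r$ on $[0,\infty)$ gives $((1-\alpha)X+\alpha Y)^r\le(1-\alpha)X^r+\alpha Y^r$, and applying the operator-monotone $1/r$-th and then $1/p$-th power (both valid since $p,r\ge1$) yields the claim. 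The general case $1\le p<q$ follows by chaining intermediate exponents $p=p_0<p_1<\cdots<p_n=q$ with $p_{i+1}/p_i\le2$. For $1/2\le p<1$ and $q=1$, the operator convexity of $t\mapsto t^{1/p}$ (valid since $1/p\in[1,2]$) applied to $X,Y$ gives directly $\cA_{\alpha,p}(A,B)\le(1-\alpha)A+\alpha B=\cA_{\alpha,1}(A,B)$; the mixed case $1/2\le p<1\le q$ then follows by transitivity through $\cA_{\alpha,1}$.

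For (ii)$\Rightarrow$(iii), I would first specialize to the commuting diagonals $A=\diag(1,x)$, $B=\diag(1,y)$, i.e.\ \eqref{F-3.5} at $\theta=0$; the Loewner inequality then reduces to the scalar power-mean inequality, whose strictness for $x\ne y$ forces $p\le q$. To rule out the remaining forbidden regimes $\{p<1/2,\,p<q\}$ and $\{1/2\le p<q<1\}$, I would invoke Lemma~\ref{L-3.3}: since $\cA_{\alpha,q}(A_0,B_\theta)-\cA_{\alpha,p}(A_0,B_\theta)\ge0$ requires its determinant to be $\ge0$ for all small $\theta>0$, the bracketed expression $D(p,q,\alpha,x,y)$ appearing as the $\theta^2$-coefficient in \eqref{F-3.9} must satisfy $D\ge0$ for every admissible $x,y>0$. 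Exhibiting an explicit pair $(x,y)$ with $D<0$ for each forbidden triple $(p,q,\alpha)$ completes the argument.

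The principal technical obstacle is the sign analysis of the bracket $D$ in \eqref{F-3.9}. The natural simplifications $x=1$, $y=1$, and $y=x$ each make $D$ vanish identically; moreover, a first-order Taylor expansion around $x=y=1$ produces additional cancellation, so the sign of $D$ must be read off either from a carefully calibrated higher-order expansion (for instance along a curve $y=x^\beta$ with $\beta\ne1$ as $x\to1$) or from a boundary degeneration such as $y\to0$. Finding a uniform family of test values that witnesses the failure across both forbidden regimes is where the bulk of the technical work is expected to lie.
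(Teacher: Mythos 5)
Your sufficiency argument (iii)$\Rightarrow$(i) is correct, though it takes a longer route than needed: rather than chaining exponents with $r=q/p\in[1,2]$ via operator convexity of $t^r$, one can apply operator concavity of $t^{p/q}$ (valid for all $0<p/q\le1$) to get $(1-\alpha)A^p+\alpha B^p\le((1-\alpha)A^q+\alpha B^q)^{p/q}$ in a single step and then take the operator-monotone $1/p$-th power; this is what the paper does. The scalar reduction forcing $p\le q$ in (ii)$\Rightarrow$(iii) is also fine.

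The genuine gap is in the remainder of (ii)$\Rightarrow$(iii): you have named the strategy (make the $\theta^2$-coefficient in \eqref{F-3.9} negative) but not produced any witnesses, and you yourself flag the sign analysis as ``where the bulk of the technical work is expected to lie'' --- that work \emph{is} the proof. Concretely, the paper's choice $x=y^2$ with $y\searrow0$ makes the bracket behave like $\alpha(1-\alpha)\bigl({1\over p}-{1\over q}\bigr)(\alpha^{1/q}-\alpha^{1/p})y-\alpha^2(1-\alpha)^2y^{2p}$, which is negative for small $y$ precisely because $2p<1$; this disposes of $0<p<1/2$, $q>p$, but the same test fails once $p\ge1/2$. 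For the remaining forbidden regime $1/2\le p<q<1$ the paper does \emph{not} use Lemma~\ref{L-3.3} at all: it needs the singular pair $\tilde A_0=\diag(2,0)$ and the rank-one projection $\tilde B_\theta$ of Lemma~\ref{L-4.2}, and that computation has to be carried out from scratch because $x^{1/p}$ is not twice differentiable at $0$ when $p>1/2$, so the expansion machinery of Lemma~\ref{L-3.1} underlying \eqref{F-3.9} is unavailable at the boundary. Your plan, which restricts to positive definite test matrices of the form \eqref{F-3.5}, therefore likely cannot close the case $1/2\le p<q<1$, and in any event no negative value of the determinant has been exhibited in either forbidden regime.
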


To prove the theorem, in addition to Lemma \ref{L-3.3} we need the following lemma.

\begin{lemma}\label{L-4.2}
Define $\tilde A_0,\tilde B_\theta\in\bM_2^+$ by
\begin{align}\label{F-4.1}
\tilde A_0:=\begin{bmatrix}2&0\\0&0\end{bmatrix},\qquad
\tilde B_\theta:=\begin{bmatrix}\cos^2\theta&\cos\theta\sin\theta\\
\cos\theta\sin\theta&\sin^2\theta\end{bmatrix},
\end{align}
for $\theta\in\bR$. Let $0<\alpha<1$ and $0<p,q<1$. Then we have
\begin{equation}\label{F-4.2}
\begin{aligned}
&\det\{\cA_{\alpha,q}(\tilde A_0,\tilde B_\theta)-\cA_{\alpha,p}(\tilde A_0,\tilde B_\theta)\} \\
&\quad=-\theta^2(\alpha+(1-\alpha)2^p)^{1/p}(\alpha+(1-\alpha)2^q)^{1/q}
\biggl({\alpha\over\alpha+(1-\alpha)2^p}-{\alpha\over\alpha+(1-\alpha)2^q}\biggr)^2 \\
&\qquad+o(\theta^2)\quad\mbox{as $\theta\to0$}.
\end{aligned}
\end{equation}
\end{lemma}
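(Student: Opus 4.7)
The key observation is that $\tilde B_\theta$ is a rank-one orthogonal projection onto the line $\mathrm{span}\bigl((\cos\theta,\sin\theta)^T\bigr)$, so $\tilde B_\theta^p=\tilde B_\theta$ for every $p>0$, and similarly $\tilde A_0^p=2^pe_1e_1^*$. Hence
\[
Y_\theta^{(p)}:=(1-\alpha)\tilde A_0^p+\alpha\tilde B_\theta^p=(1-\alpha)2^pe_1e_1^*+\alpha\tilde B_\theta
\]
is a sum of two rank-one positive pieces, with trace $T_p:=\alpha+(1-\alpha)2^p$ and determinant $\alpha(1-\alpha)2^p\sin^2\theta$. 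My plan is to compute $\cA_{\alpha,p}(\tilde A_0,\tilde B_\theta)=(Y_\theta^{(p)})^{1/p}$ directly through its spectral decomposition rather than through Lemma~\ref{L-3.1}, because the limiting matrix $(1-\alpha)\tilde A_0^p+\alpha\tilde B_0$ at $\theta=0$ has the degenerate eigenvalue $b=0$, which places $f(x)=x^{1/p}$ outside the $C^2$-framework of that lemma once $p>1/2$.

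First I will read off the eigenvalues of $Y_\theta^{(p)}$ from its trace and determinant: for small $\theta$,
\[
\lambda_-^{(p)}=\frac{\alpha(1-\alpha)2^p\sin^2\theta}{T_p}+O(\theta^4),\qquad\lambda_+^{(p)}=T_p-\lambda_-^{(p)}.
\]
A one-line eigenvector calculation shows that the $\lambda_+^{(p)}$-eigendirection has slope $(\alpha/T_p)\tan\theta$, hence makes angle $\psi_p=(\alpha/T_p)\theta+O(\theta^3)$ with $e_1$. Taking $1/p$ powers and using crucially that $0<p<1$, so that $(\lambda_-^{(p)})^{1/p}=O(\theta^{2/p})=o(\theta^2)$, I get
\[
\cA_{\alpha,p}(\tilde A_0,\tilde B_\theta)=\mu_pP_+^{(p)}+o(\theta^2),\qquad\mu_p:=(\lambda_+^{(p)})^{1/p}=T_p^{1/p}+O(\theta^2),
\]
where $P_+^{(p)}$ is the rank-one projection onto the $\psi_p$-direction, and similarly with $q$ in place of $p$.

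The next step is the elementary $2\times 2$ identity: for unit vectors $v,w\in\bR^2$ making angle $\psi$,
\[
\det(\mu ww^*-\nu vv^*)=-\mu\nu\sin^2\psi,
\]
verified immediately by choosing coordinates $v=e_1$, $w=(\cos\psi,\sin\psi)^T$. Applying it to $\mu_qP_+^{(q)}-\mu_pP_+^{(p)}$ with the angle $\psi_q-\psi_p=\alpha\theta(1/T_q-1/T_p)+O(\theta^3)$ between the two one-dimensional ranges yields the principal term
\[
-T_p^{1/p}T_q^{1/q}\,\alpha^2\theta^2\bigl(1/T_p-1/T_q\bigr)^2,
\]
which matches the right-hand side of \eqref{F-4.2}.

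The delicate part will be error control. I must verify that neither the $O(\theta^2)$ refinements of $\mu_p,\mu_q$ nor the $o(\theta^2)$ matrix tails of $\cA_{\alpha,p}$ can pollute the leading $\theta^2$ coefficient of the determinant. The first is immediate, because any $O(\theta^2)$ correction to $\mu_p$ multiplies $\sin^2(\psi_p-\psi_q)=O(\theta^2)$ to give $O(\theta^4)$. For the second, writing the full difference as $M+E$ with $\|M\|_\infty=O(1)$, $\det M=O(\theta^2)$, and $E=o(\theta^2)$ entrywise, the expansion $\det(M+E)=\det M+\mathrm{tr}(\mathrm{adj}(M)\,E)+\det E$ together with the boundedness of the $2\times2$ adjugate gives $\det(M+E)=\det M+o(\theta^2)$. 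Assembling the pieces delivers \eqref{F-4.2}.
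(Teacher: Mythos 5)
Your proof is correct, and it rests on the same two pillars as the paper's: you bypass Lemma \ref{L-3.1} (as you must, since $x^{1/p}$ is not $C^2$ at $0$ for $p>1/2$) by diagonalizing $(1-\alpha)\tilde A_0^p+\alpha\tilde B_\theta^p$ exactly, and you use $0<p<1$ to discard the $1/p$-th power of the small eigenvalue as $O(\theta^{2/p})=o(\theta^2)$ --- exactly the role played by $(1-c)^{1/p}=o(\theta^2)$ in the paper. Where you genuinely diverge is in how the determinant is extracted. The paper writes the matrix as $\frac{T_p}{2}\begin{bmatrix}1+a&b\\b&1-a\end{bmatrix}$, expands $a,b,c=\sqrt{a^2+b^2}$ and all three entries $s_{11}(p),s_{22}(p),s_{12}(p)$ to the needed order in $\theta$, and then evaluates $\det\{(s_{ij}(q))-(s_{ij}(p))\}$ by direct entrywise algebra. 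You instead keep each $\cA_{\alpha,r}$ in the structural form $\mu_rP_+^{(r)}+o(\theta^2)$ and invoke the identity $\det(\mu ww^*-\nu vv^*)=-\mu\nu\sin^2\psi$ for the angle $\psi_q-\psi_p=\alpha\theta(1/T_q-1/T_p)+O(\theta^3)$ between the two eigendirections. This is lighter on computation and more explanatory: it makes visible why the leading coefficient is a negative perfect square (two nearby rank-one ranges tilted by $O(\theta)$), whereas the paper's calculation only reveals this after the final simplification. Your error control (the adjugate expansion of $\det(M+E)$ and the observation that $O(\theta^2)$ corrections to $\mu_p,\mu_q$ only enter multiplied by $\sin^2(\psi_q-\psi_p)=O(\theta^2)$) is exactly the bookkeeping needed to make the shortcut rigorous, and both your eigenvector slope $(\alpha/T_p)\tan\theta$ and the final coefficient agree with \eqref{F-4.2}.
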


\begin{proof}
The proof is a modification of that of \cite[Lemma 3.3]{AuHi} where the case $\alpha=1/2$ was treated. It is easy to
check that
\begin{align}
(1-\alpha)\tilde A_0^p+\alpha\tilde B_\theta^p
&=\begin{bmatrix}(1-\alpha)2^p+\alpha(1-\sin^2\theta)&{\alpha\sin2\theta\over2}\\
{\alpha\sin2\theta\over2}&\alpha\sin^2\theta\end{bmatrix} \label{F-4.3}\\
&={\alpha+(1-\alpha)2^p\over2}\begin{bmatrix}1+a&b\\b&1-a\end{bmatrix}, \nonumber
\end{align}
where
\[
a:=1-{2\alpha\sin^2\theta\over\alpha+(1-\alpha)2^p},\qquad
b:={\alpha\sin2\theta\over\alpha+(1-\alpha)2^p}.
\]
Letting $c:=\sqrt{a^2+b^2}$ ($\le1$), as in the proof of \cite[Lemma 3.3]{AuHi} one can compute
\begin{align*}
&\bigl((1-\alpha)\tilde A_0^p+\alpha\tilde B_\theta^p\bigr)^{1/p} \\
&\quad=\biggl({\alpha+(1-\alpha)2^p\over2}\biggr)^{1/p} \\
&\qquad\times\begin{bmatrix}{(1+c)^{1/p}+(1-c)^{1/p}\over2}+{(1+c)^{1/p}-(1-c)^{1/p}\over2c}a&
{(1+c)^{1/p}-(1-c)^{1/p}\over2c}b\\{(1+c)^{1/p}-(1-c)^{1/p}\over2c}b&
{(1+c)^{1/p}+(1-c)^{1/p}\over2}-{(1+c)^{1/p}-(1-c)^{1/p}\over2c}a\end{bmatrix}.
\end{align*}
As $\theta\to0$ we estimate
\[
a=1-{2\alpha\theta^2\over\alpha+(1-\alpha)2^p}+o(\theta^2),\qquad
b={2\alpha\theta\over\alpha+(1-\alpha)2^p}+o(\theta),
\]
so that
\[
c^2=1-{4\alpha\theta^2\over\alpha+(1-\alpha)2^p}
+{4\alpha^2\theta^2\over(\alpha+(1-\alpha)2^p)^2}+o(\theta^2)
=1-{\alpha(1-\alpha)2^{p+2}\theta^2\over(\alpha+(1-\alpha)2^p)^2}+o(\theta^2).
\]
Hence we have
\[
c=1-{\alpha(1-\alpha)2^{p+1}\theta^2\over(\alpha+(1-\alpha)2^p)^2}+o(\theta^2),\qquad
{1\over c}=1+{\alpha(1-\alpha)2^{p+1}\theta^2\over(\alpha+(1-\alpha)2^p)^2}+o(\theta^2),
\]
and moreover
\[
(1+c)^{1/p}=2^{1/p}\biggl\{1-{\alpha(1-\alpha)2^p\theta^2\over p(\alpha+(1-\alpha)2^p)^2}\biggr\}+o(\theta^2),
\qquad(1-c)^{1/p}=o(\theta^2)
\]
thanks to $0<p<1$. Therefore, we arrive at
\[
\cA_{\alpha,p}(\tilde A_0,\tilde B_\theta)
=\begin{bmatrix}s_{11}(p)&s_{12}(p)\\s_{12}(p)&s_{22}(p)\end{bmatrix},
\]
where
\begin{align*}
s_{11}(p)&=\biggl({\alpha+(1-\alpha)2^p\over2}\biggr)^{1/p}
\biggl[2^{{1\over p}-1}\biggl\{1-{\alpha(1-\alpha)2^p\theta^2\over p(\alpha+(1-\alpha)2^p)^2}\biggr\} \\
&\quad+2^{{1\over p}-1}\biggl\{1-{\alpha(1-\alpha)2^p\theta^2\over p(\alpha+(1-\alpha)2^p)^2}\biggr\}
\biggl\{1+{\alpha(1-\alpha)2^{p+1}\theta^2\over(\alpha+(1-\alpha)2^p)^2}\biggr\}
\biggl\{1-{2\alpha\theta^2\over\alpha+(1-\alpha)2^p}\biggr\}\biggr]+o(\theta^2) \\
&=(\alpha+(1-\alpha)2^p)^{1/p}\biggl\{1-{\alpha(1-\alpha)2^p\theta^2\over p(\alpha+(1-\alpha)2^p)^2}
+{\alpha(1-\alpha)2^p\theta^2\over(\alpha+(1-\alpha)2^p)^2}-{\alpha\theta^2\over\alpha+(1-\alpha)2^p}
\biggr\}+o(\theta^2) \\
&=(\alpha+(1-\alpha)2^p)^{1/p}\biggl\{1-{\alpha(p\alpha+(1-\alpha)2^p)\over p(\alpha+(1-\alpha)2^p)^2}
\theta^2\biggr\}+o(\theta^2), \\
s_{22}(p)&=\biggl({\alpha+(1-\alpha)2^p\over2}\biggr)^{1/p}
\Biggl[2^{{1\over p}-1}\biggl\{1-{\alpha(1-\alpha)2^p\theta^2\over p(\alpha+(1-\alpha)2^p)^2}\biggr\} \\
&\quad-2^{{1\over p}-1}\biggl\{1-{\alpha(1-\alpha)2^p\theta^2\over p(\alpha+(1-\alpha)2^p)^2}\biggr\}
\biggl\{1+{\alpha(1-\alpha)2^{p+1}\theta^2\over(\alpha+(1-\alpha)2^p)^2}\biggr\}
\biggl\{1-{2\alpha\theta^2\over\alpha+(1-\alpha)2^p}\biggr\}\Biggr]+o(\theta^2) \\
&=(\alpha+(1-\alpha)2^p)^{1/p}\biggl\{-{\alpha(1-\alpha)2^p\theta^2\over(\alpha+(1-\alpha)2^p)^2}
+{\alpha\theta^2\over\alpha+(1-\alpha)2^p}\biggr\}+o(\theta^2) \\
&=\alpha^2(\alpha+(1-\alpha)2^p)^{{1\over p}-2}\theta^2+o(\theta^2), \\
s_{12}(p)&=\biggl({\alpha+(1-\alpha)2^p\over2}\biggr)^{1/p}
2^{{1\over p}-1}\biggl\{1-{\alpha(1-\alpha)2^p\theta^2\over p(\alpha+(1-\alpha)2^p)^2}\biggr\}
\biggl\{1+{\alpha(1-\alpha)2^{p+1}\theta^2\over(\alpha+(1-\alpha)2^p)^2}\biggr\} \\
&\quad\times\biggl\{{2\alpha\theta\over\alpha+(1-\alpha)2^p}+o(\theta)\biggr\}+o(\theta^2) \\
&=\alpha(\alpha+(1-\alpha)2^p)^{{1\over p}-1}\theta+o(\theta).
\end{align*}
Therefore, we obtain
\begin{align*}
&\det\{\cA_{\alpha,q}(\tilde A_0,\tilde B_\theta)-\cA_{\alpha,p}(\tilde A_0,\tilde B_\theta)\} \\
&\quad=(s_{11}(q)-s_{11}(p))(s_{22}(q)-s_{22}(p))-(s_{12}(q)-s_{12}(p))^2 \\
&\quad=\scriptsize{\theta^2\alpha^2\biggl[\Bigl\{(\alpha+(1-\alpha)2^q)^{1/q}-(\alpha+(1-\alpha)2^p)^{1/p}\Bigr\}
\Bigl\{(\alpha+(1-\alpha)2^q)^{{1\over q}-2}-(\alpha+(1-\alpha)2^p)^{{1\over p}-2}\Bigr\}} \\
&\qquad\qquad\quad
-\scriptsize{\Bigl\{(\alpha+(1-\alpha)2^q)^{{1\over q}-1}-(\alpha+(1-\alpha)2^p)^{{1\over p}-1}\Bigr\}^2
\biggr]+o(\theta^2)} \\
&\quad=\small{-\theta^2(\alpha+(1-\alpha)2^p)^{1/p}(\alpha+(1-\alpha)2^q)^{1/q}
\biggl({\alpha\over\alpha+(1-\alpha)2^p}-{\alpha\over\alpha+(1-\alpha)2^q}\biggr)^2+o(\theta^2)},
\end{align*}
as asserted.
\end{proof}

Note that although \eqref{F-4.3} is written in the form of \eqref{F-3.1} with $a=\alpha+(1-\alpha)2^p$ and
$b=0$, we cannot apply Lemma \ref{L-3.1} to prove Lemma \ref{L-4.2} because $x^{1/p}$ is not twice
differentiable at $x=0$ when $p>1/2$.

Now we are ready to prove Theorem \ref{T-4.1}.

\begin{proof}[Proof of Theorem \ref{T-4.1}]\enspace
(iii)$\implies$(i) is seen from \cite[Theorem 2.1]{AuHi}. Indeed, the proof is easy as follows. If $1\le p<q$
then
\[
((1-\alpha)A^q+\alpha B^q)^{1/q}=\bigl(((1-\alpha)A^q+\alpha B^q)^{p/q}\bigr)^{1/p}
\ge((1-\alpha)A^p+\alpha B^p)^{1/p},
\]
since $x^{p/q}$ is operator concave and $x^{1/p}$ is operator monotone on $[0,\infty)$. If $1/2\le p<1\le q$
then
\[
((1-\alpha)A^p+\alpha B^p)^{1/p}\le(1-\alpha)A+\alpha B\le((1-\alpha)A^q+\alpha B^q)^{1/q},
\]
since $x^{1/p}$ is operator convex and $x^{1/q}$ is operator concave on $[0,\infty)$.

(i)$\implies$(ii) is trivial.

(ii)$\implies$(iii).\enspace
By considering the inequality for $A=xI_2$ and $B=yI_2$, $x,y>0$, (ii) implies that $p\le q$. Hence it suffices
to show that (ii) fails to hold when $0<p<1/2$ and $q>p$ and when $0<p<q<1$.

First, assume that $0<p<1/2$ and $q>p$. Consider $A_0$ and $B_\theta$ in \eqref{F-3.5} with $0<y<1$ and
$x=y^2$. Since $(1-\alpha)y^{2p}+\alpha y^p<1$ and $(1-\alpha)y^{2q}+\alpha y^q<1$ clearly, we can apply
Lemma \ref{L-3.3}. As $y\searrow0$ we estimate
\begin{align*}
&((1-\alpha)y^{2q}+\alpha y^q)^{1/q}-((1-\alpha)y^{2p}+\alpha y^p)^{1/p} \\
&\quad=y((1-\alpha)y^q+\alpha)^{1/q}-y((1-\alpha)y^p+\alpha)^{1/p}
\approx(\alpha^{1/q}-\alpha^{1/p})y
\end{align*}
and
\begin{align*}
&{1-y^p\over1-(1-\alpha)y^{2p}-\alpha y^p}-{1-y^q\over1-(1-\alpha)y^{2q}-\alpha y^q} \\
&\quad=(1-\alpha){-y^p+y^q+y^{2p}-y^{2q}+y^{p+2q}-y^{2p+q}\over
(1-(1-\alpha)y^{2p}-\alpha y^p)(1-(1-\alpha)y^{2q}-\alpha y^q)}\approx-(1-\alpha)y^p.
\end{align*}
Therefore, the dominant term of the big bracket $[\cdots]$ of the RHS of \eqref{F-3.9} is
\[
\alpha(1-\alpha)\biggl({1\over p}-{1\over q}\biggr)(\alpha^{1/q}-\alpha^{1/p})y
-\alpha^2(1-\alpha)^2y^{2p}<0
\]
thanks to $2p<1$ when $y>0$ is sufficiently small. For such a $y>0$ the RHS of \eqref{F-3.9} $<0$
if $\theta$ is small enough, which implies that
$\cA_{\alpha,p}(A_0,B_\theta)\not\le\cA_{\alpha,q}(A_0,B_\theta)$.

Next, assume that $0<p<q<1$, and apply Lemma \ref{L-4.2}. Let $\tilde A_0$ and $\tilde B_\theta$ be
given in \eqref{F-4.1}. From expression \eqref{F-4.2} it follows that
$\det\{\cA_{\alpha,q}(\tilde A_0,\tilde B_\theta)-\cA_{\alpha,p}(\tilde A_0,\tilde B_\theta)\}<0$ so that
$\cA_{\alpha,p}(\tilde A_0,\tilde B_\theta)\not\le\cA_{\alpha,q}(\tilde A_0,\tilde B_\theta)$ if $\theta$ is
small enough. By continuity there are $A,B\in\bM_2^{++}$ such that
$\cA_{\alpha,p}(A,B)\not\le\cA_{\alpha,q}(A,B)$.
\end{proof}

The inequalities between $\cA_{\alpha,p}$ and $\cA_{\alpha,q}$ with respect to other weaker orderings are
simple as stated in the next proposition.

\begin{proposition}\label{P-4.3}
Let $0<\alpha<1$ and $p,q>0$. Then the following conditions are equivalent:
\begin{itemize}
\item[(i)] $\cA_{\alpha,p}(A,B)\le_\chao\cA_{\alpha,q}(A,B)$ for all $A,B\ge0$;
\item[(ii)] $\cA_{\alpha,p}(a,b)\le\cA_{\alpha,q}(a,b)$ for all scalars $a,b>0$;
\item[(iii)] $p\le q$.
\end{itemize}

Hence, for any $\triangleleft\in\{\le_\chao,\le_\near,\le_\lambda,\prec_w,\le_\Tr\}$,
$\cA_{\alpha,p}\triangleleft\cA_{\alpha,q}$ holds for all $A,B\ge0$ if and only if $p\le q$.
\end{proposition}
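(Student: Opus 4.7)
The plan is to establish the cycle (iii)$\Rightarrow$(i)$\Rightarrow$(ii)$\Rightarrow$(iii); the final sentence of the proposition then follows by sandwiching any intermediate ordering between $\le_\chao$ and $\le_\Tr$ using Proposition \ref{P-2.4}(3). The implication (i)$\Rightarrow$(ii) is automatic: specializing to $1\times1$ matrices, the chaotic order on positive scalars coincides with the usual $\le$. The equivalence (ii)$\Leftrightarrow$(iii) is the classical scalar power-mean monotonicity, strict whenever $a\ne b$, so (ii) forces $p\le q$ while $p\le q$ yields (ii).

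The substantive implication is (iii)$\Rightarrow$(i). Assume $0<p\le q$, take $A,B>0$, and set $s:=p/q\in(0,1]$. Because $x^s$ is operator concave on $[0,\infty)$ for $s\in(0,1]$, applying this to the convex combination $(1-\alpha)A^q+\alpha B^q$ gives
\[
\cA_{\alpha,q}(A,B)^p=\bigl((1-\alpha)A^q+\alpha B^q\bigr)^{s}\ge(1-\alpha)A^{qs}+\alpha B^{qs}
=(1-\alpha)A^p+\alpha B^p=\cA_{\alpha,p}(A,B)^p.
\]
Since $\log$ is operator monotone on $(0,\infty)$, this Loewner inequality yields $p\log\cA_{\alpha,q}(A,B)\ge p\log\cA_{\alpha,p}(A,B)$; dividing by $p>0$ is precisely $\cA_{\alpha,p}(A,B)\le_\chao\cA_{\alpha,q}(A,B)$ for $A,B>0$. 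To upgrade to arbitrary $A,B\ge0$, I would invoke Theorem \ref{T-2.6}, noting that $s(\cA_{\alpha,p}(A,B))=s(A)\vee s(B)=s(\cA_{\alpha,q}(A,B))$ so the support hypothesis on $\le_\chao$ is trivially satisfied.

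For the final clause, let $\triangleleft\in\{\le_\chao,\le_\near,\le_\lambda,\prec_w,\le_\Tr\}$. If $p\le q$ then $\cA_{\alpha,p}\le_\chao\cA_{\alpha,q}$ by what was just shown, hence $\cA_{\alpha,p}\triangleleft\cA_{\alpha,q}$ by Proposition \ref{P-2.4}(3). Conversely, if $\cA_{\alpha,p}(A,B)\triangleleft\cA_{\alpha,q}(A,B)$ holds for all $A,B\ge0$, then even the weakest ordering $\le_\Tr$ applied to $A=aI_n$, $B=bI_n$ collapses to $n\cA_{\alpha,p}(a,b)\le n\cA_{\alpha,q}(a,b)$, forcing (ii) and hence (iii). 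The only subtle point in the whole argument is spotting the right exponent: raising to the $p$-th power (rather than the $q$-th or some other exponent) is precisely what converts operator concavity of $x^{p/q}$ into the Loewner bound $\cA_{\alpha,q}^p\ge\cA_{\alpha,p}^p$, after which operator monotonicity of $\log$ delivers chaotic order with no further work.
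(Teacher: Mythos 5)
Your proposal is correct and follows essentially the same route as the paper: the heart of both arguments is the operator concavity of $x^{p/q}$ applied to $(1-\alpha)A^q+\alpha B^q$, followed by the operator monotonicity of $\log$ and division by $p$. The only cosmetic difference is that the paper handles general $A,B\ge0$ directly by compressing with the support projection $P=s(A)\vee s(B)$, whereas you reduce to $A,B>0$ and invoke Theorem \ref{T-2.6}; both are valid.
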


\begin{proof}
(i)$\implies$(ii) is obvious, and (ii)$\implies$(iii) is easy since (iii) means that $x^{q/p}$ is convex on
$(0,\infty)$. Finally, let us show (iii)$\implies$(i). Assume that $p\le q$. Let $A,B\ge0$ be arbitrary. Note that
\[
s(\cA_{\alpha,p}(A,B))=s((1-\alpha)A^p+\alpha B^p)=s(A)\vee s(B)=:P
\]
and similarly $s(\cA_{\alpha,q}(A,B))=P$. Since $x^{p/q}$ is operator concave on $[0,\infty)$, one has
\[
(1-\alpha)A^p+\alpha B^p\le((1-\alpha)A^q+\alpha B^q)^{p/q},
\]
so that
\begin{align*}
P\{\log\cA_{\alpha,p}(A,B)\}P&={1\over p}P\{\log((1-\alpha)A^p+\alpha B^p)\}P \\
&\le{1\over q}P\{\log((1-\alpha)A^q+\alpha B^q)\}P=P\{\log\cA_{\alpha,q}(A,B)\}P.
\end{align*}
Hence (i) holds.
\end{proof}

The next proposition says that $p>0\mapsto\Tr\,\cA_{\alpha,p}(A,B)$ is strictly increasing unless $A=B$.

\begin{proposition}\label{P-4.4}
Let $0<\alpha<1$ and $0<p<q$. Then for every $A,B\ge0$ the following conditions are equivalent:
\begin{itemize}
\item[(i)] $\Tr\,\cA_{\alpha,p}(A,B)=\Tr\,\cA_{\alpha,q}(A,B)$;
\item[(ii)] $\cA_{\alpha,p}(A,B)=\cA_{\alpha,q}(A,B)$;
\item[(iii)] $A=B$.
\end{itemize}
\end{proposition}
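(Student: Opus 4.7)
The implications (iii) $\Rightarrow$ (ii) $\Rightarrow$ (i) are immediate from $\cA_{\alpha,r}(A,A)=A$ for every $r>0$. The substance is (i) $\Rightarrow$ (iii), which I plan to carry out in three phases.

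\emph{Phase 1 (trace equality at $(p,q)$ forces matrix equality throughout $[p,q]$).} By Proposition~\ref{P-4.3} combined with Proposition~\ref{P-2.4}(3), for any $0<p_1\le p_2$ one has $\cA_{\alpha,p_1}(A,B)\le_\chao\cA_{\alpha,p_2}(A,B)$ and hence $\cA_{\alpha,p_1}(A,B)\le_\lambda\cA_{\alpha,p_2}(A,B)$; in particular $\Tr\,\cA_{\alpha,p'}(A,B)$ is nondecreasing in $p'$. Hypothesis (i) therefore forces it to be constant on $[p,q]$, and the entrywise eigenvalue ordering then forces $\lambda_i(\cA_{\alpha,p_1}(A,B))=\lambda_i(\cA_{\alpha,p_2}(A,B))$ for every $i$ and every $p_1,p_2\in[p,q]$. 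All these means share the common support $P:=s(A)\vee s(B)$, so on $P\bC^n$ the chaotic order becomes the Loewner inequality $\log\cA_{\alpha,p_1}\le\log\cA_{\alpha,p_2}$; equality of the eigenvalues yields equality of the traces of the logarithms, whence the positive semidefinite difference of the two logarithms has zero trace and must vanish, giving $\cA_{\alpha,p_1}(A,B)=\cA_{\alpha,p_2}(A,B)$ for all $p_1,p_2\in[p,q]$.

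\emph{Phase 2 (analytic continuation to all $p'>0$ and evaluation at $p'=1,2$).} Both $p'\mapsto A^{p'}=e^{p'\log A}$ and $p'\mapsto B^{p'}$ are real-analytic on $(0,\infty)$ (on their respective supports), so $X(p'):=(1-\alpha)A^{p'}+\alpha B^{p'}$ is real-analytic with positive definite restriction to $P\bC^n$; hence $\cA_{\alpha,p'}(A,B)=X(p')^{1/p'}=\exp((1/p')\log X(p'))$ is real-analytic on $(0,\infty)$. Being constant on the interval $[p,q]$ by Phase~1, it is constant on all of $(0,\infty)$; call the common value $M$. Specialising to $p'=1$ gives $M=(1-\alpha)A+\alpha B$, while $p'=2$ gives $M^2=(1-\alpha)A^2+\alpha B^2$, so
\[
((1-\alpha)A+\alpha B)^2=(1-\alpha)A^2+\alpha B^2.
\]
Expanding the square and cancelling the strictly positive factor $\alpha(1-\alpha)$ reduces this to $AB+BA=A^2+B^2$, i.e.\ $(A-B)^2=0$; Hermiticity of $A-B$ then forces $A=B$.

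The principal obstacle is Phase~1, where trace equality (a single scalar identity) is a priori much weaker than matrix equality. The key is to exploit \emph{both} orderings simultaneously: the chaotic order supplies the Loewner inequality between the logarithms, while the entrywise eigenvalue order converts the trace identity into equality of eigenvalues, and these combine to squeeze out pointwise matrix identity on $[p,q]$. After that, the real-analytic extension to $(0,\infty)$ and the one-line squaring identity at $p'=1,2$ are routine.
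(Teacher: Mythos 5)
Your proof is correct, but it runs the argument at the matrix level where the paper stays entirely at the scalar level, and this makes your route noticeably heavier than necessary. The paper's proof of (i)$\implies$(iii) is: by Proposition~\ref{P-4.3} the scalar function $t\mapsto\Tr\,\cA_{\alpha,t}(A,B)$ is nondecreasing, hence constant on $[p,q]$ under (i); it is real analytic in $t>0$, so it is constant on all of $(0,\infty)$; comparing $t=1/2$ with $t=1$ gives $\Tr\bigl((1-\alpha)A^{1/2}+\alpha B^{1/2}\bigr)^2=\Tr((1-\alpha)A+\alpha B)$, i.e.\ $\Tr(A^{1/2}-B^{1/2})^2=0$, whence $A=B$. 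Your Phase~1 --- combining $\le_\chao$ (which gives $P(\log\cA_{\alpha,p_1})P\le P(\log\cA_{\alpha,p_2})P$ on the common support $P=s(A)\vee s(B)$) with $\le_\lambda$ (which converts trace equality into eigenvalue equality, hence equality of the traces of the logarithms, so the positive semidefinite difference of logarithms vanishes) --- is a correct and rather elegant way to upgrade the trace identity to a matrix identity, and it is a reusable principle; but it is not needed, since the paper's squaring trick already works with the trace identity alone. Likewise your Phase~2 requires real analyticity of the matrix-valued map $p'\mapsto\cA_{\alpha,p'}(A,B)$ (which you justify adequately via the spectral decomposition of $A^{p'},B^{p'}$ and analyticity of $\log$ and $\exp$ on positive definite matrices), whereas the paper only needs analyticity of the scalar trace. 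Your final step at $p'=1,2$ yielding $(A-B)^2=0$ and the paper's at $t=1/2,1$ yielding $\Tr(A^{1/2}-B^{1/2})^2=0$ are the same idea at different exponents. In short: same skeleton (constancy on an interval, analytic continuation, evaluation at two exponents, a squaring identity), but your version proves a stronger intermediate statement at the cost of extra machinery.
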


\begin{proof}
It is obvious that (iii)$\implies$(ii)$\implies$(i). To show (i)$\implies$(iii), assume (i) so that
$\Tr\,\cA_{\alpha,t}(A,B)$ is constant for $t\in[p,q]$ by Proposition \ref{P-4.3}. Since the function
$t\mapsto\Tr\,\cA_{\alpha,t}(A,B)$ is real analytic in $t>0$, it follows that $\Tr\,\cA_{\alpha,t}(A,B)$ is
constant for all $t>0$. In particular, $\Tr\,\cA_{\alpha,1/2}(A,B)=\Tr\,\cA_{\alpha,1}(A,B)$, which gives
$\Tr(A^{1/2}-B^{1/2})^2=0$ so that $A=B$.
\end{proof}

The results of this subsection are summarized as follows:

\medskip
\begin{table}[htb]
\centering
\begin{tabular}{|c|l|l|} \hline
& Sufficient cond. & Necessary cond. \\ \hline

$\cA_{\alpha,q}\le\cA_{\alpha,q}$ & \small{$p=q$ or $1\le p<q$ or $1/2\le p<1\le q$} &
\small{$p=q$ or $1\le p<q$ or $1/2\le p<1\le q$} \\ \hline

$\begin{array}{ccccc}\cA_{\alpha,p}\le_\chao\cA_{\alpha,q} \\ \cA_{\alpha,p}\le_\near\cA_{\alpha,q} \\
\cA_{\alpha,p}\le_\lambda\cA_{\alpha,q} \\ \cA_{\alpha,p}\prec_w\cA_{\alpha,q} \\
\cA_{\alpha,p}\le_\Tr\cA_{\alpha,q}\end{array}$ & $p\le q$ & $p\le q$ \\ \hline
\end{tabular}
\end{table}

\subsection{$LE_\alpha$ vs.\ $\cA_{\alpha,q}$}\label{Sec-4.2}

The order relations between $LE_\alpha$ and $\cA_{\alpha,p}$ are clarified in the next theorem and
proposition. Theorem \ref{T-4.5} was formerly shown in \cite{AuHi} with restriction to $\alpha=1/2$.

\begin{theorem}\label{T-4.5}
For any $\alpha\in(0,1)$ and any $q>0$ there exist $A,B\in\bM_2^{++}$ such that
$LE_\alpha(A,B)\not\le\cA_{\alpha,q}(A,B)$.
\end{theorem}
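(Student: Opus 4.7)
The plan is to apply Lemma~\ref{L-3.4} with $p=q$ to the $2\times2$ matrices $A_0=\mathrm{diag}(1,x)$ and $B_\theta$ from \eqref{F-3.5}, with parameters $x,y>0$ still to be chosen, and exhibit a choice for which the coefficient of $\theta^2$ in the resulting expansion \eqref{F-3.14} of $\det\{\cA_{\alpha,q}(A_0,B_\theta)-LE_\alpha(A_0,B_\theta)\}$ is strictly negative. At $\theta=0$ the two matrices commute; both means become diagonal, and by the scalar AM--GM inequality their difference is $\mathrm{diag}(0,\,\cA_{\alpha,q}(x,y)-x^{1-\alpha}y^\alpha)$, which is positive semidefinite with vanishing determinant. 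Hence a strictly negative $\theta^2$-coefficient in \eqref{F-3.14} forces $\det\{\cA_{\alpha,q}(A_0,B_\theta)-LE_\alpha(A_0,B_\theta)\}<0$ for every sufficiently small $\theta>0$, so the difference is indefinite and the pair $(A_0,B_\theta)\in\bM_2^{++}\times\bM_2^{++}$ serves as the desired counterexample.

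The regime I would use is $x$ large with $(1-\alpha)x^q>1$, followed by $y\searrow0$. In this asymptotic the first curly bracket of \eqref{F-3.14} is a sum of two terms with positive limits: the first tends to $(x^q-1)/(q((1-\alpha)x^q-1))$ and the second to $\log x/\alpha$. The accompanying factor $((1-\alpha)x^q+\alpha y^q)^{1/q}-x^{1-\alpha}y^\alpha$ tends to $(1-\alpha)^{1/q}x$, the second curly bracket tends to the bounded quantity $\tfrac{1}{1-(1-\alpha)x^q}-\tfrac{1}{\alpha}$, $1-x^{1-\alpha}y^\alpha\to1$, and $1-\cA_{\alpha,q}(x,y)\to1-(1-\alpha)^{1/q}x<0$. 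So the first summand inside the bracketed expression of \eqref{F-3.14} is strictly negative of order $-x\log x$ as $x\to\infty$, while the second summand (positive in sign because $1-x^{1-\alpha}y^\alpha$ and $1-\cA_{\alpha,q}(x,y)$ end up on opposite sides of $1$) is only of order $+x$. A short comparison shows the total coefficient is asymptotic to $(1-\alpha)^{1/q}x\bigl(1-(1-\alpha)\log x\bigr)$, which is strictly negative once $x>e^{1/(1-\alpha)}$. Fixing any such $x$ (also exceeding $(1-\alpha)^{-1/q}$), then $y>0$ small, and finally $\theta>0$ small, completes the construction.

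The main obstacle is precisely this competition between two summands of opposite sign in \eqref{F-3.14}: no naive choice of $(x,y)$ gives a negative coefficient, and the argument relies on the $x\log x$ versus $x$ growth in the asymptotic $x\to\infty$. The side conditions of Lemma~\ref{L-3.4} --- namely $(1-\alpha)x^q+\alpha y^q\ne1$ and $x^{1-\alpha}y^\alpha\ne1$ --- are automatically preserved: the former holds once $(1-\alpha)x^q>1$, and the latter is ensured by avoiding the single value $y=x^{-(1-\alpha)/\alpha}$.
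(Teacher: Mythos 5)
Your proof is correct and runs on the same machinery as the paper's: Lemma \ref{L-3.4} applied to the pair $A_0,B_\theta$ of \eqref{F-3.5}, with the conclusion drawn from a strictly negative $\theta^2$-coefficient in the determinant expansion \eqref{F-3.14}, which makes $\cA_{\alpha,q}-LE_\alpha$ indefinite for small $\theta\ne0$. The only difference is the parameter regime: you fix $x$ large with $(1-\alpha)x^q>1$ and let $y\searrow0$, so that the negative first summand of order $x\log x$ overwhelms the positive second summand of order $x$, whereas the paper sets $x=y^2$ and lets $y\searrow0$, in which case the first summand vanishes (being $O(y\log y)$) and the second summand alone converges to a strictly negative constant --- both regimes work, the paper's being marginally simpler since no competition between terms of opposite sign has to be resolved.
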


\begin{proof}
Let $0<\alpha<1$ and $q>0$. Assume by contradiction that $LE_\alpha(A,B)\le\cA_{\alpha,p}(A,B)$ for all
$A,B\in\bM_n^{++}$. Then $LE_\alpha(A_0,B_\theta)\le\cA_{\alpha,p}(A_0,B_\theta)$ holds for
$A_0,B_\theta$ in \eqref{F-3.5} with any $x,y>0$ and $\theta\in\bR$. Let $0<y<1$ and $x=y^2$. Since
$x^{1-\alpha}y^\alpha \ne1$ and $(1-\alpha)x^q+\alpha y^q\ne1$, by Lemma \ref{L-3.4} we must have
\begin{equation}\label{F-4.4}
\begin{aligned}
&-{1-\alpha\over\alpha}\biggl\{{(1-y^{2p})(1-y^p)\over p(1-(1-\alpha)y^{2p}-\alpha y^p)}
+{2\over2-\alpha}\log y\biggr\}\bigl\{((1-\alpha)y^{2p}+\alpha y^p)^{1/p}-y^{2-\alpha}\bigr\} \\
&\quad-\biggl\{{1-y^p\over1-(1-\alpha)y^{2p}-\alpha y^p}-{1\over2-\alpha}\biggr\}^2
(1-y^{2-\alpha})\bigl\{1-((1-\alpha)y^{2p}+\alpha y^p)^{1/p}\bigr\}\ge0.
\end{aligned}
\end{equation}
As $y\searrow0$, since we have the approximations
\begin{align*}
{(1-y^{2p})(1-y^p)\over p(1-(1-\alpha)y^{2p}-\alpha y^p)}+{2\over2-\alpha}\log y
&\approx{2\over2-\alpha}\log y, \\
((1-\alpha)y^{2p}+\alpha y^p)^{1/p}-y^{2-\alpha}&\approx\alpha^{1/p}y,
\end{align*}
the LHS of \eqref{F-4.4} can be estimated to
\[
-{2(1-\alpha)\alpha^{{1\over p}-1}\over2-\alpha}\,y\log y-\biggl(1-{1\over2-\alpha}\biggr)^2,
\]
which is negative when $y>0$ is sufficiently small. This contradicts \eqref{F-4.4}.
\end{proof}

\begin{proposition}\label{P-4.6}
Let $0<\alpha<1$ and $q>0$. Then for every $A,B\ge0$ we have
$LE_\alpha(A,B)\le_\chao\cA_{\alpha,q}(A,B)$. Hence $LE_\alpha(A,B)\triangleleft\cA_{\alpha,q}(A,B)$
holds for any $\triangleleft\in\{\le_\chao,\le_\near,\le_\lambda,\prec_w,\le_\Tr\}$ and $A,B\ge0$.
\end{proposition}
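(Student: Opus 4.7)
The proof can be assembled entirely from results already in the excerpt, namely Theorem~\ref{T-2.3} (Lie--Trotter--Kato), Proposition~\ref{P-4.3} (monotonicity of $p\mapsto\cA_{\alpha,p}$ in the chaotic order), Lemma~\ref{L-2.5} (closedness of orderings under limits) and Proposition~\ref{P-2.4}(3) (the hierarchy of orderings). My plan is to deduce $LE_\alpha(A,B)\le_\chao\cA_{\alpha,q}(A,B)$ by realizing $LE_\alpha$ as the $p\searrow0$ limit of the increasing chain $\cA_{\alpha,p}$.

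Fix $0<\alpha<1$, $q>0$ and $A,B\ge0$. For each $p\in(0,q]$, Proposition~\ref{P-4.3} gives
\[
\cA_{\alpha,p}(A,B)\le_\chao\cA_{\alpha,q}(A,B).
\]
By Theorem~\ref{T-2.3} applied to $\cM_{\alpha,p}=\cA_{\alpha,p}$,
\[
\cA_{\alpha,p}(A,B)\xrightarrow[p\searrow0]{}LE_\alpha(A,B),
\]
while the right-hand side is independent of $p$. To invoke Lemma~\ref{L-2.5} for the ordering $\le_\chao$ I must verify the extra support hypothesis $s(LE_\alpha(A,B))\le s(\cA_{\alpha,q}(A,B))$. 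This is immediate from the definitions: $s(LE_\alpha(A,B))=s(A)\wedge s(B)\le s(A)\vee s(B)=s(\cA_{\alpha,q}(A,B))$. Lemma~\ref{L-2.5} then yields
\[
LE_\alpha(A,B)\le_\chao\cA_{\alpha,q}(A,B),
\]
which is the first assertion of the proposition.

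The second assertion is then automatic from Proposition~\ref{P-2.4}(3), which gives the chain of implications $\le_\chao\,\Rightarrow\,\le_\near\,\Rightarrow\,\le_\lambda\,\Rightarrow\,\prec_{w\log}\,\Rightarrow\,\prec_w\,\Rightarrow\,\le_\Tr$, so $LE_\alpha(A,B)\triangleleft\cA_{\alpha,q}(A,B)$ for every $\triangleleft$ in the listed set.

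There is no real obstacle here: the only technical point is checking the support condition when pushing $\le_\chao$ through a limit, since Lemma~\ref{L-2.5} does not give this ordering for free under convergence. All the hard analytic content (the Lie--Trotter--Kato convergence and the monotonicity $\cA_{\alpha,p}\le_\chao\cA_{\alpha,q}$ for $p\le q$) has already been established earlier in the paper, so the proof reduces to an application of those inputs followed by the hierarchy of Proposition~\ref{P-2.4}(3).
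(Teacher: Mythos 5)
Your proof is correct and follows essentially the same route as the paper: Proposition~\ref{P-4.3} gives $\cA_{\alpha,p}\le_\chao\cA_{\alpha,q}$ for $p\le q$, Theorem~\ref{T-2.3} gives the convergence $\cA_{\alpha,p}(A,B)\to LE_\alpha(A,B)$ as $p\searrow0$, and Lemma~\ref{L-2.5} passes $\le_\chao$ to the limit, after which Proposition~\ref{P-2.4}(3) yields the remaining orderings. Your explicit verification of the support hypothesis $s(LE_\alpha(A,B))=s(A)\wedge s(B)\le s(A)\vee s(B)=s(\cA_{\alpha,q}(A,B))$ is a point the paper leaves implicit, and it is a welcome addition.
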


\begin{proof}
When $0<p<q$, by Proposition \ref{P-4.3} we have $\cA_{\alpha,p}(A,B)\le_\chao\cA_{\alpha,q}(A,B)$ for
any $A,B\ge0$. Letting $p\searrow0$ gives $LE_\alpha(A,B)\le_\chao\cA_{\alpha,q}(A,B)$ by
Theorem \ref{T-2.3} and Lemma \ref{L-2.5}.
\end{proof}

In the above proof, by Theorem \ref{T-2.6} we may assume that $A,B>0$. Thus, the simpler version of
Theorem \ref{T-2.3} for $A,B>0$ is enough to prove the above proposition. However, we cannot
completely avoid use of Theorem \ref{T-A.1} because it is necessary in proving Lemma \ref{L-2.5} via
Proposition \ref{P-2.4}(2).

\begin{proposition}\label{P-4.7}
Let $0<\alpha<1$ and $q>0$. Then for every $A,B\ge0$ the following conditions are equivalent:
\begin{itemize}
\item[(i)] $\Tr\,LE_\alpha(A,B)=\Tr\,\cA_{\alpha,q}(A,B)$;
\item[(ii)] $LE_\alpha(A,B)=\cA_{\alpha,q}(A,B)$;
\item[(iii)] $A=B$.
\end{itemize}
\end{proposition}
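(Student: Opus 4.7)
The plan is to mimic the logical structure used in Proposition \ref{P-4.4}, reducing (i) $\Rightarrow$ (iii) to that proposition by squeezing $\Tr\,\cA_{\alpha,p}(A,B)$ for small $p$ between $\Tr\,LE_\alpha(A,B)$ and $\Tr\,\cA_{\alpha,q}(A,B)$. The implications (iii) $\Rightarrow$ (ii) $\Rightarrow$ (i) are essentially free: when $A=B$, a direct computation using the extended definition (with $P_0=s(A)$) gives $LE_\alpha(A,A)=A=\cA_{\alpha,q}(A,A)$, and (ii) $\Rightarrow$ (i) is trivial.

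For the main implication (i) $\Rightarrow$ (iii), I would assemble three ingredients already available in the excerpt: (a) Proposition \ref{P-4.6}, which yields $LE_\alpha(A,B)\le_\Tr\cA_{\alpha,p}(A,B)$ for every $p>0$; (b) Proposition \ref{P-4.3}, which tells us that $p\mapsto\Tr\,\cA_{\alpha,p}(A,B)$ is nondecreasing on $(0,\infty)$; and (c) Theorem \ref{T-2.3}, which gives $\cA_{\alpha,p}(A,B)\to LE_\alpha(A,B)$ as $p\searrow0$ (so in particular the trace converges). Assuming (i) and combining (a) and (b), for every $0<p\le q$ we obtain the sandwich
\[
\Tr\,LE_\alpha(A,B)\le\Tr\,\cA_{\alpha,p}(A,B)\le\Tr\,\cA_{\alpha,q}(A,B)=\Tr\,LE_\alpha(A,B),
\]
so $\Tr\,\cA_{\alpha,p}(A,B)$ is constant in $p$ throughout $(0,q]$.

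Now I would fix any $0<p<q$ and invoke Proposition \ref{P-4.4} directly: the equality $\Tr\,\cA_{\alpha,p}(A,B)=\Tr\,\cA_{\alpha,q}(A,B)$ forces $A=B$. This closes the loop and completes the equivalence.

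I do not expect any serious obstacle: the real-analyticity argument of Proposition \ref{P-4.4} is already in the previous subsection, and the only genuinely new observation needed here is that Theorem \ref{T-2.3} and the monotonicity in $p$ together prevent $\Tr\,\cA_{\alpha,p}(A,B)$ from ``moving'' between $p\searrow0$ (where it tends to $\Tr\,LE_\alpha(A,B)$) and $p=q$ when (i) holds. The only point worth checking carefully is that Theorem \ref{T-2.3} requires, when $B=0$ for instance, nothing beyond $A,B\ge0$ (no support condition since $\cM_{\alpha,p}=\cA_{\alpha,p}$ is in class (i)), so the argument applies to all $A,B\ge0$ as stated.
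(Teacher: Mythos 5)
Your argument for (i) $\Rightarrow$ (iii) — sandwiching $\Tr\,\cA_{\alpha,p}(A,B)$ between $\Tr\,LE_\alpha(A,B)$ and $\Tr\,\cA_{\alpha,q}(A,B)$ via Propositions \ref{P-4.6} and \ref{P-4.3}, then invoking Proposition \ref{P-4.4} — is exactly the paper's proof; the appeal to Theorem \ref{T-2.3} is harmless but not actually needed once the sandwich is in place. The proposal is correct.
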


\begin{proof}
It is obvious that (iii)$\implies$(ii)$\implies$(1). If (i) holds, then we have
$\Tr\,\cA_{\alpha,p}(A,B)=\Tr\,\cA_{\alpha,q}(A,B)$ for any $p\in(0,q]$ by Propositions \ref{P-4.3} and
\ref{P-4.6}. Hence (iii) follows by Proposition \ref{P-4.4}.
\end{proof}

The results of this subsection are summarized as follows:

\medskip
\begin{table}[htb]
\centering
\begin{tabular}{|c|l|l|} \hline
& Sufficient cond. & Necessary cond. \\ \hline

$LE_\alpha\le\cA_{\alpha,q}$ & none & \\ \hline

$\begin{array}{ccccc}LE_\alpha\le_\chao\cA_{\alpha,q} \\ LE_\alpha\le_\near\cA_{\alpha,q} \\
LE_\alpha\le_\lambda\cA_{\alpha,q} \\ LE_\alpha\prec_w\cA_{\alpha,q} \\
LE_\alpha\le_\Tr\cA_{\alpha,q}\end{array}$ & arbitrary $q$ & \\ \hline
\end{tabular}
\end{table}

\subsection{$R_{\alpha,p}$ vs.\ $\cA_{\alpha,q}$}\label{Sec-4.3}

In this subsection we discuss inequalities between $R_{\alpha,p}$ and $\cA_{\alpha,q}$. The next theorem
says that the near order inequality $R_{\alpha,p}(A,B)\le_\near\cA_{\alpha,q}(A,B)$ fails to hold for any
$p,q>0$.

\begin{theorem}\label{T-4.8}
For any $\alpha\in(0,1)$ and any $p,q>0$ there exist $A,B\in\bM_2^{++}$ such that
$R_{\alpha,p}(A,B)\not\le_\near\cA_{\alpha,q}(A,B)$ (hence $R_{\alpha,p}(A,B)\not\le\cA_{\alpha,q}(A,B)$
and $R_{\alpha,p}(A,B)\not\le_\chao\cA_{\alpha,q}(A,B)$).
\end{theorem}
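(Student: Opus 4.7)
The plan is to construct an explicit $2\times 2$ counterexample using the matrices $A_0,B_\theta$ from \eqref{F-3.5} specialized to $y=x>0$, $x\ne 1$, for which $A_0$ and $B_\theta$ share the spectrum $\{1,x\}$. In this degenerate regime both $\cA_{\alpha,q}(A_0,B_\theta)$ and $R_{\alpha,p}(A_0,B_\theta)$ collapse at $\theta=0$ to the common diagonal matrix $D:=\diag(1,x)$ (since at the scalar level $\cA_{\alpha,q}(x,x)=R_{\alpha,p}(x,x)=x$), so the geometric mean $G_\theta:=R_{\alpha,p}(A_0,B_\theta)\#\cA_{\alpha,q}(A_0,B_\theta)^{-1}$ satisfies $G_0=I_2$. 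The near-order relation $R_{\alpha,p}(A_0,B_\theta)\le_\near\cA_{\alpha,q}(A_0,B_\theta)$ reads $G_\theta\le I_2$, which is on the boundary at $\theta=0$; the idea is to push $G_\theta$ off $I_2$ at first order in $\theta$ and force an eigenvalue of $G_\theta$ strictly above $1$.

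Concretely, writing $X_\theta:=R_{\alpha,p}(A_0,B_\theta)$ and $Y_\theta:=\cA_{\alpha,q}(A_0,B_\theta)$, I would differentiate the defining identity $G_\theta X_\theta^{-1}G_\theta=Y_\theta^{-1}$ at $\theta=0$ (using $G_0=I_2$, $X_0=Y_0=D$, and $(Z_\theta^{-1})'|_0=-D^{-1}Z'(0)D^{-1}$) to obtain the Sylvester equation
\[
G'(0)D^{-1}+D^{-1}G'(0)=D^{-1}(M-M')D^{-1},
\]
where $M,M'$ are the off-diagonal order-$\theta$ parts of $X_\theta$ and $Y_\theta$ read off from Lemmas~\ref{L-3.5} and~\ref{L-3.3} (specialized to $y=x$). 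A coordinate-wise solution yields diagonal entries zero and $(G'(0))_{12}=(z_{12}(p)-u_{12}(q))/(1+x)$.

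With $y=x$, \eqref{F-3.8} simplifies to $u_{12}(q)=\alpha(1-x)$ (independent of $q$), while \eqref{F-3.16} gives $z_{12}(p)=(1-x)\,x^{(1-\alpha)p/2}(1-x^{\alpha p})/(1-x^p)$. Their equality reduces to $\phi_p(x):=x^{(1-\alpha)p/2}(1-x^{\alpha p})/(1-x^p)=\alpha$; a direct degree count gives $\phi_p(x)\sim x^{-(1-\alpha)p/2}\to 0$ as $x\to\infty$, so for any fixed $\alpha\in(0,1)$ and $p,q>0$ one may pick $x$ large enough to guarantee $(G'(0))_{12}\ne 0$. Then $G'(0)$ is a nonzero symmetric off-diagonal $2\times 2$ matrix with eigenvalues $\pm|(G'(0))_{12}|$, and by analytic Hermitian perturbation theory applied at the fully degenerate point $G_0=I_2$, the eigenvalues of $G_\theta$ are $1\pm\theta|(G'(0))_{12}|+O(\theta^2)$. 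For all sufficiently small $\theta>0$ the larger one exceeds $1$, so $G_\theta\not\le I_2$, giving $R_{\alpha,p}(A_0,B_\theta)\not\le_\near\cA_{\alpha,q}(A_0,B_\theta)$; the parenthetical $\not\le$ and $\not\le_\chao$ then follow from Proposition~\ref{P-2.4}(3).

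The main technical point to be careful about is the first-order expansion of $G_\theta$ through the degenerate base point $G_0=I_2$. This needs only the Rellich-type statement that a real-analytic Hermitian matrix family has analytic eigenvalues, which applies here because $(X,Y)\mapsto X\#Y^{-1}$ is real-analytic in a neighborhood of $(D,D)\in\bM_2^{++}\times\bM_2^{++}$; everything else is routine given the computations already set up in Section~\ref{Sec-3}.
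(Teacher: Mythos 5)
Your proposal is correct, and it reaches the same scalar obstruction as the paper but by a different mechanism. Both arguments use the same test family $A_0,B_\theta$ with $y=x\ne1$, work to first order in $\theta$ around the degenerate point $\theta=0$ where $R_{\alpha,p}=\cA_{\alpha,q}=\diag(1,x)$, and reduce the failure of the near order to the impossibility of the identity $\bigl(x^{(1-\alpha)p/2}-x^{(1+\alpha)p/2}\bigr)/(1-x^p)=\alpha$ holding for all $x$ (the paper kills it by letting $x\searrow0$, you by letting $x\to\infty$; both limits are $0\ne\alpha$). The difference is in how the near order is linearized: the paper uses the equivalent formulation $X\le_\near Y\iff(Y^{1/2}XY^{1/2})^{1/2}\le Y$ and explicitly expands $Y^{1/2}$, $Y^{1/2}XY^{1/2}$ and its square root via the first-order functional-calculus formula \eqref{F-3.4}, then takes the determinant of the difference; you work directly with the defining condition $G_\theta:=X_\theta\#Y_\theta^{-1}\le I$ and differentiate the Riccati identity $G_\theta X_\theta^{-1}G_\theta=Y_\theta^{-1}$ at the base point $G_0=I$, which yields a Sylvester equation whose solution gives $(G'(0))_{12}=(z_{12}(p)-\alpha(1-x))/(1+x)$ in one step. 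Your route avoids the nested square-root expansions and is arguably cleaner; what it requires in exchange is the (easily justified, as you note) real-analyticity of $\theta\mapsto G_\theta$ near the fully degenerate point and a first-order Hermitian eigenvalue bound, whereas the paper's determinant trick needs only the $o(\theta)$ expansions themselves. One cosmetic remark: you do not actually need Rellich-type analyticity of eigenvalues; since $G_\theta=I+\theta G'(0)+o(\theta)$ with $G'(0)$ purely off-diagonal and nonzero, either Weyl's perturbation inequality or the observation that $\det(G_\theta-I)=-\theta^2(G'(0))_{12}^2+o(\theta^2)<0$ already forces an eigenvalue of $G_\theta$ above $1$, which is exactly the paper's closing step in its own coordinates.
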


\begin{proof}
Consider $A_0,B_\theta\in\bM_2^{++}$ given in \eqref{F-3.5} with $y=x\ne1$. Let $0<\alpha<1$ and
$p,q>0$ be arbitrarily fixed. The estimates of $R_{\alpha,p}(A_0,B_\theta)$ and
$\cA_{\alpha,q}(A_0,B_\theta)$ up to $o(\theta)$ are enough to show the theorem.  Assume that
$R_{\alpha,p}(A_0,B_\theta)\le_\near\cA_{\alpha,q}(A_0,B_\theta)$ for any $x>0$ with $x\ne1$ and any
$\theta\in\bR$. Now write $X:=R_{\alpha,p}(A_0,B_\theta)$ and $Y:=\cA_{\alpha,q}(A_0,B_\theta)$.
Reducing \eqref{F-3.15} to $o(\theta)$ (see \eqref{F-3.4}) one has
$X=\begin{bmatrix}1&\theta z_{12}\\\theta z_{12}&x\end{bmatrix}+o(\theta)$ with $z_{12}$ in
\eqref{F-3.16} (without variable $p$ for simplicity). On the other hand, reducing \eqref{F-3.7} and \eqref{F-3.8}
with $y=x\ne1$ one has $Y=\begin{bmatrix}1&\theta\alpha(1-x)\\\theta\alpha(1-x)&x\end{bmatrix}+o(\theta)$,
and applies \eqref{F-3.4} to have
$Y^{1/2}=\begin{bmatrix}1&\theta\alpha(1-x^{1/2})\\\theta\alpha(1-x^{1/2})&x^{1/2}\end{bmatrix}+o(\theta)$
as $\theta\to0$.
Hence we have
\[
Y^{1/2}XY^{1/2}=\begin{bmatrix}1&\theta u_{12}\\\theta u_{12}&x^2\end{bmatrix}+o(\theta)
\quad\mbox{as $\theta\to0$},
\]
where
\[
u_{12}:=x^{1/2}z_{12}+\alpha(1-x^{1/2})(1+x^{3/2}),
\]
so that by \eqref{F-3.4} again we can further compute 
\[
(Y^{1/2}XY^{1/2})^{1/2}
=\begin{bmatrix}1&\theta{u_{12}\over1+x}\\\theta{u_{12}\over1+x}&x\end{bmatrix}+o(\theta).
\]
Then, since $X\le_\near Y$ is equivalent to $(Y^{1/2}XY^{1/2})^{1/2}\le Y$,
we must have
\begin{align*}
0&\le\det\bigl(Y-(Y^{1/2}XY^{1/2})^{1/2}\bigr)
=\det\biggl(\theta\begin{bmatrix}0&\alpha(1-x)-{u_{12}\over1+x}\\
\alpha(1-x)-{u_{12}\over1+x}&0\end{bmatrix}+o(\theta)\biggr) \\
&=-\theta^2\Bigl\{\alpha(1-x)-{u_{12}\over1+x}\Bigr\}^2+o(\theta^2)\quad\mbox{as $\theta\to0$}.
\end{align*}
Indeed, the last equality is seen as, with $v\in\bR$,
\[
\det\begin{bmatrix}o(\theta)&\theta v+o(\theta)\\\theta v+o(\theta)&o(\theta)\end{bmatrix}
=o(\theta)^2-(\theta v+o(\theta))^2=o(\theta)^2-\theta^2v^2+o(\theta)\theta v=-\theta^2v^2+o(\theta^2).
\]
Therefore, $\alpha(1-x)={u_{12}\over1+x}$ must hold, that is,
$x^{1/2}z_{12}+\alpha(1-x^{1/2})(1+x^{3/2})=\alpha(1-x^2)$. Inserting the form of $z_{12}$ in \eqref{F-3.16}
gives
\[
{x^{1/2}(1-x)\bigl(x^{{1-\alpha\over2}p}-x^{{1+\alpha\over2}p}\bigr)\over1-x^p}
=\alpha\{(1-x^2)-(1-x^{1/2})(1+x^{3/2})\}=\alpha x^{1/2}(1-x),
\]
so that $\bigl(x^{{1-\alpha\over2}p}-x^{{1+\alpha\over2}p}\bigr)/(1-x^p)=\alpha$ for all $x>0$, $x\ne1$. But
this is impossible since the LHS goes to $0$ ($\ne\alpha$) as $x\searrow0$.
\end{proof}

\begin{proposition}\label{P-4.9}
Let $0<\alpha<1$ and $p,q>0$. If $p/2\le q$, then we have
$R_{\alpha,p}(A,B))\le_\lambda\cA_{\alpha,q}(A,B)$ (hence
$R_{\alpha,p}(A,B)\prec_w\cA_{\alpha,q}(A,B)$) for all $A,B\ge0$.
\end{proposition}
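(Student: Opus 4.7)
The plan is to reduce the problem to the boundary case $q=p/2$ and then apply a sharp matrix Young inequality.

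\textbf{Step 1 (Reduction).} By Proposition \ref{P-4.3}, for any $q\ge p/2$ one has $\cA_{\alpha,p/2}(A,B)\le_\lambda\cA_{\alpha,q}(A,B)$ for all $A,B\ge0$. Since $\le_\lambda$ is transitive, it suffices to prove $R_{\alpha,p}(A,B)\le_\lambda\cA_{\alpha,p/2}(A,B)$. Moreover, by Theorem \ref{T-2.6} we may assume $A,B>0$, since no support condition is attached to $R_{\alpha,p}$. The weak-majorization conclusion $R_{\alpha,p}(A,B)\prec_w\cA_{\alpha,q}(A,B)$ is automatic once $\le_\lambda$ is established, by Proposition \ref{P-2.4}(3).

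\textbf{Step 2 (Eigenvalue identity and change of variable).} The matrix $R_{\alpha,p}(A,B)=(A^{(1-\alpha)p/2}B^{\alpha p}A^{(1-\alpha)p/2})^{1/p}$ is positive semidefinite, so setting $Z:=B^{\alpha p/2}A^{(1-\alpha)p/2}$ we have, for every $j$,
\[
\lambda_j(R_{\alpha,p}(A,B))^p
=\lambda_j(Z^*Z)=s_j(Z)^2
=s_j(A^{(1-\alpha)p/2}B^{\alpha p/2})^2.
\]
Writing $X:=A^{p/2}$ and $Y:=B^{p/2}$, this becomes $\lambda_j(R_{\alpha,p}(A,B))^{p/2}=s_j(X^{1-\alpha}Y^\alpha)$, while $\lambda_j(\cA_{\alpha,p/2}(A,B))^{p/2}=\lambda_j((1-\alpha)X+\alpha Y)$.

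\textbf{Step 3 (Matrix Young inequality).} I would invoke Ando's matrix Young inequality: for any $X,Y\ge 0$ and $0<\alpha<1$,
\[
s_j(X^{1-\alpha}Y^\alpha)\le\lambda_j((1-\alpha)X+\alpha Y),\qquad 1\le j\le n.
\]
(This is the classical singular-value form of Young's inequality due to Ando; one may also deduce it from the sharper log-majorization $s(X^{1-\alpha}Y^\alpha)\prec_{\log}\lambda((1-\alpha)X+\alpha Y)$.) Combining with Step 2 gives $\lambda_j(R_{\alpha,p}(A,B))^{p/2}\le\lambda_j(\cA_{\alpha,p/2}(A,B))^{p/2}$, and raising to the power $2/p$ yields the desired $R_{\alpha,p}(A,B)\le_\lambda\cA_{\alpha,p/2}(A,B)$.

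\textbf{Main obstacle.} The proof is essentially a packaging exercise once one identifies the correct scaling $X=A^{p/2}$, $Y=B^{p/2}$ that turns $R_{\alpha,p}$ versus $\cA_{\alpha,p/2}$ into an instance of Young's inequality. The only nontrivial input is Ando's inequality itself, but it is a classical result so the genuine difficulty is minimal; verifying scalar consistency (AM-GM applied to $x^{p/2},y^{p/2}$) is a useful sanity check that the exponent bound $q\ge p/2$ is exactly the one matched by the method. It is worth noting that the argument in fact delivers the stronger log-majorization $R_{\alpha,p}(A,B)\prec_{w\log}\cA_{\alpha,p/2}(A,B)$, although the statement as formulated requests only $\le_\lambda$.
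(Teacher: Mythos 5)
Your proof is correct and follows essentially the same route as the paper: both reduce to the case $q=p/2$ via Proposition \ref{P-4.3} and then apply Ando's matrix Young inequality in singular-value form after the substitution $A\mapsto A^{p/2}$, $B\mapsto B^{p/2}$. (One small aside: your parenthetical suggestion that the entrywise singular-value inequality could be deduced from a ``sharper'' log-majorization is backwards---$\le_\lambda$ implies $\prec_{w\log}$, not conversely---but nothing in the argument depends on this remark.)
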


\begin{proof}
Ando's matrix Young inequality \cite{An} says that for any $\alpha\in(0,1)$ and $A,B\in\bM_n^+$,
\[
\lambda(|A^{1-\alpha}B^\alpha|)\le\lambda((1-\alpha)A+\alpha B),
\]
equivalently, $\lambda^{1/2}(A^{1-\alpha}B^{2\alpha}A^{1-\alpha})\le\lambda((1-\alpha)A+\alpha B)$. For
any $p>0$, by replacing $A,B$ with $A^{p/2},B^{p/2}$ respectively and taking the $2/p$-power, one has
\[
\lambda^{1/p}\bigl(B^{{1-\alpha\over2}p}A^{\alpha p}B^{{1-\alpha\over2}p}\bigr)
\le\lambda^{2/p}(\alpha A^{p/2}+(1-\alpha)B^{p/2}),
\]
that is, $R_{\alpha,p}(A,B)\le_\lambda\cA_{\alpha,p/2}(A,B)$. Hence the assertion follows from
Proposition \ref{P-4.3}.
\end{proof}

\begin{theorem}\label{T-4.10}
Let $0<\alpha<1$ and $p,q>0$. If $R_{\alpha,p}(A,B)\le_\lambda\cA_{\alpha,q}(A,B)$ holds for all
$A,B\in\bM_2^{++}$, then $\alpha(1-\alpha)p\le q$.
\end{theorem}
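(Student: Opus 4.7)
The plan is to test the hypothesis against the symmetric family $A_0,B_\theta$ of \eqref{F-3.5} with $y=x\in(0,1)$, so that $A_0$ and $B_\theta$ are unitarily similar with common spectrum $\{1,x\}$. Under this choice the formulas in Lemmas \ref{L-3.3} and \ref{L-3.5} simplify dramatically: one has $((1-\alpha)x^q+\alpha x^q)^{1/q}=x$ and $1-(1-\alpha)x^q-\alpha x^q=1-x^q$, so that $u_{12}(q)$ collapses to $\alpha(1-x)$ and $u_{11}(q)=-\alpha(1-\alpha)(1-x^q)/q-\alpha^2(1-x)$.

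Feeding these expansions into Lemma \ref{L-3.6} (with $a=1>b=x$) yields closed-form $\theta^2$-expansions of the eigenvalues. On the arithmetic side, a direct cancellation between $u_{11}(q)$ and $u_{12}(q)^2/(1-x)=\alpha^2(1-x)$ produces the top eigenvalue
\[
\lambda_1(\cA_{\alpha,q}(A_0,B_\theta))=1-\frac{\alpha(1-\alpha)(1-x^q)}{q}\theta^2+o(\theta^2).
\]
On the R\'enyi side the analogous combination $z_{11}(p)+z_{12}(p)^2/(1-x)$ must be telescoped using the identities $(x^{\frac{1-\alpha}{2}p}-x^{\frac{1+\alpha}{2}p})^2=x^{(1-\alpha)p}(1-x^{\alpha p})^2$ and $1-x^{\alpha p}-x^{(1-\alpha)p}+x^p=(1-x^{\alpha p})(1-x^{(1-\alpha)p})$, after which everything collapses into
\[
\lambda_1(R_{\alpha,p}(A_0,B_\theta))=1-\frac{(1-x^{\alpha p})(1-x^{(1-\alpha)p})}{p(1-x^p)}\theta^2+o(\theta^2).
\]
Carrying out this algebraic telescoping cleanly is the main obstacle; once it is in hand the rest is a one-line limit.

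Indeed, the hypothesis $R_{\alpha,p}\le_\lambda\cA_{\alpha,q}$ applied to the top eigenvalue forces, for small $\theta$ and every $x\in(0,1)$,
\[
\frac{q(1-x^{\alpha p})(1-x^{(1-\alpha)p})}{p(1-x^p)}\ge\alpha(1-\alpha)(1-x^q).
\]
Letting $x\searrow 0$, each factor of the form $1-x^r$ with $r>0$ tends to $1$, so the left-hand side tends to $q/p$ while the right-hand side tends to $\alpha(1-\alpha)$. Hence $q/p\ge\alpha(1-\alpha)$, i.e.\ $\alpha(1-\alpha)p\le q$, as required.
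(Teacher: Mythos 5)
Your proposal is correct and follows essentially the same route as the paper: the same test family $A_0,B_\theta$ with $y=x$, the eigenvalue expansion of Lemma \ref{L-3.6}, the simplification of $u_{11}(q)+u_{12}(q)^2/(1-x)$ to $-\alpha(1-\alpha)(1-x^q)/q$ and of $z_{11}(p)+z_{12}(p)^2/(1-x)$ to $-(1-x^{\alpha p})(1-x^{(1-\alpha)p})/(p(1-x^p))$ (the paper writes the numerator unfactored as $-1-x^p+x^{\alpha p}+x^{(1-\alpha)p}$, which is the same quantity), followed by the limit $x\searrow0$. The only cosmetic difference is that the paper records both eigenvalue inequalities and notes their equivalence under $x\mapsto x^{-1}$, whereas you use only the top one, which suffices.
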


\begin{proof}
Let $A_0$ and $B_\theta$ be given in \eqref{F-3.5} with $y=x>0$ with $x\ne1$. Then we have expression
\eqref{F-3.15} with \eqref{F-3.16}. On the other hand, in the present case (where $y=x$), expression
\eqref{F-3.7} with \eqref{F-3.8} is simplified as
\begin{align}\label{F-4.5}
\cA_{\alpha,q}(A_0,B_\theta)
=\begin{bmatrix}1+\theta^2w_{11}(q)&\theta w_{12}(q)\\
\theta w_{12}(q)&x+\theta^2w_{22}(q)\end{bmatrix}
+o(\theta^2),
\end{align}
where
\begin{align}\label{F-4.6}
\begin{cases}
w_{11}(q):={\alpha(1-\alpha)\over q}(x^q-1)-\alpha^2(1-x), \\
w_{22}(q):={\alpha(1-\alpha)\over q}(x^{1-q}-x)+\alpha^2(1-x), \\
w_{12}(q):=\alpha(1-x).
\end{cases}
\end{align}
By assumption we must have $R_{\alpha,p}(A_0,B_\theta)\le_\lambda\cA_{\alpha,q}(A_0,B_\theta)$ for all
$\theta\in\bR$, which implies by Lemma \ref{L-3.6} that
\begin{align}
z_{11}(p)+{z_{12}(p)^2\over1-x}&\le w_{11}(q)+{w_{12}(q)^2\over1-x}, \label{F-4.7}\\
z_{22}(p)-{z_{12}(p)^2\over1-x}&\le w_{22}(q)-{w_{12}(q)^2\over1-x} \label{F-4.8}
\end{align}
for all $x>0$, $x\ne1$. Now, we compute from \eqref{F-3.16}
\begin{align*}
z_{11}(p)+{z_{12}(p)^2\over1-x}
&={-1-x^p+x^{\alpha p}+x^{(1-\alpha)p}\over p(1-x^p)}, \\
z_{22}(p)-{z_{12}(p)^2\over1-x}
&={x\bigl(1+x^p-x^{\alpha p}-x^{(1-\alpha)p}\bigr)\over p(1-x^p)},
\end{align*}
and from \eqref{F-4.6}
\begin{align*}
w_{11}(q)+{w_{12}(q)^2\over1-x}
&={\alpha(1-\alpha)\over q}(x^q-1), \\
w_{22}(q)-{w_{12}(q)^2\over1-x}
&={\alpha(1-\alpha)\over q}(x^{1-q}-x).
\end{align*}
Note that \eqref{F-4.7} and \eqref{F-4.8} are equivalent; indeed, dividing both sides of \eqref{F-4.7} by $x$
and then replacing $x$ with $x^{-1}$ transform \eqref{F-4.7} into \eqref{F-4.8}. Letting $x\searrow0$ in
\eqref{F-4.7} gives $-{1\over p}\le-{\alpha(1-\alpha)\over q}$, so that $\alpha(1-\alpha)p\le q$ follows.
\end{proof}

It seems that we can find no further condition other than $\alpha(1-\alpha)p\le q$ from \eqref{F-4.7}
(equivalently, \eqref{F-4.8}) for all $x\ne1$.

\begin{proposition}\label{P-4.11}
Let $0<\alpha<1$ and $p,q>0$. If $\min\{1,p/2\}\le q$, then $\Tr\,R_{\alpha,p}(A,B)\le\Tr\,\cA_{\alpha,q}(A,B)$
holds for all $A,B\ge0$.
\end{proposition}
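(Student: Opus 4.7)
The plan is to split on whether $p\le2$ or $p>2$, since $\min\{1,p/2\}$ equals $p/2$ in the first case and $1$ in the second.

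If $p\le2$, the hypothesis reads $p/2\le q$. Proposition \ref{P-4.9} gives $R_{\alpha,p}(A,B)\le_\lambda\cA_{\alpha,p/2}(A,B)$, from which Proposition \ref{P-2.4}(3) yields $\Tr\,R_{\alpha,p}(A,B)\le\Tr\,\cA_{\alpha,p/2}(A,B)$; Proposition \ref{P-4.3} then supplies $\Tr\,\cA_{\alpha,p/2}(A,B)\le\Tr\,\cA_{\alpha,q}(A,B)$ because $p/2\le q$, and chaining completes this case.

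If $p>2$, the hypothesis is simply $q\ge1$, and Proposition \ref{P-4.9} alone no longer reaches $\cA_{\alpha,q}$. The key step I would instead establish is the $p$-uniform trace bound
\[
\Tr\,R_{\alpha,p}(A,B)\ \le\ (\Tr A)^{1-\alpha}(\Tr B)^\alpha\ \le\ (1-\alpha)\Tr A+\alpha\Tr B\ =\ \Tr\,\cA_{\alpha,1}(A,B),
\]
valid for every $p>0$. The left inequality follows from the three-factor Schatten H\"older inequality applied to $M:=A^{(1-\alpha)p/2}B^{\alpha p}A^{(1-\alpha)p/2}$ with Schatten exponents $s_1=s_3=2/((1-\alpha)p)$ and $s_2=1/(\alpha p)$, which satisfy $s_1^{-1}+s_2^{-1}+s_3^{-1}=p$: one has $\|A^{(1-\alpha)p/2}\|_{s_1}=(\Tr A)^{(1-\alpha)p/2}$ and $\|B^{\alpha p}\|_{s_2}=(\Tr B)^{\alpha p}$, so H\"older gives $\|M\|_{1/p}\le(\Tr A)^{(1-\alpha)p}(\Tr B)^{\alpha p}$, and since $\|M\|_{1/p}=(\Tr\,R_{\alpha,p}(A,B))^p$, extracting the $p$-th root yields the stated estimate. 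The middle inequality is scalar Young, and Proposition \ref{P-4.3} finally provides $\Tr\,\cA_{\alpha,1}(A,B)\le\Tr\,\cA_{\alpha,q}(A,B)$ thanks to $1\le q$.

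The one subtle point is invoking H\"older's inequality for Schatten $r$-quasi-norms at the non-standard exponent $r=1/p<1$; this extension is standard, following from Horn's log-majorization $\sigma(XY)\prec_{w\log}(\sigma_i(X)\sigma_i(Y))_i$ combined with scalar H\"older, so it poses no genuine obstacle. Note that the argument for $p>2$ actually proves the stronger pointwise estimate $\Tr\,R_{\alpha,p}(A,B)\le\Tr\,\cA_{\alpha,1}(A,B)$ for all $p>0$, independently of the $p\le2$ case, which could alternatively be used as a single unified route once combined with the monotonicity $\Tr\,\cA_{\alpha,r}$ in $r$ from Proposition \ref{P-4.3}.
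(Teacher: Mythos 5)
Your proof is correct and follows essentially the same route as the paper: the same case split, Proposition \ref{P-4.9} combined with Proposition \ref{P-4.3} for $p/2\le q$, and for $q\ge1$ a reduction via Horn's log-majorization to the scalar Young inequality and $\Tr\,\cA_{\alpha,1}\le\Tr\,\cA_{\alpha,q}$. The only (harmless) difference is that the paper keeps the eigenvalue-wise bound $\Tr\,R_{\alpha,p}(A,B)\le\sum_i\lambda_i(A)^{1-\alpha}\lambda_i(B)^\alpha$ directly from Horn, whereas you package the same input as a three-factor Schatten--H\"older estimate and pass through the slightly weaker intermediate bound $(\Tr A)^{1-\alpha}(\Tr B)^\alpha$.
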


\begin{proof}
When $p/2\le q$, the asserted trace inequality follows from Proposition \ref{P-4.9}. Assume that $1\le q$
and $p>0$ is arbitrary. For every $A,B\in\bM_n^+$, by Horn's log-majorization (see, e.g., \cite{Bh,Hi}) one has
\begin{align*}
\lambda(R_{\alpha,p}(A,B))&=\lambda^{2/p}\bigl(|A^{(1-\alpha)p/2}B^{\alpha p/2}|\bigr) \\
&\prec_{\log}\Bigl(\bigl(\lambda_i(A^{(1-\alpha)p/2})\lambda_i(B^{\alpha p/2})\bigr)^{2/p}\Bigr)_{i=1}^n
=\bigl(\lambda_i(A)^{1-\alpha}\lambda_i(B)^\alpha\bigr)_{i=1}^n,
\end{align*}
so that
\begin{align*}
\Tr\,R_{\alpha,p}(A,B)&\le\sum_{i=1}^n\lambda_i(A)^{1-\alpha}\lambda_i(B)^\alpha
\le\sum_{i=1}^n\{(1-\alpha)\lambda_i(A)+\alpha\lambda_i(B)\} \\
&=\Tr((1-\alpha)A+\alpha B)=\Tr\,\cA_{\alpha,1}(A,B)\le\Tr\,\cA_{\alpha,q}(A,B),
\end{align*}
where the last inequality is due to Proposition \ref{P-4.3} since $1\le q$.
\end{proof}
 
\begin{theorem}\label{T-4.12}
Let $0<\alpha<1$ and $p,q>0$. If $\Tr\,R_{\alpha,p}(A,B)\le\Tr\,\cA_{\alpha,q}(A,B)$ holds for all
$A,B\in\bM_2^{++}$, then $\min\{1,\alpha(1-\alpha)p\}\le q$.
\end{theorem}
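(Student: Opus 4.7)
The plan is to mimic the approach of Theorem~\ref{T-4.10}: apply the assumed trace inequality to the family $A_0,B_\theta\in\bM_2^{++}$ defined in \eqref{F-3.5} with $y=x>0$, $x\ne1$, expand in $\theta$ to extract a scalar inequality in $x$, then send $x\searrow0$. By Lemma~\ref{L-3.5} together with \eqref{F-4.5}--\eqref{F-4.6}, taking traces yields
\begin{align*}
\Tr R_{\alpha,p}(A_0,B_\theta)&=1+x+\theta^2\bigl(z_{11}(p)+z_{22}(p)\bigr)+o(\theta^2),\\
\Tr\cA_{\alpha,q}(A_0,B_\theta)&=1+x+\theta^2\bigl(w_{11}(q)+w_{22}(q)\bigr)+o(\theta^2),
\end{align*}
as $\theta\to0$; note that the $\theta^1$ terms vanish since the first-order perturbations in both matrices are purely off-diagonal. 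The hypothesis therefore forces, for every fixed $x>0$ with $x\ne1$, the scalar inequality $z_{11}(p)+z_{22}(p)\le w_{11}(q)+w_{22}(q)$.

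Next, I would repackage both sides as clean products. Adding the two expressions for $z_{11}(p)+z_{12}(p)^2/(1-x)$ and $z_{22}(p)-z_{12}(p)^2/(1-x)$ already derived in the proof of Theorem~\ref{T-4.10}, and then applying the factorization $-1-x^p+x^{\alpha p}+x^{(1-\alpha)p}=-(1-x^{\alpha p})(1-x^{(1-\alpha)p})$, yields
\[
z_{11}(p)+z_{22}(p)=-\frac{(1-x)(1-x^{\alpha p})(1-x^{(1-\alpha)p})}{p(1-x^p)}.
\]
A parallel manipulation together with $x^q-1+x^{1-q}-x=-(1-x^q)(1-x^{1-q})$ produces
\[
w_{11}(q)+w_{22}(q)=-\frac{\alpha(1-\alpha)(1-x^q)(1-x^{1-q})}{q}.
\]

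Finally, I would let $x\searrow0$. For every $p>0$ and $\alpha\in(0,1)$ each factor $1-x^{\alpha p}$, $1-x^{(1-\alpha)p}$, $1-x^p$ tends to $1$, so the left side converges to $-1/p$. The right side tends to $-\alpha(1-\alpha)/q$ when $q<1$, vanishes identically when $q=1$, and tends to $+\infty$ when $q>1$. Hence the scalar inequality is automatic for $q\ge 1$, while for $q<1$ it forces $-1/p\le-\alpha(1-\alpha)/q$, i.e.\ $\alpha(1-\alpha)p\le q$. Combining these alternatives gives exactly $\min\{1,\alpha(1-\alpha)p\}\le q$. The main obstacle is recognizing the two clean factorizations above; once they are in hand, the $x\searrow0$ asymptotics can be read off uniformly in $p$ and $\alpha$, bypassing the case analysis (e.g.\ $\alpha p<1$ vs.\ $\alpha p\ge1$) that a direct substitution into \eqref{F-3.16} would seem to require.
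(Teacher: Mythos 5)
Your proposal is correct and follows essentially the same route as the paper: the same test pair $A_0,B_\theta$ with $y=x$, the same second-order trace expansion (the paper's $\theta^2$ coefficients in the proof of Theorem \ref{T-4.12} are exactly your factored expressions, unfactored), and the same limit $x\searrow0$ under the reduction to the case $q<1$. The factorizations are a cosmetic tidying of the paper's formulas, not a different argument.
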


\begin{proof}
It suffices to show that  we must have $\alpha(1-\alpha)p\le q$ when $q<1$. Consider again
$A_0,B_\theta\in\bM_2^{++}$ in \eqref{F-3.5} with $y=x>0$, $x\ne1$. From Lemma \ref{L-3.5} and
\eqref{F-4.5}, \eqref{F-4.6} we compute
\begin{align}
\Tr\,R_{\alpha,p}(A_0,B_\theta)
&=1+x+\theta^2{(1-x)(-1-x^p+x^{\alpha p}+x^{(1-\alpha)p})\over p(1-x^p)}+o(\theta^2), \nonumber\\
\Tr\,\cA_{\alpha,q}(A_0,B_\theta)
&=1+x+\theta^2{\alpha(1-\alpha)\over q}(-1-x+x^q+x^{1-q})+o(\theta^2). \label{F-4.9}
\end{align}
Hence, if $\Tr\,R_{\alpha,p}(A,B)\le\Tr\,\cA_{\alpha,q}(A,B)$ for all $A,B\in\bM_2^{++}$, then we must have
\[
{(1-x)(-1-x^p+x^{\alpha p}+x^{(1-\alpha)p})\over p(1-x^p)}
\le{\alpha(1-\alpha)\over q}(-1-x+x^q+x^{1-q})
\]
for all $x>0$, $x\ne1$. Since $q<1$ by assumption, letting $x\searrow0$ gives $-1/p\le-\alpha(1-\alpha)/q$
and hence $\alpha(1-\alpha)p\le q$ follows.
\end{proof}

In particular, when $\alpha=1/2$ and $p=1$, the next example provides the exact characterization for
$\Tr\,R_{1/2,1}(A,B)\le\Tr\cA_{1/2,q}(A,B)$.

\begin{example}\label{E-4.13}\rm
Let $\alpha=1/2$ and $p=1$. Then the following conditions are equivalent:
\begin{itemize}
\item[(i)] $\Tr\,R_{1/2,1}(A,B)\le\Tr\,\cA_{1/2,q}(A,B)$ for all $A,B\ge0$;
\item[(ii)] $\Tr\,R_{1/2,1}(A,B)\le\Tr\,\cA_{1/2,q}(A,B)$ for all $A,B\in\bM_2^{++}$;
\item[(iii)] $q\ge1/4$.
\end{itemize}
\end{example}

\begin{proof}
(i)$\implies$(ii) is trivial, and (ii)$\implies$(iii) follows from Theorem \ref{T-4.12}. Next let us show that
(iii)$\implies$(i) holds. For this, by Proposition \ref{P-4.3} it suffices to show that
$\Tr\,R_{1/2,1}(A,B)\le\Tr\,\cA_{1/2,1/4}(A,B)$ for all $A,B\ge0$. We note that
$\Tr\,R_{1/2,1}(A,B)=\Tr\,A^{1/2}B^{1/2}$ and
\begin{equation} \label{F-4.10}
\begin{aligned}
&\Tr\,\cA_{1/2,1/4}(A,B)=\Tr\biggl({A^{1/4}+B^{1/4}\over2}\biggr)^4 \\
&\qquad={1\over16}\Tr\bigl(A+B+4A^{1/2}B^{1/2}+4A^{3/4}B^{1/4}+4A^{1/4}B^{3/4}
+2A^{1/4}B^{1/4}A^{1/4}B^{1/4}\bigr).
\end{aligned}
\end{equation}
Since
\begin{align*}
0&\le\Tr(A^{1/4}-B^{1/4})^4 \\
&=\Tr\bigl(A+B+4A^{1/2}B^{1/2}-4A^{3/4}B^{1/4}-4A^{1/4}B^{3/4}+2A^{1/4}B^{1/4}A^{1/4}B^{1/4}\bigr),
\end{align*}
we have
\begin{align}\label{F-4.11}
\Tr\bigl(A+B+4A^{1/2}B^{1/2}+2A^{1/4}B^{1/4}A^{1/4}B^{1/4}\bigr)
\ge4\Tr\bigl(A^{3/4}B^{1/4}+A^{1/4}B^{3/4}\bigr).
\end{align}
It follows from \eqref{F-4.10} and \eqref{F-4.11} that
$\Tr\,\cA_{1/2,1/4}(A,B)\ge{1\over2}\Tr\bigl(A^{3/4}B^{1/4}+A^{1/4}B^{3/4}\bigr)$.
Moreover, by the matrix norm inequality for the Heinz-type means (see \cite{HiKo}) we have
\begin{align*}
\Tr\,A^{1/2}B^{1/2}&=\|A^{1/4}B^{1/4}\|_2^2\le\bigg\|{(A^{1/2})^{3\over4}(B^{1/2})^{1\over4}
+(A^{1/2})^{1\over4}(B^{1/2})^{3\over4}\over2}\bigg\|_2^2 \\
&={1\over4}\Tr(A^{3/8}B^{1/8}+A^{1/8}B^{3/8})^*(A^{3/8}B^{1/8}+A^{1/8}B^{3/8}) \\
&={1\over4}\Tr(B^{1/8}A^{3/8}+B^{3/8}A^{1/8})(A^{3/8}B^{1/8}+A^{1/8}B^{3/8}) \\
&={1\over4}\Tr(2A^{1/2}B^{1/2}+A^{3/4}B^{1/4}+A^{1/4}B^{3/4})
\end{align*}
so that $\Tr\,A^{1/2}B^{1/2}\le{1\over2}\Tr(A^{3/4}B^{1/4}+A^{1/4}B^{3/4})$. Hence (i) follows.
\end{proof}

\begin{proposition}\label{P-4.14}
Let $0<\alpha<1$ and $p,q>0$ be such that $\min\{1,p/2\}\le q$. Then for every $A,B\ge0$ the following
conditions are equivalent:
\begin{itemize}
\item[(i)] $\Tr\,R_{\alpha,p}(A,B)=\Tr\,\cA_{\alpha,q}(A,B)$;
\item[(ii)] $R_{\alpha,p}(A,B)=\cA_{\alpha,q}(A,B)$;
\item[(iii)] $A=B$.
\end{itemize}
\end{proposition}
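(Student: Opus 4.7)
Both implications (iii)$\Rightarrow$(ii)$\Rightarrow$(i) are immediate, since $A=B$ makes both means equal to $A$. For (i)$\Rightarrow$(iii), by Theorem \ref{T-2.6} I may reduce to $A,B>0$. The core strategy, modelled on the proof of Proposition \ref{P-4.7}, is to sandwich $\Tr\,R_{\alpha,p}(A,B)$ between values of $\Tr\,\cA_{\alpha,q'}(A,B)$ for $q'$ in an interval and then invoke Proposition \ref{P-4.4}: combining Proposition \ref{P-4.11} with Proposition \ref{P-4.3} yields
\[
\Tr\,R_{\alpha,p}(A,B)\le\Tr\,\cA_{\alpha,q'}(A,B)\le\Tr\,\cA_{\alpha,q}(A,B)\quad\text{for every } q'\in[\min\{1,p/2\},q],
\]
so (i) forces $q'\mapsto\Tr\,\cA_{\alpha,q'}(A,B)$ to be constant on that interval; when $\min\{1,p/2\}<q$ the interval is non-degenerate and Proposition \ref{P-4.4} at once gives $A=B$.

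The boundary case $q=\min\{1,p/2\}$ then requires a separate argument, split by which factor attains the minimum. If $q=p/2\le 1$ (so $p\le 2$), I would use the stronger Proposition \ref{P-4.9}: $R_{\alpha,p}\le_\lambda\cA_{\alpha,p/2}$ combined with equal traces forces $\lambda_i(R_{\alpha,p}(A,B))=\lambda_i(\cA_{\alpha,p/2}(A,B))$ for every $i$, and in particular equal determinants. Since $\det R_{\alpha,p}(A,B)=(\det A)^{1-\alpha}(\det B)^\alpha$ for $A,B>0$, whereas the log-concavity of $\det$ on $\bM_n^{++}$ (Minkowski's inequality) gives
\[
\det\cA_{\alpha,p/2}(A,B)=\det((1-\alpha)A^{p/2}+\alpha B^{p/2})^{2/p}\ge(\det A)^{1-\alpha}(\det B)^\alpha
\]
with equality iff $A^{p/2}=B^{p/2}$ iff $A=B$, the conclusion $A=B$ follows.

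If instead $q=1\le p/2$ (so $p\ge 2$), I would exploit the Horn-plus-scalar-AM-GM chain from the proof of Proposition \ref{P-4.11},
\[
\Tr\,R_{\alpha,p}(A,B)\le\sum_i\lambda_i(A)^{1-\alpha}\lambda_i(B)^\alpha\le\Tr\,\cA_{\alpha,1}(A,B),
\]
forcing both steps to be equalities. The second, via termwise strict scalar AM-GM, gives $\lambda_i(A)=\lambda_i(B)$ for every $i$. For the first, Horn's log-majorization $\lambda(R_{\alpha,p})\prec_{w\log}(\lambda_i(A)^{1-\alpha}\lambda_i(B)^\alpha)_i$ upgrades to full $\prec_{\log}$ because both sides share the product $(\det A)^{1-\alpha}(\det B)^\alpha$; equal sums then, via the Hardy--Littlewood--P\'olya equality case applied with the strictly convex function $e^t$ to $\log x\prec\log y$, force the two sorted sequences to coincide. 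The resulting pointwise equality $\lambda_i(A^{(1-\alpha)p}B^{\alpha p})=\lambda_i(A)^{(1-\alpha)p}\lambda_i(B)^{\alpha p}$ for every $i$ is the classical rigidity of Horn's inequality at every level $k$, forcing $A^{(1-\alpha)p}$ and $B^{\alpha p}$ (hence $A$ and $B$) to be simultaneously diagonalizable with matched eigenvalue ordering; together with $\lambda_i(A)=\lambda_i(B)$ this yields $A=B$.

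The hardest step I expect is precisely this final sub-case $q=1\le p/2$. The $q'$-range sandwich collapses to a point, and for $p>2$ there is no direct $\le_\lambda$ comparison between $R_{\alpha,p}$ and $\cA_{\alpha,1}$, so the clean determinant argument of the preceding sub-case is unavailable. The cumulative rigidity of Horn's log-majorization at every $k$---rather than just in trace---together with a careful use of the Hardy--Littlewood--P\'olya equality case is the technically delicate ingredient to be handled there.
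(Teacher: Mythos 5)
Your overall architecture is right, and your main case is actually slightly cleaner than the paper's: running the sandwich $\Tr\,R_{\alpha,p}\le\Tr\,\cA_{\alpha,q'}\le\Tr\,\cA_{\alpha,q}$ over $q'\in[\min\{1,p/2\},q]$ and invoking Proposition \ref{P-4.4} disposes of every non-boundary case in one stroke, whereas the paper splits off a separate $q\ge1$ case. The sub-case $q=p/2$ is also essentially sound for positive definite matrices, and your Minkowski-determinant argument is a legitimate alternative to the paper's citation of Manjegani. However, two points do not hold up. First, the opening reduction to $A,B>0$ ``by Theorem \ref{T-2.6}'' is not available: that theorem transfers an \emph{inequality} from $\bM_n^{++}$ to $\bM_n^+$ by a perturbation limit, and says nothing about transferring an equality-forces-$A=B$ rigidity statement. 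This matters concretely in your $q=p/2$ sub-case: for singular $A,B$ with a common null vector (say $A=\diag(1,0)$, $B=\diag(2,0)$) both $\det R_{\alpha,p}(A,B)$ and $\det\cA_{\alpha,p/2}(A,B)$ vanish, so equality of determinants carries no information and Minkowski's equality case cannot be invoked. The paper's route --- pass from $\lambda(R_{\alpha,p}(A,B))=\lambda(\cA_{\alpha,p/2}(A,B))$ to $\Tr\,\big|(A^{p/2})^{1-\alpha}(B^{p/2})^\alpha\big|=\Tr((1-\alpha)A^{p/2}+\alpha B^{p/2})$ and quote \cite[Corollary 2.3]{Man} --- works directly for $A,B\ge0$.

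Second, and more seriously, your sub-case $q=1\le p/2$ rests on an equality-case rigidity for Horn's log-majorization (``forcing $A^{(1-\alpha)p}$ and $B^{\alpha p}$ to be simultaneously diagonalizable with matched eigenvalue ordering'') which you assert is classical but neither prove nor reference; you yourself flag it as the delicate ingredient still ``to be handled.'' As it stands this is a missing step, and it is exactly the step the paper avoids. The paper's device is: by Araki's log-majorization \cite{Ar}, $t\mapsto\Tr\,R_{\alpha,t}(A,B)$ is nondecreasing, so combining $\Tr\,R_{\alpha,p}\le\Tr\,R_{\alpha,t}\le\Tr\,\cA_{\alpha,q}$ for all $t\ge p$ (the second inequality from Proposition \ref{P-4.11}, valid since $q\ge1$) with the assumed equality makes $\Tr\,R_{\alpha,t}(A,B)$ constant for $t\ge p$; real-analyticity in $t$ then makes it constant for all $t>0$, and choosing $t$ with $t/2<q$ throws the problem back to the nondegenerate-interval case. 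I would adopt that argument (or else supply a genuine proof of the Horn rigidity lemma, which in the presence of repeated or zero eigenvalues is not a routine citation).
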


\begin{proof}
It is obvious that (iii)$\implies$(ii)$\implies$(i). Assume (i). If $p/2<q$, then we have
$\Tr\,\cA_{\alpha,t}(A,B)=\Tr\,\cA_{\alpha,q}(A,B)$ for any $t\in[p/2,q]$ by Propositions 4.3 and 4.11. Hence
(iii) follows from Proposition 4.4. If $p/2=q$, then Proposition \ref{P-4.9} yields
$\lambda(R_{\alpha,p}(A,B))=\lambda(\cA_{\alpha,p/2}(A,B))$ and hence
\[
\Tr\,\big|(A^{p/2})^{1-\alpha}(B^{p/2})^\alpha\big|
=\Tr\,R_{\alpha,p}(A,B)^{p/2}=\Tr\,\cA_{\alpha,p/2}(A,B)^{p/2}
=\Tr\bigl((1-\alpha)A^{p/2}+\alpha B^{p/2}\bigr).
\]
Hence It follows from \cite[Corollary 2.3]{Man} that $A^{p/2}=B^{p/2}$ so that $A=B$. If $q\ge1$, then for
any $t\ge p$ we have $\Tr\,R_{\alpha,p}(A,B)\le\Tr\,R_{\alpha,t}(A,B)\le\Tr\,\cA_{\alpha,q}(A,B)$ by
Araki's log-majorization \cite{Ar} and Proposition \ref{P-4.11}. Hence
$\Tr\,R_{\alpha,t}(A,B)=\Tr\,\cA_{\alpha,q}(A,B)$ for $t\ge p$, which extends to all $t>0$ thanks to real
analyticity of $t\mapsto\Tr\,R_{\alpha,t}(A,B)$ in $t>0$, so we can apply the above case (where $p/2<q$).
\end{proof}

The results of this subsection are summarized as follows:

\medskip
\begin{table}[htb]
\centering
\begin{tabular}{|c|l|l|} \hline
& Sufficient cond. & Necessary cond. \\ \hline

$\begin{array}{ccc}R_{\alpha,p}\le\cA_{\alpha,q} \\ R_{\alpha,p}\le_\chao\cA_{\alpha,q} \\
R_{\alpha,p}\le_\near\cA_{\alpha,q}\end{array}$

& none & \\ \hline

$\begin{array}{cc}R_{\alpha,p}\le_\lambda\cA_{\alpha,q} \\
R_{\alpha,p}\prec_w\cA_{\alpha,q}\end{array}$ & $p/2\le q$ & $\alpha(1-\alpha)p\le q$ \\ \hline

$R_{\alpha,p}\le_\Tr\cA_{\alpha,q}$ & $q\ge1$ or $p/2\le q$ &
$q\ge1$ or $\alpha(1-\alpha)p\le q$\\ \hline

$R_{1/2,1}\le_\Tr\cA_{1/2,q}$ & $q\ge1/4$ & $q\ge1/4$ \\ \hline
\end{tabular}
\end{table}

\newpage
\begin{problem}\label{Q-4.15}\rm
There is a gap between the sufficient condition and the necessary condition for
$R_{\alpha,p}\le_\lambda\cA_{\alpha,q}$, $R_{\alpha,p}\prec_w\cA_{\alpha,q}$ and
$R_{\alpha,p}\le_\Tr\cA_{\alpha,q}$. It is also unknown whether $R_{\alpha,p}\le_\lambda\cA_{\alpha,q}$
is strictly stronger than $R_{\alpha,p}\prec_w\cA_{\alpha,q}$, or they are equivalent. Example \ref{E-4.13}
says that the sufficient condition in Proposition \ref{P-4.11} is not sharp when $\alpha=1/2$ and $p=1$.
This suggests us that the complete characterization of $R_{\alpha,p}\le_\Tr\cA_{\alpha,q}$ is a complicated
problem.
\end{problem}

\subsection{$G_{\alpha,p}$ vs.\ $\cA_{\alpha,q}$}\label{Sec-4.4}

In this subsection we discuss inequalities between $G_{\alpha,p}$ and $\cA_{\alpha,q}$. In theory of
operator means in Kubo and Ando's sense \cite{KA} it is well known that
$A\#_\alpha B\le A\triangledown_\alpha B$ for any $\alpha\in(0,1)$ and $A,B\ge0$. The next theorem
characterizes the Loewner inequality $G_{\alpha,p}\le\cA_{\alpha,q}$, extending
$\#_\alpha\le\triangledown_\alpha$ (for $p=q=1$).

\begin{theorem}\label{T-4.16}
Let $0<\alpha<1$ and $p,q>0$. Then the following conditions are equivalent:
\begin{itemize}
\item[(i)] $G_{\alpha,p}(A,B)\le\cA_{\alpha,q}(A,B)$ for all $A,B\ge0$;
\item[(ii)] $G_{\alpha,p}(A,B)\le\cA_{\alpha,q}(A,B)$ for all $A,B\in\bM_2^{++}$;
\item[(iii)] $1\le p\le q$.
\end{itemize}
\end{theorem}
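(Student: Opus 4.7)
My plan is to prove the three-way equivalence via the cycle $(iii) \Rightarrow (i) \Rightarrow (ii) \Rightarrow (iii)$, with the first and third implications carrying the substantive work.

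For $(iii) \Rightarrow (i)$, I will write a short chain of standard matrix inequalities. Assuming $1 \le p \le q$, the Kubo--Ando arithmetic-geometric mean inequality \eqref{F-1.1} applied to $A^p$ and $B^p$ gives $A^p \#_\alpha B^p \le (1-\alpha) A^p + \alpha B^p$. Since $p \ge 1$, the function $t \mapsto t^{1/p}$ is operator monotone on $[0,\infty)$, so raising both sides to the $(1/p)$-th power preserves the Loewner order and yields $G_{\alpha,p}(A,B) \le \cA_{\alpha,p}(A,B)$. The hypothesis $1 \le p \le q$ is one of the sufficient conditions in Theorem~\ref{T-4.1}, so $\cA_{\alpha,p}(A,B) \le \cA_{\alpha,q}(A,B)$, which closes the chain. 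The implication $(i) \Rightarrow (ii)$ is trivial by restriction.

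For $(ii) \Rightarrow (iii)$, I would use the $2 \times 2$ test matrices $A_0, B_\theta$ from~\eqref{F-3.5}. Lemma~\ref{L-3.3} already provides the $o(\theta^2)$ expansion of $\cA_{\alpha,q}(A_0, B_\theta)$, so the technical core is to derive an analogous expansion for $G_{\alpha,p}(A_0, B_\theta) = (A_0^p \#_\alpha B_\theta^p)^{1/p}$. I would do this in three functional-calculus layers: first write $A_0^{-p/2} B_\theta^p A_0^{-p/2}$ as a perturbation of the identity (its zeroth-order term is $I$ when $y = x$, degenerate for Lemma~\ref{L-3.1}, so a direct Taylor expansion in $\theta$ is used), then expand its $\alpha$-th power by power-series, then conjugate by $A_0^{p/2}$ to put $A_0^p \#_\alpha B_\theta^p$ into the form~\eqref{F-3.1} and apply Example~\ref{E-3.2}(1) with exponent $1/p$ to obtain the expansion of $G_{\alpha,p}(A_0, B_\theta)$ in the format of~\eqref{F-3.7}. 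With both expansions in hand, the Loewner condition forces nonnegativity of the order-$\theta^2$ diagonal coefficients of the difference and a determinant inequality in parallel with~\eqref{F-3.9}.

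To extract the two constraints I would probe two regimes in $(x,y)$. The choice $y = x$ makes the zeroth-order parts of both means coincide (both equal $A_0$), and the order-$\theta$ off-diagonals both match at $\alpha(1-x)$, so the difference is purely diagonal at leading order $\theta^2$; after the substitution $x = e^u$, its $(1,1)$-coefficient reduces to the scalar inequality $\sinh(pu)/p \le (e^{qu}-1)/q$ for all $u \in \bR$, whose asymptotics as $u \to +\infty$ forces $p \le q$. To isolate the second constraint $p \ge 1$, I would then allow $y \ne x$ so that $\Delta_0 := ((1-\alpha)x^q + \alpha y^q)^{1/q} - x^{1-\alpha}y^\alpha > 0$ enters the determinant inequality $\gamma_{11} \Delta_0 \ge \delta_{12}^2$; mimicking the limit $y = x^2$, $x \searrow 0$ used in the proof of Theorem~\ref{T-4.1}, the dominant part of $\gamma_{11}$ should change sign at $p = 1$ and rule out $p < 1$. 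If this single limit does not dispose of the remaining range $1/2 \le p < 1$, an auxiliary singular test matrix of the $\tilde A_0, \tilde B_\theta$ type from Lemma~\ref{L-4.2} can be brought in. The main obstacle I foresee is the bookkeeping of the three-layer expansion for $G_{\alpha,p}(A_0, B_\theta)$ and the careful choice of an $(x,y)$-limit that forces $p \ge 1$ cleanly and independently of $p \le q$.
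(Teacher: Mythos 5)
Your proposal is correct in outline and, for most of its length, coincides with the paper's own proof: the chain $G_{\alpha,p}\le\cA_{\alpha,p}\le\cA_{\alpha,q}$ via operator monotonicity of $t^{1/p}$ and Theorem \ref{T-4.1} is exactly the paper's (iii)$\implies$(i), and your $y=x$ regime is precisely Lemma \ref{L-4.17}: the order-$\theta$ off-diagonals of $G_{\alpha,p}(A_0,B_\theta)$ and $\cA_{\alpha,q}(A_0,B_\theta)$ both equal $\alpha(1-x)$, the difference is diagonal at order $\theta^2$, and its $(1,1)$-entry gives $(x^q-1)/q\ge(x^p-x^{-p})/(2p)$, which is your $\sinh$ inequality and forces $p\le q$. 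Where you genuinely diverge is the extraction of $p\ge1$: the paper does not use a second positive definite pair but pairs $A_0$ with the rank-one projection $\tilde B_\theta$ of \eqref{F-4.1}, which collapses the three-layer expansion you are dreading, since $\tilde B_\theta^p=\tilde B_\theta=|\phi\>\<\phi|$ yields the closed form $G_{\alpha,p}(A_0,\tilde B_\theta)=\|A_0^{-p/2}\phi\|^{2(\alpha-1)/p}|\phi\>\<\phi|$ in \eqref{F-4.18}, and the determinant condition \eqref{F-4.20} then behaves like $\frac{(1-\alpha)^{1/q+1}}{p}x^{1-p}-(1-\alpha)^2$ as $x\searrow0$, negative whenever $p<1$. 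Your primary route ($y=x^2$, $x\searrow0$, all matrices positive definite) does in fact work, but not by the mechanism you guess: the coefficient $\gamma_{11}$ does not change sign at $p=1$ — it blows up like $\frac{1-\alpha}{p}\,x^{-p}$ for every $p>0$, the divergence coming from the second-divided-difference term in the first layer, where $x_{12}^2=x^{-p}(1-y^p)^2$. What degenerates is the product: since $\Delta_0\approx\alpha^{1/q}x$ while $\delta_{12}^2\to(1-\alpha)^2$, one gets $\gamma_{11}\Delta_0\approx\frac{(1-\alpha)\alpha^{1/q}}{p}\,x^{1-p}$, which tends to $0$ exactly when $p<1$ and then loses to $\delta_{12}^2$ — the same $x^{1-p}$-versus-constant race as in \eqref{F-4.20}. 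So your plan is sound and even avoids the continuity extension to $\bM_2^+$ that the paper needs in order to use a singular test matrix, at the price of the three-layer bookkeeping; the paper's rank-one trick buys a one-line formula for the geometric mean. If you carry out your version, be sure to track the $x^{-p}$ blow-up of $\gamma_{11}$ correctly rather than looking for a sign change; otherwise fall back on the projection test, which is exactly what the paper does.
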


\begin{lemma}\label{L-4.17}
Let $\alpha>0$ and $p,q>0$. Let $A_0$ and $B_\theta$ be given in \eqref{F-3.5} with $y=x>0$ and $x\ne1$.
Then we have
\begin{align}\label{F-4.12}
G_{\alpha,p}(A_0,B_\theta)=\begin{bmatrix}1+\theta^2z_{11}(p)&\theta z_{12}(p)\\
\theta z_{12}(p)&x+\theta^2z_{22}(p)\end{bmatrix}+o(\theta^2)
\quad\mbox{as $\theta\to0$},
\end{align}
where
\begin{align}\label{F-4.13}
\begin{cases}
z_{11}(p):={\alpha(1-\alpha)\over2p}(x^p-x^{-p})-\alpha^2(1-x), \\
z_{22}(p):={\alpha(1-\alpha)\over2p}(x^{1-p}-x^{1+p})+\alpha^2(1-x), \\
z_{12}(p):=\alpha(1-x).
\end{cases}
\end{align}
\end{lemma}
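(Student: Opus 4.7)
The plan is to compute $G_{\alpha,p}(A_0,B_\theta)=(A_0^p\#_\alpha B_\theta^p)^{1/p}$ in two stages: first expand the inner geometric mean $A_0^p\#_\alpha B_\theta^p$ up to $o(\theta^2)$ into the form \eqref{F-3.1}, and then apply Example \ref{E-3.2}(1) with $r=1/p$ and $b=x^p$ to extract the $1/p$-th power. The second stage is then just algebra; the first stage is where the work lies.

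For the inner mean, I would write $A_0^p\#_\alpha B_\theta^p=A_0^{p/2}C_\theta^\alpha A_0^{p/2}$ with $C_\theta:=A_0^{-p/2}B_\theta^pA_0^{-p/2}$. Using \eqref{F-3.6} with $x^p$ in place of $y$ (which is legitimate since $y=x$), one obtains
\[
C_\theta=I+\theta M+\theta^2N+o(\theta^2),
\]
where $M=(1-x^p)x^{-p/2}\bigl[\begin{smallmatrix}0&1\\1&0\end{smallmatrix}\bigr]$ and $N=(1-x^p)\,\diag(-1,x^{-p})$. The main obstacle is that $C_0=I$ has a degenerate spectrum, so Lemma \ref{L-3.1} does not apply directly to the $\alpha$-power step. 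I would bypass this by invoking the scalar Taylor expansion of $f(x)=x^\alpha$ at $x=1$ to write
\[
C_\theta^\alpha=I+\alpha(\theta M+\theta^2N)+\tfrac{\alpha(\alpha-1)}{2}\theta^2M^2+o(\theta^2),
\]
using the key observation that $M^2=(1-x^p)^2x^{-p}\,I$ is a scalar matrix (as $M$ is a multiple of the Pauli $X$ matrix), so the quadratic Taylor remainder contributes only to the diagonal and commutes trivially with everything. Conjugating by $A_0^{p/2}=\diag(1,x^{p/2})$ then puts $A_0^p\#_\alpha B_\theta^p$ in the form \eqref{F-3.1} with $a=1$, $b=x^p$, off-diagonal entry $\tilde x_{12}=\alpha(1-x^p)$, and diagonal perturbations $\tilde x_{11}=-\alpha(1-x^p)+\tfrac12\alpha(\alpha-1)(1-x^p)^2x^{-p}$ and $\tilde x_{22}=\alpha(1-x^p)+\tfrac12\alpha(\alpha-1)(1-x^p)^2$.

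For the outer $1/p$-th power, the assumption $x\ne1$ gives $b=x^p\ne1=a$, so Example \ref{E-3.2}(1) applies with $r=1/p$. The off-diagonal entry falls out immediately: $z_{12}(p)=\frac{1-x}{1-x^p}\cdot\alpha(1-x^p)=\alpha(1-x)$, matching \eqref{F-4.13}. The diagonal entries $z_{11}(p)$ and $z_{22}(p)$ require substituting the $\tilde x_{ii}$ above together with $\tilde x_{12}^2=\alpha^2(1-x^p)^2$ into the formulas of Example \ref{E-3.2}(1); after regrouping, the algebraic identities
\[
2(1-x^p)+(1-x^p)^2x^{-p}=x^{-p}-x^p,\qquad
-2(x^{1-p}-x)+x^{1-p}(1-x^p)^2=x^{1+p}-x^{1-p}
\]
produce exactly the $\frac{\alpha(1-\alpha)}{2p}(x^p-x^{-p})$ and $\frac{\alpha(1-\alpha)}{2p}(x^{1-p}-x^{1+p})$ terms of \eqref{F-4.13}, while the residual linear-in-$\alpha^2$ terms collapse to $\mp\alpha^2(1-x)$. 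The computation is bookkeeping-heavy but entirely routine once the degeneracy at $\theta=0$ has been handled by the scalar Taylor trick above, which is why the author defers the details to the reader.
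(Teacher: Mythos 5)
Your proposal is correct and follows essentially the same route as the paper: compute $A_0^{-p/2}B_\theta^pA_0^{-p/2}$, take the $\alpha$-power, conjugate by $A_0^{p/2}$ to reach the form \eqref{F-3.1} with $a=1$, $b=x^p$, and then apply Example \ref{E-3.2}(1) with $r=1/p$; your intermediate values $\tilde x_{11},\tilde x_{22},\tilde x_{12}$ agree with the paper's $v_{11}(p),v_{22}(p),v_{12}(p)$. The one point where you are more careful than the paper is the $\alpha$-power step: the paper simply cites Example \ref{E-3.2}(1) even though $C_0=I$ is degenerate (so the hypothesis $a\ne b$ fails there), whereas your scalar Taylor expansion with the observation that $M^2$ is a scalar matrix handles that case correctly and yields the same coefficients.
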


\begin{proof}
Similarly to the proof of Lemma \ref{L-3.5} it is easy to compute
\[
A_0^{-p/2}B_\theta^pA_0^{-p/2}=\begin{bmatrix}1-\theta^2(1-x^p)&\theta(x^{-p/2}-x^{p/2})\\
\theta(x^{-p/2}-x^{p/2})&1+\theta^2(x^{-p}-1)\end{bmatrix}+o(\theta^2).
\]
Then Example \ref{E-3.2}(1) is applied to compute $\bigl(A_0^{-p/2}B_\theta^pA_0^{-p/2}\bigr)^\alpha$
and hence
\[
A_0^p\#B_\theta^p=A_0^{p/2}\bigl(A_0^{-p/2}B_\theta^pA_0^{-p/2}\bigr)^\alpha A_0^{p/2}
=\begin{bmatrix}1+\theta^2v_{11}(p)&\theta v_{12}(p)\\
\theta v_{12}(p)&x^p+\theta^2v_{22}(p)\end{bmatrix}+o(\theta^2),
\]
where
\[
\begin{cases}
v_{11}(p):=-\alpha^2+{\alpha(\alpha+1)\over2}x^p+{\alpha(\alpha-1)\over2}x^{-p}, \\
v_{22}(p):=x^p\bigl(-\alpha^2+{\alpha(\alpha-1)\over2}x^p+{\alpha(\alpha+1)\over2}x^{-p}\bigr), \\
v_{12}(p):=\alpha(1-x^p).\end{cases}
\]
Furthermore, we apply Example \ref{E-3.2}(1) again to show that \eqref{F-4.12} holds with \eqref{F-4.13},
whose details are left to the reader.
\end{proof}

\begin{proof}[Proof of Theorem \ref{T-4.16}]
(iii)$\implies$(i).\enspace
Since $A^p\#_\alpha B^p\le A^p\triangledown_\alpha B^p$ and $x^{1/p}$ is operator monotone on
$[0,\infty)$, we have
\[
G_{\alpha,p}(A,B)=(A^p\#_\alpha B^p)^{1/p}\le((1-\alpha)A^p+\alpha B^p)^{1/p}
=\cA_{\alpha,p}(A,B).
\]
Since $\cA_{\alpha,p}(A,B)\le\cA_{\alpha,q}(A,B)$ by Theorem \ref{T-4.1}, the result follows.

(i)$\implies$(ii) is trivial.

(ii)$\implies$(iii).\enspace
Assume that (ii) holds. Note that this extends to all $A,B\in\bM_2^+$ by continuity. First, let
$A_0,B_\theta\in\bM_2^{++}$ be given as in Lemma \ref{L-4.17}. Then by \eqref{F-4.12} and \eqref{F-4.13}
as well as \eqref{F-4.5} and \eqref{F-4.6} we have
\begin{align*}
\cA_{\alpha,q}(A_0,B_\theta)-G_{\alpha,p}(A_0,B_\theta)
=\theta^2\alpha(1-\alpha)\begin{bmatrix}{x^q-1\over q}-{x^p-x^{-p}\over2p}&0\\
0&{x^{1-q}-x\over q}-{x^{1-p}-x^{1+p}\over2p}\end{bmatrix}+o(\theta^2).
\end{align*}
Hence we must have
\begin{align}
{x^q-1\over q}-{x^p-x^{-p}\over2p}&\ge0,\qquad x>0, \label{F-4.14}\\
{x^{1-q}-x\over q}-{x^{1-p}-x^{1+p}\over2p}&\ge0,\qquad x>0. \label{F-4.15}
\end{align}
(Note that \eqref{F-4.15} is equivalent to \eqref{F-4.14}.) It is clear that $p\le q$ follows from \eqref{F-4.14}.

Now assume that $p\le q$, and let $A_0$ be given in \eqref{F-3.5} with $x>0$ satisfying
$(1-\alpha)x^q\ne1$, and $\tilde B_\theta\in\bM_2^+$ be in \eqref{F-4.1}. Then by Example \ref{E-3.2}(1)
we compute
\begin{align}\label{F-4.16}
\cA_{\alpha,q}(A_0,\tilde B_\theta)
=\begin{bmatrix}1+\theta^2w_{11}&\theta w_{12}\\\theta w_{12}&(1-\alpha)^{1/q}x+\theta^2w_{22}
\end{bmatrix}+o(\theta^2)\quad\mbox{as $\theta\to0$},
\end{align}
where
\begin{align}\label{F-4.17}
\begin{cases}w_{11}=-{\alpha(1-\alpha)(1-x^q)\over q(1-(1-\alpha)x^q)}
-{\alpha^2(1-(1-\alpha)^{1/q}x)\over(1-(1-\alpha)x^q)^2}, \\
w_{22}={\alpha(1-\alpha)^{1/q}x^{1-q}(1-x^q)\over q(1-(1-\alpha)x^q)}
+{\alpha^2(1-(1-\alpha)^{1/q}x)\over(1-(1-\alpha)x^q)^2}, \\
w_{12}={\alpha(1-(1-\alpha)^{1/q}x)\over1-(1-\alpha)x^q}.
\end{cases}
\end{align}
On the other hand, since $\tilde B_\theta^p=\tilde B_\theta=|\phi\>\<\phi|$ where
$\phi:=\begin{bmatrix}\cos\theta\\\sin\theta\end{bmatrix}$, one has
\[
\bigl(A_0^{-p/2}\tilde B_\theta^pA_0^{-p/2}\bigr)^\alpha
=\|A_0^{-p/2}\phi\|^{2(\alpha-1)}|A_0^{-p/2}\phi\>\<A_0^{-p/2}\phi|
\]
so that $G_\alpha(A_0^p,\tilde B_\theta^p)=\|A_0^{-p/2}\phi\|^{2(\alpha-1)}|\phi\>\<\phi|$. Hence one writes
\begin{equation}\label{F-4.18}
\begin{aligned}
G_{\alpha,p}(A_0,\tilde B_\theta)&=\|A_0^{-p/2}\phi\|^{2(\alpha-1)\over p}|\phi\>\<\phi| \\
&=(\cos^2\theta+x^{-p}\sin^2\theta)^{\alpha-1\over p}
\begin{bmatrix}\cos^2\theta&\cos\theta\sin\theta\\\cos\theta\sin\theta&\sin^2\theta\end{bmatrix} \\
&=\{1+(x^{-p}-1)\theta^2\}^{\alpha-1\over p}
\begin{bmatrix}1-\theta^2&\theta\\\theta&\theta^2\end{bmatrix}+o(\theta^2) \\
&=\biggl\{1+{\alpha-1\over p}(x^{-p}-1)\theta^2\biggr\}
\begin{bmatrix}1-\theta^2&\theta\\\theta&\theta^2\end{bmatrix}+o(\theta^2) \\
&=\begin{bmatrix}1+\bigl\{{\alpha-1\over p}(x^{-p}-1)-1\bigr\}\theta^2&\theta\\
\theta&\theta^2\end{bmatrix}+o(\theta^2).
\end{aligned}
\end{equation}
By \eqref{F-4.16} and \eqref{F-4.18} we write
\[
\cA_{\alpha,q}(A_0,\tilde B_\theta)-G_{\alpha,p}(A_0,\tilde B_\theta)
=\begin{bmatrix}\theta^2\bigl\{w_{11}-{\alpha-1\over p}(x^{-p}-1)+1\bigr\}&\theta(w_{12}-1)\\
\theta(w_{12}-1)&(1-\alpha)^{1/q}x+\theta^2(w_{22}-1)\end{bmatrix}+o(\theta^2).
\]
Therefore, we must have
\begin{align}
0&\le w_{11}-{\alpha-1\over p}(x^{-p}-1)+1, \label{F-4.19}\\
0&\le(1-\alpha)^{1/q}x\biggl\{w_{11}-{\alpha-1\over p}(x^{-p}-1)+1\biggr\}-(w_{12}-1)^2. \label{F-4.20}
\end{align}

We estimate \eqref{F-4.19} and \eqref{F-4.20} as $x\searrow0$ as follows:
\begin{align*}
\mbox{\eqref{F-4.19}}
&\approx-{\alpha(1-\alpha)\over q}-\alpha^2-{\alpha-1\over p}(x^{-p}-1)+1
\approx{1-\alpha\over p}\,x^{-p}>0. \\
\mbox{\eqref{F-4.20}}
&\approx(1-\alpha)^{1/q}x\biggl\{-{\alpha(1-\alpha)\over q}-\alpha^2-{\alpha-1\over p}(x^{-p}-1)+1\biggr\}
-(\alpha-1)^2 \\
&\approx{(1-\alpha)^{{1\over q}+1}\over p}\,x^{1-p}-(1-\alpha)^2.
\end{align*}
If $p<1$, then \eqref{F-4.20}\,$\to-(1-\alpha)^2<0$ as $x\searrow0$, which is impossible since
\eqref{F-4.20}\,$\ge0$ in the limit $x\searrow0$. Hence $p\ge1$ must hold.
\end{proof}

The next corollary is a particular case of Theorem \ref{T-4.16} when $q=1$.

\begin{corollary}\label{C-4.18}
Let $0<\alpha<1$ and $p>0$. Then the following conditions are equivalent:
\begin{itemize}
\item[(i)] $G_{\alpha,p}(A,B)\le\alpha A+(1-\alpha)B$ for all $A,B\ge0$;
\item[(ii)] $G_{\alpha,p}(A,B)\le\alpha A+(1-\alpha)B$ for all $A,B\in\bM_2^{++}$;
\item[(iii)] $p=1$.
\end{itemize}
\end{corollary}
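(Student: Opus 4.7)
The plan is to obtain Corollary \ref{C-4.18} as an immediate specialization of Theorem \ref{T-4.16} with $q=1$. Setting $q=1$ in Theorem \ref{T-4.16} turns condition (iii) into $1\le p\le1$, i.e., $p=1$, so that the three conditions of the corollary correspond exactly to conditions (i), (ii), (iii) of Theorem \ref{T-4.16} in this special case.

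First I would verify the implication (iii)$\Rightarrow$(i). When $p=1$ the inequality becomes $A\#_\alpha B\le(1-\alpha)A+\alpha B$, which is the classical Kubo--Ando arithmetic--geometric mean inequality \eqref{F-1.1} for weighted means, valid for all $A,B\ge0$ by continuity (since $\#_\alpha$ and $\triangledown_\alpha$ are both operator means in Kubo and Ando's sense). Alternatively, one can quote the chain given in the proof of (iii)$\Rightarrow$(i) of Theorem \ref{T-4.16}:
\[
G_{\alpha,1}(A,B)=A\#_\alpha B\le(1-\alpha)A+\alpha B=\cA_{\alpha,1}(A,B).
\]

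Next (i)$\Rightarrow$(ii) is trivial, since it is a restriction of the family of matrices considered.

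For (ii)$\Rightarrow$(iii), which is the only substantive direction, I would simply invoke the corresponding implication of Theorem \ref{T-4.16} with $q=1$: the hypothesis of (ii) in the corollary is exactly the hypothesis of (ii) in Theorem \ref{T-4.16}, so the theorem yields $1\le p\le1$, i.e., $p=1$. I do not expect any real obstacle here; the only point worth checking is that the matrix pair $A_0,\tilde B_\theta$ of \eqref{F-3.5} and \eqref{F-4.1} used in the proof of (ii)$\Rightarrow$(iii) of Theorem \ref{T-4.16} still produces the sharp constraint $p\ge1$ when specialized to $q=1$, and indeed the estimate of \eqref{F-4.20} as $x\searrow0$ is already set up to give this conclusion independently of $q$, so nothing extra is needed.
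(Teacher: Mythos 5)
Your proposal is correct and is essentially identical to the paper's treatment: the corollary is obtained precisely as the specialization $q=1$ of Theorem \ref{T-4.16}, and your check that the $x\searrow0$ estimate of \eqref{F-4.20} yields $p\ge1$ independently of $q$ is exactly the point that makes the specialization legitimate. Note only that the corollary as printed has the weights in the order $\alpha A+(1-\alpha)B$, whereas $\cA_{\alpha,1}(A,B)=(1-\alpha)A+\alpha B$; your proof (correctly) reads the right-hand side as $(1-\alpha)A+\alpha B$, consistent with Theorem \ref{T-4.16} and with Remark \ref{R-2.7}(1), which rules out comparability when the weights differ.
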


\begin{proposition}\label{P-4.19}
Let $0<\alpha<1$. If $0<p\le q$, then we have $G_{\alpha,p}(A,B)\le_\chao\cA_{\alpha,q}(A,B)$ for all
$A,B\ge0$.
\end{proposition}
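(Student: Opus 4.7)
The plan is to reduce to the positive definite case via Theorem \ref{T-2.6}, and then prove the positive definite version by chaining two chaotic-order inequalities through the intermediate mean $\cA_{\alpha,p}$. For $A,B>0$ the chaotic order $X\le_\chao Y$ is simply $\log X\le\log Y$, which is transitive, so it suffices to establish
\[
G_{\alpha,p}(A,B)\le_\chao\cA_{\alpha,p}(A,B)\le_\chao\cA_{\alpha,q}(A,B).
\]

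The second inequality is immediate from Proposition \ref{P-4.3} (since $p\le q$). For the first, I would start from the classical Kubo--Ando arithmetic--geometric inequality $A^p\#_\alpha B^p\le(1-\alpha)A^p+\alpha B^p$, which holds in the Loewner order for all $A,B>0$ and all $p>0$. Applying the operator monotone function $\log$ and dividing by $p>0$ gives
\[
{1\over p}\log(A^p\#_\alpha B^p)\le{1\over p}\log((1-\alpha)A^p+\alpha B^p),
\]
i.e., $\log G_{\alpha,p}(A,B)\le\log\cA_{\alpha,p}(A,B)$, which is exactly $G_{\alpha,p}(A,B)\le_\chao\cA_{\alpha,p}(A,B)$ in the positive definite case.

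Combining the two inequalities by transitivity of $\le\,$ applied to $\log G_{\alpha,p}(A,B)\le\log\cA_{\alpha,p}(A,B)\le\log\cA_{\alpha,q}(A,B)$ yields the desired conclusion for $A,B>0$. Finally, Theorem \ref{T-2.6} promotes this to all $A,B\ge0$, since $\le_\chao$ is among the orderings covered there.

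There is no real obstacle: the argument is a short chain of Loewner-order AM--GM, operator monotonicity of $\log$, Proposition \ref{P-4.3}, transitivity, and Theorem \ref{T-2.6}. The only minor point to watch is that transitivity of $\le_\chao$ is used only in the positive definite setting (where it reduces to transitivity of the Loewner order via $\log$), with the extension to general $A,B\ge0$ handled separately by Theorem \ref{T-2.6}.
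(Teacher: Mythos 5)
Your proof is correct and follows essentially the same route as the paper's: both reduce the chaotic order to a Loewner-order arithmetic--geometric inequality (via $\#_\alpha\le\triangledown_\alpha$ together with operator concavity of $x^{p/q}$) and then apply the operator monotone function $\log$. The only cosmetic differences are that you factor the argument through the intermediate mean $\cA_{\alpha,p}$ and handle singular $A,B$ by Theorem \ref{T-2.6}, whereas the paper invokes Theorem \ref{T-4.16} applied to $A^p,B^p$ in a single step and works with the support projection directly; both are valid.
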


\begin{proof}
Assume that $0<p\le q$. Since $1\le q/p$, it follows from Theorem \ref{T-4.16} that
$G_{\alpha,1}(A^p,B^p)\le\cA_{\alpha,q/p}(A^p,B^p)$, i.e.,
\[
A^p\#_\alpha B^p\le\bigl((1-\alpha)A^q+\alpha B^q\bigr)^{p/q},
\]
which implies that
\[
P:=s(G_{\alpha,p}(A,B))=s(A^p\#_\alpha B^p)\le s((1-\alpha)A^q+\alpha B^q)=s(\cA_{\alpha,q}(A,B))
\]
and
\[
P\{\log(A^p\#_\alpha B^p)\}P\le P\{\log((1-\alpha)A^q+\alpha B^q)^{p/q}\}P
=pP\{\log\cA_{\alpha,q}(A,B)\}P
\]
so that
\[
P\{\log G_{\alpha,p}(A,B)\}P\le P\{\log\cA_{\alpha,q}(A,B)\}P,
\]
that is, $G_{\alpha,p}(A,B)\le_\chao\cA_{\alpha,q}(A,B)$.
\end{proof}

\begin{theorem}\label{T-4.20}
Let $0<\alpha<1$ and $p,q>0$. If $G_{\alpha,p}(A,B))\le_\lambda\cA_{\alpha,q}(A,B)$ holds for all
$A,B\in\bM_2^{++}$, then $p\le q$.
\end{theorem}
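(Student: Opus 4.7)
The plan is to follow the pattern already established in Sections \ref{Sec-4.1}--\ref{Sec-4.3}: feed the specific family $A_0,B_\theta$ from \eqref{F-3.5} (taking $y=x$ with $x\ne1$) into the hypothesized eigenvalue inequality, expand both sides up to $o(\theta^2)$, and read off a scalar inequality in $x,p,q,\alpha$ whose asymptotic behavior forces $p\le q$.

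Concretely, I would first invoke Lemma \ref{L-4.17} to obtain the expansion \eqref{F-4.12}--\eqref{F-4.13} of $G_{\alpha,p}(A_0,B_\theta)$, and then use \eqref{F-4.5}--\eqref{F-4.6} for $\cA_{\alpha,q}(A_0,B_\theta)$. The crucial observation is that the off-diagonal entries coincide exactly: $z_{12}(p)=w_{12}(q)=\alpha(1-x)$, independently of $p$ and $q$. Therefore, applying Lemma \ref{L-3.6} with $a=1$, $b=x$ to each matrix, the terms $\pm x_{12}^2/(a-b)$ cancel between the two sides, and the assumption $G_{\alpha,p}(A_0,B_\theta)\le_\lambda\cA_{\alpha,q}(A_0,B_\theta)$ reduces cleanly to the two scalar inequalities $z_{11}(p)\le w_{11}(q)$ and $z_{22}(p)\le w_{22}(q)$ for all $x>0$, $x\ne1$.

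Substituting \eqref{F-4.13} and \eqref{F-4.6}, both inequalities cancel the common $-\alpha^2(1-x)$ term and become equivalent to
\begin{equation*}
\frac{x^p-x^{-p}}{2p}\le\frac{x^q-1}{q},\qquad x>0,\ x\ne1.
\end{equation*}
(The two inequalities should match by the usual $x\leftrightarrow x^{-1}$ symmetry; I would verify this briefly.) Letting $x\to\infty$, the left side grows like $x^p/(2p)$ while the right side grows like $x^q/q$, so if $p>q$ the inequality fails for all sufficiently large $x$, giving a contradiction. Hence $p\le q$.

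I do not expect a genuine obstacle here. The only bookkeeping that needs care is (i) confirming the cancellation of the $z_{12}^2/(a-b)$ contribution in Lemma \ref{L-3.6} thanks to $z_{12}(p)=w_{12}(q)$, and (ii) checking that the two resulting scalar inequalities are indeed equivalent (so that no extra information is thrown away), after which the asymptotic argument as $x\to\infty$ is immediate.
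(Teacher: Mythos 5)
Your proposal is correct and follows essentially the same route as the paper: the paper likewise feeds $A_0,B_\theta$ with $y=x\ne1$ into Lemma \ref{L-3.6} via \eqref{F-4.7}--\eqref{F-4.8}, uses Lemma \ref{L-4.17} and \eqref{F-4.5}--\eqref{F-4.6} to reduce to the scalar inequality ${x^p-x^{-p}\over2p}\le{x^q-1\over q}$ (and its $x\leftrightarrow x^{-1}$ twin), and concludes $p\le q$. Your explicit cancellation of the off-diagonal contributions and the $x\to\infty$ asymptotics just spell out details the paper leaves implicit.
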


\begin{proof}
We use $A_0,B_\theta\in\bM_2^{++}$ in \eqref{F-3.5} with $y=x\ne1$ once again, and argue in the same
way as in the proof of Theorem \ref{T-4.10}. Then by assumption we must have the same inequalities as
in \eqref{F-4.7} and \eqref{F-4.8} with $z_{11}(p),z_{22}(p),z_{12}(p)$ in \eqref{F-4.13} instead of
\eqref{F-3.16}. These are specified in the present case as
\begin{align*}
{\alpha(1-\alpha)\over2p}(x^p-x^{-p})&\le{\alpha(1-\alpha)\over q}(x^q-1), \\
{\alpha(1-\alpha)\over2p}(x^{1-p}-x^{1+p})&\le{\alpha(1-\alpha)\over q}(x^{1-q}-x)
\end{align*}
for all $x>0$, $x\ne1$. Hence we have $p\le q$.
\end{proof}

\begin{proposition}\label{P-4.21}
Let $0<\alpha<1$ and $p,q>0$ be arbitrary. Then for every $A,B\ge0$ we have
$G_{\alpha,p}(A,B)\prec_{w\log}\cA_{\alpha,q}(A,B)$ and hence
$G_{\alpha,p}(A,B)\prec_w\cA_{\alpha,q}(A,B)$.
\end{proposition}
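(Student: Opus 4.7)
The plan is to route the inequality through the Log-Euclidean mean $LE_\alpha(A,B)$, which by Theorem \ref{T-2.3} sits at the $p\searrow 0$ endpoint of the family $G_{\alpha,p}$ and is also controlled above by $\cA_{\alpha,q}$ via Proposition \ref{P-4.6}. First, by Theorem \ref{T-2.6} (which lists $\prec_{w\log}$ among the covered orderings) it suffices to prove the inequality for $A,B>0$.

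For the geometric half, I will invoke the weighted Ando--Hiai log-majorization: for $r\ge 1$ and $X,Y>0$,
\[
X^r\#_\alpha Y^r\prec_{\log}(X\#_\alpha Y)^r.
\]
Applying this with $X=A^s$, $Y=B^s$, and $r=p/s$ for $0<s\le p$, and then taking the $1/p$-th root (which preserves log-majorization), yields
\[
G_{\alpha,p}(A,B)\prec_{\log}G_{\alpha,s}(A,B),\qquad 0<s\le p.
\]
Letting $s\searrow 0$, Theorem \ref{T-2.3} gives $G_{\alpha,s}(A,B)\to LE_\alpha(A,B)$, and since eigenvalues depend continuously on the matrix, $\prec_{\log}$ passes to the limit and produces
\[
G_{\alpha,p}(A,B)\prec_{\log}LE_\alpha(A,B).
\]

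For the arithmetic half, Proposition \ref{P-4.6} already asserts $LE_\alpha(A,B)\le_\chao\cA_{\alpha,q}(A,B)$, and the implication chain in Proposition \ref{P-2.4}(3) delivers $\le_\chao\Rightarrow\le_\lambda\Rightarrow\prec_{w\log}$, hence
\[
LE_\alpha(A,B)\prec_{w\log}\cA_{\alpha,q}(A,B).
\]
Combining the two halves by the (obvious) transitivity of $\prec_{w\log}$, using that $\prec_{\log}$ is stronger than $\prec_{w\log}$, gives $G_{\alpha,p}(A,B)\prec_{w\log}\cA_{\alpha,q}(A,B)$. The consequence $G_{\alpha,p}(A,B)\prec_w\cA_{\alpha,q}(A,B)$ is then immediate from Proposition \ref{P-2.4}(3).

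The only mildly delicate point is the invocation of the \emph{weighted} Ando--Hiai log-majorization; the unweighted version is already quoted in \eqref{F-2.1}, and the weighted form is the original 1994 Ando--Hiai theorem, so this is essentially a citation rather than a new computation. Accordingly I do not anticipate any substantive obstacle, and the $2\times 2$ eigenvalue machinery from Section \ref{Sec-3} is not needed here.
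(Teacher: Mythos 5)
Your proof is correct and follows essentially the same route as the paper: both pass through $LE_\alpha$ via the log-majorization $G_{\alpha,p}(A,B)\prec_{\log}LE_\alpha(A,B)$ and then apply Proposition \ref{P-4.6} together with Proposition \ref{P-2.4}(3). The only cosmetic difference is that you re-derive that log-majorization from the weighted Ando--Hiai inequality (which is exactly how \cite[Corollary 2.3]{AH} is obtained), whereas the paper simply cites it, and you reduce to $A,B>0$ via Theorem \ref{T-2.6} rather than by the direct $\eps$-perturbation limit.
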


\begin{proof}
When $A,B>0$, the result follows from the log-majorization $G_{\alpha,p}(A,B)\prec_{\log}LE_\alpha(A,B)$
(see \cite[Corollary 2.3]{AH}) and Proposition \ref{P-4.6}. For general $A,B\ge0$ one can take the limit from
the result for $A+\eps I$ and $B+\eps I$ as $\eps\searrow0$.
\end{proof}

\begin{proposition}\label{P-4.22}
Let $0<\alpha<1$ and $p,q>0$. Then for every $A,B\ge0$ the following conditions are equivalent:
\begin{itemize}
\item[(i)] $\Tr\,G_{\alpha,p}(A,B)=\Tr\,\cA_{\alpha,q}(A,B)$;
\item[(ii)] $G_{\alpha,p}(A,B)=\cA_{\alpha,q}(A,B)$;
\item[(iii)] $A=B$.
\end{itemize}
\end{proposition}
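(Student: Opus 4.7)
The implications (iii)$\implies$(ii)$\implies$(i) are immediate: when $A=B$ both means equal $A$, and trace equality is a direct consequence of matrix equality. The real content is (i)$\implies$(iii), and my plan is to reduce it to the already-established Proposition \ref{P-4.7} via a sandwich argument.

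The key observation is that we have a chain of inequalities
\[
\Tr\,G_{\alpha,p}(A,B)\le\Tr\,LE_\alpha(A,B)\le\Tr\,\cA_{\alpha,q}(A,B).
\]
The first inequality follows from the log-majorization $G_{\alpha,p}(A,B)\prec_{w\log}LE_\alpha(A,B)$ (from \cite[Corollary 2.3]{AH}, already invoked in the proof of Proposition \ref{P-4.21}), which via Proposition \ref{P-2.4}(3) implies $G_{\alpha,p}(A,B)\le_\Tr LE_\alpha(A,B)$. The second inequality is exactly the trace portion of Proposition \ref{P-4.6}, which gives $LE_\alpha(A,B)\le_\chao\cA_{\alpha,q}(A,B)$ and hence $LE_\alpha(A,B)\le_\Tr\cA_{\alpha,q}(A,B)$.

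Now, assuming (i), the two extreme quantities in the above chain coincide, which forces
\[
\Tr\,LE_\alpha(A,B)=\Tr\,\cA_{\alpha,q}(A,B).
\]
Applying Proposition \ref{P-4.7} to this equality immediately yields $A=B$, completing (i)$\implies$(iii).

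I do not anticipate any serious obstacle here, since the argument is a clean reduction to the corresponding fact for $LE_\alpha$ versus $\cA_{\alpha,q}$; the only thing to double-check is that the log-majorization $G_{\alpha,p}(A,B)\prec_{w\log}LE_\alpha(A,B)$ is valid for general $A,B\ge0$ (not just $A,B>0$), but this extends by the $\eps\searrow0$ approximation as in the proof of Proposition \ref{P-4.21}, together with Lemma \ref{L-2.5}.
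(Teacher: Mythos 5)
Your proof is correct and follows essentially the same squeeze argument as the paper: the paper pinches $\Tr\,\cA_{\alpha,q}(A,B)=\Tr\,G_{\alpha,p}(A,B)\le\Tr\,\cA_{\alpha,t}(A,B)\le\Tr\,\cA_{\alpha,q}(A,B)$ for $t\in(0,q]$ using Propositions \ref{P-4.21} and \ref{P-4.3} and then invokes Proposition \ref{P-4.4}, while you pinch through $\Tr\,LE_\alpha(A,B)$ and invoke Proposition \ref{P-4.7}, which is itself proved from Proposition \ref{P-4.4} by the same mechanism. The only point worth checking---that $G_{\alpha,p}\prec_{w\log}LE_\alpha$ extends from $A,B>0$ to $A,B\ge0$ by the $\eps\searrow0$ limit---is exactly what Proposition \ref{P-4.21} records, so nothing is missing.
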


\begin{proof}
It suffices to show (i)$\implies$(iii). If (i) holds, then $\Tr\,\cA_{\alpha,t}(A,B)=\Tr\,\cA_{\alpha,q}(A,B)$
for any $t\in(0,q]$ by Propositions \ref{P-4.3} and \ref{P-4.21}. Hence we have (iii) by Proposition \ref{P-4.4}.
\end{proof}

The results of this subsection are summarized as follows:

\medskip
\begin{table}[htb]
\centering
\begin{tabular}{|c|l|l|} \hline
& Sufficient cond. & Necessary cond. \\ \hline

$G_{\alpha,p}\le\cA_{\alpha,q}$ & $1\le p\le q$ & $1\le p\le q$ \\ \hline

$\begin{array}{ccc}G_{\alpha,p}\le_\chao\cA_{\alpha,q} \\ G_{\alpha,p}\le_\near\cA_{\alpha,q} \\
G_{\alpha,p}\le_\lambda\cA_{\alpha,q}\end{array}$ & $p\le q$ & $p\le q$ \\ \hline

$\begin{array}{cc}G_{\alpha,p}\prec_w\cA_{\alpha,q} \\
G_{\alpha,p}\le_\Tr\cA_{\alpha,q}\end{array}$ & arbitrary $p,q$ & \\ \hline
\end{tabular}
\end{table}

\subsection{$SG_{\alpha,p}$ vs.\ $\cA_{\alpha,q}$}\label{Sec-4.5}

In this subsection we discuss inequalities between $SG_{\alpha,p}$ and $\cA_{\alpha,q}$. The next theorem
says that $SG_{\alpha,p}(A,B)\le_\chao\cA_{\alpha,q}(A,B)$ fails to hold for any $p,q>0$.

\begin{theorem}\label{T-4.23}
For any $\alpha\in(0,1)$ and any $p,q>0$ there exist $A,B\in\bM_2^{++}$ such that
$SG_{\alpha,p}(A,B)\allowbreak\not\le_\chao\cA_{\alpha,q}(A,B)$ (hence
$SG_{\alpha,p}(A,B)\not\le\cA_{\alpha,q}(A,B)$).
\end{theorem}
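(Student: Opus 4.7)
By Theorem \ref{T-2.6} it is enough to exhibit $A, B \in \bM_2^{++}$ with $SG_{\alpha,p}(A,B) \not\le_\chao \cA_{\alpha,q}(A,B)$, so the plan is a perturbation analysis of the family $A_0, B_\theta$ of \eqref{F-3.5} specialized to $y = x > 0$, $x \ne 1$. At $\theta = 0$ both quasi means collapse to $\diag(1, x)$, and the structural miracle to exploit is that their first-order deviations from $\diag(1, x)$ coincide.

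The first step is to compute $SG_{\alpha,p}(A_0, B_\theta)$ and $\cA_{\alpha,q}(A_0, B_\theta)$ up to $o(\theta^2)$. For $\cA_{\alpha,q}$, Lemma \ref{L-3.3} specialized to $y = x$ gives the off-diagonal $\theta$-coefficient $w_{12}(q) = \alpha(1-x)$ together with explicit $w_{11}(q), w_{22}(q)$. For $SG_{\alpha,p}(A_0, B_\theta) = F_\alpha(A_0^p, B_\theta^p)^{1/p}$ I would iterate Example \ref{E-3.2}(1): form $(A_0^{p/2} B_\theta^p A_0^{p/2})^{1/2}$, conjugate by $A_0^{-p/2}$ to obtain $A_0^{-p} \# B_\theta^p$, take its $\alpha$-th power (a plain Taylor expansion, since this matrix reduces to $I$ at $\theta = 0$), sandwich $A_0^p$ on both sides to get $F_\alpha(A_0^p, B_\theta^p)$, and finally apply the $(1/p)$-th power. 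The pay-off is a clean simplification: the off-diagonal $\theta$-coefficient of $SG_{\alpha,p}(A_0, B_\theta)$ also equals $\alpha(1-x)$, matching $\cA_{\alpha,q}$ exactly.

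Both expansions now fit the ansatz \eqref{F-3.1}, so applying Example \ref{E-3.2}(3) with $a = 1, b = x$ to take matrix logarithms, the $\theta$-order off-diagonals of $\log \cA$ and $\log SG$ both equal $-\alpha \log x$ and cancel, yielding
\[
\log \cA_{\alpha,q}(A_0, B_\theta) - \log SG_{\alpha,p}(A_0, B_\theta) = \theta^2 \diag(\delta_1(x), \delta_2(x)) + o(\theta^2),
\]
where $\delta_1 = w_{11}(q) - z_{11}^{SG}(p)$ and $\delta_2 = (w_{22}(q) - z_{22}^{SG}(p))/x$, with $z_{ii}^{SG}(p)$ denoting the $\theta^2$-diagonal coefficients of $SG_{\alpha,p}(A_0, B_\theta)$ (the additional $x_{12}^2$-terms in Example \ref{E-3.2}(3) are identical for $\log \cA$ and $\log SG$, so they cancel).

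Finally I would locate $x$ with $\delta_2(x) < 0$. A direct computation of the leading $x$-behavior as $x \to \infty$ gives $w_{22}(q) \sim -\alpha[(1-\alpha)/q + \alpha] x$ and (after simplification) $z_{22}^{SG}(p) \sim \alpha^2 (1-p) x / p$, whence
\[
\delta_2(x) \longrightarrow -\alpha\left[\frac{1-\alpha}{q} + \frac{\alpha}{p}\right],
\]
which is strictly negative for every $0 < \alpha < 1$ and $p, q > 0$. Hence for $x$ large enough the diagonal matrix $\diag(\delta_1, \delta_2)$ has a negative entry, and then for sufficiently small $\theta > 0$ the matrix $\log \cA_{\alpha,q}(A_0, B_\theta) - \log SG_{\alpha,p}(A_0, B_\theta)$ fails to be positive semidefinite, refuting $SG_{\alpha,p}(A_0, B_\theta) \le_\chao \cA_{\alpha,q}(A_0, B_\theta)$. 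The main obstacle is the multi-layer expansion of $SG_{\alpha,p}$: tracking the $\theta^2$-diagonal coefficients through the cascade $(A_0^{p/2} B_\theta^p A_0^{p/2})^{1/2} \to A_0^{-p} \# B_\theta^p \to (A_0^{-p} \# B_\theta^p)^\alpha \to F_\alpha(A_0^p, B_\theta^p) \to SG_{\alpha,p}(A_0, B_\theta)$ and then simplifying $z_{22}^{SG}(p)$ enough to read off the clean $x \to \infty$ asymptotics is the most delicate part.
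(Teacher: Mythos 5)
Your strategy (a second--order expansion of the pair $A_0,B_\theta$ from \eqref{F-3.5} with $y=x$, followed by comparison of the diagonal $\theta^2$-coefficients of the logarithms) is different from the paper's, which first substitutes $Y:=A^p$, $X:=A^{-p}\#B^p$ so that $B^p=XYX$ by the Riccati lemma, rewrites the chaotic inequality as $r\log(X^\alpha YX^\alpha)\le\log\{(1-\alpha)Y^r+\alpha(XYX)^r\}$ with $r=q/p$, and then refutes it on a genuinely two-parameter family $X=A_0$, $Y=B_\theta$ by letting $y\searrow0$. Your structural observations are correct up to a point: the off-diagonal $\theta$-coefficients of $SG_{\alpha,p}(A_0,B_\theta)$ and $\cA_{\alpha,q}(A_0,B_\theta)$ do both equal $\alpha(1-x)$, so the first-order and the $x_{12}^2$-correction terms of the two logarithms cancel and the difference of logs is $\theta^2\diag(\delta_1,\delta_2)+o(\theta^2)$ as you say.

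However, the final asymptotic is wrong, and the error kills the proof for most $p,q$. Carrying the expansion through the cascade, one gets (with $g_{12}=\alpha\frac{1-x^p}{1+x^p}$, $g_{22}=\alpha\xi_{22}+\frac{\alpha(\alpha-1)}{2}\xi_{12}^2$ and $\xi_{ij}$ as in the proof of Theorem \ref{T-4.29}) $f_{22}=g_{12}^2+2g_{22}x^p\sim\alpha(\alpha-2)x^p$ as $x\to\infty$, whence
\[
z_{22}^{SG}(p)=\frac{x^{1-p}}{p}f_{22}+\alpha^2\Bigl(1-\frac{x^{1-p}}{p}+\frac{x}{p}-x\Bigr)
\sim-\Bigl[\frac{2\alpha(1-\alpha)}{p}+\alpha^2\Bigr]x,
\]
not $\alpha^2(1-p)x/p$. (A consistency check: $f_{22}-\frac{f_{12}^2}{1-x^p}=\frac{2\alpha(1-\alpha)x^p(1-x^p)}{1+x^p}$, in agreement with $\eta_{22}(p)-\frac{\eta_{12}(p)^2}{1-x^p}$ computed in the paper's Theorem \ref{T-4.29}.) Combining with $w_{22}(q)\sim-\alpha[\frac{1-\alpha}{q}+\alpha]x$ gives
\[
\delta_2(x)\longrightarrow\alpha(1-\alpha)\Bigl(\frac{2}{p}-\frac{1}{q}\Bigr),
\]
which is negative only when $q<p/2$. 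So your family produces a contradiction only in that range; for $q\ge p/2$ the limit is nonnegative and nothing is refuted (e.g.\ already for $\alpha=1/2$, $p=q=1$, i.e.\ $F(A,B)$ versus $\frac{A+B}{2}$, your limit is $+1/4$). This is not an accident: the threshold $p/2\le q$ is exactly the necessary condition extracted in Theorem \ref{T-4.29} for the much weaker trace inequality on this same symmetric family, so one should not expect the diagonal-entry test on the $y=x$ family to yield a universal counterexample. To cover all $p,q>0$ you need the extra degree of freedom the paper exploits (independent $x,y$ with $y\searrow0$ after the Riccati substitution), or some other less symmetric pair.
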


\begin{proof}
For $A,B>0$ set $Y:=A^p$ and $X:=A^{-p}\#B^p$; then $X=Y^{-1}\#B^p$ and hence $B^p=XYX$ by the
Riccati lemma, so that $B=(XYX)^{1/p}$. Therefore, $SG_{\alpha,p}(A,B)\le_\chao\cA_{\alpha,q}(A,B)$ is
equivalently written as
\[
{q\over p}\log(X^\alpha YX^\alpha)\le\log\{(1-\alpha)Y^{q/p}+\alpha(XYX)^{q/p}\}.
\]
Moreover, for any $X,Y>0$, if we set $A:=Y^{1/p}$ and $B:=(XYX)^{1/p}$, then $Y=A^p$ and
$X=A^{-p}\#B^p$. Hence, by replacing $q/p$ with $r$, it suffices to show that for any $r>0$ there exist
$X,Y\in\bM_2^{++}$ such that
\begin{align}\label{F-4.21}
r\log(X^\alpha YX^\alpha)\not\le\log\{(1-\alpha)Y^r+\alpha(XYX)^r\}.
\end{align}

To do this, let $X:=A_0$ and $Y:=B_\theta$ in \eqref{F-3.5} for $x,y>0$ with $x^2y\ne1$. Since
\[
XYX=\begin{bmatrix}1-\theta^2(1-y)&\theta x(1-y)\\\theta x(1-y)&x^2y+\theta^2x^2(1-y)\end{bmatrix}
+o(\theta^2)\quad\mbox{as $\theta\to0$},
\]
one can compute $(XYX)^r$ by Example \ref{E-3.2}(1) as
\[
(XYX)^r=\begin{bmatrix}1+\theta^2a_{11}&\theta a_{12}\\
\theta a_{12}&x^{2r}y^r+\theta^2 a_{22}\end{bmatrix}+o(\theta^2),
\]
where
\begin{align}\label{F-4.22}
\begin{cases}
a_{11}:=-r(1-y)+{x^2(1-y)^2(r-1-rx^2y+x^{2r}y^r)\over(1-x^2y)^2}, \\
a_{12}:={x(1-y)(1-x^{2r}y^r)\over1-x^2y}.
\end{cases}
\end{align}
Hence one can write
\begin{align}\label{F-4.23}
(1-\alpha)Y^r+\alpha(XYX)^r=\begin{bmatrix}1+\theta^2 b_{11}&\theta b_{12}\\
\theta b_{12}&b+\theta^2b_{22}\end{bmatrix}+o(\theta^2),
\end{align}
where
\begin{align}\label{F-4.24}
\begin{cases}
\ \,b\ :=y^r(\alpha x^{2r}+1-\alpha), \\
b_{11}:=\alpha a_{11}-(1-\alpha)(1-y^r), \\
b_{12}:=\alpha a_{12}+(1-\alpha)(1-y^r).
\end{cases}
\end{align}
From this, by Example \ref{E-3.2}(3) one can write
\begin{align}\label{F-4.25}
\log\{(1-\alpha)Y^r+\alpha(XYX)^r\}=\begin{bmatrix}\theta^2c_{11}&\theta c_{12}\\
\theta c_{12}&\log b+\theta^2c_{22}\end{bmatrix}+o(\theta^2),
\end{align}
where
\begin{align}\label{F-4.26}
c_{11}:=b_{11}+{1-b+\log b\over(1-b)^2}\,b_{12}^2.
\end{align}

On the other hand, for $x,y>0$ with $x^{2\alpha}y\ne1$, since
\begin{align}\label{F-4.27}
X^\alpha YX^\alpha=\begin{bmatrix}1-\theta^2(1-y)&\theta x^\alpha(1-y)\\
\theta x^\alpha(1-y)&x^{2\alpha}y+\theta^2x^{2\alpha}(1-y)\end{bmatrix}+o(\theta^2),
\end{align}
one can write by Example \ref{E-3.2}(3),
\begin{align}\label{F-4.28}
\log(X^\alpha YX^\alpha)=\begin{bmatrix}\theta^2 d_{11}&\theta d_{12}\\
\theta d_{12}&2\alpha\log x+\log y+\theta^2d_{22}\end{bmatrix}+o(\theta^2),
\end{align}
where
\begin{align}\label{F-4.29}
d_{11}:=y-1+{x^{2\alpha}(1-y)^2(1-x^{2\alpha}y+2\alpha\log x+\log y)\over(1-x^{2\alpha}y)^2}.
\end{align}
From \eqref{F-4.25}, \eqref{F-4.28} and \eqref{F-4.24} one can write
\begin{align*}
&\det\bigl[\log\{(1-\alpha)Y^r+\alpha(XYX)^r\}-r\log(X^\alpha YX^\alpha)\bigr] \\
&\quad=\det\biggl(\begin{bmatrix}\theta^2(c_{11}-rd_{11})&\theta(c_{12}-rd_{12})\\
\theta(c_{12}-rd_{12})&\log b-r(2\alpha\log x+\log y)+\theta^2(c_{22}-rd_{22})\end{bmatrix}
+o(\theta^2)\biggr) \\
&\quad=\theta^2\bigl[(c_{11}-rd_{11})\{\log(\alpha x^{2r}+1-\alpha)-2r\alpha\log x\}
-(c_{12}-rd_{12})^2\bigr]+o(\theta^2).
\end{align*}
Now assume that $r\log(X^\alpha YX^\alpha)\le\log\{(1-\alpha)Y^r+\alpha(XYX)^r\}$ for all
$X,Y\in\bM_2^{++}$. Then we must have
\begin{align}\label{F-4.30}
(c_{11}-rd_{11})\{\log(\alpha x^{2r}+1-\alpha)-2r\alpha\log x\}\ge(c_{12}-rd_{12})^2\ge0
\end{align}
for all $x,y>0$ with $x^2y\ne1$ and $x^{2\alpha}y\ne1$. Let $x>0$ with $x\ne1$ be fixed, so that $x^2y<1$
and $x^{2\alpha}y<1$ for sufficiently small $y>0$. Since $(x^{2r})^\alpha<\alpha x^{2r}+1-\alpha$ so that
$\log(\alpha x^{2r}+1-\alpha)-2r\alpha\log x>0$, it follows from \eqref{F-4.30} that $c_{11}-rd_{11}\ge0$.
Let us estimate $c_{11}$ and $d_{11}$ when $y\searrow0$. As $y\searrow0$ we have $b\to0$ and
$\log b\approx r\log y$ by \eqref{F-4.24}. Since $a_{11}\to-r+(r-1)x^2$ and $a_{12}\to x$ by \eqref{F-4.22},
we have by \eqref{F-4.24}
\begin{align}\label{F-4.31}
b_{11}\to\alpha(r-1)(x^2-1)-1,\qquad b_{12}\to\alpha x+1-\alpha,
\end{align}
so that by \eqref{F-4.26},
\[
c_{11}\approx\alpha(r-1)(x^2-1)-1+r(\alpha x+1-\alpha)^2\log y
\approx r(\alpha x+1-\alpha)^2\log y.
\]
Moreover, by \eqref{F-4.29},
\[
d_{11}\approx-1+x^{2\alpha}\log y\approx x^{2\alpha}\log y.
\]
Hence it follows that
\[
0\le c_{11}-rd_{11}\approx r(\alpha x+1-\alpha)^2\log y-rx^{2\alpha}\log y
=r\{(\alpha x+1-\alpha)^2-x^{2\alpha}\}\log y,
\]
which is impossible since $(\alpha x+1-\alpha)^2-x^{2\alpha}>0$ and $\log y<0$ as $y\searrow0$. This implies
that \eqref{F-4.21} holds for some $X,Y\in\bM_2^{++}$.
\end{proof}

When $p=q=1$, Theorem \ref{T-4.23} says that for any $\alpha\in(0,1)$, the Loewner inequality
$F_\alpha(A,B)\le A\triangledown_\alpha B$ fails to hold. In particular, $F(A,B)\le{A+B\over2}$ fails
to hold.

\begin{theorem}\label{T-4.24}
Let $0<\alpha<1$ and $p,q>0$. If $SG_{\alpha,p}(A,B)\le_\near\cA_{\alpha,q}(A,B)$ for all $A,B\in\bM_2^{++}$,
then $q/p\ge1+\max\{\alpha(q-1),(1-\alpha)(q-1)\}$; hence
\[
{q\over p}\ge\begin{cases}
1+\max\{\alpha,1-\alpha\}(q-1)\ge1 & \text{if $q\ge1$}, \\
1-\min\{\alpha,1-\alpha\}(1-q)\ge1-\min\{\alpha,1-\alpha\}=\max\{\alpha,1-\alpha\} & \text{if $0<q\le1$}.
\end{cases}
\]
\end{theorem}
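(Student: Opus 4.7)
The plan is to adapt the Riccati-substitution strategy used in the proof of Theorem \ref{T-4.23} to the near-order setting. For $P, Q > 0$ the near order $P \le_\near Q$ is equivalent to $(Q^{1/2} P Q^{1/2})^{1/2} \le Q$; in $\bM_2^{++}$ this reduces to requiring that both the trace and the determinant of $Q - (Q^{1/2} P Q^{1/2})^{1/2}$ be nonnegative. I will test the hypothesized inequality on the $2 \times 2$ family from \eqref{F-3.5} and read off the asserted necessary bound from the $\theta^2$-term of the determinant.

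First I would set $Y := A^p$ and $X := A^{-p} \# B^p$, so by the Riccati lemma $B^p = XYX$, and with $r := q/p$ one has
\[
SG_{\alpha,p}(A,B) = (X^\alpha Y X^\alpha)^{1/p}, \qquad
\cA_{\alpha,q}(A,B) = \bigl((1-\alpha)Y^r + \alpha (XYX)^r\bigr)^{1/(pr)}.
\]
As in the proof of Theorem \ref{T-4.23}, the inverse substitution $A = Y^{1/p}$, $B = (XYX)^{1/p}$ shows that testing the inequality over all $A, B \in \bM_2^{++}$ is the same as testing it over all $X, Y \in \bM_2^{++}$. I would then specialize to $X := A_0$ and $Y := B_\theta$ from \eqref{F-3.5} for $x, y > 0$ avoiding the resonances $x^2 y = 1$ and $x^{2\alpha} y = 1$. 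Using \eqref{F-3.6}, the intermediate formulas \eqref{F-4.22}--\eqref{F-4.24} already derived in the proof of Theorem \ref{T-4.23}, and Example \ref{E-3.2}(1), the second-order Taylor expansions of $P := SG_{\alpha,p}(A,B)$ and $Q := \cA_{\alpha,q}(A,B)$ can be written in the form \eqref{F-3.1}. A further application of Example \ref{E-3.2}(1) produces $Q^{1/2}$, then the sandwich $Q^{1/2} P Q^{1/2}$, and finally $(Q^{1/2}PQ^{1/2})^{1/2}$, all to accuracy $o(\theta^2)$.

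The $\theta^2$-coefficient of $\det\bigl(Q - (Q^{1/2}PQ^{1/2})^{1/2}\bigr)$ must then be nonnegative for every admissible $x, y$. Fixing $x > 0$ and taking the asymptotic regime $y \searrow 0$ (mirroring the $y \searrow 0$ analyses in the proofs of Theorems \ref{T-4.5}, \ref{T-4.10}, and \ref{T-4.23}), I expect the leading behaviour of this coefficient to collapse to a single scalar inequality whose nonnegativity is exactly $q/p \ge 1 + \alpha(q-1)$. The companion bound $q/p \ge 1 + (1-\alpha)(q-1)$ then follows without further computation from the symmetries $F_\alpha(A,B) = F_{1-\alpha}(B,A)$ (Remark \ref{R-2.1}(2)) and $\cA_{\alpha,q}(A,B) = \cA_{1-\alpha,q}(B,A)$, which let one swap $(\alpha, A, B)$ for $(1-\alpha, B, A)$ in the hypothesis. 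Combining the two inequalities gives $q/p \ge 1 + \max\{\alpha(q-1), (1-\alpha)(q-1)\}$, and the concluding case split on whether $q \ge 1$ or $q \le 1$ is elementary arithmetic.

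The principal obstacle is computational: one must iterate Lemma \ref{L-3.1} (via Example \ref{E-3.2}) three or four times to obtain $(Q^{1/2}PQ^{1/2})^{1/2}$ to order $\theta^2$, tracking numerous second-order terms produced by the Schur products and divided differences in the Daleckii--Krein expansions. A secondary difficulty is selecting an asymptotic regime in the scalar parameters that cleanly isolates the bound $q/p \ge 1 + \alpha(q-1)$ without leftover coupling terms, for which some trial with substitutions of the form $y = x^s$ (as used elsewhere in Section \ref{Sec-4}) is likely required.
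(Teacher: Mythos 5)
Your proposal follows essentially the same route as the paper's proof: the Riccati substitution $Y=A^p$, $X=A^{-p}\#B^p$ reducing the problem to $(X^\alpha YX^\alpha)^{r/q}\le_\near\{(1-\alpha)Y^r+\alpha(XYX)^r\}^{1/q}$ with $r=q/p$, the test pair $X=A_0$, $Y=B_\theta$ from \eqref{F-3.5}, the determinant of $M-(M^{1/2}LM^{1/2})^{1/2}$ expanded to order $\theta^2$ via Example \ref{E-3.2}(1), the limit $y\searrow0$ with $x$ fixed, and symmetry in $\alpha\leftrightarrow1-\alpha$ for the companion bound. The only cosmetic difference is that the paper extracts one bound from $x\searrow0$ and the other from the top-order term in $x$ (equivalently by symmetry), rather than a substitution $y=x^s$; the strategy and conclusion are the same.
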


\begin{proof}
As in the first paragraph of the proof of Theorem \ref{T-4.23} we see that
$SG_{\alpha,p}(A,B)\le_\near\cA_{\alpha,q}(A,B)$ for all $A,B\in\bM_2^{++}$ if and only if
\begin{align}\label{F-4.32}
(X^\alpha YX^\alpha)^{r/q}\le_\near\{(1-\alpha)Y^r+\alpha(XYX)^r\}^{1/q},\qquad X,Y\in\bM_2^{++},
\end{align}
where $r:=q/p$. Let $X:=A_0$ and $Y:=B_\theta$ in \eqref{F-3.5} for $x,y>0$ with $x^2y\ne1$,
$x^{2\alpha}y\ne1$ and $b:=y^r(\alpha x^{2r}+1-\alpha)\ne1$. By \eqref{F-4.23} with \eqref{F-4.24} one has
by Example \ref{E-3.2}(1),
\[
M:=\{(1-\alpha)Y^r+\alpha(XYX)^r\}^{1/q}=\begin{bmatrix}1+\theta^2u_{11}&\theta u_{12}\\
\theta u_{12}&b^{1/q}+\theta^2u_{22}\end{bmatrix}+o(\theta^2),
\]
where
\begin{align}\label{F-4.33}
u_{11}:={b_{11}\over q}+{1-b-q\bigl(1-b^{1\over q}\bigr)\over q(1-b)^2}\,b_{12}^2,\qquad
u_{12}:={1-b^{1\over q}\over1-b}\,b_{12}
\end{align}
with $b_{11},b_{12}$ in \eqref{F-4.24}. Moreover,
\[
M^{1/2}=\begin{bmatrix}1+\theta^2v_{11}&\theta v_{12}\\
\theta v_{12}&b^{1\over2q}+\theta^2v_{22}\end{bmatrix}+o(\theta^2),
\]
where
\begin{align}\label{F-4.34}
v_{11}:={1\over2}\Biggl\{u_{11}-{u_{12}^2\over\bigl(1+b^{1\over2q}\bigr)^2}\Biggr\},\qquad
v_{12}:={u_{12}\over1+b^{1\over2q}}.
\end{align}
On the other hand, by \eqref{F-4.27} one has by Example \ref{E-3.2}(1) again,
\begin{align}\label{F-4.35}
L:=(X^\alpha YX^\alpha)^{r/q}=\begin{bmatrix}1+\theta^2s_{11}&\theta s_{12}\\
\theta s_{12}&x^{2\alpha r\over q}y^{r\over q}+\theta^2s_{22}\end{bmatrix}+o(\theta^2),
\end{align}
where
\begin{align}\label{F-4.36}
\begin{cases}
s_{11}:=-{r\over q}(1-y)+{x^{2\alpha}(1-y)^2\bigl({r\over q}-1-{r\over q}x^{2\alpha}y
+x^{2\alpha r\over q}y^{r\over q}\bigr)\over(1-x^{2\alpha}y)^2}, \\
s_{12}:={x^\alpha(1-y)\bigl(1-x^{2\alpha r\over q}y^{r\over q}\bigr)\over1-x^{2\alpha}y}.
\end{cases}
\end{align}
Moreover,
\[
M^{1/2}LM^{1/2}=\begin{bmatrix}1+\theta^2t_{11}&\theta t_{12}\\
\theta t_{12}&a+\theta^2t_{22}\end{bmatrix}+o(\theta^2),
\]
where
\begin{align}\label{F-4.37}
\begin{cases}
a:=x^{2\alpha r\over q}y^{r\over q}b^{1\over q}, \\
t_{11}:=s_{11}+2v_{11}+2s_{12}v_{12}+x^{2\alpha r\over q}y^{r\over q}v_{12}^2, \\
t_{12}:=b^{1\over2q}s_{12}+\bigl(x^{2\alpha r\over q}y^{r\over q}b^{1\over2q}+1\bigr)v_{12}.
\end{cases}
\end{align}
Hence one has
\[
(M^{1/2}LM^{1/2})^{1/2}=\begin{bmatrix}1+\theta^2w_{11}&\theta w_{12}\\
\theta w_{12}&a^{1/2}+\theta^2w_{22}\end{bmatrix}+o(\theta^2),
\]
where
\begin{align}\label{F-4.38}
w_{11}:={1\over2}\Bigl\{t_{11}-{t_{12}^2\over(1+a^{1/2})^2}\Bigr\},\qquad
w_{12}:={t_{12}\over1+a^{1/2}}.
\end{align}

Now assume \eqref{F-4.32}, which means that $(M^{1/2}LM^{1/2})^{1/2}\le M$ so that
\begin{align*}
0\le\det\{M-(M^{1/2}LM^{1/2})^{1/2}\}
&=\det\Biggl(\begin{bmatrix}\theta^2(u_{11}-w_{11})&\theta(u_{12}-w_{12})\\
\theta(u_{12}-w_{12})&b^{1\over q}-a^{1\over2}+\theta^2(u_{22}-w_{22})\end{bmatrix}+o(\theta^2)\Biggr) \\
&=\theta^2\bigl\{\bigl(b^{1\over q}-a^{1\over2}\bigr)(u_{11}-w_{11})-(u_{12}-w_{12})^2\bigr\}+o(\theta^2).
\end{align*}
Therefore,
\begin{align}\label{F-4.39}
\bigl(b^{1\over q}-a^{1\over2}\bigr)(u_{11}-w_{11})-(u_{12}-w_{12})^2\ge0
\end{align}
for all $x,y>0$ with $x^2y\ne1$, $x^{2\alpha}y\ne1$ and $b\ne1$. By \eqref{F-4.24} and \eqref{F-4.37}
note that
\begin{equation}\label{F-4.40}
\begin{aligned}
b^{1\over q}-a^{1\over2}&=y^{r\over q}(\alpha x^{2r}+1-\alpha)^{1\over q}
-x^{\alpha r\over q}y^{r\over q}(\alpha x^{2r}+1-\alpha)^{1\over2q} \\
&=y^{r\over q}(\alpha x^{2r}+1-\alpha)^{1\over2q}\{(\alpha x^{2r}+1-\alpha)^{1\over2q}-x^{\alpha r\over q}\}.
\end{aligned}
\end{equation}
Furthermore, when $y\searrow0$ with $x>0$ fixed, we see that
\begin{align*}
u_{12}-w_{12}&=u_{12}-t_{12}+O(y^{r\over q})
\quad\mbox{$\Bigl($by \eqref{F-4.38} and $a^{1/2}=O(y^{r\over2q}b^{1\over2q})=O(y^{r\over q})$$\Bigr)$} \\
&=u_{12}-b^{1\over2q}s_{12}-v_{12}+O(y^{r\over q})\quad\mbox{(by \eqref{F-4.37})} \\
&=u_{12}-b^{1\over2q}{1-y\over1-x^{2\alpha}y}\,x^\alpha-{u_{12}\over1+b^{1\over2q}}+O(y^{r\over q})
\quad\mbox{(by \eqref{F-4.36} and \eqref{F-4.34})} \\
&={b^{1\over2q}\over1+b^{1\over2q}}\,u_{12}-b^{1\over2q}{1-y\over1-x^{2\alpha}y}\,x^\alpha+O(y^{r\over q}) \\
&=b^{1\over2q}(u_{12}-x^\alpha)+O(y^{{r\over2q}+1})+O(y^{r\over q}) \\
&\quad\mbox{$\Bigl($since ${b^{1\over2q}\over1+b^{1\over2q}}-b^{1\over2q}
={-b^{1\over q}\over1+b^{1\over2q}}=O(y^{r\over q})$ and
$b^{1\over2q}{1-y\over1-x^{2\alpha}y}-b^{1\over2q}
=b^{1\over2q}y\,{x^{2\alpha}-1\over1-x^{2\alpha}y}=O(y^{{r\over2q}+1})$$\Bigr)$} \\
&=y^{r\over2q}(\alpha x^{2r}+1-\alpha)^{1\over2q}(u_{12}-x^\alpha)+O(y^{{r\over2q}+1})+O(y^{r\over q}),
\end{align*}
which implies that
\begin{align}\label{F-4.41}
(u_{12}-w_{12})^2=y^{r\over q}(\alpha x^{2r}+1-\alpha)^{1\over q}(u_{12}-x^\alpha)^2+o(y^{r\over q}).
\end{align}
From \eqref{F-4.39}--\eqref{F-4.41} it follows that when $y\searrow0$,
\[
(\alpha x^{2r}+1-\alpha)^{1\over2q}\bigl\{(\alpha x^{2r}+1-\alpha)^{1\over2q}-x^{\alpha r\over q}\bigr\}
(u_{11}-w_{11})-(\alpha x^{2r}+1-\alpha)^{1\over q}(u_{12}-x^\alpha)^2\ge o(1)
\]
so that
\begin{align}\label{F-4.42}
\bigl\{(\alpha x^{2r}+1-\alpha)^{1\over2q}-x^{\alpha r\over q}\bigr\}(u_{11}-w_{11})
-(\alpha x^{2r}+1-\alpha)^{1\over2q}(u_{12}-x^\alpha)^2\ge o(1).
\end{align}
As $y\searrow0$, since $b\to0$, from \eqref{F-4.33}, \eqref{F-4.31} and \eqref{F-4.34} we see that
\begin{align*}
u_{11}&\to{\alpha(r-1)\over q}(x^2-1)-{1\over q}+{1-q\over q}(\alpha x+1-\alpha)^2,\qquad
u_{12}\to\alpha x+1-\alpha, \\
v_{11}&\to{\alpha(r-1)\over2q}(x^2-1)-{1\over2q}+{1-2q\over2q}(\alpha x+1-\alpha)^2,\qquad
v_{12}\to\alpha x+1-\alpha.
\end{align*}
Moreover, from \eqref{F-4.36} and \eqref{F-4.37},
\begin{align*}
s_{11}&\to-{r\over q}+\biggl({r\over q}-1\biggr)x^{2\alpha},\qquad s_{12}\to x^\alpha,\qquad a\to0, \\
t_{11}&\to-{r+1\over q}+\biggl({r\over q}-1\biggr)x^{2\alpha}+{\alpha(r-1)\over q}(x^2-1)
+{1-2q\over q}(\alpha x+1-\alpha)^2+2x^\alpha(\alpha x+1-\alpha), \\
t_{12}&\to\alpha x+1-\alpha.
\end{align*}
Finally, from \eqref{F-4.38},
\begin{align*}
w_{11}&\to-{r+1\over2q}+{1\over2}\biggl({r\over q}-1\biggr)x^{2\alpha}
+{\alpha(r-1)\over2q}(x^2-1)+{1-3q\over2q}(\alpha x+1-\alpha)^2+x^\alpha(\alpha x+1-\alpha).
\end{align*}
Therefore, letting $y\searrow0$ in \eqref{F-4.42} gives
\begin{equation}\label{F-4.43}
\begin{aligned}
&\bigl\{(\alpha x^{2r}+1-\alpha)^{1\over2q}-x^{\alpha r\over q}\bigr\}
\biggl\{{\alpha(r-1)\over2q}(x^2-1)+{r-1\over2q}-{1\over2}\biggl({r\over q}-1\biggr)x^{2\alpha} \\
&+{q+1\over2q}(\alpha x+1-\alpha)^2-x^\alpha(\alpha x+1-\alpha)\biggr\}
-(\alpha x^{2r}+1-\alpha)^{1\over2q}(\alpha x+1-\alpha-x^\alpha)^2\ge0
\end{aligned}
\end{equation}
for all $x>0$. Letting $x\searrow0$ in \eqref{F-4.43} gives
\[
(1-\alpha)^{1\over2q}\biggl\{-{\alpha(r-1)\over2q}+{r-1\over2q}+{q+1\over2q}(1-\alpha)^2\biggr\}
-(1-\alpha)^{1\over2q}(1-\alpha)^2\ge0
\]
so that $(1-\alpha)(r-1)-(1-\alpha)^2(q-1)\ge0$, implying that $r\ge1+(1-\alpha)(q-1)$. Examining the
coefficient of the maximal order term $x^{{r\over q}+2}$ of \eqref{F-4.43} we also find that
\[
\alpha^{1\over2q}\biggl\{{\alpha(r-1)\over2q}+{\alpha^2(q+1)\over2q}\biggr\}-\alpha^{1\over2q}\alpha^2\ge0
\]
so that $\alpha(r-1)+\alpha^2(q+1)-2q\alpha^2\ge0$, implying that $r\ge1+\alpha(q-1)$. This is immediate
since the assumption of the theorem holds for $1-\alpha$ in place of $\alpha$ from symmetry of
$SG_{\alpha,p}$ and $\cA_{\alpha,q}$ (see Remark \ref{R-2.1}(2)). Thus the result follows.
\end{proof}

\begin{proposition}\label{P-4.25}
Let $0<\alpha<1$ and $p,q>0$.
\begin{itemize}
\item[(1)] If ${p/q}\le\min\{\alpha,1-\alpha\}$, then we have
$SG_{\alpha,p}(A,B)\prec_{\log}R_{\alpha,q}(A,B)$ for every $A,B\ge0$ with $s(A)\ge s(B)$.
\item[(2)] If ${p/q}\le2\min\{\alpha,1-\alpha\}$, then we have
$SG_{\alpha,p}(A,B)\prec_{w\log}\cA_{\alpha,q}(A,B)$ (hence
$SG_{\alpha,p}(A,B)\prec_w\cA_{\alpha,q}(A,B)$) for every $A,B\ge0$ with $s(A)\ge s(B)$.
\end{itemize}
\end{proposition}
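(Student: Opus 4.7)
My plan is to prove part~(1) first and to deduce part~(2) from it by composing with Proposition~\ref{P-4.9}. By Theorem~\ref{T-2.6} (together with the straightforward extension of Lemma~\ref{L-2.5} to $\prec_{\log}$, which follows by continuity of determinants) it suffices to treat $A, B > 0$. Substituting $C := A^p$, $D := B^p$ and $t := q/p$ gives $SG_{\alpha,p}(A,B) = F_\alpha(C,D)^{1/p}$ and $R_{\alpha,q}(A,B) = R_{\alpha,t}(C,D)^{1/p}$, so, as $\prec_{\log}$ is preserved under positive powers, part~(1) reduces to
\[
F_\alpha(C,D) \prec_{\log} R_{\alpha,t}(C,D) \qquad \text{whenever } t \ge 1/\min\{\alpha,1-\alpha\}.
\]
Applying Araki's log-monotonicity of $(X^{s/2}Y^sX^{s/2})^{1/s}$ in $s > 0$ with $X = C^{1-\alpha}$, $Y = D^\alpha$ shows that $t \mapsto R_{\alpha,t}(C,D)$ is log-monotone increasing, so it is enough to treat the boundary value $t_0 := 1/\min\{\alpha,1-\alpha\}$. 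The symmetries $F_\alpha(C,D) = F_{1-\alpha}(D,C)$ (see Remark~\ref{R-2.1}(2) and \cite{LL}) and $\lambda(R_{\alpha,t}(C,D)) = \lambda(R_{1-\alpha,t}(D,C))$ allow me to assume $\alpha \le 1/2$, so $t_0 = 1/\alpha$ and the residual claim is
\[
F_\alpha(C,D)^{1/\alpha} \prec_{\log} C^{(1-\alpha)/(2\alpha)}\, D\, C^{(1-\alpha)/(2\alpha)}.
\]

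For this boundary inequality, write $T := C^{-1}\#D = C^{-1/2}(C^{1/2}DC^{1/2})^{1/2}C^{-1/2}$, so that $F_\alpha(C,D) = T^\alpha C T^\alpha$ and the Riccati identity $TCT = D$ holds. A direct application of the Araki--Lieb--Thirring log-majorization with exponent $1/\alpha \ge 1$ immediately supplies the preliminary bound
\[
F_\alpha(C,D)^{1/\alpha} \prec_{\log} T\, C^{1/\alpha}\, T,
\]
but small $2\times 2$ numerical checks (already at $\alpha = 1/3$) show that $\lambda(TC^{1/\alpha}T)$ strictly exceeds the target $\lambda(C^{(1-\alpha)/(2\alpha)}DC^{(1-\alpha)/(2\alpha)})$ in the log-majorization order, so this bound is too loose and one must exploit the specific structure of $T$. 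The crucial identity, immediate from the definition of $T$ and cyclic invariance of eigenvalues, is $\lambda(C^{1/2}T^2C^{1/2}) = \lambda(D)$. My planned strategy is to combine this identity with the Ando--Hiai log-majorization $(C^{-1}\#D)^{2\alpha} \prec_{\log} C^{-2\alpha}\# D^{2\alpha}$ (available because $2\alpha \le 1$) and a Furuta-type rearrangement interpolating between the products $TC^{1/\alpha}T$ and $C^{(1-\alpha)/(2\alpha)}DC^{(1-\alpha)/(2\alpha)}$; carrying out this interpolation rigorously is the main obstacle.

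To deduce part~(2) from part~(1), suppose $p/q \le 2\min\{\alpha,1-\alpha\}$ and set $q_1 := 2q$, so that $p/q_1 \le \min\{\alpha,1-\alpha\}$. Part~(1) applied with $q_1$ in place of $q$ then yields $SG_{\alpha,p}(A,B) \prec_{\log} R_{\alpha,2q}(A,B)$, and Proposition~\ref{P-4.9} combined with Proposition~\ref{P-2.4}(3) gives $R_{\alpha,2q}(A,B) \le_\lambda \cA_{\alpha,q}(A,B)$, hence $R_{\alpha,2q}(A,B) \prec_{w\log} \cA_{\alpha,q}(A,B)$. Since $\prec_{\log}$ implies $\prec_{w\log}$ and the latter is transitive, composing these bounds yields $SG_{\alpha,p}(A,B) \prec_{w\log} \cA_{\alpha,q}(A,B)$, and the desired $SG_{\alpha,p}(A,B) \prec_w \cA_{\alpha,q}(A,B)$ follows from Proposition~\ref{P-2.4}(3).
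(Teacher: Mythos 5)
Your reduction steps (passing to $A,B>0$ by continuity, substituting $C=A^p$, $D=B^p$, using the symmetry $F_\alpha(C,D)=F_{1-\alpha}(D,C)$ to assume $\alpha\le1/2$, invoking Araki's log-monotonicity of $t\mapsto R_{\alpha,t}$ to reduce to the boundary value $t_0=1/\alpha$, and deducing part (2) from part (1) via Proposition \ref{P-4.9}) all match the paper's argument and are sound. But the core of part (1) --- the boundary inequality $F_\alpha(C,D)^{1/\alpha}\prec_{\log}C^{(1-\alpha)/(2\alpha)}DC^{(1-\alpha)/(2\alpha)}$ --- is not proved. You apply Araki--Lieb--Thirring once with exponent $1/\alpha$ to get $F_\alpha(C,D)^{1/\alpha}\prec_{\log}TC^{1/\alpha}T$, correctly observe that this is too lossy, and then only sketch a plan (``Ando--Hiai plus a Furuta-type rearrangement'') whose execution you yourself identify as the main obstacle. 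As written, the proposal does not establish the proposition.

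The missing idea is to apply ALT twice with an intermediate use of the Riccati identity, rather than once with the full exponent. With $T:=C^{-1}\#D$, first apply ALT with exponent $r=2\alpha\le1$ in the form $\lambda(T^\alpha CT^\alpha)\prec_{\log}\lambda^{2\alpha}\bigl(T^{1/2}C^{1/(2\alpha)}T^{1/2}\bigr)$; this leaves a \emph{single} copy of $T$ sandwiched after a cyclic rearrangement, namely $\lambda^{2\alpha}\bigl(C^{(1-2\alpha)/(4\alpha)}\,C^{1/2}TC^{1/2}\,C^{(1-2\alpha)/(4\alpha)}\bigr)$, and the identity $C^{1/2}(C^{-1}\#D)C^{1/2}=(C^{1/2}DC^{1/2})^{1/2}$ replaces that copy by $(C^{1/2}DC^{1/2})^{1/2}$ exactly (not merely up to spectrum, which is where your identity $\lambda(C^{1/2}T^2C^{1/2})=\lambda(D)$ fails to help, since $C^{1/2}T^2C^{1/2}\ne D$ as a matrix). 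A second ALT application with $r=1/2$ then removes the square root and yields $\lambda^\alpha\bigl(C^{(1-\alpha)/(2\alpha)}DC^{(1-\alpha)/(2\alpha)}\bigr)=\lambda\bigl(R_{\alpha,1/\alpha}(C,D)\bigr)$. No Ando--Hiai or Furuta-type machinery is needed. Your derivation of part (2) from part (1) and Proposition \ref{P-4.9} is correct and is exactly the paper's route, but it inherits the gap in part (1).
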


\begin{proof}
By continuity (Proposition \ref{P-2.2}) and symmetry mentioned in Remark \ref{R-2.1}(2) we may assume
that $A,B>0$ and $0<\alpha\le1/2$.

(1)\enspace
For $0<\alpha\le1/2$ and $A,B>0$, by Araki's log-majorization \cite{Ar} we have
\begin{align*}
\lambda((A^{-1}\#B)^\alpha A(A^{-1}\#B)^\alpha)
&\prec_{\log}\lambda^{2\alpha}\bigl((A^{-1}\#B)^{1\over2}A^{1\over2\alpha}(A^{-1}\#B)^{1\over2}\bigr) \\
&=\lambda^{2\alpha}\bigl(A^{1\over4\alpha}(A^{-1}\#B)A^{1\over4\alpha}\bigr) \\
&=\lambda^{2\alpha}\bigl(A^{1-2\alpha\over4\alpha}A^{1/2}(A^{-1}\#B)A^{1/2}
A^{1-2\alpha\over4\alpha}\bigr) \\
&=\lambda^{2\alpha}\bigl(A^{1-2\alpha\over4\alpha}(A^{1/2}BA^{1/2})^{1/2}
A^{1-2\alpha\over4\alpha}\bigr) \\
&\prec_{\log}\lambda^\alpha\bigl(A^{1-2\alpha\over2\alpha}A^{1/2}BA^{1/2}
A^{1-2\alpha\over2\alpha}\bigr) \\
&=\lambda^\alpha\bigl(A^{1-\alpha\over2\alpha}BA^{1-\alpha\over2\alpha}\bigr)
=\lambda(R_{\alpha,1/\alpha}(A,B)).
\end{align*}
Replacing $A,B$ with $A^p,B^p$ gives
$SG_{\alpha,p}(A,B)\prec_{\log}R_{\alpha,1/\alpha}(A^p,B^p)^{1/p}=R_{\alpha,p/\alpha}(A,B)$. The result
follows since $R_{\alpha,p/\alpha}(A,B)\prec_{\log}R_{\alpha,q}(A,B)$ if $p/\alpha\le q$ by Araki's
log-majorization again.

(2)\enspace
If $p/q\le2\alpha$ and so $p/\alpha\le2q$, then we have
$R_{\alpha,p/\alpha}(A,B)\le_\lambda\cA_{\alpha,q}(A,B)$ by Proposition \ref{P-4.9}. Combining this with (1)
gives the result.
\end{proof}

\begin{remark}\label{R-4.26}\rm
As for log-majorization in (1) above, for $0<\alpha<1$ and $p,q>0$, it is indeed known that
$SG_{\alpha,p}(A,B)\prec_{\log}R_{\alpha,q}(A,B)$ for all $A,B>0$ if and only if $p/q\le\min\{\alpha,1-\alpha\}$,
and that $R_{\alpha,q}(A,B)\prec_{\log}SG_{\alpha,p}(A,B)$ for all $A,B>0$ if and only if
$p/q\ge\max\{\alpha,1-\alpha\}$. The details on these facts will be provided in \cite{Hi5}, while the `if' part
of the latter was indeed shown in \cite{GT}.
\end{remark}

\begin{theorem}\label{T-4.27}
Let $0<\alpha<1$ and $p,q>0$. If $\lambda_1(SG_{\alpha,p}(A,B))\le\lambda_1(\cA_{\alpha,q}(A,B))$
holds for all $A,B\in\bM_2^{++}$, then $q/p\ge\max\{\alpha,1-\alpha\}$. Hence, if
$SG_{\alpha,p}(A,B)\prec_w\cA_{\alpha,q}(A,B)$ for all $A,B\in\bM_2^{++}$, then
$q/p\ge\max\{\alpha,1-\alpha\}$.
\end{theorem}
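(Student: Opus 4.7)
The plan is to adapt the Riccati substitution used in the proof of Theorem~\ref{T-4.23} and then perform an asymptotic eigenvalue expansion analogous to Theorem~\ref{T-4.24}, but tailored to the largest eigenvalue via Lemma~\ref{L-3.6}. Setting $Y:=A^p$ and $X:=A^{-p}\#B^p$, so that $A=Y^{1/p}$ and, by the Riccati lemma, $B=(XYX)^{1/p}$, the hypothesis is equivalent to
\[
\lambda_1(X^\alpha YX^\alpha)^r\le\lambda_1\bigl((1-\alpha)Y^r+\alpha(XYX)^r\bigr),\qquad X,Y\in\bM_2^{++},
\]
where $r:=q/p$. So I only need to produce, for any $r<\max\{\alpha,1-\alpha\}$, some $X,Y\in\bM_2^{++}$ violating this.

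Specializing to $X:=A_0$ and $Y:=B_\theta$ from \eqref{F-3.5} (with parameters $x,y>0$), the second-order expansions \eqref{F-4.23}--\eqref{F-4.24} and \eqref{F-4.27} already recorded in the proof of Theorem~\ref{T-4.24} apply verbatim. For $y$ sufficiently small the diagonal entries of the leading matrices in those expansions satisfy $x^{2\alpha}y<1$ and $y^r(\alpha x^{2r}+1-\alpha)<1$, so Lemma~\ref{L-3.6} (with $a=1$) gives
\begin{align*}
\lambda_1(X^\alpha YX^\alpha)&=1+\theta^2\mu(x,y)+o(\theta^2),\\
\lambda_1\bigl((1-\alpha)Y^r+\alpha(XYX)^r\bigr)&=1+\theta^2\nu(x,y)+o(\theta^2),
\end{align*}
with explicit rational-exponential formulas for $\mu,\nu$. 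Raising the first line to the $r$-th power, the hypothesis forces the scalar inequality $r\mu(x,y)\le\nu(x,y)$.

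The key step is to take $y\searrow0$, which I expect to collapse the constraint to
\[
r(x^{2\alpha}-1)\le\alpha\bigl[x^2(r+\alpha-1)+2(1-\alpha)x-r-(1-\alpha)\bigr],\qquad x>0.
\]
Evaluating at $x\searrow0$ yields $-r\le-\alpha r-\alpha(1-\alpha)$, i.e.\ $r\ge\alpha$. Letting $x\to\infty$, the left side grows like $rx^{2\alpha}$ while the right has leading term $\alpha(r+\alpha-1)x^2$; since $2\alpha<2$, this forces $r+\alpha-1\ge0$, i.e.\ $r\ge1-\alpha$. Combining gives $q/p=r\ge\max\{\alpha,1-\alpha\}$. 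The second assertion follows at once, since the $k=1$ case of $SG_{\alpha,p}(A,B)\prec_w\cA_{\alpha,q}(A,B)$ is exactly $\lambda_1(SG_{\alpha,p}(A,B))\le\lambda_1(\cA_{\alpha,q}(A,B))$.

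The main obstacle I anticipate is making the second-order eigenvalue expansion rigorous. Lemma~\ref{L-3.6} requires the two leading-order diagonal entries to be distinct, and this must be verified for the regime of $(x,y)$ used; moreover, the sequential limits $\theta\searrow0$, then $y\searrow0$, then $x\to0$ or $x\to\infty$ must be taken on the algebraic inequality obtained at the previous stage and not interchanged with the $o(\theta^2)$ remainders. Correctly distilling the reduced functions $\mu(x,0)$ and $\nu(x,0)$ from \eqref{F-4.23}--\eqref{F-4.27} is where the bulk of the technical work will go, but the resulting scalar inequality is then a routine polynomial-versus-power comparison.
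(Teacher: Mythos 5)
Your proposal is correct and follows essentially the same route as the paper's proof: the same Riccati reduction to $\lambda_1((X^\alpha YX^\alpha)^r)\le\lambda_1((1-\alpha)Y^r+\alpha(XYX)^r)$ with $r=q/p$, the same test pair $X=A_0$, $Y=B_\theta$, the same second-order eigenvalue expansion via Lemma \ref{L-3.6} applied to \eqref{F-4.23} and \eqref{F-4.27}, and the same order of limits $\theta\to0$, then $y\searrow0$, then $x\searrow0$ to obtain $r\ge\alpha$; indeed your limiting scalar inequality $r(x^{2\alpha}-1)\le\alpha\{(r+\alpha-1)x^2+2(1-\alpha)x-r-(1-\alpha)\}$ is exactly what the paper derives. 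The only deviation is that for the second bound $r\ge1-\alpha$ the paper invokes the symmetry $SG_{\alpha,p}(A,B)=SG_{1-\alpha,p}(B,A)$ (Remark \ref{R-2.1}(2)) instead of letting $x\to\infty$, but your comparison of the growth $rx^{2\alpha}$ against $\alpha(r+\alpha-1)x^2$ is an equally valid way to force $r+\alpha-1\ge0$.
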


\begin{proof}
Assume that $\lambda_1(SG_{\alpha,p}(A,B))\le\lambda_1(\cA_{\alpha,q}(A,B))$ holds for all
$A,B\in\bM_2^{++}$. Then as in the first paragraph of the proof of Theorem \ref{T-4.23} it follows that
\[
\lambda_1((X^\alpha YX^\alpha)^r)\le\lambda_1((1-\alpha)Y^r+\alpha(XYX)^r)
\]
holds for all $X,Y\in\bM_2^{++}$, where $r:=q/p$. Now set $X:=A_0$ and $Y:=B_\theta$ in \eqref{F-3.5}
for $x,y>0$ with $x^2y\ne1$. Apply Lemma \ref{L-3.6} to \eqref{F-4.23} and \eqref{F-4.35} with $q=1$ to
find that
\begin{align*}
&\lambda_1((1-\alpha)Y^r+\alpha(XYX)^r)
=1+\theta^2\biggl(b_{11}+{b_{12}^2\over1-b}\biggr)+o(\theta^2)
\quad\mbox{if $0<b<1$}, \\
&\lambda_1((X^\alpha YX^\alpha)^r)
=1+\theta^2\biggl(\hat s_{11}+{\hat s_{12}^2\over1-x^{2\alpha r}y^r}\biggr)+o(\theta^2)
\qquad\mbox{if $0<x^{2\alpha r}y^r<1$},
\end{align*}
where $b,b_{11},b_{12}$ are in \eqref{F-4.24} via \eqref{F-4.22} and $\hat s_{11},\hat s_{12}$ are in
\eqref{F-4.36} with $q=1$. Hence, for any $x>0$, if $y>0$ is sufficiently small, then we must have
\[
\hat s_{11}+{\hat s_{12}^2\over1-x^{2\alpha r}y^r}\le b_{11}+{b_{12}^2\over1-b}.
\]
When $y\searrow0$, the above inequality becomes
\[
-r+(r-1)x^{2\alpha}+x^{2\alpha}
\le-\alpha r+\alpha(r-x)x^2-(1-\alpha)+(\alpha x+1-\alpha)^2.
\]
Letting $x\searrow0$ in the above gives $-r\le-\alpha r-(1-\alpha)+(1-\alpha)^2$, so that $r\ge\alpha$.
Moreover, from symmetry we have $r\ge1-\alpha$ too, so the result follows.
\end{proof}

Note that the sufficient condition in Proposition \ref{P-4.25} is indeed stricter than the necessary condition in
Theorem \ref{T-4.27}, because $2\min\{\alpha,1-\alpha\}<1/\max\{\alpha,1-\alpha\}$.

\begin{proposition}\label{P-4.28}
Let $0<\alpha<1$ and $p,q>0$. If $q\ge1$ or ${p/q}\le2\min\{\alpha,1-\alpha\}$, then
$\Tr\,SG_{\alpha,p}(A,B)\le\Tr\,\cA_{\alpha,q}(A,B)$ holds for all $A,B\ge0$ with $s(A)\ge s(B)$.
\end{proposition}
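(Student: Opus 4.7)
The plan is to use Theorem~\ref{T-2.6} to reduce to the case $A, B > 0$, and then handle the two sufficient conditions listed in the statement separately.

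For the condition $p/q \le 2\min\{\alpha, 1-\alpha\}$, the trace inequality is immediate: Proposition~\ref{P-4.25}(2) already provides the stronger weak log-majorization $SG_{\alpha,p}(A,B) \prec_{w\log} \cA_{\alpha,q}(A,B)$, which by Proposition~\ref{P-2.4}(3) implies $\prec_w$ and hence the desired $\le_\Tr$.

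For the condition $q \ge 1$, I would route the argument through the R\'enyi mean. Set $q_0 := p/\min\{\alpha, 1-\alpha\}$, so that $p/q_0 = \min\{\alpha, 1-\alpha\}$ sits exactly at the boundary of the hypothesis of Proposition~\ref{P-4.25}(1). That proposition then yields the log-majorization $SG_{\alpha,p}(A,B) \prec_{\log} R_{\alpha,q_0}(A,B)$, from which $\Tr\,SG_{\alpha,p}(A,B) \le \Tr\,R_{\alpha,q_0}(A,B)$ follows. Next, since $q \ge 1$ trivially satisfies $\min\{1, q_0/2\} \le 1 \le q$, Proposition~\ref{P-4.11} applies with $p$ replaced by $q_0$, giving $\Tr\,R_{\alpha,q_0}(A,B) \le \Tr\,\cA_{\alpha,q}(A,B)$. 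Chaining the two trace inequalities completes the argument.

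There is no serious technical obstacle here: both sufficient conditions reduce directly to results already established earlier in the paper, with the R\'enyi mean serving as a convenient intermediate object in the $q \ge 1$ case. The only small subtlety is the parameter choice $q_0 = p/\min\{\alpha, 1-\alpha\}$, which is selected so that Proposition~\ref{P-4.25}(1) applies at the extremal boundary of its hypothesis; any larger value of $q_0$ would work equally well, but this choice makes the chaining cleanest.
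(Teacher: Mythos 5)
Your proposal is correct and follows essentially the same route as the paper: the case $p/q\le2\min\{\alpha,1-\alpha\}$ is dispatched by Proposition \ref{P-4.25}(2), and the case $q\ge1$ chains $\Tr\,SG_{\alpha,p}\le\Tr\,R_{\alpha,r}\le\Tr\,\cA_{\alpha,q}$ via Propositions \ref{P-4.25}(1) and \ref{P-4.11}, the only cosmetic difference being that the paper takes any $r$ with $p/r\le\min\{\alpha,1-\alpha\}$ while you fix the extremal choice $r=p/\min\{\alpha,1-\alpha\}$.
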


\begin{proof}
If ${p/q}\le2\min\{\alpha,1-\alpha\}$, then the result follows from Proposition \ref{P-4.25}(2). If $q\ge1$, then
we have, with $r>0$ satisfying $p/r\le\min\{\alpha,1-\alpha\}$,
\[
\Tr\,SG_{\alpha,p}(A,B)\le\Tr\,R_{\alpha,r}(A,B)\le\Tr\,\cA_{\alpha,q}(A,B)
\]
by Propositions \ref{P-4.25}(1) and \ref{P-4.11}.
\end{proof}

\begin{theorem}\label{T-4.29}
Let $0<\alpha<1$ and $p,q>0$. If $\Tr\,SG_{\alpha,p}(A,B)\le\Tr\,\cA_{\alpha,q}(A,B)$ holds for all
$A,B\in\bM_2^{++}$, then $\min\{1,p/2\}\le q$.
\end{theorem}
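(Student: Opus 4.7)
The approach mirrors the proof of Theorem~\ref{T-4.12}. Since $\min\{1,p/2\}\le q$ holds trivially when $q\ge1$, it suffices to establish $p/2\le q$ under the additional assumption $q<1$. My plan is to apply the trace inequality to the matrices $A_0,B_\theta\in\bM_2^{++}$ from \eqref{F-3.5} with $y=x\in(0,1)\setminus\{1\}$, expand both sides up to $o(\theta^2)$ as $\theta\to0$, and then let $x\searrow0$. By \eqref{F-4.9}, the $\theta^2$-coefficient of $\Tr\,\cA_{\alpha,q}(A_0,B_\theta)$ equals $\frac{\alpha(1-\alpha)}{q}(x^q+x^{1-q}-1-x)$, which tends to $-\alpha(1-\alpha)/q$ as $x\searrow0$ since $q<1$ forces $x^q,x^{1-q},x\to0$.

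For $\Tr\,SG_{\alpha,p}(A_0,B_\theta)=1+x+\theta^2\phi_p(x)+o(\theta^2)$, I will compute $\phi_p(x)$ in four nested steps: (i) expand $Z:=A_0^{-p}\#B_\theta^p=A_0^{-p/2}(A_0^{p/2}B_\theta^pA_0^{p/2})^{1/2}A_0^{-p/2}$ by applying Example~\ref{E-3.2}(1) with $r=1/2$ to the matrix $A_0^{p/2}B_\theta^pA_0^{p/2}$; (ii) since $Z|_{\theta=0}=I$ and the $\theta$-linear term of $Z$ is $\theta M$ with $M$ having zero diagonal and off-diagonal entries $m=(1-x^p)/(1+x^p)$ (so that $M^2=m^2I$), expand $Z^\alpha$ by Taylor's theorem at $I$; (iii) compute $Z^\alpha A_0^pZ^\alpha$, which fits the form \eqref{F-3.1} with $a=1$, $b=x^p$; (iv) apply Example~\ref{E-3.2}(1) once more, this time with $r=1/p$, to $Z^\alpha A_0^pZ^\alpha$ to extract the trace of $SG_{\alpha,p}(A_0,B_\theta)=(Z^\alpha A_0^pZ^\alpha)^{1/p}$.

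The crucial observation in the limit $x\searrow0$ is a cancellation: when $p>1$, the divergent $x^{1-p}$ contributions coming from the term $m^2(x^p+x^{1-p})$ in the $\theta^2$-diagonal of $Z^\alpha A_0^pZ^\alpha$ and from the second divided difference factor $(1-x^{1-p})/(1-x^p)$ appearing in step~(iv) exactly cancel (the combined bracket reduces to $(m^2-1)(x^p+x^{1-p})+1+x=O(x^p)+O(x)+1$), yielding the finite limit $\lim_{x\searrow0}\phi_p(x)=-2\alpha(1-\alpha)/p$. The trace inequality then forces, in the limit, $-2\alpha(1-\alpha)/p\le-\alpha(1-\alpha)/q$, i.e., $p/2\le q$. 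The main obstacle is executing this four-layer computation cleanly and verifying the cancellation of divergent $x^{1-p}$ contributions, which is essential when $p>1$ since several intermediate quantities individually blow up as $x\to0$ but must combine to a finite limit.
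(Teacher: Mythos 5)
Your proposal is correct and follows essentially the same route as the paper's proof: the same test pair $A_0,B_\theta$ with $y=x$, the same comparison of $\theta^2$-coefficients of the traces via \eqref{F-4.9}, and the same limit $x\searrow0$ (under $q<1$) yielding $-2\alpha(1-\alpha)/p\le-\alpha(1-\alpha)/q$. The only differences are organizational — the paper rearranges to $\{A_0^{p/2}(A_0^{-p}\#B_\theta^p)^{2\alpha}A_0^{p/2}\}^{1/p}$ and reads off eigenvalues via Lemma \ref{L-3.6}, so that each potentially divergent term carries an extra factor $x^{p}$ and tends to $0$ individually, whereas your parametrization requires the explicit cancellation of the $x^{1-p}$ pieces that you correctly identify.
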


\begin{proof}
It suffices to show that  we must have $p/2\le q$ when $q<1$. Consider once again
$A_0,B_\theta\in\bM_2^{++}$ in \eqref{F-3.5} with $y=x>0$, $x\ne1$. Note that
\[
\Tr\,SG_{\alpha,p}(A_0,B_\theta)
=\Tr\bigl\{(A_0^{-p}\#B_\theta^p)^\alpha A_0^p(A_0^{-p}\#B_\theta^p)^\alpha\bigr\}^{1/p}
=\Tr\bigl\{A_0^{p/2}(A_0^{-p}\#B_\theta^p)^{2\alpha}A_0^{p/2}\bigr\}^{1/p}.
\]
We compute
\[
A_0^{-p}\#B_\theta^p=\begin{bmatrix}1+\theta^2\xi_{11}&\theta\xi_{12}\\
\theta\xi_{12}&1+\theta^2\xi_{22}\end{bmatrix}+o(\theta^2),
\]
where
\[
\begin{cases}
\xi_{11}:=-{(1-x^p)(1+3x^p)\over2(1+x^p)^2}, \\
\xi_{22}:={(1-x^p)(3+x^p)\over2(1+x^p)^2}, \\
\xi_{12}:={1-x^p\over1+x^p}.
\end{cases}
\]
This computation is similar to the previous ones based on Example \ref{E-3.2}(1), so the details are left to
the reader. Furthermore, with $H:=\begin{bmatrix}0&\xi_{12}\\\xi_{12}&0\end{bmatrix}$ and
$K:=\begin{bmatrix}\xi_{11}&0\\0&\xi_{22}\end{bmatrix}$ we have by the Taylor expansion,
\begin{align*}
(A_0^{-p}\#B_\theta^p)^{2\alpha}
&=(I_2+\theta H+\theta^2K+o(\theta^2))^{2\alpha} \\
&=I_2+2\alpha(\theta H+\theta^2K)+\alpha(2\alpha-1)(\theta H)^2+o(\theta^2) \\
&=I_2+2\theta\alpha H+\theta^2\alpha(2K+(2\alpha-1)H^2)+o(\theta^2),
\end{align*}
from which it is easy to compute
\begin{align*}
A_0^{p/2}(A_0^{-p}\#B_\theta^p)^{2\alpha}A_0^{p/2}
&=\begin{bmatrix}1+\theta^2\eta_{11}(p)&\theta\eta_{12}(p)\\
\theta\eta_{12}(p)&x^p+\theta^2\eta_{22}(p)\end{bmatrix}+o(\theta^2),
\end{align*}
where
\[
\begin{cases}
\eta_{11}(p):=-{2\alpha(1-x^p)(1-\alpha+(1+\alpha)x^p)\over(1+x^p)^2}, \\
\eta_{22}(p):={2\alpha x^p(1-x^p)(1+\alpha+(1-\alpha)x^p)\over(1+x^p)^2}, \\
\eta_{12}(p):={2\alpha x^{p/2}(1-x^p)\over1+x^p}.
\end{cases}
\]
Then by Lemma \ref{L-3.6} the two eigenvalues of
$A_0^{p/2}(A_0^{-p}\#B_\theta^p)^{2\alpha}A_0^{p/2}$ are
\[
1+\theta^2\biggl(\eta_{11}(p)+{\eta_{12}(p)^2\over1-x^p}\biggr)+o(\theta^2),\qquad
x^p+\theta^2\biggl(\eta_{22}(p)-{\eta_{12}(p)^2\over1-x^p}\biggr)+o(\theta^2).
\]
Hence we find that
\begin{align*}
\Tr\,SG_{\alpha,p}(A_0,B_\theta)
&=\biggl\{1+\theta^2\biggl(\eta_{11}(p)+{\eta_{12}(p)^2\over1-x^p}\biggr)\biggr\}^{1/p}
+\biggl\{x^p+\theta^2\biggl(\eta_{22}(p)-{\eta_{12}(p)^2\over1-x^p}\biggr)\biggr\}^{1/p}+o(\theta^2) \\
&=1+x+{\theta^2\over p}\bigg\{\eta_{11}(p)+{\eta_{12}(p)^2\over1-x^p}
+x^{1-p}\biggl(\eta_{22}(p)+{\eta_{12}(p)^2\over1-x^p}\biggr)\biggr\}+o(\theta^2).
\end{align*}
On the other hand, we have estimated $\Tr\,\cA_{\alpha,q}(A_0,B_\theta)$ in \eqref{F-4.9}. Therefore, we
must have
\[
{1\over p}\bigg\{\eta_{11}(p)+{\eta_{12}(p)^2\over1-x^p}
+x^{1-p}\biggl(\eta_{22}(p)+{\eta_{12}(p)^2\over1-x^p}\biggr)\biggr\}
\le{\alpha(1-\alpha)\over q}(-1-x+x^q+x^{1-q})
\]
for all $x>0$, $x\ne1$. As $x\searrow0$ note that
\begin{align*}
&\eta_{11}(p)\to-2\alpha(1-\alpha),\qquad\eta_{12}(p)\to0, \\
&x^{1-p}\eta_{22}(p)={2\alpha x(1-x^p)(1+\alpha+(1-\alpha)x^p)\over(1+x^p)^2}\to0, \\
&x^{1-p}\,{\eta_{12}(p)^2\over1-x^p}={4\alpha^2 x(1-x^p)\over(1+x^p)^2}\to0, \\
&-1-x+x^q+x^{1-q}\to-1,
\end{align*}
where the last convergence is due to the assumption $q<1$. Therefore, we have
$-{2\alpha(1-\alpha)\over p}\le-{\alpha(1-\alpha)\over q}$, showing that $p/2\le q$.
\end{proof}

\begin{proposition}\label{P-4.30}
Let $0<\alpha<1$ and $p,q>0$ be such that $q\ge1$ or ${p/q}\le2\min\{\alpha,1-\alpha\}$. Then for every
$A,B\ge0$ with $s(A)\ge s(B)$ the following conditions are equivalent:
\begin{itemize}
\item[(i)] $\Tr\,SG_{\alpha,p}(A,B)=\Tr\,\cA_{\alpha,q}(A,B)$;
\item[(ii)] $SG_{\alpha,p}(A,B)=\cA_{\alpha,q}(A,B)$;
\item[(iii)] $A=B$.
\end{itemize}
\end{proposition}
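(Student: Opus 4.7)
Proof plan for Proposition 4.30.

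The implications (iii)$\Rightarrow$(ii)$\Rightarrow$(i) are trivial as always (setting $A=B$ gives both means equal to $A$). The content is (i)$\Rightarrow$(iii), and my plan is to sandwich $SG_{\alpha,p}$ between the R\'enyi mean and the quasi-arithmetic mean and then invoke the R\'enyi version of this proposition (Proposition \ref{P-4.14}) that has already been established.

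Concretely, set $r:=p/\min\{\alpha,1-\alpha\}$, so that $p/r=\min\{\alpha,1-\alpha\}$. Proposition \ref{P-4.25}(1) then applies (at its boundary) to yield the log-majorization $SG_{\alpha,p}(A,B)\prec_{\log}R_{\alpha,r}(A,B)$, whence
\[
\Tr\,SG_{\alpha,p}(A,B)\le\Tr\,R_{\alpha,r}(A,B).
\]
The key observation is that under the hypothesis of the proposition we automatically have $\min\{1,r/2\}\le q$. Indeed, if $p/q\le2\min\{\alpha,1-\alpha\}$ then $r/2=p/(2\min\{\alpha,1-\alpha\})\le q$, while if $q\ge1$ then $\min\{1,r/2\}\le1\le q$. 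Hence Proposition \ref{P-4.11} applies to $R_{\alpha,r}$ and gives
\[
\Tr\,R_{\alpha,r}(A,B)\le\Tr\,\cA_{\alpha,q}(A,B).
\]

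Chaining the two inequalities together with assumption (i), the outer equality forces the inner equality $\Tr\,R_{\alpha,r}(A,B)=\Tr\,\cA_{\alpha,q}(A,B)$. Since the condition $\min\{1,r/2\}\le q$ is precisely the hypothesis of Proposition \ref{P-4.14}, that proposition is applicable and yields $A=B$, proving (iii).

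The plan has essentially no hard computational step: the one thing to notice is the right intermediate mean, namely $R_{\alpha,r}$ with $r=p/\min\{\alpha,1-\alpha\}$, which simultaneously lands in the hypothesis of Proposition \ref{P-4.25}(1) (giving the lower bound $\Tr\,SG_{\alpha,p}\le\Tr\,R_{\alpha,r}$) and, in both cases $q\ge1$ and $p/q\le2\min\{\alpha,1-\alpha\}$, satisfies $\min\{1,r/2\}\le q$ so that Propositions \ref{P-4.11} and \ref{P-4.14} both apply. The only potential subtlety is the boundary case $p/q=2\min\{\alpha,1-\alpha\}$, where one has exactly $r/2=q$; this is harmless because Proposition \ref{P-4.14} permits $\min\{1,r/2\}=q$, so no separate log-majorization-equality argument (as in the $p/2=q$ case of the proof of Proposition \ref{P-4.14}) is required here.
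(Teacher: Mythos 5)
Your proposal is correct and follows essentially the same route as the paper's proof: sandwich $\Tr\,SG_{\alpha,p}$ between $\Tr\,R_{\alpha,p/\min\{\alpha,1-\alpha\}}$ and $\Tr\,\cA_{\alpha,q}$ via Propositions \ref{P-4.25}(1) and \ref{P-4.11}, then conclude with Proposition \ref{P-4.14}. The verification that $\min\{1,r/2\}\le q$ in both cases of the hypothesis matches the paper's argument exactly.
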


\begin{proof}
It suffices to show (i)$\implies$(iii). Assume (i). Let $r:=\min\{\alpha,1-\alpha\}$; then $\min\{1,(p/r)/2\}\le q$.
By Propositions \ref{P-4.25}(1) and \ref{P-4.11} we have
\[
\Tr\,SG_{\alpha,p}(A,B)\le\Tr\,R_{\alpha,p/r}(A,B)\le\Tr\,\cA_{\alpha,q}(A,B)
\]
so that $\Tr\,R_{\alpha,p/r}(A,B)=\Tr\,\cA_{\alpha,q}(A,B)$. Hence (iii) follows from Proposition \ref{P-4.14}.
\end{proof}

\newpage
The results of this subsection are summarized as follows:

\medskip
\begin{table}[htb]
\centering
\begin{tabular}{|c|l|l|} \hline
& Sufficient cond. & Necessary cond. \\ \hline

$\begin{array}{cc}SG_{\alpha,p}\le\cA_{\alpha,q} \\ SG_{\alpha,p}\le_\chao\cA_{\alpha,q}\end{array}$
& none & \\ \hline

$SG_{\alpha,p}\le_\near\cA_{\alpha,q}$ &\ \ ?
& $q/p\ge1+\max\{\alpha(q-1),(1-\alpha)(q-1)\}$ \\ \hline

$SG_{\alpha,p}\le_\lambda\cA_{\alpha,q}$ &\ \ ?
& $q/p\ge\max\{\alpha,1-\alpha\}$ \\ \hline

$SG_{\alpha,p}\prec_w\cA_{\alpha,q}$ & ${p/q}\le2\min\{\alpha,1-\alpha\}$ &
$q/p\ge\max\{\alpha,1-\alpha\}$ \\ \hline

$SG_{\alpha,p}\le_\Tr\cA_{\alpha,q}$ & $q\ge1$ or ${p/q}\le2\min\{\alpha,1-\alpha\}$ &
$q\ge1$ or $p\le2q$ \\ \hline
\end{tabular}
\end{table}

\begin{problem}\label{Q-4.31}\rm
At the moment we find no sufficient condition for $SG_{\alpha,p}\le_\near\cA_{\alpha,q}$ to hold, while a
necessary condition is given in Theorem \ref{T-4.24}. It might happen that this inequality never holds for
any $p,q>0$. For example, when $\alpha=1/2$, since the necessary condition gives $p\le{2q\over q+1}$,
we notice that $SG_{1/2,p}\le_\near\cA_{1/2,q}$ fails to hold for any $q>0$ if $p\ge2$, and for any
$q<1$ if $p=1$. But it is still unknown if $F(A,B)\le_\near{A+B\over2}$ (the case $p=q=1$) holds for
all $A,B>0$. The situation is similar for $SG_{\alpha,p}\le_\lambda\cA_{\alpha,q}$ except the case
$\alpha\ne1/2$. When $\alpha=1/2$, since $\lambda(F(A,B))=\lambda\bigl((B^{1/2}AB^{1/2})^{1/2}\bigr)$
(see \cite[Theorem 3.2, Item 8]{FP}), it is easy to see that if $0<p\le q$, then
$SG_{1/2,p}(A,B)\le_\lambda\cA_{1/2,q}(A,B)$ for all $A,B>0$. As for $SG_{\alpha,p}\prec_w\cA_{\alpha,q}$
and $SG_{\alpha,p}\le_\Tr\cA_{\alpha,q}$, there is a rather big gap between the sufficient condition and the
necessary condition obtained.
\end{problem}

\subsection{$\widetilde SG_{\alpha,p}$ vs.\ $\cA_{\alpha,q}$}\label{Sec-4.6}

In this subsection we discuss inequalities between $\widetilde SG_{\alpha,p}$ and $\cA_{\alpha,q}$. The next
theorem says that $\widetilde SG_{\alpha,p}(A,B)\le_\near\cA_{\alpha,q}(A,B)$ fails to hold for any
$\alpha\in(0,1)\setminus\{1/2\}$ and any $p,q>0$.

\begin{theorem}\label{T-4.32}
For any $\alpha\in(0,1)\setminus\{1/2\}$ and any $p,q>0$ there exist $A,B\in\bM_2^{++}$ such that
$\widetilde SG_{\alpha,p}(A,B)\not\le_\near\cA_{\alpha,q}(A,B)$. Hence, for any $\alpha\in(0,1)$ and any
$p,q>0$ there exist $A,B\in\bM_2^{++}$ such that
$\widetilde SG_{\alpha,p}(A,B)\not\le_\chao\cA_{\alpha,q}(A,B)$.
\end{theorem}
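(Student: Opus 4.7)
The plan is to handle the two assertions separately. For the second assertion, if $\alpha \ne 1/2$, then because $\le_\chao$ implies $\le_\near$ by Proposition~\ref{P-2.4}(3), the matrices $A, B \in \bM_2^{++}$ provided by the first assertion already witness $\widetilde{SG}_{\alpha,p}(A,B) \not\le_\chao \cA_{\alpha,q}(A,B)$. If $\alpha = 1/2$, then $\widetilde{SG}_{1/2,p} = SG_{1/2,p}$ by item (v) of Section~\ref{Sec-2.1}, and Theorem~\ref{T-4.23} directly supplies matrices $A, B \in \bM_2^{++}$ with $SG_{1/2,p}(A,B) \not\le_\chao \cA_{1/2,q}(A,B)$. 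Thus the whole theorem reduces to the first assertion.

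For the first assertion, I would argue by contradiction following the pattern of Theorem~\ref{T-4.24}. Fix $\alpha \in (0,1) \setminus \{1/2\}$ and $p, q > 0$, and assume that $\widetilde{SG}_{\alpha,p}(A,B) \le_\near \cA_{\alpha,q}(A,B)$ for all $A, B \in \bM_2^{++}$. Take $A_0$ and $B_\theta$ from~\eqref{F-3.5} with $x, y > 0$ to be chosen, and write $L := \widetilde{SG}_{\alpha,p}(A_0, B_\theta)$ and $M := \cA_{\alpha,q}(A_0, B_\theta)$. The expansion of $M$ up to $o(\theta^2)$ is supplied by Lemma~\ref{L-3.3}. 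For $L$, I would iterate Example~\ref{E-3.2}(1) through four stages: first expand $A_0^{-p/2} B_\theta^p A_0^{-p/2}$; then raise it to the $\alpha$-power and conjugate by $A_0^{p/2}$ on each side to get $A_0^{-p} \#_\alpha B_\theta^p$; then take the $1/2$-power of this and sandwich between $A_0^{p(1-\alpha)}$ on each side to obtain $\widetilde F_\alpha(A_0^p, B_\theta^p)$; and finally take the $1/p$-power to obtain $L$. Each stage is a routine application of Example~\ref{E-3.2}(1) to a matrix of the form~\eqref{F-3.1}.

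With expansions of $L$ and $M$ in hand, the near-order condition $L \le_\near M$ is equivalent to $(M^{1/2} L M^{1/2})^{1/2} \le M$. I would expand $M^{1/2}$, then $M^{1/2} L M^{1/2}$, and finally $(M^{1/2} L M^{1/2})^{1/2}$ up to $o(\theta^2)$ via the formulas~\eqref{F-3.2}--\eqref{F-3.4}, and extract the coefficient of $\theta^2$ in $\det\bigl(M - (M^{1/2} L M^{1/2})^{1/2}\bigr)$, exactly as in the derivation leading to~\eqref{F-4.39}. The aim is then to specialize $x, y$—my first attempts would be $y = x$ and, failing that, to fix $x \ne 1$ and let $y \searrow 0$ in the manner of the proofs of Theorems~\ref{T-4.23} and~\ref{T-4.24}—so that the leading coefficient is strictly negative, contradicting the assumed inequality.

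The principal obstacle is twofold. First, the four-stage functional-calculus expansion of $L$ accumulates second-order corrections from $x \mapsto x^p$, $x^\alpha$, $x^{1/2}$, and $x^{1/p}$, and combining them without error is delicate. Second, and more fundamentally, the failure of $\le_\near$ must be driven by a term that vanishes at $\alpha = 1/2$: in that case $\widetilde{SG}_{1/2,p} = SG_{1/2,p}$ and the near-order question remains open (see Problem~\ref{Q-4.31}), so any leading coefficient we extract from the determinant must carry a factor that vanishes at $\alpha = 1/2$. This factor should emerge from the asymmetry between $A^{2(1-\alpha)}$ and the absence of a corresponding $B$-power in the definition of $\widetilde F_\alpha$; isolating it, establishing its sign, and choosing the appropriate asymptotic regime in $(x, y)$ is where the real work lies. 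If the direct specialization $y \searrow 0$ proves inadequate, a fallback is to use the substitution $Y := A^p$, $Z := A^{-p} \#_\alpha B^p$, under which $L = (Z^{1/2} Y^{2(1-\alpha)} Z^{1/2})^{1/p}$ and $(Y, Z)$ range freely over $\bM_2^{++} \times \bM_2^{++}$, at the price of a less transparent expression for $\cA_{\alpha,q}(A,B)$.
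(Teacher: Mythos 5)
Your reduction of the second assertion to the first (via $\le_\chao\Rightarrow\le_\near$ for $\alpha\ne1/2$, and via $\widetilde SG_{1/2,p}=SG_{1/2,p}$ together with Theorem \ref{T-4.23} for $\alpha=1/2$) is exactly the paper's argument and is correct. For the first assertion, however, your proposal stops short of the idea that actually makes the proof work, and what you describe as your primary plan would run into serious trouble. The paper takes as its \emph{main} device the substitution you mention only as a fallback: setting $Y:=A^p$ and $X:=A^{-p}\#_\alpha B^p$, so that $B^p=Y^{-1}\#_{1/\alpha}X$ and the inequality becomes
$(X^{1/2}Y^{2(1-\alpha)}X^{1/2})^{r/q}\le_\near\{(1-\alpha)Y^r+\alpha(Y^{-1}\#_{1/\alpha}X)^r\}^{1/q}$ with $r=q/p$, where $(X,Y)$ now range freely over $\bM_2^{++}$. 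The decisive step is then the choice $X:=B_\theta$, $Y:=A_0$ with $y=x^{2\alpha-1}$: this makes the two sides have \emph{identical diagonals at} $\theta=0$ (both are $\diag(1,x^{r/q})$), so only first-order expansions up to $o(\theta)$ are needed, the determinant of $M-(M^{1/2}LM^{1/2})^{1/2}$ collapses to $-\theta^2v^2+o(\theta^2)$ with $v$ a difference of off-diagonal coefficients, and the assumed near order forces $v\equiv0$, i.e.\ an explicit identity in $x$ that visibly fails as $x\searrow0$. Note also that this choice degenerates exactly at $\alpha=1/2$ (then $y=1$ and $B_\theta=I$), which is precisely the mechanism you correctly anticipated must be present but could not locate.

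By contrast, your primary plan — a four-stage $o(\theta^2)$ functional-calculus expansion of $\widetilde SG_{\alpha,p}(A_0,B_\theta)$ with $y=x$ or $y\searrow0$ — leaves the diagonals of $L$ and $M$ unequal at zeroth order, so you would be forced into the full second-order machinery of Theorem \ref{T-4.24}, with a complicated $\theta^2$ coefficient whose sign analysis you have not carried out and for which no factor vanishing at $\alpha=1/2$ is in evidence. You flag this yourself as the unresolved obstacle, and it is indeed the gap: without the reparametrization as the main tool and the specific tuning $y=x^{2\alpha-1}$, the proposal does not yield a counterexample and so does not constitute a proof of the first assertion.
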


\begin{proof}
First, note that the latter assertion follows from the first and Theorem \ref{T-4.23} since
$\widetilde SG_{1/2,p}=SG_{1/2,p}$. For $A,B>0$ set $Y:=A^p$ and $X:=A^{-p}\#_\alpha B^p$; then
$B^p=Y^{-1/2}(Y^{1/2}XY^{1/2})^{1/\alpha}Y^{-1/2}$. We will express this RHS as $Y^{-1}\#_{1/\alpha}X$,
though $1/\alpha>1$, in analogy of the geometric mean $\#_\alpha$. Then
$\widetilde SG_{\alpha,p}(A,B)\le_\near\cA_{\alpha,q}(A,B)$ is equivalently written as
\begin{align}\label{F-4.44}
(X^{1/2}Y^{2(1-\alpha)}X^{1/2})^{r/q}\le_\near
\bigl\{(1-\alpha)Y^r+\alpha(Y^{-1}\#_{1/\alpha}X)^r\bigr\}^{1/q},
\end{align}
where $r:=q/p$. Conversely, for any $X,Y>0$, if we set $A:=Y^{1/p}$ and $B:=(Y^{-1}\#_{1/\alpha}X)^{1/p}$,
then $Y=A^p$ and $X=A^{-p}\#_\alpha B^p$. Hence it suffices to show that for any
$\alpha\in(0,1)\setminus\{1/2\}$ and any $r,q>0$ there exist $X,Y\in\bM_2^{++}$ for which \eqref{F-4.44}
is violated.

Let $X:=B_\theta$ and $Y:=A_0$ in \eqref{F-3.5} with $y=x^{2\alpha-1}>0$ ($\alpha\ne1/2$) and $x\ne1$.
Set
\[
L:=(X^{1/2}Y^{2(1-\alpha)}X^{1/2})^{r/q},\qquad
M:=\bigl\{(1-\alpha)Y^r+\alpha(Y^{-1}\#_{1/\alpha}X)^r\bigr\}^{1/q}.
\]
In the following computations we will repeatedly apply the reduced version of Example \ref{E-3.2}(1) up to
$o(\theta)$. One computes
$X^{1/2}Y^{2(1-\alpha)}X^{1/2}=\begin{bmatrix}1&\theta a_{12}\\\theta a_{12}&x\end{bmatrix}+o(\theta)$,
where
\begin{align}\label{F-4.45}
a_{12}:=\bigl(1-x^{2\alpha-1\over2}\bigr)\bigl(1+x^{3-2\alpha\over2}\bigr),
\end{align}
and hence
\begin{align}\label{F-4.46}
L=\begin{bmatrix}1&\theta\,{1-x^{r\over q}\over1-x}\,a_{12}\\
\theta\,{1-x^{r\over q}\over1-x}\,a_{12}&x^{r\over q}\end{bmatrix}+o(\theta)\quad\mbox{as $\theta\to0$}.
\end{align}
On the other hand, one computes
$Y^{-1}\#_{1/\alpha}X=\begin{bmatrix}1&\theta\xi_{12}\\\theta\xi_{12}&x\end{bmatrix}+o(\theta)$,
where
\[
\xi_{12}:={(1-x^2)(1-x^{2\alpha-1})\over1-x^{2\alpha}},
\]
and hence
$(Y^{-1}\#_{1/\alpha}X)^r=\begin{bmatrix}1&\theta\zeta_{12}\\\theta\zeta_{12}&x^r\end{bmatrix}+o(\theta)$,
where
\begin{align}\label{F-4.47}
\zeta_{12}:={1-x^r\over1-x}\,\xi_{12}={(1+x)(1-x^r)(1-x^{2\alpha-1})\over1-x^{2\alpha}}.
\end{align}
Since $(1-\alpha)Y^r+\alpha(Y^{-1}\#_{1/\alpha}X)^r
=\begin{bmatrix}1&\theta\alpha\zeta_{12}\\\theta\alpha\zeta_{12}&x^r\end{bmatrix}+o(\theta)$,
one has
\begin{align}\label{F-4.48}
M=\begin{bmatrix}1&\theta\,{1-x^{r\over q}\over1-x^r}\,\alpha\zeta_{12}\\
\theta\,{1-x^{r\over q}\over1-x^r}\,\alpha\zeta_{12}&x^{r\over q}\end{bmatrix}+o(\theta)
\quad\mbox{as $\theta\to0$}.
\end{align}
From \eqref{F-4.48} and \eqref{F-4.46} one can further compute
\[
M^{1/2}=\begin{bmatrix}1&\theta{1-x^{r\over2q}\over1-x^r}\,\alpha\zeta_{12}\\
\theta{1-x^{r\over2q}\over1-x^r}\,\alpha\zeta_{12}&x^{r\over2q}\end{bmatrix}+o(\theta),\qquad
M^{1/2}LM^{1/2}=\begin{bmatrix}1&\theta u_{12}\\u_{12}&x^{2r\over q}\end{bmatrix}+o(\theta),
\]
where
\[
u_{12}:={\bigl(1-x^{r\over2q}\bigr)\bigl(1+x^{3r\over2q}\bigr)\over1-x^r}\,\alpha\zeta_{12}
+{x^{r\over2q}\bigl(1-x^{r\over q}\bigr)\over1-x}\,a_{12}.
\]
Furthermore,
\begin{align}\label{F-4.49}
(M^{1/2}LM^{1/2})^{1/2}=\begin{bmatrix}1&\theta v_{12}\\v_{12}&x^{r\over q}\end{bmatrix}+o(\theta),
\quad\mbox{as $\theta\to0$},
\end{align}
where
\[
v_{12}:={1-x^{r\over q}\over1-x^{2r\over q}}\,u_{12}
={\bigl(1-x^{r\over q}\bigr)\bigl(1-x^{r\over2q}\bigr)\bigl(1+x^{3r\over2q}\bigr)\over
(1-x^r)(1-x^{2r\over q}\bigr)}\,\alpha\zeta_{12}
+{x^{r\over2q}\bigl(1-x^{r\over q}\bigr)^2\over(1-x)\bigl(1-x^{2r\over q}\bigr)}\,a_{12}.
\]

Now assume that $L\le_\near M$ for all $X,Y\in\bM_2^{++}$. Then we must have
$(M^{1/2}LM^{1/2})^{1/2}\le M$ so that by \eqref{F-4.48} and \eqref{F-4.49},
\begin{align*}
0&\le\det\{M-(M^{1/2}LM^{1/2})^{1/2}\} \\
&=\det\Biggl(\begin{bmatrix}0&
\theta\Bigl({1-x^{r\over q}\over1-x^r}\,\alpha\zeta_{12}-v_{12}\Bigr)\\
\theta\Bigl({1-x^{r\over q}\over1-x^r}\,\alpha\zeta_{12}-v_{12}\Bigr)&0\end{bmatrix}+o(\theta)\Biggr) \\
&=-\theta^2\biggl({1-x^{r\over q}\over1-x^r}\,\alpha\zeta_{12}-v_{12}\biggr)^2+o(\theta^2).
\end{align*}
Therefore, we must have
\[
{1-x^{r\over q}\over1-x^r}\,\alpha\zeta_{12}=v_{12}
={\bigl(1-x^{r\over q}\bigr)\bigl(1-x^{r\over2q}\bigr)\bigl(1+x^{3r\over2q}\bigr)\over
(1-x^r)(1-x^{2r\over q}\bigr)}\,\alpha\zeta_{12}
+{x^{r\over2q}\bigl(1-x^{r\over q}\bigr)^2\over(1-x)\bigl(1-x^{2r\over q}\bigr)}\,a_{12},
\]
which becomes ${\alpha\zeta_{12}\over1-x^r}={a_{12}\over1-x}$. By \eqref{F-4.45} and \eqref{F-4.47}
this gives
\[
{(1+x)(1+x^{2\alpha-1\over2})\over1-x^{2\alpha}}\,\alpha={1+x^{3-2\alpha\over2}\over1-x}
\]
for all $x>0$ with $x\ne1$, which fails to hold as $x\searrow0$ in either case $0<\alpha<1/2$ or
$1/2<\alpha<1$.
\end{proof}

\begin{proposition}\label{P-4.33}
Let $0<\alpha<1$ and $p,q>0$.
\begin{itemize}
\item[(1)] If $p\le\alpha q$, then we have $\widetilde SG_{\alpha,p}(A,B)\prec_{\log}R_{\alpha,q}(A,B)$ for
every $A,B\ge0$ with $s(A)\ge s(B)$.
\item[(2)] If $p\le2\alpha q$, then we have $\widetilde SG_{\alpha,p}(A,B)\prec_{w\log}\cA_{\alpha,q}(A,B)$
(hence $\widetilde SG_{\alpha,p}(A,B)\prec_w\cA_{\alpha,q}(A,B)$) for every $A,B\ge0$ with $s(A)\ge s(B)$.
\end{itemize}
\end{proposition}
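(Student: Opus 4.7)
The plan is to parallel the proof of Proposition~\ref{P-4.25}. I would first reduce to $A,B>0$ using Proposition~\ref{P-2.2} and the fact that log-majorization is continuous in the $\eps\searrow0$ limit (both sides' determinants being continuous and matching, as recorded below). The central step will be to prove the single log-majorization
\begin{align*}
\widetilde SG_{\alpha,p}(A,B)\prec_{\log}R_{\alpha,p/\alpha}(A,B),
\end{align*}
from which part~(1) will follow by the Araki-type log-monotonicity $R_{\alpha,q_1}(A,B)\prec_{\log}R_{\alpha,q_2}(A,B)$ for $q_1\le q_2$, and part~(2) will follow by combining the central log-majorization with Proposition~\ref{P-4.9}.

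To establish the central log-majorization, I would set $C:=A^p$, $D:=B^p$ and reduce the eigenvalues of $\widetilde F_\alpha(C,D)=(C^{-1}\#_\alpha D)^{1/2}C^{2(1-\alpha)}(C^{-1}\#_\alpha D)^{1/2}$ by cyclic rearrangement. Using $C^{-1}\#_\alpha D=C^{-1/2}(C^{1/2}DC^{1/2})^\alpha C^{-1/2}$ and $\lambda(PQ)=\lambda(QP)$, this gives
\begin{align*}
\lambda(\widetilde F_\alpha(C,D))=\lambda\bigl(C^{(1-2\alpha)/2}(C^{1/2}DC^{1/2})^\alpha C^{(1-2\alpha)/2}\bigr).
\end{align*}
Next I would apply Araki's log-majorization $\lambda((X^{1/2}YX^{1/2})^r)\prec_{\log}\lambda(X^{r/2}Y^rX^{r/2})$ with $X:=C^{1-2\alpha}$, $Y:=(C^{1/2}DC^{1/2})^\alpha$ and $r:=1/\alpha\ge1$. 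The right side collapses via $X^{1/(2\alpha)}Y^{1/\alpha}X^{1/(2\alpha)}=C^{(1-\alpha)/(2\alpha)}DC^{(1-\alpha)/(2\alpha)}$, and taking $\alpha$-th powers of eigenvalues restores
\begin{align*}
\lambda(\widetilde F_\alpha(C,D))\prec_{\log}\lambda\bigl((C^{(1-\alpha)/(2\alpha)}DC^{(1-\alpha)/(2\alpha)})^\alpha\bigr).
\end{align*}
Reinserting $C=A^p$, $D=B^p$ and raising to the $(1/p)$-th power will yield $\widetilde SG_{\alpha,p}(A,B)\prec_{\log}R_{\alpha,p/\alpha}(A,B)$. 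Since both sides have determinant $\det A^{1-\alpha}\det B^\alpha$, this is genuine log-majorization, not merely weak.

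For~(1), the hypothesis $p\le\alpha q$ is $p/\alpha\le q$; a second application of Araki (with $X=A^{(1-\alpha)p/\alpha}$, $Y=B^p$ at exponent $q\alpha/p\ge1$) will give $R_{\alpha,p/\alpha}(A,B)\prec_{\log}R_{\alpha,q}(A,B)$, and transitivity closes~(1). For~(2), $p\le2\alpha q$ is $(p/\alpha)/2\le q$; Proposition~\ref{P-4.9} then gives $R_{\alpha,p/\alpha}(A,B)\le_\lambda\cA_{\alpha,q}(A,B)$ and hence $\prec_{w\log}$. Composing with the central log-majorization will produce $\widetilde SG_{\alpha,p}(A,B)\prec_{w\log}\cA_{\alpha,q}(A,B)$, and $\prec_w$ is immediate from Proposition~\ref{P-2.4}(3).

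The main technical point to watch is the uniformity of the eigenvalue reduction across $\alpha\in(0,1)$: the sign of the exponent $1-2\alpha$ changes at $\alpha=1/2$, but because $C>0$, both $C^{1-2\alpha}$ and its square root $C^{(1-2\alpha)/2}$ are well-defined positive operators throughout, so the cyclic rearrangement and Araki's inequality at $r=1/\alpha$ apply without a case split. Unlike in Proposition~\ref{P-4.25}, the symmetry $\alpha\leftrightarrow1-\alpha$ is not available since $\widetilde SG_{\alpha,p}$ is not symmetric; however a single Araki application at $r=1/\alpha\ge1$ suffices for every $\alpha\in(0,1)$, making symmetrization unnecessary.
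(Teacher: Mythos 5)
Your proposal is correct and follows essentially the same route as the paper: both reduce the statement to the single log-majorization $\widetilde SG_{\alpha,p}(A,B)\prec_{\log}R_{\alpha,p/\alpha}(A,B)$ via one application of Araki's inequality at exponent $r=1/\alpha\ge1$, and then finish part (1) by the Araki monotonicity of $R_{\alpha,\cdot}$ in the parameter and part (2) by Proposition~\ref{P-4.9}. Your explicit remark that no case split at $\alpha=1/2$ is needed (because $C^{1-2\alpha}$ is a well-defined positive matrix for $C>0$ regardless of the sign of $1-2\alpha$) is a welcome clarification of a point that the paper's write-up, which states the eigenvalue chain only for $0<\alpha\le1/2$, leaves implicit.
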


\begin{proof}
(1)\enspace
By continuity (Proposition \ref{P-2.2}) we may assume that $A,B>0$. Similarly to the proof of
Proposition \ref{P-4.25}(1) it suffices to show that
\[
\widetilde SG_{\alpha,p}(A,B)\prec_{\log}R_{\alpha,p/\alpha}(A,B),\qquad p>0.
\]
For $0<\alpha\le1/2$, by Araki's log-majorization \cite{Ar} we have
\begin{align*}
\lambda((B^{-1}\#_\alpha A)^{1/2}B^{2(1-\alpha)}(B^{-1}\#_\alpha A)^{1/2})
&=\lambda(B^{1-\alpha}(B^{-1}\#_\alpha A)B^{1-\alpha}) \\
&=\lambda\bigl(B^{{1\over2}-\alpha}(B^{1/2}AB^{1/2})^\alpha B^{{1\over2}-\alpha}\bigr) \\
&\prec_{\log}\lambda^\alpha\bigl(B^{{1\over2\alpha}-1}(B^{1/2}AB^{1/2})B^{{1\over2\alpha}-1}\bigr) \\
&=\lambda^\alpha\bigl(B^{1-\alpha\over2\alpha}AB^{1-\alpha\over2\alpha}\bigr)
=\lambda(R_{\alpha,1/\alpha}(A,B)).
\end{align*}
The remaining proof is the same as in the proof of Proposition \ref{P-4.25}(1).

(2) follows from (1) and Proposition \ref{P-4.9} as in the proof of Proposition \ref{P-4.25}(2).
\end{proof}

\begin{remark}\label{R-4.34}\rm
Although we have shown the log-majorizations in Propositions \ref{P-4.25}(1) and \ref{P-4.33}(1) from
Araki's log-majorization, they can also be shown by directly applying the familiar antisymmetric tensor
power technique (see \cite{AH}). Furthermore, it is known that
$\widetilde SG_{\alpha,p}(A,B)\prec_{\log}R_{\alpha,q}(A,B)$ for all $A,B>0$ if and only if $p/q\le\alpha$,
and that $R_{\alpha,q}(A,B)\prec_{\log}\widetilde SG_{\alpha,p}(A,B)$ for all $A,B>0$ if $\alpha\le1/2$
and $q\le p$, and the same holds only if $\alpha\le1/2$ and $p/q\ge1/2$. The details on these facts will be
provided in \cite{Hi5}.
\end{remark}

\begin{theorem}\label{T-4.35}
Let $0<\alpha<1$ and $p,q>0$. If
$\lambda_1\bigl(\widetilde SG_{\alpha,p}(A,B)\bigr)\le\lambda_1(\cA_{\alpha,q}(A,B))$ holds for all
$A,B\in\bM_2^{++}$, then $q/p\ge1-\alpha$. Hence, if
$\widetilde SG_{\alpha,p}(A,B)\prec_w\cA_{\alpha,q}(A,B)$ for all $A,B\in\bM_2^{++}$, then
$q/p\ge1-\alpha$.
\end{theorem}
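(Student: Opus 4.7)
The plan is to adapt the argument of Theorem \ref{T-4.27} to the $\widetilde SG_{\alpha,p}$ setting by first transposing the problem via the reparametrization used in the proof of Theorem \ref{T-4.32}. Setting $Y:=A^p$ and $X:=A^{-p}\#_\alpha B^p$ gives
\[
\widetilde SG_{\alpha,p}(A,B)=L:=(X^{1/2}Y^{2(1-\alpha)}X^{1/2})^{r/q},\qquad
\cA_{\alpha,q}(A,B)=M:=\bigl\{(1-\alpha)Y^r+\alpha(Y^{-1}\#_{1/\alpha}X)^r\bigr\}^{1/q},
\]
where $r:=q/p$ and $Y^{-1}\#_{1/\alpha}X$ is interpreted in the extended sense as in the proof of Theorem \ref{T-4.32}. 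Since $(X,Y)\leftrightarrow(A,B)$ is a bijection on $\bM_2^{++}\times\bM_2^{++}$, the hypothesis becomes $\lambda_1(L)\le\lambda_1(M)$ for all $X,Y\in\bM_2^{++}$.

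Next, I substitute $X:=B_\theta$, $Y:=A_0$ from \eqref{F-3.5} with the specialization $y=x^{2\alpha-1}$ for $x>0$, $x\ne1$. A direct check shows that at $\theta=0$ both $L$ and $M$ reduce to $\diag(1,x^{r/q})$; the choice $y=x^{2\alpha-1}$ is precisely what forces this zeroth-order matching, so the content of the assumption lives in the second-order perturbation in $\theta$. Extending the $o(\theta)$ expansions of $L$ and $M$ in \eqref{F-4.46} and \eqref{F-4.48} to full $o(\theta^2)$ expansions by repeated application of Example \ref{E-3.2}(1), I write $L$ and $M$ in the form \eqref{F-3.1} with $a=1$ and $b=x^{r/q}$. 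Lemma \ref{L-3.6} then gives, for $0<x<1$,
\begin{align*}
\lambda_1(L)&=1+\theta^2\Bigl(\ell_{11}+\tfrac{\ell_{12}^2}{1-x^{r/q}}\Bigr)+o(\theta^2), \\
\lambda_1(M)&=1+\theta^2\Bigl(m_{11}+\tfrac{m_{12}^2}{1-x^{r/q}}\Bigr)+o(\theta^2),
\end{align*}
with $\ell_{ij},m_{ij}$ explicit rational/exponential functions of $x,\alpha,r,q$ extracted from the expansion.

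The hypothesis then forces $\ell_{11}+\ell_{12}^2/(1-x^{r/q})\le m_{11}+m_{12}^2/(1-x^{r/q})$ for all admissible $x$. Letting $x\searrow0$ and tracking the dominant terms should collapse this to a scalar inequality whose leading coefficients read $-1/p\le-(1-\alpha)/q$, equivalently $r\ge1-\alpha$. Unlike Theorem \ref{T-4.27}, no second condition $r\ge\alpha$ appears, because $\widetilde SG_{\alpha,p}$ lacks the symmetry $(A,B)\leftrightarrow(B,A)$ noted in Remark \ref{R-2.1}(2). The final clause on $\prec_w$ is immediate since weak majorization implies $\lambda_1\le\lambda_1$.

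The main obstacle is the routine but heavy second-order expansion of $M$, which nests four operator-functional operations: the extended weighted geometric mean $Y^{-1}\#_{1/\alpha}X$, its $r$-th power, the convex combination with $(1-\alpha)Y^r$, and the $1/q$-power. Each step is governed by Example \ref{E-3.2}(1), but the bookkeeping is considerable and one must verify that the degenerate cases (for instance $x^r=1$ or $x^{2\alpha}=1$) are avoided by the range of $x$ chosen. Should $y=x^{2\alpha-1}$ not cleanly isolate the coefficient $1-\alpha$ in the $x\searrow0$ limit, I would instead try $y=x^s$ for a different $s$ satisfying the analogous zeroth-order matching condition, or examine the $x\to\infty$ regime; the Lemma \ref{L-3.6} machinery applies verbatim.
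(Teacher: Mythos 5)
Your reduction to $\lambda_1(L)\le\lambda_1(M)$ with $L=(X^{1/2}Y^{2(1-\alpha)}X^{1/2})^{r/q}$ and $M=\{(1-\alpha)Y^r+\alpha(Y^{-1}\#_{1/\alpha}X)^r\}^{1/q}$, $r=q/p$, is exactly the paper's first step, and the closing remark that $\prec_w$ implies the $\lambda_1$ inequality is fine. The gap is in the choice of test family and in the fact that the decisive computation is only asserted, not performed. You import the specialization $y=x^{2\alpha-1}$ from the proof of Theorem \ref{T-4.32}, motivated by "zeroth-order matching" of $L$ and $M$ at $\theta=0$. That matching is irrelevant here: Lemma \ref{L-3.6} compares the eigenvalue branches separately, and for the $\lambda_1$ comparison one only needs both matrices to have top-left entry $1+O(\theta^2)$ with the relevant diagonal gaps nonzero; the smaller eigenvalues need not agree at zeroth order. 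Worse, the specialization is fatal at $\alpha=1/2$: there $y=x^{2\alpha-1}=1$, so $B_\theta=I$ for every $\theta$, whence $X=I$, $Y^{-1}\#_{2}X=Y$, and $L=M=Y^{r/q}$ identically. Your family then carries no information at all, while the theorem still demands the conclusion $r\ge1/2$ at $\alpha=1/2$. For $\alpha\ne1/2$ the curve $y=x^{2\alpha-1}$ sends $y\to\infty$ (if $\alpha<1/2$) as $x\searrow0$, so the asymptotics you "expect" to collapse to $-1/p\le-(1-\alpha)/q$ are a different limiting regime from anything verified, and your own hedge ("should collapse", "I would instead try $y=x^s$") concedes that the leading coefficient has not been extracted.

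The paper avoids all of this by keeping $x$ and $y$ as independent parameters (with $X:=B_\theta$, $Y:=A_0$, conditions $xy\ne1$, $x^{1-\alpha}y\ne1$), expanding $Y^{1-\alpha}XY^{1-\alpha}$ (which has the same spectrum as $X^{1/2}Y^{2(1-\alpha)}X^{1/2}$ but is easier to expand) and $(1-\alpha)Y^r+\alpha(Y^{-1}\#_{1/\alpha}X)^r$ to order $\theta^2$, applying Lemma \ref{L-3.6} for $x,y<1$, and then taking the iterated limit $y\searrow0$ followed by $x\searrow0$; this yields $0\le\alpha(r-1)+\alpha^2$, i.e.\ $r\ge1-\alpha$. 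To repair your argument you would need either to restore the second parameter $y$ and carry out the two-stage limit, or to exhibit a one-parameter family for which the $x\searrow0$ asymptotics are actually computed and shown to produce the constant $1-\alpha$; as written, the proposal does not establish the theorem.
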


\begin{proof}
Assume that $\lambda_1\bigl(\widetilde SG_{\alpha,p}(A,B)\bigr)\le\lambda_1(\cA_{\alpha,q}(A,B))$ holds
for all $A,B\in\bM_2^{++}$, and argue as in the first paragraph of the proof of Theorem \ref{T-4.32}. Then,
since $\lambda((X^{1/2}Y^{2(1-\alpha)}X^{1/2})^{r/q})=\lambda((Y^{1-\alpha}XY^{1-\alpha})^{r/q})$, we have
\[
\lambda_1\bigl((Y^{1-\alpha}XY^{1-\alpha})^{r/q}\bigr)\le
\lambda_1\bigl(\bigl\{(1-\alpha)Y^r+(Y^{-1}\#_{1/\alpha}X)^r\bigr\}^{1/q}\bigr)
\]
holds for all $X,Y\in\bM_2^{++}$, where $r:=q/p$. Now set $X:=B_\theta$ and $Y:=A_0$ in \eqref{F-3.5}
for $x,y>0$ with $xy\ne1$ and $x^{1-\alpha}y\ne1$. We compute
\[
Y^{1-\alpha}XY^{1-\alpha}=\begin{bmatrix}1-\theta^2(1-y)&\theta x^{1-\alpha}(1-y)\\
\theta x^{1-\alpha}(1-y)&x^{2(1-\alpha)}y+\theta^2x^{2(1-\alpha)}(1-y)\end{bmatrix}+o(\theta^2)
\quad\mbox{as $\theta\to0$}.
\]
On the other hand, using Example \ref{E-3.2}(1) we compute
\[
Y^{-1}\#_{1/\alpha}X=\begin{bmatrix}1+\theta^2 u_{11}&\theta u_{12}\\
\theta u_{12}&x^{1-\alpha\over\alpha}y^{1\over\alpha}+\theta^2u_{22}\end{bmatrix}+o(\theta^2),
\]
where
\[
\begin{cases}
u_{11}:=-{1-y\over\alpha}+{x(1-y)^2\bigl(1-\alpha-xy+\alpha x^{1\over\alpha}y^{1\over\alpha}\bigr)
\over\alpha(1-xy)^2}, \\
u_{12}:={(1-y)\bigl(1-x^{1\over\alpha}y^{1\over\alpha}\bigr)\over1-xy},
\end{cases}
\]
and hence
\[
(Y^{-1}\#_{1/\alpha}X)^r=\begin{bmatrix}1+\theta^2v_{11}&\theta v_{12}\\
\theta v_{12}&x^{(1-\alpha)r\over\alpha}y^{r\over\alpha}+\theta^2v_{22}\end{bmatrix}+o(\theta^2),
\]
where
\[
\begin{cases}
v_{11}:=ru_{11}+{r-1-rx^{1-\alpha\over\alpha}y^{1\over\alpha}
+x^{(1-\alpha)r\over\alpha}y^{r\over\alpha}\over
\bigl(1-x^{1-\alpha\over\alpha}y^{1\over\alpha}\bigr)^2}\,u_{12}^2, \\
v_{12}:={1-x^{(1-\alpha)r\over\alpha}y^{r\over\alpha}\over
1-x^{1-\alpha\over\alpha}y^{1\over\alpha}}\,u_{12}.
\end{cases}
\]
Therefore,
\[
(1-\alpha)Y^r+\alpha(Y^{-1}\#_{1/\alpha}X)^r
=\begin{bmatrix}1+\theta^2\alpha v_{11}&\theta\alpha v_{12}\\\theta\alpha v_{12}&
(1-\alpha)x^r+\alpha x^{(1-\alpha)r\over\alpha}y^{r\over\alpha}+\theta^2\alpha v_{22}\end{bmatrix}
+o(\theta^2)\quad\mbox{as $\theta\to0$}.
\]

Let $x,y<1$. Note that $x^{2(1-\alpha)}y<1$ and
$(1-\alpha)x^r+\alpha x^{(1-\alpha)r\over\alpha}y^{r\over\alpha}<1$. Then by Lemma \ref{L-3.6} we have
\begin{align*}
&\lambda_1(Y^{1-\alpha}XY^{1-\alpha}\bigr)
=1+\theta^2\biggl\{-(1-y)+{x^{2(1-\alpha)}(1-y)^2\over1-x^{2(1-\alpha)}y}\biggr\}+o(\theta^2), \\
&\lambda_1\bigl((1-\alpha)Y^r+\alpha(Y^{-1}\#_{1/\alpha}X)^r\bigr)
=1+\theta^2\Biggl\{\alpha v_{11}+{\alpha^2v_{12}^2\over
1-(1-\alpha)x^r-\alpha x^{(1-\alpha)r\over\alpha}y^{r\over\alpha}}\Biggr\}+o(\theta^2),
\end{align*}
so that
\begin{align*}
&\lambda_1\bigl((Y^{1-\alpha}XY^{1-\alpha})^{r/q}\bigr)
=1+{\theta^2r\over q}\biggl\{-1+y+{x^{2(1-\alpha)}(1-y)^2\over1-x^{2(1-\alpha)}y}\biggr\}+o(\theta^2), \\
&\lambda_1\bigl(\bigl\{(1-\alpha)Y^r+\alpha(Y^{-1}\#_{1/\alpha}X)^r\bigr\}^{1/q}\bigr)
=1+{\theta^2\over q}\Biggl\{\alpha v_{11}+{\alpha^2v_{12}^2\over
1-(1-\alpha)x^r-\alpha x^{(1-\alpha)r\over\alpha}y^{r\over\alpha}}\Biggr\}+o(\theta^2).
\end{align*}
This implies that
\[
r\biggl\{-1+y+{x^{2(1-\alpha)}(1-y)^2\over1-x^{2(1-\alpha)}y}\biggr\}
\le\alpha v_{11}+{\alpha^2v_{12}^2\over1-(1-\alpha)x^r-\alpha x^{(1-\alpha)r\over\alpha}y^{r\over\alpha}}.
\]
Letting $y\searrow0$ gives
\begin{align*}
&u_{11}\to-{1\over\alpha}+{1-\alpha\over\alpha}\,x,\qquad u_{12}\to1, \\
&v_{11}\to r\biggl(-{1\over\alpha}+{1-\alpha\over\alpha}\,x\biggr)+(r-1),\qquad v_{12}\to1.
\end{align*}
Hence we must have
\[
-r+rx^{2(1-\alpha)}\le-r+r(1-\alpha)x+\alpha(r-1)+{\alpha^2\over1-(1-\alpha)x^r}.
\]
Letting $x\searrow0$ further gives $0\le\alpha(r-1)+\alpha^2$, i.e., $r\ge1-\alpha$.
\end{proof}

\begin{proposition}\label{P-4.36}
Let $0<\alpha<1$ and $p,q>0$. If $\min\{1,{p\over2\alpha}\}\le q$, then
$\Tr\,\widetilde SG_{\alpha,p}(A,B)\le\Tr\,\cA_{\alpha,q}(A,B)$ holds for all $A,B\ge0$ with $s(A)\ge s(B)$.
\end{proposition}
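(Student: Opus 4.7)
The plan is to mimic the proof of the analogous Proposition \ref{P-4.28}, splitting the hypothesis $\min\{1,p/(2\alpha)\}\le q$ into the two cases $p/(2\alpha)\le q$ and $q\ge1$, and chaining through previously established (weak) log-majorization results.

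First, suppose $p/(2\alpha)\le q$, i.e., $p\le2\alpha q$. Then Proposition \ref{P-4.33}(2) directly yields $\widetilde SG_{\alpha,p}(A,B)\prec_w\cA_{\alpha,q}(A,B)$ for all $A,B\ge0$ with $s(A)\ge s(B)$, and taking the trace gives the desired inequality (any weak majorization, in particular, implies the trace inequality, by Proposition \ref{P-2.4}(3)).

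Next, suppose $q\ge1$. Choose the auxiliary parameter $r:=p/\alpha$, so that $p\le\alpha r$ and Proposition \ref{P-4.33}(1) applies to give
\[
\widetilde SG_{\alpha,p}(A,B)\prec_{\log}R_{\alpha,r}(A,B).
\]
Log-majorization implies weak majorization (via the standard passage $\prec_{\log}\Rightarrow\prec_{w\log}\Rightarrow\prec_w$; see Proposition \ref{P-2.4}(3)), hence in particular $\Tr\,\widetilde SG_{\alpha,p}(A,B)\le\Tr\,R_{\alpha,r}(A,B)$. On the other hand, since $q\ge1$, the hypothesis of Proposition \ref{P-4.11} is satisfied for the pair $(r,q)$ (indeed $\min\{1,r/2\}\le1\le q$), so $\Tr\,R_{\alpha,r}(A,B)\le\Tr\,\cA_{\alpha,q}(A,B)$. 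Concatenating the two inequalities gives the claim.

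The two cases jointly cover the hypothesis $\min\{1,p/(2\alpha)\}\le q$, so the proposition follows. There is no essential obstacle here: all the work has been done in Propositions \ref{P-4.33} and \ref{P-4.11}, and the only mild point is to remember that the log-majorization in Proposition \ref{P-4.33}(1) is strong enough to give a trace inequality after the intermediate step through $R_{\alpha,r}$ with $r=p/\alpha$.
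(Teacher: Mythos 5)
Your proof is correct and follows essentially the same route as the paper's: the case $p\le2\alpha q$ is handled by Proposition \ref{P-4.33}(2), and the case $q\ge1$ by chaining Proposition \ref{P-4.33}(1) (with the intermediate mean $R_{\alpha,r}$ for $r=p/\alpha$, exactly the choice the paper allows via ``$r>0$ satisfying $p/r\le\alpha$'') with Proposition \ref{P-4.11}. No gaps.
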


\begin{proof}
If $p\le2\alpha q$, then the result follows from Proposition \ref{P-4.33}(2). If $q\ge1$, then we have, with
$r>0$ satisfying $p/r\le\alpha$,
\[
\Tr\,\widetilde SG_{\alpha,p}(A,B)\le\Tr\,R_{\alpha,r}(A,B)\le\Tr\,\cA_{\alpha,q}(A,B)
\]
by Propositions \ref{P-4.33}(1) and \ref{P-4.11}.
\end{proof}

\begin{theorem}\label{T-4.37}
Let $0<\alpha<1$ and $p,q>0$. If $\Tr\,\widetilde SG_{\alpha,p}(A,B)\le\Tr\,\cA_{\alpha,q}(A,B)$ holds for
all $A,B\in\bM_2^{++}$, then  $\min\{1,(1-\alpha)p\}\le q$.
\end{theorem}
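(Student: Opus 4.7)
The strategy parallels that of Theorem \ref{T-4.29}. As there, it suffices to show that $(1-\alpha)p\le q$ whenever $q<1$. We again test the hypothesis on the matrices $A_0,B_\theta\in\bM_2^{++}$ of \eqref{F-3.5} with $y=x>0$, $x\ne1$, and expand $\Tr\,\widetilde SG_{\alpha,p}(A_0,B_\theta)$ up to $o(\theta^2)$.

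The first step is a similarity reduction that plays the role of the one used in Theorem~\ref{T-4.29}. Writing
$C:=A_0^{(1-\alpha)p}\bigl(A_0^{-p}\#_\alpha B_\theta^p\bigr)^{1/2}$, we have $C^*C=\widetilde F_\alpha(A_0^p,B_\theta^p)$ and $CC^*=A_0^{(1-\alpha)p}(A_0^{-p}\#_\alpha B_\theta^p)A_0^{(1-\alpha)p}$. Since $C^*C$ and $CC^*$ have the same nonzero spectrum,
\[
\Tr\,\widetilde SG_{\alpha,p}(A_0,B_\theta)
=\Tr\bigl\{A_0^{(1-\alpha)p}(A_0^{-p}\#_\alpha B_\theta^p)A_0^{(1-\alpha)p}\bigr\}^{1/p}.
\]
Then I would compute $A_0^{-p}\#_\alpha B_\theta^p=A_0^{-p/2}\bigl(A_0^{p/2}B_\theta^pA_0^{p/2}\bigr)^\alpha A_0^{-p/2}$ up to $o(\theta^2)$: the inner matrix is of the form \eqref{F-3.1} with $a=1$, $b=x^{2p}$ and explicit $x_{ij}$ coming from \eqref{F-3.6}, so Example~\ref{E-3.2}(1) (with $r=\alpha$) gives its $\alpha$-power; conjugating by the diagonal $A_0^{-p/2}$ is routine.

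Next, conjugating by $A_0^{(1-\alpha)p}$ yields a matrix $X(\theta)$ of the form
\[
X(\theta)=\begin{bmatrix}1+\theta^2\zeta_{11}&\theta\zeta_{12}\\
\theta\zeta_{12}&x^p+\theta^2\zeta_{22}\end{bmatrix}+o(\theta^2),
\]
the diagonal values at $\theta=0$ being $1$ and $x^{2(1-\alpha)p}\cdot x^{(2\alpha-1)p}=x^p$ as expected, with explicit coefficients $\zeta_{ij}=\zeta_{ij}(x,p,\alpha)$ obtainable in closed form. Lemma~\ref{L-3.6} (taking $0<x<1$, so $x^p<1$) then gives
\[
\lambda(X(\theta))
=\Bigl(1+\theta^2\bigl(\zeta_{11}+\tfrac{\zeta_{12}^2}{1-x^p}\bigr),\
x^p+\theta^2\bigl(\zeta_{22}-\tfrac{\zeta_{12}^2}{1-x^p}\bigr)\Bigr)+o(\theta^2),
\]
so that
\[
\Tr\,\widetilde SG_{\alpha,p}(A_0,B_\theta)
=1+x+\frac{\theta^2}{p}\Bigl\{\zeta_{11}+\tfrac{\zeta_{12}^2}{1-x^p}
+x^{1-p}\bigl(\zeta_{22}-\tfrac{\zeta_{12}^2}{1-x^p}\bigr)\Bigr\}+o(\theta^2).
\]
Combining with \eqref{F-4.9} and the hypothesis, we get a scalar inequality valid for all $x\in(0,1)$, from which I take $x\searrow0$. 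Since $q<1$, the RHS tends to $-\alpha(1-\alpha)/q$; the claim is that the LHS tends to $-\alpha/p$. Granting this, the resulting inequality $-\alpha/p\le-\alpha(1-\alpha)/q$ is exactly $(1-\alpha)p\le q$. As a sanity check, specializing to $\alpha=1/2$ collapses to $p/2\le q$, matching Theorem~\ref{T-4.29} via $\widetilde SG_{1/2,p}=SG_{1/2,p}$.

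The only nontrivial step is the last one: identifying the $x\searrow0$ limit of
$\zeta_{11}+\tfrac{\zeta_{12}^2}{1-x^p}+x^{1-p}\bigl(\zeta_{22}-\tfrac{\zeta_{12}^2}{1-x^p}\bigr)$.
This is the main obstacle, because the three terms mix different powers of $x$ and the weighted geometric mean produces exponents $x^{(2\alpha-1)p}$ that behave differently for $\alpha<1/2$ and $\alpha>1/2$; also the prefactor $x^{1-p}$ distinguishes the regimes $p<1$ and $p>1$. The cleanest way I see is to first check, by the symmetry $A_0\leftrightarrow A_0, B_\theta\leftrightarrow B_\theta$ under $\alpha=1/2$, that the result at $\alpha=1/2$ agrees with Theorem \ref{T-4.29}, and then to verify directly from the expressions for $\zeta_{11},\zeta_{12},\zeta_{22}$ that the terms involving $x^{(2\alpha-1)p}$, $x^{2(2\alpha-1)p}$ and $x^p$ all vanish in the limit, leaving $-\alpha/p$ as the only surviving constant. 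No case analysis on $\alpha$ is needed because the symmetric-looking contributions $\tfrac{\zeta_{12}^2}{1-x^p}$ cancel between the two eigenvalues modulo an $O(x^{1-p})$ correction that dies out as $x\searrow0$.
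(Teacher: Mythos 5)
Your architecture coincides with the paper's: the same test pair $A_0,B_\theta$ with $y=x$, the same similarity reduction replacing $\Tr\,\widetilde F_\alpha(A_0^p,B_\theta^p)^{1/p}$ by $\Tr\{A_0^{(1-\alpha)p}(A_0^{-p}\#_\alpha B_\theta^p)A_0^{(1-\alpha)p}\}^{1/p}$ (the paper pushes one step further to $\Tr\{A_0^{(\frac12-\alpha)p}(A_0^{p/2}B_\theta^pA_0^{p/2})^\alpha A_0^{(\frac12-\alpha)p}\}^{1/p}$, which makes the Example~\ref{E-3.2}(1) computation a single application), the same eigenvalue expansion via Lemma~\ref{L-3.6}, the same comparison with \eqref{F-4.9}, and the same limit $x\searrow0$ to extract $-\alpha/p\le-\alpha(1-\alpha)/q$.

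However, the step you yourself flag as ``the main obstacle'' is not an afterthought but the actual content of the proof, and the justification you sketch for it is wrong where it matters. You claim the cross terms cancel ``modulo an $O(x^{1-p})$ correction that dies out as $x\searrow0$''; but $x^{1-p}\to\infty$ when $p>1$, so this correction does not die out by itself. In the paper's notation, $x^{1-p}\eta_{22}(p)$ contains the genuinely divergent piece $\alpha x^{1-p}$, which survives only because it combines with the $-\alpha x^{1-p}/(1+x^p)^2$ piece coming from the same bracket into $\alpha x^{1-p}\{1-(1+x^p)^{-2}\}=\alpha x(2+x^p)/(1+x^p)^2\to0$; a second potentially divergent exponent, $x^{1+(1-2\alpha)p}$ (negative power of $x$ when $\alpha>1/2$ and $(2\alpha-1)p>1$), likewise cancels only after combining $x^{1-p}\eta_{22}(p)$ with $x^{1-p}\eta_{12}(p)^2/(1-x^p)$, leaving only positive powers $x^{2(1-\alpha)p+1}$, $x^{p+1}$, $x^{2p+1}$, $x^{(1+2\alpha)p+1}$. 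So the limit $-\alpha/p$ is correct, but it requires exactly the explicit cancellations you omit, and your stated reason for skipping the case analysis is false in the regimes $p>1$ and $\alpha$ near $1$. To complete the proof you must write out $\eta_{11},\eta_{12},\eta_{22}$ (or your $\zeta_{ij}$) in closed form and perform these cancellations, as the paper does.
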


\begin{proof}
It suffices to show that we must have $(1-\alpha)p\le q$ when $q<1$. Consider $A_0,B_\theta\in\bM_2^{++}$
in \eqref{F-3.5} with $y=x>0$, $x\ne1$. Note that
\begin{align*}
\Tr\,\widetilde SG_{\alpha,p}(A_0,B_\theta)
&=\Tr\bigl\{(A_0^{-p}\#_\alpha B_\theta^p)^{1/2}A_0^{2(1-\alpha)p}
(A_0^{-p}\#_\alpha B_\theta^p)^{1/2}\bigr\}^{1/p} \\
&=\Tr\bigl\{A_0^{(1-\alpha)p}(A_0^{-p}\#_\alpha B_\theta^p)A_0^{(1-\alpha)p}\bigr\}^{1/p} \\
&=\Tr\bigl\{A_0^{({1\over2}-\alpha)p}(A_0^{p/2}B_\theta^pA_0^{p/2})^\alpha
A_0^{({1\over2}-\alpha)p}\bigr\}^{1/p}.
\end{align*}
Using Example \ref{E-3.2}(1) as before we compute
\[
A_0^{({1\over2}-\alpha)p}(A_0^{p/2}B_\theta^pA_0^{p/2})^\alpha A_0^{({1\over2}-\alpha)p}
=\begin{bmatrix}1+\theta^2\eta_{11}(p)&\theta\eta_{12}(p)\\
\theta\eta_{12}(p)&x^p+\theta^2\eta_{22}(p)\end{bmatrix}+o(\theta^2)\quad\mbox{as $\theta\to0$},
\]
where
\[
\begin{cases}
\eta_{11}(p):=-\alpha(1-x^p)+{x^p(\alpha-1-\alpha x^{2p}+x^{2\alpha p})\over(1+x^p)^2}, \\
\eta_{22}(p):=\alpha(1-x^p)+{x^{2(1-\alpha)p}(1+(\alpha-1)x^{2\alpha p}-\alpha x^{2(\alpha-1)p})
\over(1+x^p)^2}, \\
\eta_{12}(p):={x^{(1-\alpha)p}(1-x^{2\alpha p})\over1+x^p}.
\end{cases}
\]
Hence we find by Lemma \ref{L-3.6} that
\begin{align*}
\Tr\,\widetilde SG_{\alpha,p}(A_0,B_\theta)
&=\biggl\{1+\theta^2\biggl(\eta_{11}(p)+{\eta_{12}(p)^2\over1-x^p}\biggr)\biggr\}^{1/p}
+\biggl\{x^p+\theta^2\biggl(\eta_{22}(p)-{\eta_{12}(p)^2\over1-x^p}\biggr)\biggr\}^{1/p}+o(\theta^2) \\
&=1+x+{\theta^2\over p}\biggl\{\eta_{11}(p)+{\eta_{12}(p)^2\over1-x^p}
+x^{1-p}\biggl(\eta_{22}(p)-{\eta_{12}(p)^2\over1-x^p}\biggr)\biggr\}+o(\theta^2).
\end{align*}
From this and \eqref{F-4.9} for $\Tr\,\cA_{\alpha,q}(A_0,B_\theta)$ we must have
\begin{align}\label{F-4.50}
{1\over p}\biggl\{\eta_{11}(p)+{\eta_{12}(p)^2\over1-x^p}
+x^{1-p}\biggl(\eta_{22}(p)-{\eta_{12}(p)^2\over1-x^p}\biggr)\biggr\}
\le{\alpha(1-\alpha)\over q}(-1-x+x^q+x^{1-q})
\end{align}
for all $x>0$, $x\ne1$. As $x\searrow0$, noting that $\eta_{11}(p)\to-\alpha$, $\eta_{12}(p)\to0$ and
\begin{align*}
&x^{1-p}\biggl(\eta_{22}(p)-{\eta_{12}(p)^2\over1-x^p}\biggr) \\
&\quad=x^{1-p}\biggl\{\alpha(1-x^p)+{x^{2(1-\alpha)p}(1+(\alpha-1)x^{2\alpha p}-\alpha x^{2(\alpha-1)p}
\over(1+x^p)^2}-{x^{2(1-\alpha)p}(1-x^{2\alpha p})^2\over(1+x^p)^2(1-x^p)}\biggr\} \\
&\quad=\alpha x^{1-p}\biggl\{1-{1\over(1+x^p)^2}\biggr\}-\alpha x
+x^{(1-2\alpha)p+1}\biggl\{{1+(\alpha-1)x^{2\alpha p}\over(1+x^p)^2}
-{(1-x^{2\alpha p})^2\over(1+x^p)^2(1-x^p)}\biggr\} \\
&\quad={\alpha x(2+x^p)\over(1+x^p)^2}-\alpha x
+{-x^{2(1-\alpha)p+1}+(\alpha+1)x^{p+1}-(\alpha-1)x^{2p+1}-x^{(1+2\alpha)p+1}\over(1+x^p)^2(1-x)} \\
&\quad\to0,
\end{align*}
we find that the LHS of \eqref{F-4.50} goes to $-{\alpha\over p}$, while the RHS goes to
$-{\alpha(1-\alpha)\over q}$ since $q<1$. Therefore, $-{\alpha\over p}\le-{\alpha(1-\alpha)\over q}$, showing
that $p\le{q\over1-\alpha}$.
\end{proof}

The following is seen similarly to Proposition \ref{P-4.30}.

\begin{proposition}\label{P-4.38}
Let $0<\alpha<1$ and $p,q>0$ be such that $\min\{1,{p\over2\alpha}\}\le q$. Then for every $A,B\ge0$ with
$s(A)\ge s(B)$ the following conditions are equivalent:
\begin{itemize}
\item[(i)] $\Tr\,\widetilde SG_{\alpha,p}(A,B)=\Tr\,\cA_{\alpha,p}(A,B)$;
\item[(ii)] $\widetilde SG_{\alpha,p}(A,B)=\cA_{\alpha,p}(A,B)$;
\item[(iii)] $A=B$.
\end{itemize}
\end{proposition}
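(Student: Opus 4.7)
The implications $(iii)\Rightarrow(ii)\Rightarrow(i)$ are obvious, so the substantive content is $(i)\Rightarrow(iii)$. My plan is to mirror the proof of Proposition \ref{P-4.30}, inserting the R\'enyi mean $R_{\alpha,p/\alpha}$ as an intermediary between $\widetilde SG_{\alpha,p}$ and $\cA_{\alpha,q}$. The choice $r=\alpha$ (rather than the symmetric $\min\{\alpha,1-\alpha\}$ used for $SG_{\alpha,p}$) is dictated by the one-sided log-majorization available for $\widetilde SG_{\alpha,p}$ in Proposition \ref{P-4.33}(1), which reflects the lack of symmetry of $\widetilde SG_{\alpha,p}$.

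Concretely, Proposition \ref{P-4.33}(1) applied with $q$ replaced by $p/\alpha$ (so that $p\le\alpha\cdot(p/\alpha)$ holds with equality) yields the log-majorization $\widetilde SG_{\alpha,p}(A,B)\prec_{\log}R_{\alpha,p/\alpha}(A,B)$, and in particular
$$\Tr\,\widetilde SG_{\alpha,p}(A,B)\le\Tr\,R_{\alpha,p/\alpha}(A,B).$$
The standing hypothesis $\min\{1,p/(2\alpha)\}\le q$ is exactly $\min\{1,(p/\alpha)/2\}\le q$, which is the condition needed to invoke Proposition \ref{P-4.11} with $p/\alpha$ in place of $p$; this gives $\Tr\,R_{\alpha,p/\alpha}(A,B)\le\Tr\,\cA_{\alpha,q}(A,B)$. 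Chaining these with assumption (i) yields
$$\Tr\,\widetilde SG_{\alpha,p}(A,B)\le\Tr\,R_{\alpha,p/\alpha}(A,B)\le\Tr\,\cA_{\alpha,q}(A,B)=\Tr\,\widetilde SG_{\alpha,p}(A,B),$$
forcing $\Tr\,R_{\alpha,p/\alpha}(A,B)=\Tr\,\cA_{\alpha,q}(A,B)$. Since the same parameter condition $\min\{1,(p/\alpha)/2\}\le q$ holds, one last invocation of Proposition \ref{P-4.14} with $p/\alpha$ in place of $p$ delivers the rigidity conclusion $A=B$.

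I foresee no genuine obstacle: the three ingredients---Proposition \ref{P-4.33}(1) for the log-majorization against $R_{\alpha,\cdot}$, Proposition \ref{P-4.11} for the R\'enyi--arithmetic trace comparison, and the rigidity statement of Proposition \ref{P-4.14}---are all in place. The only point requiring care is the bookkeeping of parameters: one must verify that the substitution $p\mapsto p/\alpha$ converts each auxiliary hypothesis into exactly the single assumption $\min\{1,p/(2\alpha)\}\le q$ appearing in the statement, which is immediate. The support assumption $s(A)\ge s(B)$ is needed precisely to make $\widetilde SG_{\alpha,p}(A,B)$ and the log-majorization of Proposition \ref{P-4.33}(1) meaningful, and is preserved throughout.
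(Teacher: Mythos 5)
Your proof is correct and is exactly the argument the paper intends: the paper gives no explicit proof of Proposition \ref{P-4.38}, stating only that it ``is seen similarly to Proposition \ref{P-4.30},'' and your chain $\Tr\,\widetilde SG_{\alpha,p}\le\Tr\,R_{\alpha,p/\alpha}\le\Tr\,\cA_{\alpha,q}$ via Propositions \ref{P-4.33}(1), \ref{P-4.11} and \ref{P-4.14} (with $p/\alpha$ in place of $p$, using $r=\alpha$ instead of $\min\{\alpha,1-\alpha\}$) is precisely that adaptation. You have also correctly read the statement's $\cA_{\alpha,p}$ as the evident typo for $\cA_{\alpha,q}$.
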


The results of this subsection are summarized as follows. Note that the conditions here are not symmetric
under interchanging $\alpha$ and $1-\alpha$, unlike those in the previous subsections. This is a reflection
of the fact that $\widetilde SG_{\alpha,p}$ is not symmetric in the sense of Remark \ref{R-2.1}(2), unlike
other quasi matrix means.

\medskip
\begin{table}[htb]
\centering
\begin{tabular}{|c|l|l|} \hline
& Sufficient cond. & Necessary cond. \\ \hline

$\begin{array}{cc}\widetilde SG_{\alpha,p}\le\cA_{\alpha,q} \\
\widetilde SG_{\alpha,p}\le_\chao\cA_{\alpha,q}\end{array}$
& none & \\ \hline

$\widetilde SG_{\alpha,p}\le_\near\cA_{\alpha,q}$ & none for $\alpha\ne1/2$ & \ \ ? \\ \hline

$\widetilde SG_{\alpha,p}\le_\lambda\cA_{\alpha,q}$ & \ \ ? & $p\le{q\over1-\alpha}$ \\ \hline

$\widetilde SG_{\alpha,p}\prec_w\cA_{\alpha,q}$ & $p\le2\alpha q$ & $p\le{q\over1-\alpha}$ \\ \hline

$\widetilde SG_{\alpha,p}\le_\Tr\cA_{\alpha,q}$ & $q\ge1$ or $p\le2\alpha q$ &
$q\ge1$ or $p\le{q\over1-\alpha}$ \\ \hline
\end{tabular}
\end{table}

\newpage
\begin{problem}\label{Q-4.39}\rm
We find no necessary condition for $\widetilde SG_{\alpha,p}\le_\near\cA_{\alpha,q}$ and no sufficient
condition for $\widetilde SG_{\alpha,p}\le_\lambda\cA_{\alpha,q}$. The proof of Theorem \ref{T-4.32} cannot
apply to the case $\alpha=1/2$. Indeed, when $\alpha=1/2$, since $\widetilde SG_{1/2,p}=SG_{1/2,p}$,
the problem has already been pointed out in Problem \ref{Q-4.31}. Moreover, as for
$\widetilde SG_{\alpha,p}\prec_w\cA_{\alpha,q}$ and $\widetilde SG_{\alpha,p}\le_\Tr\cA_{\alpha,q}$,
there is a big gap between the sufficient condition and the necessary condition, similarly to those for
$SG_{\alpha,p}$ and $\cA_{\alpha,q}$.
\end{problem}

\section{Concluding remarks}\label{Sec-5}
(1)\enspace
For each quasi matrix mean $\cM_{\alpha,p}\in\{\cA_{\alpha,p},LE_\alpha,R_{\alpha,p},G_{\alpha,p},
SG_{\alpha,p},\widetilde SG_{\alpha,p}\}$ and for each matrix order
$\triangleleft\in\{\le,\le_\chao,\le_\near,\le_\lambda,\prec_w,\le_\Tr\}$, we have aimed at finding the
necessary and sufficient condition on $p,q,\alpha$ under which the inequality
$\cM_{\alpha,p}(A,B)\triangleleft\cA_{\alpha,q}(A,B)$ holds for all $A,B>0$. When
$\cM_{\alpha,p}=\cA_{\alpha,p},LE_\alpha,G_{\alpha,p}$, our objective has perfectly been achieved
as seen in the tables at the end of Sections \ref{Sec-4.1}, \ref{Sec-4.2} and \ref{Sec-4.4}. However, when
$\cM_{\alpha,p}=R_{\alpha,p},SG_{\alpha,p},\widetilde SG_{\alpha,p}$, that has not completely be done
as seen in the tables of Sections \ref{Sec-4.3}, \ref{Sec-4.5} and \ref{Sec-4.6}, where there is a gap
between the sufficient condition and the necessary condition for some of our target inequalities. Therefore,
the problem is still left open for those cases as explained in Problems \ref{Q-4.15}, \ref{Q-4.31} and
\ref{Q-4.39}. We are especially concerned with the question whether $p/2\le q$ is the necessary and
sufficient condition for $R_{\alpha,p}\le_\lambda\cA_{\alpha,q}$ to hold or not. This is indeed equivalent to
saying whether $|A^{\alpha p}B^{(1-\alpha)p}|^{1/p}\le_\lambda\alpha A+(1-\alpha)B$ holds for all $A,B>0$
only if $p\le1$ (in other words, whether Ando's matrix Young inequality
$|A^\alpha B^{1-\alpha}|\le_\lambda\alpha A+(1-\alpha)B$ is the best possible case or not).

(2)\enspace
We have considered an inequality $\cM_{\alpha,p}\triangleleft\cA_{\alpha,q}$ in the two directions of the
sufficiency part (to show the inequality under some condition) and the necessity part (to find a necessary
condition for the inequality to hold). The former direction is a more or less easy task by applying well known
facts or methods in matrix analysis. Thus we have presented the results in this direction as propositions. On
the other hand, the latter direction is computation-oriented, where we provide a counter-example with use of
a specific pair of $2\times2$ matrices, typically the pair $A_0,B_\theta$ given in \eqref{F-3.5}. We have
prepared in Section \ref{Sec-3} some technical computations on $A_0,B_\theta$ based on Taylor's theorem
for matrix functions, which are repeatedly used in the main Section \ref{Sec-4}. In this way, the results in the
latter direction are much involved with plenty of computations though elementary in nature, so we have
presented those as theorems. In order to settle the aforementioned open questions by improving the current
necessary conditions, we probably need to seek a more sophisticated example of $2\times2$ matrix pair,
or otherwise a matrix pair of higher degrees, though computations with matrix pairs of $3\times3$ or higher
seem difficult to perform by hand. Also, it seems that numerical computations are not so much helpful
because the problem is to find the best possible necessary condition.

(3)\enspace
It is intuitively clear that the implications stated in Proposition \ref{P-2.4}(3) are all strict. This fact has been
exemplified in our study of inequalities of quasi matrix means. In fact, the strictness of those implications
except for $\le_\chao\ \Rightarrow\ \le_\near$ is manifest in the tables at the end of Sections
\ref{Sec-4.1}--\ref{Sec-4.4}. As for $\le_\chao\ \Rightarrow\ \le_\near$, the following remark is worth noting:
Due to Proposition \ref{P-2.4}(2) this implication is equivalent to the so-called Ando--Hiai inequality \cite{AH}
(i.e., for $X,Y>0$, $X\#Y\le I\Rightarrow X^p\#Y^p\le I$ for $p\ge1$); see also \cite[Proposition 4]{DF}.
Therefore, its strictness corresponds to $X\#Y\le I\nRightarrow X^p\#Y^p\le I$ for $0<p<1$.

(4)\enspace
We can apply the method explained in Remark \ref{R-2.7}(3) to have the characterization of
$\cH_{\alpha,q}\triangleleft\cH_{\alpha,p}$ from that of $\cA_{\alpha,p}\triangleleft\cA_{\alpha,q}$ given in
Section \ref{Sec-4.1}. Furthermore, we have the sufficient condition (or the necessary condition) for
$\cH_{\alpha,q}\triangleleft\cM_{\alpha,p}$ from that for $\cM_{\alpha,p}\triangleleft\cA_{\alpha,q}$ given in
Sections \ref{Sec-4.2}--\ref{Sec-4.6} for any
$\cM_{\alpha,p}\in\{G_{\alpha,p},SG_{\alpha,p},\widetilde SG_{\alpha,p},R_{\alpha,p},L_\alpha\}$ and
$\triangleleft\in\{\le,\le_\chao,\le_\near,\le_\lambda\}$. But the inequalities
$\cH_{\alpha,q}\prec_w\cM_{\alpha,p}$ and $\cH_{\alpha,q}\le_\Tr\cM_{\alpha,p}$ are not touched in this
paper. As for $\cH_{\alpha,p}\triangleleft\cA_{\alpha,q}$, we have $\cH_{\alpha,p}\le_\chao\cA_{\alpha,q}$
for any $p,q>0$ since Proposition \ref{P-4.19} yields
$\cH_{\alpha,p}\le_\chao G_{\alpha,r}\le_\chao\cA_{\alpha,q}$ with $r=\min\{p,q\}$. We have also
$\cH_{\alpha,p}\le\cA_{\alpha,q}$ for any $p,q\ge1$ since Theorem \ref{T-4.16} yields
$\cH_{\alpha,p}\le G_{\alpha,1}\le\cA_{\alpha,q}$. An interesting problem left open is whether
$\cH_{\alpha,p}\le\cA_{\alpha,q}$ holds only if $p,q\ge1$ or not. Here note that for any $q>0$,
$\cH_{\alpha,p}\le\cA_{\alpha,q}$ fails to hold at least for sufficiently small $p>0$, because otherwise
letting $p\searrow0$ gives $LE_\alpha\le\cA_{\alpha,q}$ thanks to Theorem \ref{T-2.3} and it contradicts
Theorem \ref{T-4.5}.

(5)\enspace
The study of this paper was partly motivated by Milan Mosonyi's question to the author, asking whether
there exists,  for $0<\alpha<1$, a `reasonable' $\alpha$-weighted geometric-type mean $\cM(A,B)$
($A,B>0$) other than $\#_\alpha$, where `reasonable' is used in the sense that $\cM$ satisfies
(i)~$\cM(A,B)=A^{1-\alpha}B^\alpha$ if $AB=BA$,
(ii)~tensor multiplicative $\cM(A_1\otimes A_2,B_1\otimes B_2)=\cM(A_1,B_1)\otimes\cM(A_2,B_2)$,
(iii)~block additive $\cM(A_1\oplus A_2,B_1\oplus B_2)=\cM(A_1,B_1)\oplus\cM(A_2,B_2)$,
and (iv)~arithmetic-geometric inequality $\cM(A,B)\le(1-\alpha)A+\alpha B$.
We examined the possibility of the quasi-geometric type means discussed in this paper to satisfy condition
(iv) as it is obvious that they satisfy the other conditions (i)--(iii). But it turned out that none of them other
than $\#_\alpha$ satisfies (iv); see the tables of Sections \ref{Sec-4.2}--\ref{Sec-4.6} and
Corollary \ref{C-4.18} for $G_{\alpha,p}$. Meanwhile, Mosonyi and his coauthors settled the question as
follows in \cite{FMVW}: For any $\alpha\in(0,1)$, if an $\alpha$-weighted matrix mean
$\cM:\bM_n^{++}\times\bM_n^{++}\to\bM_n^{++}$ ($n\in\bN$) satisfies (i),
(ii$'$)~(weakly) tensor multiplicative $\cM(A^{\otimes n},B^{\otimes n})=\cM(A,B)^{\otimes n}$,
(ii$''$)~scalar tensor multiplicative $\cM(aA,bB)=\cM(a,b)\cM(A,B)$ for $a,b\in(0,\infty)$, (iii) and (iv), then
$\cM(A,B)=A\#_\alpha B$ for all $A,B>0$. This result establishes a remarkable new characterization of the
operator mean $\#_\alpha$.

(6)\enspace
In Theorem \ref{T-A.1} of the appendix below, we present the Lie--Trotter--Kato product formula for operator
means in the positive semidefinite matrix case, which we have used in Section \ref{Sec-2}. The author has
known Theorem \ref{T-A.1} for long years, without publication though it was briefly explained in \cite{AuHi2}
without proof. This product formula for operator means in the positive semidefinite case seems unfamiliar
even to matrix analysis experts, while that in the positive definite matrix case is rather well known (see, e.g.,
\cite[Lemma 3.3]{HP}, \cite[Sec.~4.3]{Hi0}). So it would be worthwhile for us to take this opportunity to
present its complete description.

\subsection*{Acknowledgements}

The author thanks Milan Mosonyi for invitation to the workshop at the Erd\H os Center in July, 2024 and for
valuable suggestions which helped to improve this paper.

\appendix

\section{The operator mean version of the Lie--Trotter--Kato product formula for positive semidefinite
matrices}\label{Sec-A}

The famous \emph{Lie--Trotter--Kato product formula} originally established in \cite{Tr,Kato} says that if $H$
and $K$ are lower bounded self-adjoint operators in a Hilbert space $\cH$, then $(e^{-H/m}e^{-K/m})^m$
converges in strong operator topology to $e^{-(H\dot+K)}P_0$ as $m\to\infty$, where $H\dot+K$ is the
\emph{form sum} (see \cite{Kato0}) and $P_0$ is the orthogonal projection onto the closure of the domain
of $H\dot+K$. According to the proof in \cite{Kato0} (see also \cite[Theorem 3.6]{Hi0}) the formula can be
modified in the symmetric form with a continuous parameter as
\begin{align}\label{F-A.1}
\mathrm{s}\mbox{-}\lim_{p\searrow0}(e^{-pH/2}e^{-pK}e^{-pH/2})^{1/p}=e^{-(H\dot+K)}P_0.
\end{align}
Furthermore, it is known \cite[Sec.~5]{Kato} that the formula is valid even if $H$ and $K$ have non-dense
domains.

For Hermitian matrices $H$ and $K$ the product formula simply becomes the Lie formula
\[
\lim_{k\to\infty}(e^{H/m}e^{K/m})^m=\lim_{p\searrow0}(e^{pH/2}e^{pK}e^{pH/2})^{1/p}=e^{H+K},
\]
which has plenty of applications in matrix analysis. The unitary orbital version of this (without limit) is also
worth noting \cite{So,GLT}. For positive semidefinite (not necessarily positive definite) matrices $A,B$ we
consider $H:=-\log A$ and $K:=-\log B$ defined under the restriction to the ranges of the support projections
$A^0:=s(A)$ and $B^0:=s(B)$, respectively. Applying \eqref{F-A.1} (for non-dense domains) to these $H,K$
we have\footnote{
There seems no literature which provides the proof of \eqref{F-A.2} in a way specialized to the matrix setting.}
\begin{align}\label{F-A.2}
\lim_{p\searrow0}(A^{p/2}B^pA^{p/2})^{1/p}
=P_0\exp\{P_0(\log A)P_0+P_0(\log B)P_0\},
\end{align}
where $P_0:=A^0\wedge B^0$.

This appendix is aimed to supply the operator mean version of \eqref{F-A.2} for matrices $A,B\ge0$.
Throughout the appendix let $A,B$ be $n\times n$ positive semidefinite matrices. Define $\log A$ in the
generalized sense as $\log A:=(\log A)A^0$ restricted on the range of $A^0$ (and zero on the range of
$A^{0\perp}=I-A^0$), and similarly $\log B:=(\log B)B^0$. We write $P_0:=A^0\wedge B^0$ as above.
Now, let $\sigma$ be a Kubo--Ando's operator mean with the representing operator monotone function $f$
on $(0,\infty)$, and let $\alpha:=f'(1)$. Note that $0\le\alpha\le1$ and if $\alpha=0$ (resp., $\alpha=1$) then
$A\sigma B=A$ (resp., $A\sigma B=B)$ so that $(A^p\sigma B^p)^{1/p}=A$ (resp.,
$(A^p\sigma B^p)^{1/p}=B$) for all $A,B\ge0$ and $p>0$. So in the rest we assume that $0<\alpha<1$.

\begin{theorem}\label{T-A.1}
With the above assumptions, for every $A,B\ge0$,
\begin{equation}\label{F-A.3}
\lim_{p\searrow0}(A^p\sigma B^p)^{1/p}
=P_0\exp\{(1-\alpha)P_0(\log A)P_0+\alpha P_0(\log B)P_0\}.
\end{equation}
\end{theorem}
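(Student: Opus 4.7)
The plan is to first establish \eqref{F-A.3} for positive definite $A, B > 0$ by a direct Taylor expansion, then extend to general $A, B \ge 0$ by an $\eps$-regularization, with the principal difficulty lying in the interchange of the two limits $p \searrow 0$ and $\eps \searrow 0$.

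For the PD case, I would use the Kubo--Ando representation $A^p \sigma B^p = A^{p/2} f(A^{-p/2} B^p A^{-p/2}) A^{p/2}$. A first-order Taylor expansion in $p$ yields $A^{-p/2} B^p A^{-p/2} = I + p(\log B - \log A) + O(p^2)$, and since $f(1) = 1$ and $f'(1) = \alpha$, applying $f$ via Daleckii--Krein's derivative formulas gives $f(A^{-p/2} B^p A^{-p/2}) = I + p\alpha(\log B - \log A) + O(p^2)$. Combining with the outer $A^{p/2}$ factors produces $A^p \sigma B^p = I + p[(1-\alpha) \log A + \alpha \log B] + O(p^2)$, whence $\tfrac{1}{p}\log(A^p \sigma B^p) \to (1-\alpha) \log A + \alpha \log B$ and $(A^p \sigma B^p)^{1/p} \to LE_\alpha(A, B)$ as $p \searrow 0$.

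For general $A, B \ge 0$, I would set $A_\eps := A + \eps I$ and $B_\eps := B + \eps I > 0$. The PD case applied to $A_\eps, B_\eps$ gives, for each fixed $\eps > 0$, $\lim_{p \searrow 0}(A_\eps^p \sigma B_\eps^p)^{1/p} = LE_\alpha(A_\eps, B_\eps)$. Letting $\eps \searrow 0$: the LHS converges to $(A^p \sigma B^p)^{1/p}$ by the downward continuity built into the Kubo--Ando definition and continuity of $x \mapsto x^{1/p}$, while the RHS converges to $P_0 \exp\{(1-\alpha) P_0 (\log A) P_0 + \alpha P_0 (\log B) P_0\}$, which can be verified directly or by applying the scalar formula \eqref{F-A.2} to $A_\eps^{1-\alpha}, B_\eps^\alpha$ and passing to $\eps \to 0$.

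The principal obstacle is to justify the interchange of these two limits: the $O(p^2)$ remainder in the PD Taylor expansion has a coefficient controlled by $\|\log A_\eps\|_\infty$ and $\|\log B_\eps\|_\infty$, both diverging as $\eps \searrow 0$, so uniformity is not automatic. To address this, I would decompose $\bC^n = \cH_1 \oplus \cH_1^\perp$ with $\cH_1 = \ran P_0$ and analyze the two blocks separately. Since $P_0 \le s(A), s(B)$, the projection $P_0$ commutes with both $s(A)$ and $s(B)$, so the auxiliary matrix $M_0 := (1-\alpha) s(A) + \alpha s(B)$ is block-diagonal, with $M_0|_{\cH_1} = I_{\cH_1}$ and $M_0|_{\cH_1^\perp}$ having all eigenvalues strictly less than $1$. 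On $\cH_1$, the compressed logarithms $P_0 (\log A) P_0$ and $P_0 (\log B) P_0$ stay bounded uniformly in $\eps$, so the PD perturbation argument can be reproduced uniformly. On $\cH_1^\perp$, the eigenvalues of $A_\eps^p \sigma B_\eps^p$ remain strictly bounded away from $1$ for small $p, \eps$, so their $1/p$-th powers decay to $0$, matching $L|_{\cH_1^\perp} = 0$. Combining these two regimes yields the uniform convergence needed to interchange the limits. An alternative route would be to use the Kubo--Ando sandwich $A\,!_\alpha B \le A \sigma B \le A \triangledown_\alpha B$ (valid because $f'(1) = \alpha$) to reduce to the weighted harmonic and arithmetic mean cases---each amenable to direct spectral analysis of $M_0$---though extracting full matrix convergence for the middle term from an eigenvalue squeeze requires extra care, since $x \mapsto x^{1/p}$ is not operator monotone for $p < 1$.
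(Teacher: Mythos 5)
Your positive-definite case is fine and matches what the paper calls the ``much simpler'' argument, and you correctly identify the crux: the degenerate directions outside $\ran P_0$ and the resulting failure of uniformity in the double limit $p\searrow0$, $\eps\searrow0$. But the step where you resolve this is asserted rather than proved, and it contains a claim that is false as stated. For each fixed $\eps>0$ the matrices $A_\eps,B_\eps$ are positive definite, so $A_\eps^p\sigma B_\eps^p\to I\sigma I=I$ as $p\searrow0$ and \emph{all} of its eigenvalues tend to $1$; they are not ``bounded away from $1$ for small $p,\eps$'' unless you couple $\eps$ to $p$ quantitatively, and you give no such coupling. More fundamentally, $A_\eps^p\sigma B_\eps^p$ (and likewise $A^p\sigma B^p$ for $A,B\ge0$) is not block-diagonal with respect to $\cH_1\oplus\cH_1^\perp$, so ``analyzing the two blocks separately and combining'' begs the question: the whole difficulty is that the off-diagonal coupling between the block converging to something invertible and the block whose $1/p$-th powers should vanish is only bounded, not small. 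Similarly, ``the PD perturbation argument can be reproduced uniformly on $\cH_1$'' does not follow from boundedness of $P_0(\log A_\eps)P_0$: the compression of the mean is not the mean of the compressions, and the second-order remainder involves the full operators $\log A_\eps$, $\log B_\eps$ whose norms diverge.

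The paper's proof shows what is actually needed to close this gap, and it is not a refinement of the $\eps$-regularization. It works directly with $A,B\ge0$ and splits into two cases via the Kubo--Ando representation $\sigma=\theta\triangledown_\beta+(1-\theta)\tau$ with $P\tau Q=P\wedge Q$. For $\tau$ the mean is genuinely supported on $\cH_0$, so there is no off-diagonal block, and the sandwich $!_\alpha\le\tau\le\triangledown_\alpha$ is applied \emph{after} taking ${1\over p}\log$ (which is operator monotone, so your worry about $x^{1/p}$ does not arise), with the key lower bound $A^p!_\alpha B^p\ge\{P_0(A^{-p}\triangledown_\alpha B^{-p})P_0\}^{-1}$ obtained from Choi's inequality. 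For general $\sigma$ the support is $s(A)\vee s(B)\supsetneq P_0$, and the coupling problem is solved by Lemma \ref{L-A.3}: a dedicated eigenvalue--eigenvector perturbation lemma (using the majorization $\sum_{i\le r}\mu_i(p)\le\sum_{i\le r}\lambda_i(p)$ and the fact that the lower-right block of ${1\over p}\log(A^p\sigma B^p)$ behaves like ${1\over p}\log c_i\to-\infty$) showing $e^{Z(p)}\to e^{Z_0}\oplus0$ despite merely bounded off-diagonal blocks. Some substitute for that lemma is the missing ingredient in your argument; without it the interchange of limits, which you rightly flag as the principal obstacle, remains unjustified.
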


\begin{remark}\label{R-A.2}\rm
Note \cite[Sect.~4]{HP} that the RHS of \eqref{F-A.3} is written as
\[
\lim_{\eps\searrow0}\exp\{(1-\alpha)\log(A+\eps I)+\alpha\log(B+\eps I)\},
\]
so that we may write
\begin{align*}
\lim_{p\searrow0}(A^p\sigma B^p)^{1/p}
&=\lim_{\eps\searrow0}\exp\{(1-\alpha)\log(A+\eps I)+\alpha\log(B+\eps I)\} \\
&=\lim_{\eps\searrow0}\lim_{p\searrow0}((A+\eps I)^p\sigma (B+\eps I)^p)^{1/p}.
\end{align*}
\end{remark}

The next lemma is essential to prove the theorem. The proof of the lemma is a slight
modification of that of \cite[Lemma 4.1]{HP}.

\begin{lemma}\label{L-A.3}
For each $p\in(0,p_0)$ with some $p_0>0$, a Hermitian matrix $Z(p)$ is given in the
$2\times2$ block form as
\[
Z(p)=\begin{bmatrix}Z_0(p)&Z_2(p)\\Z_2^*(p)&Z_1(p)\end{bmatrix},
\]
where $Z_0(p)$ is $m\times m$, $Z_1(p)$ is $l\times l$ and $Z_2(p)$ is $m\times l$.
Assume:
\begin{itemize}
\item[(a)] $Z_0(p)\to Z_0$ as $p\searrow0$,
\item[(b)] $\sup\{\|Z_2(p)\|_\infty:p\in(0,p_0)\}<\infty$,
\item[(c)] there is a $\delta>0$ such that $pZ_1(p)\le-\delta I_l$ for all $p\in(0,p_0)$.
\end{itemize}
Then
\[
e^{Z(p)}\to\begin{bmatrix}e^{Z_0}&0\\0&0\end{bmatrix}\quad\mbox{as $p\searrow0$}.
\]
\end{lemma}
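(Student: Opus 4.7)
The plan is to use hypothesis (c) to force the $l$ smallest eigenvalues of $Z(p)$ to diverge to $-\infty$ like $-\delta/p$, while the top $m$ eigenvalues remain bounded and cluster near the eigenvalues of $Z_0$; then $e^{Z(p)}$ will localise on the top spectral subspace. Writing $Z(p)=D(p)+V(p)$ with $D(p):=Z_0(p)\oplus Z_1(p)$ block-diagonal and $V(p)$ purely off-diagonal (with blocks $Z_2(p)$ and $Z_2^*(p)$), hypothesis (b) gives $C:=\sup_{p<p_0}\|V(p)\|_\infty<\infty$, and (a) yields $M$ with $\|Z_0(p)\|_\infty\le M$ for all small $p$. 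By Weyl's eigenvalue inequality applied to the additive perturbation $D(p)+V(p)$, once $p$ is small enough that $-\delta/p<-M-2C$, the eigenvalues of $Z(p)$ in decreasing order consist of $m$ values in $[-M-C,M+C]$ followed by $l$ values in $(-\infty,-\delta/p+C]$, producing a spectral gap of order $1/p$.

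Let $P(p)$ be the spectral projection of $Z(p)$ onto the top $m$-dimensional eigenspace. Since $P(p)$ commutes with $e^{Z(p)}$, we have the orthogonal decomposition
\[
e^{Z(p)}=P(p)\,e^{Z(p)}\,P(p)+(I-P(p))\,e^{Z(p)}\,(I-P(p)),
\]
and the second summand, being positive semidefinite with eigenvalues drawn from the bottom $l$ eigenvalues of $Z(p)$, has operator norm at most $e^{-\delta/p+C}\to0$. For the first summand, take the positively oriented circle $\gamma$ of radius $R:=M+C+1$ about the origin; for all small $p$ this encloses precisely the top $m$ eigenvalues of $Z(p)$, and also all eigenvalues of $Z_0$. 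The Riesz functional calculus then gives
\[
P(p)\,e^{Z(p)}\,P(p)=\frac{1}{2\pi i}\oint_\gamma e^z\,(zI-Z(p))^{-1}\,dz.
\]

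It remains to show that $(zI-Z(p))^{-1}$ converges uniformly on $\gamma$ to the block matrix whose only nonzero block is $(zI_m-Z_0)^{-1}$ in the top-left position, whereupon exchanging limit and contour integral produces $e^{Z_0}$ in the top-left and $0$ elsewhere. This is carried out via the Schur complement inversion formula: hypothesis (c) yields $\|(zI_l-Z_1(p))^{-1}\|_\infty\le(\delta/p-R)^{-1}=O(p)$ uniformly for $z\in\gamma$, so the Schur complement $S(z,p):=zI_m-Z_0(p)-Z_2(p)(zI_l-Z_1(p))^{-1}Z_2^*(p)$ converges uniformly to $zI_m-Z_0$, which is invertible on $\gamma$ since $R>M$. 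The off-diagonal blocks of $(zI-Z(p))^{-1}$ and its $(2,2)$ block each carry at least one factor of $(zI_l-Z_1(p))^{-1}$, and thus vanish uniformly on $\gamma$ thanks to (b). The main delicate point will be to verify that all of these convergences are uniform on the compact contour $\gamma$, so that the limit can be interchanged with the contour integral; once the $O(p)$ estimate on $\|(zI_l-Z_1(p))^{-1}\|_\infty$ is in place and the invertibility of $S(z,p)$ on $\gamma$ is secured, the remainder is routine term-by-term estimation in the block inverse formula.
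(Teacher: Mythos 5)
Your proposal is correct, but it follows a genuinely different route from the paper. The paper works directly with eigenvalues and eigenvectors: it invokes the submajorization $\sum_{i=1}^r\mu_i(p)\le\sum_{i=1}^r\lambda_i(p)$ between the eigenvalues of the compression $Z_0(p)$ and those of $Z(p)$ to control the top $m$ eigenvalues from below, notes that assumption (c) forces $\lambda_i(p)\to-\infty$ for $i>m$, and then extracts a convergent subsequence of the top $m$ eigenvectors $u_i(p)=v_i^{(k)}\oplus w_i^{(k)}$, showing inductively (via the Rayleigh quotient and the bound $-\delta\|w_i^{(k)}\|^2/p$ coming from (c)) that $w_i^{(k)}\to0$, $\lambda_i(p_k')\to\mu_i$ and $Z_0v_i=\mu_iv_i$. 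Your argument replaces all of this by a spectral-gap estimate from Weyl's inequality, the Riesz projection $\frac{1}{2\pi i}\oint_\gamma e^z(zI-Z(p))^{-1}\,dz$, and uniform resolvent convergence on $\gamma$ obtained from the Schur complement together with the bound $\|(zI_l-Z_1(p))^{-1}\|_\infty\le(\delta/p-R)^{-1}=O(p)$ (which rests on the normality identity $\|(zI_l-Z_1(p))^{-1}\|_\infty=1/\mathrm{dist}(z,\mathrm{spec}\,Z_1(p))$ --- worth stating explicitly). What your route buys is the absence of any subsequence extraction, fully quantitative $O(p)$ control of all blocks, and an immediate generalization to any function holomorphic near $\mathrm{spec}(Z_0)$ and decaying at $-\infty$; what the paper's route buys is elementarity, using nothing beyond eigenvalue majorization and compactness of the unit sphere, at the cost of more delicate eigenvector bookkeeping. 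Both proofs are complete.
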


\begin{proof}
We list the eigenvalues of $Z(p)$ in decreasing order (with multiplicities) as
\[
\lambda_1(p)\ge\dots\ge\lambda_m(p)\ge\lambda_{m+1}(p)\ge\dots\ge\lambda_{m+l}(p)
\]
together with the corresponding orthonormal eigenvectors
\[
u_1(p),\dots,u_m(p),u_{m+1}(p),\dots,u_{m+l}(p),
\]
so that we write
\begin{equation}\label{F-A.4}
e^{Z(p)}=\sum_{i=1}^{m+l}e^{\lambda_i(p)}|u_i(p)\>\<u_i(p)|.
\end{equation}
Furthermore, let $\mu_1(p)\ge\dots\ge\mu_m(p)$ be the eigenvalues of $Z_0(p)$ and
$\mu_1\ge\dots\ge\mu_m$ be the eigenvalues of $Z_0$ Then $\mu_i(p)\to\mu_i$ as $p\searrow0$
thanks to assumption (a). By the majorization result for eigenvalues in
\cite[Corollary 7.2]{An2} we have
\begin{equation}\label{F-A.5}
\sum_{i=1}^r\mu_i(p)\le\sum_{i=1}^r\lambda_i(p),\qquad1\le r\le m.
\end{equation}
Since
\[
pZ(p)\le\begin{bmatrix}pZ_0(p)&pZ_2(p)\\pZ_2^*(p)&-\delta I_l\end{bmatrix}
\to\begin{bmatrix}0&0\\0&-\delta I_l\end{bmatrix}\quad\mbox{as $p\searrow0$}
\]
thanks to assumptions (a)--(c), it follows that for $m<i\le m+l$, $p\lambda_i(p)<-\delta/2$ for any
$p>0$ sufficiently small so that
\begin{equation}\label{F-A.6}
\lim_{p\searrow0}\lambda_i(p)=-\infty,\qquad m<i\le m+l.
\end{equation}
Hence it suffices to prove that for any sequence $(p_0>)\ p_k\searrow0$ there exist a subsequence
$\{p_k'\}$ of $\{p_k\}$ and vectors $v_1,\dots,v_m\in\bC^m$ such that we have for $1\le i\le m$
\begin{align}
\lambda_i(p_k')&\to\mu_i\quad\mbox{as $k\to\infty$}, \label{F-A.7}\\
u_i(p_k')&\to v_i\oplus0\in\bC^m\oplus\bC^l\quad\mbox{as $k\to\infty$}, \label{F-A.8}\\
Z_0v_i&=\mu_iv_i. \label{F-A.9}
\end{align}
Indeed, it then follows that $v_1,\dots,v_m$ are orthonormal vectors in $\bC^m$, so from
\eqref{F-A.4} and \eqref{F-A.6} we obtain
$$
\lim_{k\to\infty}e^{Z(p_k')}
=\sum_{i=1}^me^{\mu_i}|v_i\>\<v_i|\oplus0=e^{Z_0}\oplus0.
$$

Now, replacing $\{p_k\}$ with a subsequence, we may assume that $u_i(p_k)$ itself converges to some
$u_i\in\bC^m\oplus\bC^l$ for $1\le i\le m$. Writing $u_i(p_k)=v_i^{(k)}\oplus w_i^{(k)}$ in
$\bC^m\oplus\bC^l$, we have
\begin{align}
\lambda_1(p_k)
&=\bigl\<v_i^{(k)}\oplus w_i^{(k)},Z(p_k)(v_i^{(k)}\oplus w_i^{(k)})\bigr\> \nonumber\\
&=\bigl\<v_i^{(k)},Z_0(p_k)v_i^{(k)}\bigr\>
+2\Re\bigl\<v_i^{(k)},Z_2(p_k)w_i^{(k)}\bigr\>
+\bigl\<w_i^{(k)},Z_1(p_k)w_i^{(k)}\bigr\> \nonumber\\
&\le\bigl\<v_i^{(k)},Z_0(p_k)v_i^{(k)}\bigr\>
+2\Re\bigl\<v_i^{(k)},Z_2(p_k)w_i^{(k)}\bigr\>
-{\delta\over p_k}\,\big\|w_i^{(k)}\big\|^2 \label{F-A.10}
\end{align}
by assumption (c). For $i=1$, since $\mu_1(p_k)\le\lambda_1(p_k)$ by \eqref{F-A.5} for $r=1$, it follows
from \eqref{F-A.10} that
\[
p_k\mu_1(p_k)\le p_k\|Z_0(p_k)\|_\infty+2p_k\|Z_2(p_k)\|_\infty-\delta\big\|w_1^{(k)}\big\|^2
\]
so that
\[
\delta\big\|w_1^{(k)}\big\|^2\le p_k\|Z_0(p_k)\|_\infty+2p_k\|Z_2(p_k)\|_\infty-p_k\mu_1(p_k)\to0
\]
as $k\to\infty$ ($p_k\searrow0$) due to assumptions (a) and (b). Hence we have $w_1^{(k)}\to0$ so that
$u_1(p_k)\to u_1=v_1\oplus0$ in $\bC^m\oplus\bC^l$ (hence $v_1^{(k)}\to v_1$) for some $v_1\in\bC^m$.
From \eqref{F-A.10} again we furthermore have
\begin{align*}
\limsup_{k\to\infty}\lambda_1(p_k)
&\le\limsup_{k\to\infty}\Bigl\{\bigl\<v_i^{(k)},Z_0(p_k)v_i^{(k)}\bigr\>
+2\|Z_2(p_k)\|_\infty\big\|w_i^{(k)}\big\|\Bigr\} \\
&\le\<v_1,Z_0v_1\>\le\mu_1=\lim_{k\to\infty}\mu_1(p_k)\le\liminf_{k\to\infty}\lambda_1(p_k).
\end{align*}
Therefore, $\lambda_1(p_k)\to\<v_1,Z_0v_1\>=\mu_1$ and hence $Z_0v_1=\mu_1v_1$ since $\mu_1$ is
the largest eigenvalue of $Z_0$. Next, when $k\ge2$ and $i=2$, since $\lambda_2(p_k)$ is bounded below
by \eqref{F-A.5} for $r=2$, it follows as above that $w_2^{(k)}\to0$ and hence $u_2(p_k)\to u_2=v_2\oplus0$
for some $v_2\in\bC^m$. Therefore,
\[
\limsup_{k\to\infty}\lambda_2(p_k)\le\<v_2,Z_0v_2\>
\le\mu_2\le\liminf_{k\to\infty}\lambda_2(p_k)
\]
so that $\lambda_2(p_k)\to\<v_2,Z_0v_2\>=\mu_2$ and $Z_0v_2=\mu_2v_2$ since $\mu_2$ is the largest
eigenvalue of $Z_0$ restricted to $\{v_1\}^\perp\cap\bC^m$. Repeating this argument we obtain
$v_1,\dots,v_m\in\bC^m$ for which \eqref{F-A.7}--\eqref{F-A.9} hold for $1\le i\le m$.
\end{proof}

%Note that the lemma and its proof hold true even when the assumption $Z_2(p)\to Z_2$ in (b) is relaxed
%into $pZ_2(p)\to0$ as $p\searrow0$.

\begin{proof}[Proof of Theorem \ref{T-A.1}]
Let us divide the proof into two steps. In the proof below we use the $\alpha$-weighted arithmetic mean
$\triangledown_\alpha$ and the $\alpha$-weighted harmonic mean $!_\alpha$. Note that
\[
A!_\alpha B\le A\sigma B\le A\triangledown_\alpha B,\qquad A,B\ge0.
\]

\noindent
{\it Step 1.}\enspace
First, we prove the theorem in the case where $P\sigma Q=P\wedge Q$ for all orthogonal projections
$P,Q$ (this is the case, for instance, when $\sigma=\ !_\alpha$ or $\#_\alpha$ the weighted geometric mean,
see \cite[Theorem 3.7]{KA}). Let $\cH_0$ be the range of $P_0$ ($=A^0!_\alpha B^0=A^0\sigma B^0$).
From the operator monotonicity of $\log x$ ($x>0$) it follows that for every $p>0$,
\begin{equation}\label{F-A.11}
{1\over p}\log(A^p!_\alpha B^p)\big|_{\cH_0}\le{1\over p}\log(A^p\sigma B^p)\big|_{\cH_0}
\le{1\over p}\log\bigl(P_0(A^p\triangledown_\alpha B^p)P_0\bigr)\big|_{\cH_0}.
\end{equation}
For every $\eps>0$ we have
\begin{align*}
(A+\eps A^{0\perp})^p!_\alpha(B+\eps B^{0\perp})^p
&=\bigl\{(A+\eps A^{0\perp})^{-p}\triangledown_\alpha(B+\eps B^{0\perp})^{-p}\bigr\}^{-1} \\
&=\bigl\{A^{-p}\triangledown_\alpha B^{-p}
+\eps^{-p}(A^{0\perp}\triangledown_\alpha B^{0\perp})\bigr\}^{-1},
\end{align*}
where $A^{-p}=(A^{-1})^p$ and $B^{-p}=(B^{-1})^p$ are defined via the generalized inverses. Therefore,
\begin{equation}\label{F-A.12}
\begin{aligned}
P_0\bigl\{(A+\eps A^{0\perp})^p!_\alpha(B+\eps B^{0\perp})^p\bigr\}P_0
&\ge\bigl\{P_0\bigl(A^{-p}\triangledown_\alpha B^{-p}
+\eps^{-p}(A^{0\perp}\triangledown_\alpha B^{0\perp})\bigr)P_0\bigr\}^{-1} \\
&=\bigl\{P_0(A^{-p}\triangledown_\alpha B^{-p})P_0\bigr\}^{-1},
\end{aligned}
\end{equation}
since the support projection of $A^{0\perp}\triangledown_\alpha B^{0\perp}$ is
$A^{0\perp}\vee B^{0\perp}=P_0^\perp$. In the above, $\{\cdots\}^{-1}$ is the generalized inverse (with
support $\cH_0$) and the inequality is due to \cite[Corollary 2.3]{Choi}. Letting $\eps\searrow0$ in
\eqref{F-A.12} gives
\begin{align}\label{F-A.13}
A^p!_\alpha B^p=P_0(A^p!_\alpha B^p)P_0
\ge\bigl\{P_0(A^{-p}\triangledown_\alpha B^{-p})P_0\bigr\}^{-1}
\end{align}
so that
\begin{equation}\label{F-A.14}
{1\over p}\log(A^p!_\alpha B^p)\big|_{\cH_0}
\ge-{1\over p}\log\bigl(P_0(A^{-p}\triangledown_\alpha B^{-p})P_0\bigr)\big|_{\cH_0}.
\end{equation}
Combining \eqref{F-A.11} and \eqref{F-A.14} yields
\begin{equation}\label{F-A.15}
-{1\over p}\log\bigl(P_0(A^{-p}\triangledown_\alpha B^{-p})P_0\bigr)\big|_{\cH_0}
\le{1\over p}\log(A^p\sigma B^p)\big|_{\cH_0}
\le{1\over p}\log\bigl(P_0(A^p\triangledown_\alpha B^p)P_0\bigr)\big|_{\cH_0}.
\end{equation}

Since $A^{-p}=e^{-p\log A}A^0=A^0-p\log A+o(p)$ and similarly $B^{-p}=B^0-p\log B+o(p)$ as
$p\searrow0$, we have
\[
A^{-p}\triangledown_\alpha B^{-p}
=A^0\triangledown_\alpha B^0-p((\log A)\triangledown_\alpha(\log B))+o(p)
\]
so that
\begin{align}\label{F-A.16}
P_0(A^{-p}\triangledown_\alpha B^{-p})P_0
=P_0-p\{(1-\alpha)P_0(\log A)P_0+\alpha P_0(\log B)P_0\}+o(p).
\end{align}
Therefore,
\begin{equation}\label{F-A.17}
-{1\over p}\log\bigl(P_0(A^{-p}\triangledown B^{-p})P_0\bigr)\big|_{\cH_0}
=\{(1-\alpha)P_0(\log A)P_0+\alpha P_0(\log B)P_0\}\big|_{\cH_0}+o(1).
\end{equation}
Similarly, we have
\begin{align}\label{F-A.18}
P_0(A^p\triangledown_\alpha B^p)P_0
=P_0+p\{(1-\alpha)P_0(\log A)P_0+\alpha P_0(\log B)P_0\}+o(p).
\end{align}
so that
\begin{equation}\label{F-A.19}
{1\over p}\log\bigl(P_0(A^p\triangledown B^p)P_0\bigr)\big|_{\cH_0}
=\{(1-\alpha)P_0(\log A)P_0+\alpha P_0(\log B)P_0\}\big|_{\cH_0}+o(1).
\end{equation}
From \eqref{F-A.15}, \ref{F-A.17} and \eqref{F-A.19} we obtain
\[
\lim_{p\searrow0}{1\over p}\log(A^p\sigma B^p)\big|_{\cH_0}
=\{(1-\alpha)P_0(\log A)P_0+\alpha P_0(\log B)P_0\}\big|_{\cH_0},
\]
which yields the required limit formula.

\medskip\noindent
{\it Step 2.}\enspace
For a general operator mean $\sigma$ the integral representation theorem \cite[Theorems 3.4, 3.7]{KA}
says that there are $\theta,\beta\in[0,1]$ and an operator mean $\tau$ such that
\[
\sigma=\theta\triangledown_\beta+(1-\theta)\tau
\]
and $P\tau Q=P\wedge Q$ for all orthogonal projections $P,Q$. Moreover, $\tau$ has the representing
operator monotone function $g$ on $(0,\infty)$ for which $\gamma:=g'(1)\in(0,1)$ and
\begin{align}\label{F-A.20}
\alpha=\theta\beta+(1-\theta)\gamma.
\end{align}
We may assume that $0<\theta\le1$ since the case $\theta=0$ was shown in Step 1. Moreover, when
$\theta=1$, we have $\beta=\alpha\in(0,1)$. At the moment, assume that $0<\theta\le1$ and $0<\beta<1$.
Let $A,B\ge0$ be given, and note that
$A^0\sigma B^0=\theta A^0\triangledown_\beta B^0+(1-\theta)(A^0\wedge B^0)$ has the support projection
$A^0\vee B^0$. Let $\cH$, $\cH_0$ and $\cH_1$ denote the ranges of $A^0\vee B^0$,
$P_0=A^0\wedge B^0$ and $A^0\vee B^0-P_0$, respectively, so that $\cH=\cH_0\oplus\cH_1$. Note that
the support of $A^p\sigma B^p$ for any $p>0$ is $\cH$. We will describe
${1\over p}\log(A^p\sigma B^p)\big|_\cH$ in the $2\times2$ block form with respect to the decomposition
$\cH=\cH_0\oplus\cH_1$. Let
\[
Y_0:=\{(1-\gamma)P_0(\log A)P_0+\gamma P_0(\log B)P_0\}\big|_{\cH_0}.
\]
It follows from Step 1 that $\lim_{p\searrow0}(A^p\tau B^p)^{1/p}=P_0e^{Y_0}P_0$ and hence
\begin{align*}
A^p\tau B^p&=P_0\bigl(e^{Y_0}+o(1)\bigr)^pP_0
=P_0\bigl[\exp\bigl\{p\log\bigl(e^{Y_0}+o(1)\bigr)\bigr\}\bigr]P_0 \\
&=P_0\bigl[I_{\cH_0}+p\log\bigl(e^{Y_0}+o(1)\bigr)+o(p)\bigr]P_0
=P_0\bigl(I_{\cH_0}+pY_0+o(p)\bigr)P_0 \\
&=P_0+pP_0Y_0P_0+o(p)\quad\mbox{as $p\searrow0$}.
\end{align*}
In the above, the fourth equality follows since $\log\bigl(e^{Y_0}+o(1)\bigr)=Y_0+o(1)$. On the other hand,
we have
\[
A^p\triangledown_\beta B^p
=A^0\triangledown_\beta B^0+p\{(\log A)\triangledown_\beta(\log B)\}+o(p).
\]
Therefore, we have
\begin{align*}
A^p\sigma B^p&=\theta(A^0\triangledown_\beta B^0)+(1-\theta)P_0
+p\theta\{(\log A)\triangledown_\beta(\log B)\}+p(1-\theta)P_0Y_0P_0+o(p)\quad\mbox{as $p\searrow0$}.
\end{align*}
Setting
\begin{align*}
C&:=\bigl\{\theta(A^0\triangledown_\beta B^0)+(1-\theta)P_0\bigr\}\big|_\cH, \\
H&:=\bigl[\theta\{(\log A)\triangledown_\beta(\log B)\}+(1-\theta)P_0Y_0P_0\bigr]\big|_\cH,
\end{align*}
we write
\begin{equation}\label{F-A.21}
{1\over p}\log(A^p\sigma B^p)\big|_\cH
={1\over p}\log(C+pH+o(p)),
\end{equation}
where $C$ is a positive definite contraction on $\cH$ and $H$ is a Hermitian operator on $\cH$. Note that
the eigenspace of $C$ for the eigenvalue $1$ is $\cH_0$. Hence, with a basis consisting of orthonormal
eigenvectors for $C$ we may assume that $C$ is diagonal so that $C=\diag(c_1,\dots,c_{m+l})$ with
\[
c_1=\dots=c_m=1>c_{m+1}\ge\dots\ge c_{m+l}>0,
\]
where $m=\dim\cH_0$ and $m+l=\dim\cH$.

Applying Taylor's theorem (see, e.g., \cite[Theorem 2.3.1]{Hi}) to $\log(C+pH+o(p))$ we have
\begin{equation}\label{F-A.22}
\log(C+pH+o(p))=\log C+pD(\log x)(C)(H)+o(p),
\end{equation}
where $D(\log x)(C)(\cdot)$ denotes the Fr\'echet derivative of the functional calculus by $\log x$ at $C$.
Daleckii and Krein's derivative formula (see, e.g., \cite[Theorem 2.3.1]{Hi}) says that
\begin{equation}\label{F-A.23}
D(\log x)(C)(H)=\Biggl[{\log c_i-\log c_j\over c_i-c_j}\Biggr]_{i,j=1}^{m+l}\circ H,
\end{equation}
where $\circ$ is the Schur product and $(\log c_i-\log c_j)/(c_i-c_j)$ is understood as $1/c_i$ when $c_i=c_j$.
We write $D(\log x)(C)(H)$ in the $2\times2$ block form on $\cH_0\oplus\cH_1$ as
$\begin{bmatrix}Z_0&Z_2\\Z_2^*&Z_1\end{bmatrix}$ where $Z_0:=P_0HP_0|_{\cH_0}$. By
\eqref{F-A.21}--\eqref{F-A.23} we can write
\begin{align}\label{F-A.24}
{1\over p}\log(A^p\sigma B^p)\big|_\cH={1\over p}\log C+D(\log x)(C)(H)+o(1)
=\begin{bmatrix}Z_0(p)&Z_2(p)\\Z_2^*(p)&Z_1(p)\end{bmatrix},
\end{align}
where
\begin{align*}
Z_0(p)&=Z_0+o(1),\qquad Z_2(p)=Z_2+o(1), \\
Z_1(p)&={1\over p}\diag(\log c_{m+1},\dots,\log c_{m+l})+Z_1+o(1).
\end{align*}
This $2\times2$ block form of $Z(p):={1\over p}\log(A^p\sigma B^p)\big|_\cH$ satisfies assumptions (a)--(c)
of Lemma \ref{L-A.3} for $p\in(0,p_0)$ with a sufficiently small $p_0>0$. Therefore, the lemma implies that
\[
\lim_{p\searrow0}(A^p\sigma B^p)^{1/p}\big|_\cH=\lim_{p\searrow0}e^{Z(p)}=e^{Z_0}\oplus0
\]
on $\cH=\cH_0\oplus\cH_1$. Finally, we have
\begin{align*}
Z_0&=\theta(\{(1-\beta)P_0(\log A)P_0+\beta P_0(\log B)P_0\}|_{\cH_0} \\
&\qquad+(1-\theta)\{(1-\gamma)P_0(\log A)P_0+\gamma P_0(\log B)P_0\}|_{\cH_0} \\
&=\{(1-\alpha)P_0(\log A)P_0+\alpha P_0(\log B)P_0\}|_{\cH_0}
\end{align*}
thanks to \eqref{F-A.20}. Hence the desired limit formula follows.

For the remaining case where $0<\theta<1$ and $\beta=0$ or $1$, the proof is similar to the above when
we take as $\cH$ the range of $A^0$ (for $\beta=0$) or $B^0$ (for $\beta=1$) instead of the range of
$A^0\vee B^0$.
\end{proof}

\begin{remark}\label{R-A.4}\rm
Assume that the operator mean $\sigma$ satisfies $P\sigma Q=P\wedge Q$ for all orthogonal projections
$P,Q$, that is, the representing function $f$ satisfies $f(0^+)=0$ and $\lim_{x\to\infty}f(x)/x=0$ (see
\cite[Theorem 3.7]{KA}). This is the case when $\sigma\#_\alpha$ for instance. Then, from the proof of
Step 1 above, for any $A,B\ge0$ we have a slightly improved form of \eqref{F-A.3} as follows:
\begin{align}\label{F-A.25}
A^p\sigma B^p=P_0+p\{(1-\alpha)P_0(\log A)P_0+\alpha P_0(\log B)P_0\}+o(p)
\quad\mbox{as $p\searrow0$}.
\end{align}
Indeed, set $L:=(1-\alpha)P_0(\log A)P_0+\alpha P_0(\log B)P_0$. By \eqref{F-A.13} and \eqref{F-A.16}
one has
\begin{align*}
A^p\sigma B^p&\ge A^p!_\alpha B^p\ge\{P_0(A^{-p}\triangledown_\alpha B^{-p})P_0\}^{-1} \\
&=\{P_0-pL+o(p)\}^{-1}=P_0+pL+o(p)\quad\mbox{as $p\searrow0$}.
\end{align*}
Also, by \eqref{F-A.18} one has
\[
A^p\sigma B^p=P_0(A^p\sigma B^p)P_0\le P_0(A^p\triangledown_\alpha B^p)P_0=P_0+pL+o(p).
\]
Therefore, \eqref{F-A.24} follows.
\end{remark}

\addcontentsline{toc}{section}{References}


\begin{thebibliography}{99}
\bibitem{An1}
T. Ando, On some operator inequalities, {\it Math. Ann.} {\bf 279} (1987), 157--159.

\bibitem{An2}
T. Ando, Majorizations, doubly stochastic matrices, and comparison of eigenvalues,
{\it Linear Algebra Appl.}\ {\bf 118} (1989), 163--248.

\bibitem{An}
T. Ando, Matrix Young inequalities, in: {\it Operator Theory in Function Spaces and Banach Lattices},
Oper. Theory Adv. Appl. Vol. 75, C. B. Huijsmans et al. (eds), Birkh\"auser, Basel, 1995, pp. 33--38.

\bibitem{AH}
T. Ando and F. Hiai, Log majorization and complementary Golden–Thompson type inequalities,
{\it Linear Algebra Appl.} {\bf 197} (1994), 113--131.

\bibitem{Ar}
H. Araki, On an inequality of Lieb and Thirring, {\it Lett. Math. Phys.} {\bf 19} (1990), 167--170.

\bibitem{AD}
K. M. R. Audenaert and N. Datta, $\alpha$-$z$-relative entropies,
{\it J. Math. Phys.} {\bf 56} (2015), 022202, 16~pp.

\bibitem{AuHi}
K. M. R. Audenaert and F. Hiai, On matrix inequalities between the power means: counterexamples,
{\it Linear Algebra Appl.} {\bf 439} (2013), 1590--1604.

\bibitem{AuHi2}
K. M. R. Audenaert and F. Hiai, Reciprocal Lie--Trotter formula,
{\it Linear and Multilinear Algebra} {\bf 64} (2016), 1220--1235.

%\bibitem{BLP}
%N. Bebiano, R. Lemos and J. da Provid\^encia,
%Inequalities for quantum relative entropy, {\it Linear Algebra Appl.} {\bf 401} (2005), 159--172.
%
\bibitem{Bh}
R. Bhatia. {\it Matrix analysis}, Springer, New York, 1996.

\bibitem{Choi}
M.-D. Choi, A Schwarz inequality for positive linear maps on $C^*$-algebras,
{\it Illinois J. Math.} {\bf 18} (1974), 565--574.

\bibitem{DTV}
T. H. Dinh, T.-Y. Tam and T.-D. Vuong, A new weighted spectral geometric mean and properties,
{\it Electronic J. Linear Algebra} {\bf 40} (2024), 333--342.

\bibitem{DF1}
R. Dumitru and J. A. Franco, Near order and metric-like functions on the cone of positive definite
matrices, {\it Positivity} {\bf 28} (2023), 2, 15~pp.

\bibitem{DF}
R. Dumitru and J. A. Franco, Log-majorizations related to the spectral geometric and R\'enyi means,
{\it Acta Sci. Math.}, to appear, 2024.

\bibitem{FMVW}
P. E. Frenkel, M. Mosonyi, P. Vrana and M. Weiner, Error bounds for composite quantum hypothesis
testing and a new characterization of the weighted Kubo-Ando geometric means,
arXiv:2503.13379 [quant-ph].

\bibitem{FP}
M. Fiedler and V. Pt\'ak, A new positive definite geometric mean of two positive definite matrices,
{\it Linear Algebra Appl.}\ {\bf 251} (1997), 1--20.

\bibitem{GH}
L. Gan and H. Huang, Order relations of the Wasserstein mean and the spectral geometric mean,
arXiv:2312.15394v2 [math.FA].

\bibitem{GK}
L. Gan and S. Kim, Revisit on spectral geometric mean,
{\it Linear and Multilinear Algebra} {\bf 72} (2024), 944--955.

\bibitem{GLT}
L. Gan, X. Liu and T.-Y. Tam, On two geometric means and sum of adjoint orbits,
{\it Linear Algebra Appl.} {\bf 631} (2021), 156--173.

\bibitem{GT}
L. Gan and T.-Y. Tam, Inequalities and limits of weighted spectral geometric mean,
{\it Linear and Multilinear Algebra} {\bf 72} (2024), 261--282.

%\bibitem{Ha}
%M. Hayashi, {\it Quantum Information Theory: Mathematical Foundation}, 2nd ed.,
%Graduate Texts in Physics, Springer, Berlin, 2017.
%
\bibitem{Hi0}
F. Hiai, Log-majorizations and norm inequalities for exponential operators, in {\it Linear Operators},
J. Janas, F. H. Szafraniec and J. Zem\'anek (eds.), Banach Center Publications, Vol. 38, 1997,
pp.~119--181.

%\bibitem{Hi1}
%F. Hiai, Equality cases in matrix norm inequalities of Golden-Thompson type,
%{\it Linear and Multilinear Algebra} {\bf 36} (1994), 239--249.
%
\bibitem{Hi}
F. Hiai, Matrix analysis: matrix monotone functions, matrix means, and majorization,
Interdiscip. Inform. Sci. {\bf 16} (2010), 139--248.

%\bibitem{Hi4}
%F. Hiai, Concavity of certain matrix trace and norm functions,
%{\it Linear Algebra Appl.} {\bf 439} (2013), 1568--1589.

\bibitem{Hi2}
F. Hiai, Log-majorization related to R\'enyi divergences,
{\it Linear Algebra Appl.} {\bf 563} (2019), 255--276.

%\bibitem{Hi3}
%F. Hiai, Log-majorization and matrix norm inequalities with application to quantum information,
%{\it Acta Sci. Math.}, to appear, 2024.
%
\bibitem{Hi5}
F. Hiai, Log-majorizations between quasi-geometric type means for matrices, in preparation.

%\bibitem{HiJe}
%F. Hiai and A. Jen\v cov\'a,
%$\alpha$-$z$-R\'enyi divergences in von Neumann algebras: data processing inequality, reversibility, and
%monotonicity properties in $\alpha,z$,
%{\it Comm. Math. Phys.}, to appear, arXiv:2404.07617 [quant-ph], 2024.
%
\bibitem{HiKo}
F. Hiai and H. Kosaki, Means for matrices and comparison of their norms,
{\it Indiana Univ. Math. J.} {\bf 48} (1999), 899--936.

%\bibitem{HM}
%F. Hiai and M. Mosonyi, Different quantum $f$-divergences and the reversibility of quantum operations,
%{\it Rev. Math. Phys.} {\bf 29} (2017), 1750023, 80~pp.
%
\bibitem{HP}
F. Hiai and D. Petz, The Golden-Thompson trace inequality is complemented,
{\it Linear Algebra Appl.} {\bf 181} (1993), 153--185.

%\bibitem{HUW}
%F. Hiai, Y. Ueda and S. Wada, Pusz–Woronowicz functional calculus and extended operator
%convex perspectives,
%{\it Integr. Equ. Oper. Theory} {\bf 94} (2022), 1, 66~pp.
%

\bibitem{Kato0}
T. Kato, {\it Perturbation Theory for Linear Operators}, Reprint of the 1980 edition,
Classics in Mathematics, Springer-Verlag, Berlin, 1995. 

\bibitem{Kato}
T. Kato, Trotter's product formula for an arbitrary pair of self-adjoint contraction semigroups,
in: {\it Topics in Functional Analysis}, I. Gohberg and M. Kac (eds.), Academic Press, New York,
1978, 185--195.

%\bibitem{Ka}
%S. Kato, On $\alpha$-$z$-R\'enyi divergence in the von Neumann algebra setting,
%{\it J. Math. Phys.} {\bf 65} (2024), 042202,  16 pp.
%
%\bibitem{KS}
%M. Kian, Y. Seo, Norm inequalities related to the matrix geometric mean of negative power,
%{\it Sci. Math. Jpn.} Online, e-2018,
%http://www.jams.or.jp/notice/scmjol/index.html.
%

\bibitem{KL}
S. Kim and H. Lee, Relative operator entropy related with the spectral geometric mean,
{\it Anal. Math. Phys.} {\bf 5} (2015), 233--240.

\bibitem{KA}
F. Kubo and T. Ando, Means of positive linear operators, {\it Math. Ann.} {\bf 246} (1980), 205--224.

\bibitem{LL}
H. Lee and Y. Lim, Metric and spectral geometric means on symmetric cones,
{\it Kyungpook Math. J.} {\bf  47} (2007), 133--150.

\bibitem{Man}
S. M. Manjegani, H\"older and Young inequalities for the trace of operators,
{\it Positivity} {\bf 11} (2007), 239--250.

\bibitem{MOA}
A. W. Marshall, I. Olkin and B. C. Arnold, {\it Inequalities: Theory of Majorization and Its Applications},
2nd ed., Springer, New York, 2011.

%\bibitem{Ma}
%K. Matsumoto, A new quantum version of $f$-divergence,
%In: Nagoya Winter Workshop 2015: Reality and Measurement in Algebraic Quantum Theory,
%M. Ozawa et al. (eds.), Springer, 2018, pp. 229--273.
%
%\bibitem{MoHi}
%M. Mosonyi and F. Hiai, Some continuity properties of quantum R\'enyi divergences,
%{\it IEEE Trans. Inform. Theory} {\bf 70} (2024), 2674--2700.
%
\bibitem{PW}
W. Pusz and S. L. Woronowicz, Functional calculus for sesquilinear forms and the purification map
{\it Rep. Math. Phys.} {\bf 5} (1975), 159--170.

%\bibitem{Ru}
%M. B. Ruskai, Yet another proof of the joint convexity of relative entropy,
%{\it Lett. Math. Phys.} {\bf 112}, (2022), article number 81, 8 pp.
%

\bibitem{So}
W. So, The high road to an exponential formula, {\it Linear Algebra Appl.} {\bf 379} (2004), 69--75.

\bibitem{Tr}
H. Trotter, On the product of semigroups of operators,
{\it Proc.\ Amer.\ Math.\ Soc.} {\bf 10} (1959), 545--551.

%\bibitem{Zh}
%H. Zhang, From Wigner--Yanase--Dyson conjecture to Carlen--Frank--Lieb conjecture,
%{\it Adv. Math.} {\bf 365} (2020), 107053, 18~pp.

\end{thebibliography}
\end{document}